\newtheorem{theorem}{Theorem}[section]
\newtheorem{corollary}[theorem]{Corollary}
\newtheorem{lemma}[theorem]{Lemma}
\newtheorem{proposition}[theorem]{Proposition}
\theoremstyle{definition}
\newtheorem{definition}[theorem]{Definition}
\newtheorem{remark}[theorem]{Remark}
\numberwithin{equation}{section}
\newcommand\eps{\varepsilon}
\newcommand\E{\mathbb{E}}
\newcommand\R{\mathbb{R}}
\newcommand\Z{\mathbb{Z}}
\newcommand\N{\mathbb{N}}
\newcommand\C{\mathbb{C}}
\newcommand\n{\mathbf{n}}
\newcommand\Poly{{\operatorname{Poly}}}
\newcommand\TV{{\operatorname{TV}}}
\newcommand\Lip{{\operatorname{Lip}}}
\let\oldpmod\pmod
\renewcommand{\pmod}[1]{\hspace{-0.1cm}\oldpmod {#1}}
\begin{document}

\title[Higher uniformity I]{Higher uniformity of arithmetic functions in short intervals I.  All intervals}

\author[K. Matom\"aki]{Kaisa Matom\"aki}
\address{Department of Mathematics and Statistics \\
University of Turku, 20014 Turku\\
Finland}
\email{ksmato@utu.fi}

\author[X. Shao]{Xuancheng Shao}
\address{Department of Mathematics, University of Kentucky\\
715 Patterson Office Tower\\
Lexington, KY 40506\\
USA}
\email{xuancheng.shao@uky.edu}

\author[T. Tao]{Terence Tao}
\address{Department of Mathematics, UCLA\\
405 Hilgard Ave\\
Los Angeles CA 90095\\
USA}
\email{tao@math.ucla.edu}

\author[J. Ter\"av\"ainen]{Joni Ter\"av\"ainen}
\address{Department of Mathematics and Statistics\\ University of Turku, 20014 Turku\\
Finland}
\email{joni.p.teravainen@gmail.com}

\begin{abstract} We study higher uniformity properties of the M\"obius function $\mu$, the von Mangoldt function $\Lambda$, and the divisor functions $d_k$ on short intervals $(X,X+H]$ with $X^{\theta+\eps} \leq H \leq X^{1-\eps}$ for a fixed constant $0 \leq \theta < 1$ and any $\eps>0$. 

More precisely, letting $\Lambda^\sharp$ and $d_k^\sharp$ be suitable approximants of $\Lambda$ and $d_k$ and $\mu^\sharp = 0$, we show for instance that, for any nilsequence $F(g(n)\Gamma)$, we have 
\[
\sum_{X < n \leq X+H} (f(n)-f^\sharp(n)) F(g(n) \Gamma) \ll H \log^{-A} X
\]
when $\theta = 5/8$ and $f \in \{\Lambda, \mu, d_k\}$ or $\theta = 1/3$ and $f = d_2$.

As a consequence, we show that the short interval Gowers  norms $\|f-f^\sharp\|_{U^s(X,X+H]}$ are also asymptotically small for any fixed $s$ for these choices of $f,\theta$.  As applications, we prove an asymptotic formula for the number of solutions to linear equations in primes in short intervals, and show that multiple ergodic averages along primes in short intervals converge in $L^2$. 

Our innovations include the use of multi-parameter nilsequence equidistribution theorems to control type $II$ sums, and an elementary decomposition of the neighborhood of a hyperbola into arithmetic progressions to control type $I_2$ sums.
\end{abstract}

\maketitle

\tableofcontents

\section{Introduction}
\label{sec:intro}
In this paper we shall study correlations of arithmetic functions $f \colon \mathbb{N} \to \mathbb{C}$ with arbitrary nilsequences $n \mapsto F(g(n) \Gamma)$ in short intervals. For simplicity, we will restrict attention  to the following model examples of functions $f$:
\begin{itemize}
\item The \emph{M\"obius function} $\mu(n)$, defined to equal $(-1)^j$ when $n$ is the product of $j$ distinct primes, and $0$ otherwise.
\item The \emph{von Mangoldt function} $\Lambda(n)$, defined to equal $\log p$ when $n$ is a power $p^j$ of a prime $p$ for some $j \geq 1$, and $0$ otherwise.
\item The \emph{$k^{\mathrm{th}}$ divisor function} $d_k(n)$, defined to equal the number of representations of $n$ as the product $n=n_1\dotsm n_k$ of $k$ natural numbers, where $k \geq 2$ is fixed.  (In particular, all implied constants in our asymptotic notation are understood to depend on $k$.)
\end{itemize}
By a ``nilsequence'', we mean a function of the form $n \mapsto F(g(n)\Gamma)$, where $G/\Gamma$ is a filtered nilmanifold and $F \colon G/\Gamma \to \C$ is a Lipschitz function.  The precise definitions of these terms will be given in Section~\ref{nilmanifold-sec}, but a simple example of a nilsequence to keep in mind for now is $F(g(n) \Gamma) = e(\alpha n^d)$ for some real number $\alpha$, some natural number $d \geq 0$, and with $e(\theta) \coloneqq e^{2\pi i\theta}$.

When $f$ is non-negative and $F(g(n) \Gamma)$ is ``major arc'' in some sense (e.g., if $F(g(n)\Gamma) = e(\alpha n^s)$ with $\alpha$ very close to a rational $a/q$ with small denominator $q$), there is actually correlation between $f$ and $F(g(n) \Gamma)$, but we shall deal with this by first subtracting off a suitable approximation $f^\sharp$ from $f$. In the case of the M\"obius function $\mu$, we may set $\mu^\sharp = 0$.  On the other hand, the functions $\Lambda, d_k$ are non-negative and one therefore needs to construct non-trivial approximants $\Lambda^\sharp, d_k^\sharp$ to such functions before one can expect to obtain discorrelation; we shall choose
\begin{equation}\label{lambdar-def}
 \Lambda^\sharp(n) \coloneqq \frac{P(R)}{\varphi(P(R))}1_{(n,P(R))=1}, \quad \text{where}\quad  P(w)\coloneqq\prod_{p<w}p,\quad R \coloneqq \exp( (\log X)^{1/10} )
\end{equation}
and
\begin{equation}\label{dks-def}
 d_k^\sharp(n) \coloneqq \sum_{\substack{m \leq R_k^{2k-2}\\ m|n}} P_m(\log n), \quad \text{where } R_k \coloneqq X^{\eta}\text{ and }\eta = \tfrac{1}{10k}
\end{equation}
and the polynomials $P_m(t)$ (which have degree $k-1$) are given by the formula
\begin{equation}
\label{eq:Pm(t)-def}
P_m(t) \coloneqq \sum_{j=0}^{k-1} \binom{k}{j} \sum_{\substack{n_1,\dots,n_j \leq R_k < n_{j+1},\dots,n_{k-1} \leq R_k^2\\ n_1 \dotsm n_{k-1} = m}} \frac{\left( t - \log(n_1 \dotsm n_j R_k^{k-j})\right)^{k-j-1}}{(k-j-1)! \log^{k-j-1} R_k}.
\end{equation}
We will discuss these choices of approximants more in Section~\ref{ssec:longinterval} (which can be read independently of the rest of the paper), but let us already here note that the approximants lead to type $I$ sums and are thus easier to handle than the original functions, and that the choice of the parameter $R$ in $\Lambda^{\sharp}$ allows for an arbitrary power of log saving in~\eqref{mangoldt-discor} below. Moreover, the approximants are nonnegative, which is helpful for some applications (in particular in the proof of Theorem~\ref{thm_gowers} below). For future use, we record the fact that our correlation estimates for $d_k - d_k^{\sharp}$ work for $d_k^{\sharp}$ defined as in~\eqref{dks-def} with any fixed $0 < \eta \leq \frac{1}{10k}$, as long as we allow implied constants to depend on $\eta$.

For technical reasons, it can be beneficial to consider ``maximal discorrelation'' estimates. Loosely following Robert and Sargos~\cite{robert-sargos} we adopt the convention\footnote{Strictly speaking, this is an abuse of notation, since the expression $|\sum_{n \in I \cap \Z} f(n)|^*$ depends not only on the value of the sum $\sum_{n \in I \cap \Z} f(n)$, but also on the individual summands $f(n)$ and the range $I \cap \Z$. In particular, we caution that $\sum_{n \in I \cap \Z} f(n) = \sum_{m \in J \cap \Z} g(m)$ does \emph{not} necessarily imply that $|\sum_{n \in I \cap \Z} f(n)|^* = |\sum_{m \in J \cap \Z} g(m)|^*$.} that, for an interval $I$,
\begin{equation}\label{maximal-sum}
 \left|\sum_{n \in I \cap \Z} f(n)\right|^* \coloneqq \sup_{P \subset I \cap \Z} \left|\sum_{n \in P} f(n)\right|,
\end{equation}
where $P$ ranges over all arithmetic progressions in $I \cap \Z$.

Now we are ready to state our main theorem\footnote{For definitions of undefined terms such as ``filtered nilmanifold'' and $\Poly(\Z \to G)$, see Definitions~\ref{def:filtNilman} and~\ref{def:FiltGroup} below.  For our conventions for asymptotic notation such as $\ll$, see Section~\ref{notation-sec}.}.
\begin{theorem}[Discorrelation estimate]\label{discorrelation-thm} Let $X \geq 3$, $X^{\theta+\varepsilon} \leq H \leq X^{1-\varepsilon}$ for some $0 < \theta < 1$ and $\eps > 0$, and let $\delta \in (0,1)$. Let $G/\Gamma$ be a filtered nilmanifold of some degree $d$ and dimension $D$, and complexity at most $1/\delta$, and let $F \colon G/\Gamma \to \C$ be a Lipschitz function of norm at most $1/\delta$.
\begin{itemize}
\item[(i)] If $\theta = 5/8$, then for all $A > 0$,
\begin{align}
\sup_{g \in \Poly(\Z \to G)} \left| \sum_{X < n \leq X+H} \mu(n) \overline{F}(g(n)\Gamma) \right|^* &\ll_{A,\eps,d,D} \delta^{-O_{d,D}(1)} H \log^{-A} X \label{mobius-discor}
\end{align}
\item[(ii)] If $\theta = 5/8$, then for all $A > 0$,
\begin{align}
\sup_{g \in \Poly(\Z \to G)} \left| \sum_{X < n \leq X+H} (\Lambda(n) - \Lambda^\sharp(n)) \overline{F}(g(n)\Gamma) \right|^* &\ll_{A,\eps,d,D} \delta^{-O_{d,D}(1)} H \log^{-A} X. \label{mangoldt-discor}
\end{align}
\item[(iii)] Let $k \geq 2$.  Set $\theta = 1/3$ for $k=2$, $\theta=5/9$ for $k=3$, and $\theta =5/8$ for $k \geq 4$. Then
\begin{equation}\label{dk-discor}
\sup_{g \in \Poly(\Z \to G)}\left| \sum_{X < n \leq X+H} (d_k(n) - d_k^\sharp(n)) \overline{F}(g(n)\Gamma) \right|^* \ll_{\eps,d,D} \delta^{-O_{d,D}(1)} H X^{-c_{k,d,D} \eps}
\end{equation}
for some constant $c_{k,d,D}>0$ depending only on $k,d,D$.
\item[(iv)] If $\theta = 3/5$, then
\begin{equation}\label{mobius-discor-alt}
\sup_{g \in \Poly(\Z \to G)} \left| \sum_{X < n \leq X+H} \mu(n) \overline{F}(g(n)\Gamma)  \right|^* \ll_{\eps,d,D} \delta^{-O_{d,D}(1)} H \log^{-1/4} X.
\end{equation}
\item[(v)] Let $k \geq 4$. If $\theta = 3/5$, then
\begin{equation}\label{dk-discor-alt}
\sup_{g \in \Poly(\Z \to G)}\left| \sum_{X < n \leq X+H} (d_k(n) - d_k^\sharp(n)) \overline{F}(g(n)\Gamma) \right|^* \ll_{\eps,d,D} \delta^{-O_{d,D}(1)} H \log^{\frac{3}{4}k-1} X.
\end{equation}
\end{itemize}
\end{theorem}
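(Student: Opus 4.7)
The plan is to unify all five parts by reducing, via a combinatorial identity, to bilinear (Type~II) short nilsequence sums and then appealing to quantitative equidistribution on nilmanifolds. Specifically, for $\mu$ and $\Lambda$ I would apply the Heath--Brown identity to expand the function as a bounded sum of Dirichlet convolutions $a_1 \ast \cdots \ast a_j$ with each $a_i$ supported on some dyadic range $[N_i, 2N_i]$, while for $d_k$ I would iterate the Dirichlet hyperbola splitting to obtain a similar family of convolutions. Substituting into the short-interval sum against $F(g(n)\Gamma)$ yields sums of the form
\[
\sum_{X < n_1 \cdots n_j \leq X+H} a_1(n_1) \cdots a_j(n_j) \, \overline{F}(g(n_1 \cdots n_j) \Gamma),
\]
which are classified by the scales $(N_1, \dots, N_j)$ as Type~I (one factor has a long smooth range of length $\gg X^{1/2+\eps}$, say), Type~II (two factors at comparable intermediate scales), and for $d_2$ the special Type~$I_2$ configuration where both factors are of size roughly $X^{1/2}$.

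The Type~I contributions should be cancelled by the approximants. By construction, $\Lambda^\sharp$ in \eqref{lambdar-def} matches the contribution of convolutions $\mu_{\leq R} \ast \log$, and $d_k^\sharp$ in \eqref{dks-def}--\eqref{eq:Pm(t)-def} captures the Dirichlet polynomial of degree $k-1$ that Type~I $d_k$-convolutions produce; the point is that Type~I sums are essentially sums over arithmetic progressions and can be evaluated asymptotically. For $\mu$ the identity produces no residual Type~I term, which is why $\mu^\sharp=0$ suffices. The choice $R = \exp((\log X)^{1/10})$ is what converts the eventual power saving into arbitrary log saving in parts (i)--(iii); the weaker parts (iv)--(v) come from settling for the cruder fixed-log savings obtained without this refinement.

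The main obstacle, and the heart of the proof, is the Type~II analysis. For a generic Type~II piece $\sum_{X < mn \leq X+H} \alpha_m \beta_n \overline{F}(g(mn)\Gamma)$ with $m \sim M$, $n \sim N$ and $MN \asymp X$, I would apply Cauchy--Schwarz in the longer variable, open the square, and reduce to estimating
\[
\sum_{n \sim N} \sum_{m, m'} \alpha_m \overline{\alpha_{m'}} \overline{F}(g(mn)\Gamma) F(g(m'n)\Gamma) \, 1_{X < mn \leq X+H} \, 1_{X < m'n \leq X+H}.
\]
The inner sum is a multi-parameter short nilsequence sum in $(m, m', n)$, driven by the polynomial sequence $(m,m',n) \mapsto (g(mn), g(m'n))$ on the product nilmanifold $(G/\Gamma)^2$. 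I would apply a multi-parameter quantitative equidistribution theorem of Green--Tao type (the announced innovation): either the orbit equidistributes, in which case direct cancellation gives the required bound, or it ``factorises through'' a sub-nilmanifold / exhibits major-arc structure, which by a suitable splitting argument lets one iterate the analysis on shorter windows and arithmetic progressions (here the maximal-sum formulation $|\cdot|^*$ is essential, since each factorisation step may restrict to progressions). The threshold $\theta = 5/8$ in \eqref{mobius-discor}--\eqref{dk-discor} is the break-even point between the dyadic ranges $(M,N)$ produced by Heath--Brown and the $N$-length required by this multi-parameter equidistribution step; the thresholds $5/9$ and $1/3$ for small $k$ come from the fact that fewer Heath--Brown pieces are needed, and $3/5$ from the cruder variant.

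Finally, for $d_2$ the hyperbola piece with $n_1, n_2 \sim X^{1/2}$ is not a Type~II sum in the above sense and must be treated separately. The plan is to invoke the elementary decomposition of the neighborhood $\{(n_1, n_2) : X < n_1 n_2 \leq X+H\}$ into arithmetic progressions of $n_2$ (for each fixed $n_1$), turning the sum into a short nilsequence sum along each progression. Provided $H \geq X^{1/3+\eps}$, each progression is long enough for the nilsequence equidistribution theorem to give cancellation, yielding the $\theta=1/3$ threshold in \eqref{dk-discor} for $k=2$. The main technical difficulty throughout remains the Type~II step, where the short length of the $n$-window, the polynomial shifts $g(mn)$ versus $g(m'n)$, and the requirement of polynomial-in-$\delta$ dependence together force one to use the sharpest known multi-parameter equidistribution results.
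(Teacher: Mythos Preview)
Your high-level framework (Heath--Brown identity, Type~I/II/$I_2$ classification) matches the paper, but several load-bearing steps are missing or incorrect.

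\textbf{The approximants do not cancel the Type~I pieces.} You write that $\Lambda^\sharp$ and $d_k^\sharp$ ``match the contribution'' of the Type~I convolutions. That is not how the paper proceeds: the approximants are themselves treated as additional Type~I sums (see Lemma~\ref{comb-lambda} and Lemma~\ref{comb-divisor}), and every Type~I piece --- including those coming from $\Lambda$, $\mu$, $d_k$ --- is shown to exhibit cancellation against a \emph{minor-arc} nilsequence directly via the quantitative Leibman theorem (Theorem~\ref{inverse}(i)). The approximant plays no role there. Where the approximant matters is in the separate \emph{major-arc} case $F \equiv \mathrm{const}$, handled in Section~\ref{major-arc-sec} by Perron's formula and mean-value theorems, and reached via an induction on $\dim G$ together with a central Fourier decomposition (Proposition~\ref{central}). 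Your outline has no such major/minor dichotomy or dimension induction.

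\textbf{The Type~II step as you describe it will not close.} After Cauchy--Schwarz you propose to feed the resulting two-parameter orbit $(g(mn),g(m'n))$ into a multi-parameter equidistribution theorem and win either way. But in the abelian case $G = \R$, $F(g(n)\Gamma) = e(P(n))$, there is no cancellation to be had when $P$ is ``major-arc'': the sequence $(P(mn),P(m'n))$ can be perfectly equidistributed on $(\R/\Z)^2$ while $\sum \alpha_m \beta_n e(P(mn))$ is large (take $\beta_n = e(-P(m_0 n))$). The paper's Type~II inverse theorem (Theorem~\ref{inverse}(ii),(iii)) instead splits into a \emph{non-abelian} case, where a nilsequence large sieve (Proposition~\ref{large-sieve}) forces many relations $g(a'\cdot) = \eps_{aa'} g(a\cdot) \gamma_{aa'}$ and a Furstenberg--Weiss commutator argument on the four-parameter orbit $(g(ab),g(ab'),g(a'b),g(a'b'))$ exploits $[G,G]\neq 1$; and an \emph{abelian} case, where one shows $e(P(n)) \approx n^{iT}$ and then must control $\sum f(n) n^{iT}$ on $(X,X+H]$. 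This last step --- the twisted major-arc Type~II sum --- is handled not by equidistribution at all but by Dirichlet polynomial mean-value estimates of Baker--Harman--Pintz type (Lemma~\ref{le:BHP}), and it is \emph{this} step, not the equidistribution step, that produces the $5/8$ barrier (see the $(1/4,1/4,1/4,1/4)$ obstruction in Remark~\ref{rem:obstructionscomb} and the discussion after Lemma~\ref{combinatorial}). Your proposal has no mechanism for these $n^{iT}$-twisted sums.

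\textbf{Two smaller points.} The $\theta=3/5$ results in (iv)--(v) are not ``the cruder variant'' of (i)--(ii); they require a genuinely different combinatorial decomposition (Lemma~\ref{comb-mu}) that uses Ramar\'e's identity to insert an extra prime variable, buying flexibility at the cost of a weaker error term. And the Type~$I_2$ hyperbola decomposition is not ``arithmetic progressions of $n_2$ for each fixed $n_1$'': it is a partition of the two-dimensional region $\{X < n_1 n_2 \leq X+H\}$ into arithmetic progressions in $\Z^2$ with spacing $(q,-a)$ coming from Farey fractions (Theorem~\ref{decomp}), which is what makes $\theta=1/3$ attainable.
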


The dependency of the implied constants on $A$ in~\eqref{mobius-discor} and~\eqref{mangoldt-discor} is ineffective due to the possible existence of Siegel zeros. All the other implied constants are effective.

\begin{remark}\label{rem:1-eps} One could extend the theorem to cover the range $X^{1-\varepsilon}\leq H\leq X$ without difficulty; however, this is not the most interesting regime and there are some places in the proof where the restriction to $H\leq X^{1-\varepsilon}$ is convenient. In the cases of~\eqref{mobius-discor},~\eqref{mobius-discor-alt}, the result for $X^{\theta+\varepsilon}\leq H\leq X^{1-\varepsilon}$ directly implies the result for $X^{1-\varepsilon}\leq H\leq X$ by splitting  long sums into shorter ones. In the cases of~\eqref{mangoldt-discor},~\eqref{dk-discor},~\eqref{dk-discor-alt}, it turns out that there is some flexibility in the choice of the approximant (one can certainly vary $R$ in~\eqref{lambdar-def} or $R_k$ in~\eqref{dks-def} by a multiplicative factor $\asymp 1$), and then one can make a similar splitting argument. We leave the details to the interested reader.
\end{remark}

In applications $d,D,\delta$ will often be fixed; however, the fact that the constants here depend in a polynomial fashion on $\delta$ will be useful for induction purposes.

Note that polynomial phases $F(g(n)\Gamma) = e(P(n))$, with $P \colon \Z \to \R$ a polynomial of degree $d$, are a special case of nilsequences --- in this case the filtered nilmanifold is the unit circle $\R/\Z$ (with $\R = (\R,+)$ being the filtered nilpotent group with $\R_i = \R$ for $i \leq d$ and $\R_i = \{0\}$ for $i>d$) and $F(\alpha) = e(\alpha)$ for all $\alpha \in \R/\Z$. In particular the results of Theorem~\ref{discorrelation-thm} hold for polynomial phases, that is, with $G/\Gamma=\mathbb{R}/\mathbb{Z}$, $D=1$, and with $\overline{F}(g(n)\Gamma)$ replaced with $e(P(n))$. Before moving on, let us for the convenience of the reader state the following corollary of our theorem in the polynomial phase case.

\begin{corollary}[Discorrelation of $\mu$ and $\Lambda$ with polynomial phases in short intervals]\label{cor:polynomial}
Let $X \geq 3$ and let $X^{\theta+\varepsilon} \leq H \leq X^{1-\varepsilon}$ for some $0 < \theta < 1$ and $\eps > 0$. Let $d\geq 1$ and let $P:\mathbb{Z}\to \mathbb{R}$ be any polynomial of degree $d$.
\begin{itemize}
\item[(i)]
If $\theta = 5/8$, then, for all $A > 0$,
\begin{align*}
\left|\sum_{X < n\leq X+H}\mu(n)e(P(n))\right|\ll_{d,A,\varepsilon} \frac{H}{\log^A X}
\end{align*}
\item[(ii)]
If $\theta = 5/8$ and $A > 0$, we have
\begin{align*}
\left|\sum_{X < n\leq X+H}\Lambda(n)e(P(n))\right|\leq \frac{H}{\log^A X},
\end{align*}
unless there exists $1\leq q\leq (\log X)^{O_{d,A,\varepsilon}(1)}$ such that one has the ``major arc'' property
\begin{align}\label{erg7}
\max_{1\leq j\leq d} H^j \|q\alpha_j\|_{\R/\Z}\leq (\log X)^{O_{d,A,\varepsilon}(1)},
\end{align}
where $\alpha_j$ is the degree $j$ coefficient of the polynomial $n\mapsto P(n+X)$ and $\|y\|_{\R/\Z}$ denotes the distance from $y$ to the nearest integer(s).
\item[(iii)]
If $\theta = 3/5$, then
\begin{align*}
\left|\sum_{X < n\leq X+H}\mu(n)e(P(n))\right|\ll_{d,\varepsilon} \frac{H}{\log^{1/10} X}.
\end{align*}
\end{itemize}
\end{corollary}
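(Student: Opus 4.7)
The plan is to derive Corollary~\ref{cor:polynomial} from Theorem~\ref{discorrelation-thm} by specializing the nilmanifold to $\mathbb{R}/\mathbb{Z}$ and, for part (ii), supplementing with a Weyl-type analysis of the approximant $\Lambda^\sharp$ to identify the major arc obstruction.

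For parts (i) and (iii), I take $G/\Gamma = \mathbb{R}/\mathbb{Z}$ equipped with the filtration $\mathbb{R}_i = \mathbb{R}$ for $0 \le i \le d$ and $\mathbb{R}_i = \{0\}$ for $i > d$, set $F(\alpha) = e(\alpha)$, and let $g(n) = P(n) \in \Poly(\mathbb{Z} \to \mathbb{R})$, so that $e(P(n)) = F(g(n)\Gamma)$. Here $G/\Gamma$ has dimension $D=1$ and bounded complexity, and $F$ has bounded Lipschitz norm, so the complexity parameter $\delta$ may be fixed as an absolute constant. The ordinary sum is dominated by the maximal sum $|\cdot|^*$ defined in \eqref{maximal-sum} (by taking the trivial arithmetic progression $(X, X+H] \cap \mathbb{Z}$), so parts (i) and (iv) of Theorem~\ref{discorrelation-thm} directly yield parts (i) and (iii) of the corollary.

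For part (ii), the same specialization with Theorem~\ref{discorrelation-thm}(ii) applied with $A$ replaced by $A+1$ gives
\[
\sum_{X < n \le X+H} \Lambda(n) e(P(n)) = \sum_{X < n \le X+H} \Lambda^\sharp(n) e(P(n)) + O_{A,d,\varepsilon}(H \log^{-A-1} X).
\]
It therefore suffices to show that if the remaining $\Lambda^\sharp$-sum has absolute value at least $\tfrac{1}{2} H \log^{-A} X$, then the major arc condition \eqref{erg7} holds for some $q \le (\log X)^{O_{d,A,\varepsilon}(1)}$. Writing $W \coloneqq \prod_{p<R} p$ and applying Möbius inversion to the indicator $1_{(n,W)=1}$ converts the $\Lambda^\sharp$-sum into
\[
\frac{W}{\varphi(W)} \sum_{q \mid W} \mu(q) \sum_{X/q < m \le (X+H)/q} e(P(qm)).
\]
The contribution from divisors $q > Q \coloneqq (\log X)^{C}$, for a sufficiently large $C = C(A,d,\varepsilon)$, is controlled by a fundamental-lemma style sieve bound. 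For each remaining $q \le Q$, the inner sum has length $\asymp H/q$, and Weyl's inequality forces it to be $\le (H/q)\log^{-A-1}X$ unless, simultaneously for every $j \in \{1,\dots,d\}$, the degree-$j$ coefficient of the shifted polynomial $m \mapsto P(qm+X)$ (which is a combinatorial combination of the $\alpha_i q^i$ for $i \ge j$) lies within $(\log X)^{O(1)} (H/q)^{-j}$ of a rational with denominator $\le (\log X)^{O(1)}$. Clearing the factor $q^j$ through the fractional-part estimates and taking a common denominator $q' \le (\log X)^{O_{d,A,\varepsilon}(1)}$ then produces exactly the simultaneous bound \eqref{erg7}.

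The main technical obstacle is this final step: carefully converting Dirichlet approximations for the coefficients of $m \mapsto P(qm+X)$ into simultaneous approximations for the coefficients $\alpha_j$ of $n \mapsto P(n+X)$ themselves, with a single common denominator polylogarithmic in $X$. This is standard in the Vinogradov–Vaughan theory of exponential sums over almost-primes, but requires bookkeeping across all degrees $1 \le j \le d$ and induction from the top-degree coefficient downward to strip off already-approximated contributions.
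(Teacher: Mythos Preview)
Your treatment of parts (i) and (iii) is correct and identical to the paper's. For (ii), the reduction to showing that $\bigl|\sum_{X<n\le X+H}\Lambda^\sharp(n)e(P(n))\bigr|\gg H\log^{-A}X$ implies \eqref{erg7} also matches the paper.

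The gap is in your truncation step. After M\"obius inversion you discard the divisors $q>Q=(\log X)^C$, appealing to a ``fundamental-lemma style sieve bound''. But the sifting range here is $[2,R)$ with $R=\exp((\log X)^{1/10})$, and the fundamental lemma requires the level $D$ to satisfy $\log D/\log R\to\infty$; at $D=(\log X)^C$ one has $\log D/\log R\to 0$, so it gives nothing. Concretely, already the prime divisors $p\in(Q,R)$ alone contribute, under the trivial bound on the inner exponential sum,
\[
\frac{W}{\varphi(W)}\sum_{Q<p<R}\frac{H}{p}\asymp (\log X)^{1/10}\cdot H\cdot\log\log X,
\]
so you cannot truncate to polylogarithmic $q$ this way.

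The paper instead truncates at a power of $X$: it replaces $\Lambda^\sharp$ by the type $I$ sum $\Lambda^\sharp_I$ of \eqref{lambdasharp-i-def}, which is legitimate by \eqref{eq:lambdasharp2} (an application of Lemma~\ref{le:FLS}), and then invokes the type $I$ polynomial-phase inverse estimate \cite[Proposition~2.1]{matomaki-shao}, which directly outputs \eqref{erg7} with polylogarithmic denominator from largeness of a type $I$ sum against $e(P(n))$. Your Weyl-plus-downward-induction endgame is morally what that proposition encapsulates, but it must be run on the full type $I$ convolution with $q$ ranging up to a power of $X$ (followed by an averaging/pigeonhole over $q$), not term-by-term for $q\le(\log X)^C$.
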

The claims (i) and (iii) are immediate from Theorem~\ref{discorrelation-thm}, but (ii) requires a short argument, provided in Section~\ref{sec:apps}. One could state an analogous result in the case of $d_k$ (with the same exponents as in Theorem~\ref{discorrelation-thm}).

Let us now discuss the literature on the topic, starting with results concerning the M\"obius function. A discorrelation estimate such as Theorem~\ref{discorrelation-thm}(i) with arbitrary $F(g(n) \Gamma)$ was previously only known in case of long intervals due to the work of Green and the third author~\cite[Theorem 1.1]{gt-mobius}. Namely, they have shown that
\begin{equation}\label{green-tao}
 \sup_{g \in \Poly(\Z \to G)} \left| \sum_{n \leq X} \mu(n) \overline{F}(g(n)\Gamma) \right| \ll_{A,G/\Gamma,F} X \log^{-A} X
\end{equation}
for any $X \geq 2$, $A>0$, filtered nilmanifold $G/\Gamma$, and Lipschitz function $F \colon G/\Gamma \to \C$. This result of Green and the third author is a vast generalization of a classical result of Davenport~\cite{davenport}, which states that
\begin{equation}\label{davenport}
\sup_{\alpha \in \R} \left| \sum_{n \leq X} \mu(n) e(-\alpha n)\right| \ll_A X \log^{-A} X,
\end{equation}
and of the Siegel--Walfisz theorem (see e.g.~\cite[Corollary 5.29]{ik}), which states that
\begin{equation}
\label{eq:S-W}
\max_{a, q \in \mathbb{N}} \Bigl|\sum_{\substack{n \leq X\\ n = a\ (q)}} \mu(n) \Bigr| \ll_A X \log^{-A} X.
\end{equation}
As is well known, the bounds of $O_A(X \log^{-A} X)$ here cannot be improved unconditionally with current technology, due to the possible existence of Siegel zeroes (unless one subtracts a correction term to account for the contribution of such zero; see~\cite[Theorem 2.7]{tt-quant}).

On the other hand, for short intervals there has been a lot of activity in the special case of polynomial phase twists.

Theorem~\ref{discorrelation-thm}(i) was previously only known in the linear phase case when $F(g(n)\Gamma) = e(\alpha n)$ for any $\alpha \in \mathbb{R}$ by work of Zhan~\cite{zhan}. More precisely Zhan~\cite[Theorem 5]{zhan} established that
\begin{equation}
\label{eq:Zhan-mu}
\sup_{\alpha \in \R} \left|\sum_{X < n \leq X+H} \mu(n) e(-\alpha n)\right| \ll_{A,\varepsilon} H \log^{-A} X
\end{equation}
whenever $X^{5/8+\varepsilon} \leq H \leq X$ and $A \geq 1$. Hence Theorem~\ref{discorrelation-thm}(i) can be seen as a vast extension of Zhan's work.

Concerning higher degree polynomials, the most recent result is due to the first two authors~\cite[Theorem 1.4]{matomaki-shao} giving, for any polynomial $P(n)$ of degree $\leq d$,
\begin{equation}\label{mu-poly}
\sum_{X < n \leq X+H} \mu(n) e(-P(n))  \ll_{A,d,\eps} H \log^{-A} X
\end{equation}
for all $A > 0$ and $X^{2/3+\varepsilon} \leq H \leq X$. In particular a special case of Theorem~\ref{discorrelation-thm}(i) (recorded here as Corollary~\ref{cor:polynomial}(i)) supersedes this result by showing it with the exponent $2/3$ lowered to $5/8$.

All the previous results mentioned so far for the M\"obius function exist also for the von Mangoldt function as long as $F(g(n) \Gamma)$ or $e(-P(n))$ is ``minor arc'' in certain sense (for results corresponding to~\eqref{green-tao},~\eqref{davenport},~\eqref{eq:S-W},~\eqref{eq:Zhan-mu} and~\eqref{mu-poly} see respectively~\cite[Section 7]{gt-mobius},~\cite[Theorem 13.6]{ik},~\cite[Corollary 5.29]{ik},~\cite[Theorems 2--3]{zhan}, and~\cite[Theorem 1.1]{matomaki-shao}). It is very likely that with our choice of approximant these arguments also extend to cover major arc cases and maximal correlations, although we will not detail this here as such claims follow in any case from Theorem~\ref{discorrelation-thm}.

Theorem~\ref{discorrelation-thm}(iv) generalizes (albeit with a slightly weaker logarithmic saving) a result of the first and fourth authors~\cite[Theorem 1.5]{MatoTera} that gave, for $0 < A < 1/3$,
\begin{equation}\label{eq:MatoTera}
 \sup_{\alpha \in \R} \left|\sum_{X < n \leq X+H} \mu(n) e(-\alpha n)\right| \ll_{A,\eps} H \log^{-A} X
\end{equation}
in the regime $X \geq H \geq X^{3/5+\varepsilon}$ (actually~\cite[Remark 5.2]{MatoTera} allows one to enlarge the range of $A$ to $0 < A < 1$).

The literature on correlations between $d_k$ and Fourier or higher order phases is sparse. A variant of the long interval case~\eqref{green-tao} (with a weaker error term) follows from work of Matthiesen~\cite[Theorem 6.1]{MatthiesenGenFour}.

Furthermore, it should be possible to adapt the existing results on polynomial correlations of $\Lambda(n)$ also to the case of $d_k(n)$, but with power savings. More precisely, one should be able to follow the approach of Zhan~\cite{zhan} to obtain discorrelation with linear phases $e(\alpha n)$ for $X \geq H \geq X^{5/8+\varepsilon}$ (for $k=2$ one can replace $5/8$ by $1/2$ and for $k=3$ one can replace $5/8$ by $3/5$) and the work of the first two authors~\cite{matomaki-shao} to obtain discorrelation with polynomial phases for $X \geq H \geq X^{2/3+\varepsilon}$ (for $k=2$ one can replace $2/3$ by $1/2$). We omit the details of these extensions of~\cite{zhan, matomaki-shao} as they follow from our Theorem~\ref{discorrelation-thm}.

We note that in the case $k=2$ the exponent $1/3$ in Theorem~\ref{discorrelation-thm}(iii) matches the classical Voronoi exponent for the error term in long sums of the divisor function without any twist, and the result seems to be new even in the case of linear phases.

In the most major arc case $F(g(n) \Gamma) = 1$, shorter intervals can be reached than in Theorem~\ref{discorrelation-thm}; see Theorem~\ref{thm:major-arc} below. Furthermore if one only wants discorrelation in almost all intervals, for instance by seeking to bound
\[
\int_X^{2X} \sup_{g \in \Poly(\Z \to G)} \left| \sum_{x < n \leq x+H} (f(n)-f^\sharp(n)) \overline{F}(g(n)\Gamma) \right|^* dx,
\]
much shorter intervals can be reached with aid of additional ideas. We will return to this question and its applications in a follow-up paper~\cite{MRSTT-almost}.

\begin{remark}\label{variants-rem}	 It should be clear to experts from an inspection of our arguments that the methods used in this paper could also treat other arithmetic functions with similar structure to $\mu$, $\Lambda$, or $d_k$.  For instance, all of the results for the M\"obius function $\mu$ here have counterparts for the Liouville function $\lambda$; the results for the von Mangoldt function $\Lambda$ have counterparts (with somewhat different normalizations) for the indicator function $1_{\mathbb{P}}$ of the primes ${\mathbb{P}}$, and the results for $d_2$ have counterparts for the function $r_2(n) \coloneqq \sum_{a,b \in \Z: a^2+b^2=n} 1$ counting the number of representations of $n$ as the sum of two squares.  We sketch the modifications needed to establish these variants in Appendix~\ref{variants-app}.  We also conjecture that the methods can be extended to treat the indicator function $1_S$ of the set $S \coloneqq \{ a^2+b^2: a,b \in \Z \}$ of sums of two squares, or the indicator $1_{S_\eta}$ of $X^\eta$-smooth numbers, although in those two cases a technical difficulty arises that the construction of a sufficiently accurate approximant to these indicator functions is non-trivial.  Again, see Appendix~\ref{variants-app} for further discussion.

On the other hand, our arguments do not seem to easily extend to the Fourier coefficients $\lambda_f(n)$ of holomorphic cusp forms. The coefficients $\lambda_f(n)$ are analogous to $d_2(n)$ in many ways (though with vanishing approximant $\lambda^\sharp_f = 0$), and it is reasonable to conjecture parallel results for these two functions.  For instance, in~\cite{EHK} it was established that
\[
\sup_\alpha \left| \sum_{X < n \leq X+H} \lambda_f(n) e(\alpha n) \right| \ll HX^{-c_\eps}
\]
for $X^{2/5+\eps} \leq H \leq X$. See also~\cite{he-wang} for a result with general nilsequences but long intervals. Unfortunately, the methods we use in this paper rely heavily on the convolution structure of the functions involved and do not obviously extend to give results for $\lambda_f$.
\end{remark}

\subsection{Gowers uniformity in short intervals}\label{subsec:gowers}

Just as discorrelation estimates with polynomial phases are important for applications of the circle method, discorrelation estimates with nilsequences are important in higher order Fourier analysis due to the connection with the Gowers uniformity norms that we next discuss.

For any non-negative integer $s \geq 1$, and any function $f \colon \Z \to \C$ with finite support, define the (unnormalized) Gowers uniformity norm
$$ \| f \|_{U^{s}(\Z)} \coloneqq \left( \sum_{x,h_1,\dots,h_{s} \in \Z} \prod_{\omega \in \{0,1\}^{s}} \mathcal{C}^{|\omega|} f(x+\omega_1 h_1+\dots+\omega_{s} h_{s}) \right)^{1/2^{s}}$$
where $\omega = (\omega_1,\dots,\omega_{s})$, $|\omega| \coloneqq \omega_1+\dots+\omega_{s}$, and $\mathcal{C} \colon z \mapsto \overline{z}$ is the complex conjugation map. Then for any interval $(X,X+H]$ with $H \geq 1$ and any $f \colon \Z \to \C$ (not necessarily of finite support), define the \emph{Gowers uniformity norm over} $(X,X+H]$ by
\begin{equation}\label{gow}
 \| f \|_{U^{s}(X,X+H]} \coloneqq \| f 1_{(X,X+H]} \|_{U^{s}(\Z)} /  \| 1_{(X,X+H]} \|_{U^{s}(\Z)}
\end{equation}
where $1_{(X,X+H]} \colon \Z \to \C$ is the indicator function of $(X,X+H]$.

Using the inverse theorem for Gowers norms (see Proposition~\ref{prop_inverse}) we can deduce the following theorem from Theorem~\ref{discorrelation-thm} and a construction of pseudorandom majorants in Section~\ref{gowers-sec}.

\begin{theorem}[Gowers uniformity estimate]\label{thm_gowers}
Let $X^{\theta+\varepsilon}\leq H\leq X^{1-\varepsilon}$ for some fixed $0 < \theta < 1$ and $\eps > 0$. Let $s\geq 1$ be a fixed integer. Also denote $\Lambda_{w}(n):=\frac{W}{\varphi(W)}1_{(n,W)=1}$, where $W:=\prod_{p\leq w}p$ and $X$ is large enough in terms of $w$.
\begin{itemize}
\item[(i)]   If $\theta = 5/8$, then
\begin{align}\label{erg14}
&\|\Lambda-\Lambda_w\|_{U^s(X,X+H]}=o_{w\to \infty}(1),
\end{align}
and for any $1\leq a\leq W$ with $(a,W)=1$ we have
\begin{align}\label{erg14b}
&\left\|\frac{\varphi(W)}{W}\Lambda(W\cdot+a)-1\right\|_{U^s(X,X+H]}=o_{w\to \infty}(1).
\end{align}
\item[(ii)] Let $k \geq 2$.  Set $\theta = 1/3$ for $k=2$, $\theta=5/9$ for $k=3$, and $\theta =3/5$ for $k \geq 4$. Then
\begin{equation}\label{dk-unif}
\|d_k-d_k^{\sharp}\|_{U^s(X, X+H]}=o(\log^{k-1} X),
\end{equation}
and for any $W'$ satisfying $W\mid W'\mid W^{\lfloor w\rfloor}$ and for any $1\leq a\leq W'$ with $(a,W')=1$ we have
\begin{align}\label{dk-unifb}
\|d_k(W'\cdot+a)-d_k^{\sharp}(W'\cdot+a)\|_{U^s(X, X+H]}=o_{w\to \infty}\left(\left(\frac{\varphi(W')}{W'}\right)^{k-1}\log^{k-1} X\right).
\end{align}
\item[(iii)]  If $\theta = 3/5$, then
\begin{equation}\label{mobius-unif-alt}
\|\mu\|_{U^s(X, X+H]}=o(1).
\end{equation}
\end{itemize}
In all these estimates the $o(1)$ notation is with respect to the limit $X \to \infty$ (holding $s,\eps,k$ fixed).
\end{theorem}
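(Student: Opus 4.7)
The plan is to deduce the Gowers uniformity bounds from the discorrelation estimates of Theorem~\ref{discorrelation-thm} by means of an inverse theorem for the Gowers norms (Proposition~\ref{prop_inverse}): if a function has $U^s$ norm nontrivially large on $(X,X+H]$, then it must correlate on that interval with a Lipschitz nilsequence $F(g(n)\Gamma)$ of bounded complexity. Contraposing this against Theorem~\ref{discorrelation-thm} at the appropriate value of $\theta$ produces the stated $o(1)$ (respectively $o(\log^{k-1}X)$) decay.

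The main technical obstacle is that the inverse theorem as usually stated requires an $L^\infty$-bounded input, whereas $\Lambda-\Lambda^\sharp$ and $d_k-d_k^\sharp$ are unbounded. I would resolve this using the pseudorandom majorants constructed in Section~\ref{gowers-sec}: for $\Lambda$, a Selberg- or GPY-style divisor-sum majorant $\nu_\Lambda$, and for $d_k$, a divisor-sum majorant $\nu_{d_k}$ of pointwise size $O(\log^{k-1}X)$. Each majorant must be shown to satisfy a linear-forms / polynomial correlation condition of sufficient quality on the short interval $(X,X+H]$, and its own $U^s$ norm must be verified to be close to the expected density. With such $\nu$ in hand, a transference / relative version of the inverse theorem reduces control of $\|f-f^\sharp\|_{U^s(X,X+H]}$ to the bounded-nilsequence correlation statement supplied by Theorem~\ref{discorrelation-thm}.

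For the $W$-tricked variants~\eqref{erg14b} and~\eqref{dk-unifb}, I would first perform the change of variables $n\mapsto W'n+a$. This transforms a polynomial sequence $g\in\Poly(\Z\to G)$ into $g'(n)\coloneqq g(W'n+a)$, which remains a polynomial sequence of the same degree on the same filtered group, with a complexity increase bounded in terms of $w$ alone. The resulting correlation against $F(g'(n)\Gamma)$ is then a sum of $f$ over an arithmetic progression inside a short interval, which is exactly what the maximal norm $|\cdot|^*$ in Theorem~\ref{discorrelation-thm} is designed to control. For~\eqref{erg14} one additionally checks, via a direct sieve calculation, that $\|\Lambda^\sharp-\Lambda_w\|_{U^s(X,X+H]}=o_{w\to\infty}(1)$, which converts the bound on $\Lambda-\Lambda^\sharp$ into the stated bound on $\Lambda-\Lambda_w$.

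Parts (ii) for $k\geq 4$ and (iii) fall outside the range of the strongest discorrelation estimates of Theorem~\ref{discorrelation-thm} but are covered by the weaker versions~\eqref{dk-discor-alt} and~\eqref{mobius-discor-alt} at $\theta=3/5$; the corresponding logarithmic losses $\log^{3k/4-1}X$ and $\log^{-1/4}X$ comfortably fit inside the targets $o(\log^{k-1}X)$ and $o(1)$ respectively. The only genuinely nonmechanical step is therefore the construction of the short-interval pseudorandom majorants and the verification of their pseudorandomness conditions in Section~\ref{gowers-sec}; once these are in place the theorem follows by plugging Theorem~\ref{discorrelation-thm} into the relative inverse theorem.
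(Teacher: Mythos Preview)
Your proposal is correct and follows essentially the same route as the paper: the short-interval pseudorandom majorants (Lemmas~\ref{le_pseudo} and~\ref{le_pseudoinshort}) feed into the relative inverse theorem (Proposition~\ref{prop_inverse}), with the nilsequence correlation input supplied by Theorem~\ref{discorrelation-thm} via the maximal sum $|\cdot|^*$ absorbing the $W$-trick change of variables exactly as you describe.

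One step you understate as ``mechanical'' is the passage from the $W$-tricked bound~\eqref{dk-unifb} to the untricked~\eqref{dk-unif}. The pseudorandom majorant for $d_k$ in Lemma~\ref{le_pseudoinshort} only exists for $d_k(W'n+b)$ with $(b,W')=1$; there is no usable majorant for $d_k$ itself on $(X,X+H]$, so one cannot apply the inverse theorem directly to $d_k-d_k^\sharp$. The paper instead splits $(X,X+H]$ into residues modulo $\widetilde W=W^w$ via Lemma~\ref{le_wtrick}, factors out $d_k((a,\widetilde W))$ on the residues with $(a,\widetilde W)\mid W^{w-1}$ to reduce to~\eqref{dk-unifb}, and disposes of the remaining exceptional residues by a direct Gowers-norm upper bound using Henriot's correlation estimate (Lemma~\ref{le_upperbound}). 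The choice of modulus $W^w$ rather than $W$ is what makes the ``good'' residues have density $1-o(1)$; this is a genuine extra ingredient beyond the majorant construction.
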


\textbf{Remarks.}

\begin{itemize}
    \item

    The model $\Lambda_w$ with $w$ fixed is simple to work with and arises in various applications of Gowers uniformity (e.g. to ergodic theory). This also motivates our choice of the $\Lambda^{\sharp}$ model in~\eqref{lambdar-def} (although that is defined with a larger value of $w$ to produce better error terms).

    \item Since the bounds in this theorem (unlike in Theorem~\ref{discorrelation-thm}) are qualitative in nature, it should be possible to use Heath-Brown's trick from~\cite{Heath-Brown} to extend the range of $H$ from $X^{\theta+\varepsilon}\leq H\leq X^{1-\varepsilon}$ to $X^{\theta}\leq H\leq X^{1-\varepsilon}$. Also the range $X^{1-\varepsilon}\leq H\leq X$ could be covered, as in Remark~\ref{rem:1-eps}. We leave the details to the interested reader.

\item  In the case $s=2$, we obtain significantly stronger estimates thanks to the polynomial nature of the $U^2$ inverse theorem.  Specifically, when $\theta=5/8+\varepsilon$, we have
$$
\|\mu\|_{U^2(X, X+X^{\theta}]}, \|\Lambda-\Lambda^{\sharp}\|_{U^2(X, X+X^{\theta}]} \ll_{A,\eps} \log^{-A} X$$
for all $A > 0$ and
\begin{equation}\label{dk-u2}
\|d_k\|_{U^2(X, X+X^{\theta}]} \ll_\eps X^{-c_{k}\eps}
\end{equation}
for some $c_{k}>0$, with~\eqref{dk-u2} also holding when $(k,\theta) = (3,5/9), (2,1/3)$, and finally
$$
\|\mu\|_{U^2(X, X+X^{\theta}]} \ll_\eps \log^{-1/20} X$$
when $\theta = 3/5$. All of these follow directly by combining Theorem~\ref{discorrelation-thm} for $d=1$ (that is, for Fourier phases in place of nilsequences) with the polynomial form of the $U^2$ inverse theorem, which states that if $f:[N]\to \mathbb{C}$ is $1$-bounded and $\|f\|_{U^2[N]}\geq \delta$ for some $\delta>0$, then $|\sum_{n\leq N}f(n)e(\alpha n)|^{*}\gg \delta^4 N$ for some $\alpha \in \mathbb{R}$. This form of the inverse theorem follows directly from the Fourier representation of the $U^2[N]$ norm and Parseval's theorem, where the Gowers norm $U^2[N]$ is defined analogously as in~\eqref{gow}.

\end{itemize}

\subsection{Applications}\label{subsec:applications}

\subsubsection{Polynomial phases}
We already stated Corollary~\ref{cor:polynomial} concerning polynomial phases. But let us here mention that in a recent work of Kanigowski--Lema{\'n}czyk--Radziwi{\l\l}~\cite{klr} on the prime number theorem for analytic skew products, a key analytic input (\cite[Theorem 9.1]{klr}) was that Corollary~\ref{cor:polynomial}(ii) holds for $H=X^{2/3-\eta}$ (with a weaker error term of $o_{\eta\to 0}(H)$), thus going just beyond the range of validity of~\cite[Theorem 1.1]{matomaki-shao}. Corollary~\ref{cor:polynomial} allows taking $\eta<1/24$ with strongly logarithmic savings for the error terms. Similar remarks apply to the recent work of Kanigowski~\cite{kanigowski}.

\subsubsection{An application to ergodic theory}

In a seminal work, Host and Kra~\cite{host-kra} showed that, for any measure-preserving system $(X,\mathcal{X},\mu,T)$, any bounded functions $f_1,\ldots, f_k:X\to \mathbb{C}$, and any intervals $I_N$ whose lengths tend to infinity as $N\to \infty$, the multiple ergodic averages
\begin{align*}
\frac{1}{|I_N|}\sum_{n\in I_N}f_1(T^nx)\cdots f_k(T^{kn}x)
\end{align*}
converge in $L^2(\mu)$ as $N\to \infty$. Since this work, it has therefore become a natural and active question to determine for which sequences of intervals $(I_N)_N$ and weights $w:\mathbb{N}\to \mathbb{C}$ we have the $L^2$-convergence of
\begin{align*}
\frac{1}{|I_N|}\sum_{n\in I_N}w(n)f_1(T^nx)\cdots f_k(T^{kn}x)
\end{align*}
as $N\to \infty$. The case of $I_N=[1,N]$ and with the weight being the primes, that is $w(n)=1_{\mathbb{P}}(n)$, was settled in the works of Frantzikinakis--Host--Kra~\cite{fhk} and Wooley--Ziegler~\cite{wooley-ziegler} (the results of~\cite{fhk} in the cases $k\geq 4$ were originally conditional on the Gowers uniformity of the von Mangoldt function).  Analogous results also exist for weights $w$ supported on a sequence given by a Hardy field~\cite{frantzikinakis-hardy} or random sequences~\cite{flw}; see also~\cite{le} for related results concerning correlation sequences $n \mapsto \int_X f_1(T^nx)\cdots f_k(T^{kn}x)\ d\mu(x)$.  As an application of Theorem~\ref{thm_gowers}, we can extend the result on prime weights  to short collections of intervals $(I_N)_N$.

\begin{theorem}[Multiple ergodic averages over primes in short intervals] \label{thm_ergodic}
Let $k\geq 1$, $\varepsilon>0$ and $\kappa \in [5/8+\varepsilon, 1-\varepsilon]$. Let $h_1,\ldots, h_k$ be distinct positive integers. Let $(X,\mathcal{X},\mu,T)$ be a measure-preserving system. Let $f_1,\ldots, f_k:X\to \mathbb{C}$ be bounded and measurable. Then the multiple ergodic averages
\begin{align*}
\mathbb{E}_{N < p\leq N+N^{\kappa}} f_1(T^{h_1p}x)\cdots f_k(T^{h_kp}x)
\end{align*}
converge in $L^2(\mu)$.
\end{theorem}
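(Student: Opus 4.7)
The plan is to combine the short-interval Gowers uniformity of $\Lambda$ provided by Theorem~\ref{thm_gowers}(i) with the Host--Kra multiple ergodic theorem~\cite{host-kra}, following the strategy of Frantzikinakis--Host--Kra~\cite{fhk} but with the Gowers norm over $(N, N+N^\kappa]$ in place of the one over $[1, N]$. First I would reduce from a sum over primes to a sum weighted by $\Lambda$: prime powers $p^j$ with $j \geq 2$ contribute $O(N^{\kappa/2})$ to the short interval, and partial summation removes the $\log p$ factor. So it suffices to prove $L^2(\mu)$-convergence of
$$A_N(x) \coloneqq \frac{1}{N^\kappa} \sum_{N < n \leq N+N^\kappa} \Lambda(n) f_1(T^{h_1 n}x) \cdots f_k(T^{h_k n}x).$$

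Next I would apply the $W$-trick: fix a large auxiliary parameter $w$, set $W \coloneqq \prod_{p \leq w} p$, and partition according to residue classes $n \equiv a \pmod W$ with $(a, W) = 1$. Writing $n = Wm + a$, each residue piece becomes (up to a factor $\varphi(W)/W$) an average over an interval $I_N^{(a)}$ of length $\sim N^\kappa/W$ of the weighted quantity $\frac{\varphi(W)}{W}\Lambda(Wm+a) \prod_i f_i(T^{h_i a} (T^{h_i W})^m x)$. Decompose $\frac{\varphi(W)}{W} \Lambda(Wm+a) = 1 + E_w(m)$. By \eqref{erg14b} of Theorem~\ref{thm_gowers}(i), $\|E_w\|_{U^s(I_N^{(a)})} = o_{w \to \infty}(1)$ uniformly in $N$ for each fixed $s$. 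The ``$1$'' piece reduces to the unweighted multiple average $\mathbb{E}_{m \in I_N^{(a)}} \prod_i f_i(T^{h_i a}(T^{h_i W})^m x)$, which converges in $L^2(\mu)$ as $N \to \infty$ by the Host--Kra theorem~\cite{host-kra} (or Walsh's theorem) applied to the commuting powers $T^{h_i W}$, since $(I_N^{(a)})_N$ is a Følner sequence in $\mathbb{Z}$. For the $E_w$ piece I would invoke a short-interval generalised von Neumann estimate
$$\Bigl\| \frac{1}{|I|}\sum_{m \in I} E(m) \prod_{i=1}^k f_i(T^{h_i (Wm+a)}x) \Bigr\|_{L^2(\mu)} \ll_k \|E\|_{U^s(I)},$$
valid for some $s = s(k)$ and any $E$ majorised on $I$ by an appropriate pseudorandom weight; the relevant majorant for $\Lambda(W\cdot+a)$ on a short interval is precisely the one constructed in Section~\ref{gowers-sec} for the derivation of Theorem~\ref{thm_gowers}. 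Combining the two ingredients: given $\eta > 0$, first choose $w$ so the $E_w$ contribution is at most $\eta/3$ for every $N$, then invoke the Host--Kra convergence of the structured part to force $\|A_{N_1}-A_{N_2}\|_{L^2(\mu)} < \eta$ for $N_1, N_2$ large, so that $(A_N)$ is Cauchy and hence convergent in $L^2(\mu)$.

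The main obstacle I foresee is the short-interval generalised von Neumann inequality above. In the long-interval arguments of~\cite{fhk,wooley-ziegler} this step uses PET induction together with linear-forms and correlation conditions for a pseudorandom majorant on $[1, N]$, with the normalisation of the Gowers norm matching the averaging length. Here the averaging length is the short window $\sim N^\kappa$, and one must verify that the majorants constructed in Section~\ref{gowers-sec} satisfy the required linear-forms and correlation estimates on $(N, N+N^\kappa]$ with constants uniform in $N$. Once this is checked, the remainder is a routine adaptation of the Frantzikinakis--Host--Kra argument.
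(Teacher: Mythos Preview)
Your overall plan is correct and mirrors the paper's strategy: reduce from primes to $\Lambda$, split $\Lambda$ into a structured part $\Lambda_w$ and an error $\Lambda-\Lambda_w$, handle the structured part by Host--Kra type convergence along a F{\o}lner sequence of short windows, control the error part by the short-interval Gowers uniformity of Theorem~\ref{thm_gowers}(i), and assemble a Cauchy argument for the limits as $w\to\infty$.

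The one place where the paper proceeds differently---and more simply---is exactly the obstacle you flag. You propose to bound
\[
\Bigl\| \frac{1}{|I|}\sum_{m \in I} E_w(m) \prod_{i=1}^k f_i(T^{h_i (Wm+a)}x) \Bigr\|_{L^2(\mu)}
\]
by a short-interval generalised von Neumann inequality that requires a pseudorandom majorant for $E_w$ on $I$, and you worry about verifying the linear-forms and correlation conditions from Section~\ref{gowers-sec}. The paper avoids this entirely. It works with $\epsilon(n)=\Lambda(n)-\Lambda_w(n)$ \emph{before} any $W$-trick, dualises the $L^2$ norm against a bounded $f_0$, and then uses the $T$-invariance of $\mu$ to insert an additional free average over $m\le N^\kappa$ via $x=T^m y$. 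This converts the expression into a two-parameter average
\[
\int_X \mathbb{E}_{m\le N^\kappa}\,\mathbb{E}_{n'\le N^\kappa}\,\epsilon_N(n')\,f_0(T^m y)\prod_{i=1}^k f_i(T^{m+h_i n'}T^{h_i N}y)\,d\mu(y),
\]
to which the \emph{bounded} generalised von Neumann theorem \cite[Lemma~2]{fhk} applies directly, giving a bound $\ll \|\epsilon_N\|_{U^k([N^\kappa])}\,\|f_0\|_{L^2(\mu)}$. No pseudorandom majorant is needed here; the unbounded weight $\epsilon_N$ is controlled purely through its Gowers norm, which is $o_{w\to\infty}(1)$ by~\eqref{erg14}. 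Your route via majorants should also work (the ingredients are indeed in Section~\ref{gowers-sec}), but it is heavier machinery than the situation demands. A minor cosmetic point: for the structured part the paper cites Austin~\cite{austin} (convergence along arbitrary F{\o}lner sequences) rather than Host--Kra directly, but as you note the intervals are translates of a F{\o}lner sequence, so either reference suffices.
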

The results of~\cite{fhk} and~\cite{wooley-ziegler} correspond to the case $\kappa=1$ . According to the best of our knowledge, Theorem~\ref{thm_ergodic} is the first result of its kind with $\kappa<1$.

\subsubsection{Linear equations in short intervals}

The work of Green and the third author~\cite{green-tao} on linear equations in primes (together with~\cite{gt-mobius},~\cite{gtz}) provides for any finite complexity systems of linear forms $(\psi_1,\ldots, \psi_t):\mathbb{Z}^d\to \mathbb{Z}^t$ an asymptotic formula for
\begin{align}\label{erg11}
\sum_{\mathbf{n}\in K\cap \mathbb{Z}^d}\prod_{i=1}^t \Lambda(\psi_i(\mathbf{n})),
\end{align}
whenever $K\subset [-X,X]^d$ is a convex body containing a positive proportion of the whole cube $[-X,X]^d$, that is, $\textnormal{vol}(K)\gg X^d$. One may ask if one can establish similar results when $K$ is a smaller region in $[-X,X]^d$, of volume $\asymp X^{\theta d}$ with $\theta<1$. Note that for a single linear form, this boils down to asymptotics for primes in short intervals (where the exponent $\theta=7/12$ from~\cite{huxley},~\cite{Heath-Brown} is the best one known). Using Theorem~\ref{thm_gowers}, we can indeed give asymptotics for~\eqref{erg11} in small regions.

\begin{theorem}[Generalized Hardy--Littlewood conjecture in small boxes for finite complexity systems]\label{thm_lineq}
Let $X \geq 3$ and $X^{5/8+\varepsilon} \leq H \leq X^{1-\varepsilon}$ for some fixed $\eps > 0$. Let $d,t,L\geq 1$. Let $\Psi=(\psi_1,\ldots, \psi_t)$ be a system of affine-linear forms, where each $\psi_i:\mathbb{Z}^d\to \mathbb{Z}$ has the form $\psi_i(\mathbf{x})=\dot{\psi_i}\cdot \mathbf{x}+\psi_i(0)$ with $\dot{\psi_i}\in \mathbb{Z}^d$ and $\psi_i(0)\in \mathbb{Z}$ satisfying $|\dot{\psi_i}|\leq L$ and $|\psi_i(0)|\leq LX$. Suppose that $\dot{\psi_i}$ and $\dot{\psi_j}$ are linearly independent whenever $i\neq j$. Let $K\subset (X,X+H]^d$ be a convex body. Then
\begin{align}\label{erg10}
\sum_{\mathbf{n}\in K\cap \mathbb{Z}^d}\prod_{i=1}^t \Lambda(\psi_i(\mathbf{n}))=\beta_{\infty}\prod_p \beta_p+o_{t,d,L}(H^d),
\end{align}
where $\Lambda$ is extended as $0$ to the nonpositive integers and the Archimedean factor is given by
\begin{align*}
\beta_{\infty}=\textnormal{vol}(K\cap \Psi^{-1}(\mathbb{R}_{>0}^t))
\end{align*}
and the local factors are given by
\begin{align*}
\beta_p=\mathbb{E}_{\mathbf{n}\in (\mathbb{Z}/p\mathbb{Z})^d}\prod_{i=1}^t\frac{p}{p-1}1_{\psi_i(\mathbf{n})\neq 0}.
\end{align*}
\end{theorem}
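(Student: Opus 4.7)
The plan is to follow the Green--Tao--Ziegler framework~\cite{green-tao,gt-mobius,gtz} for counting linear equations in primes, adapted to the short box $(X,X+H]^d$, using Theorem~\ref{thm_gowers}(i) as the key arithmetic input. First I would perform the standard $W$-trick: choose a slowly growing $w = w(X) \to \infty$ and $W = \prod_{p \leq w} p$, then decompose each $\Lambda(\psi_i(\mathbf{n}))$ into residue classes $\psi_i(\mathbf{n}) \equiv a_i \pmod{W}$. The contribution from $a_i$ with $(a_i, W) > 1$ is negligible (supported on prime powers up to negligible size), so one is reduced to counting, for each admissible tuple $(a_1, \ldots, a_t) \in (\Z/W\Z)^t$, the sum
\[
\sum_{\mathbf{n} \in K'} \prod_{i=1}^t \frac{W}{\varphi(W)}\, \widetilde{\Lambda}_i(\psi_i(\mathbf{n})/W'_i),
\]
where $\widetilde{\Lambda}_i(m) := \frac{\varphi(W)}{W}\Lambda(Wm + a_i)$ and $K'$ is the corresponding (rescaled) short box. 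By the pigeonhole principle, only $O(W^t)$ such tuples contribute, and this weight ratio matches the local factors $\beta_p$ for $p \leq w$.

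Next I would apply the generalized von Neumann theorem on the short box: splitting $\widetilde{\Lambda}_i = 1 + (\widetilde{\Lambda}_i - 1)$, each term where at least one factor is replaced by $\widetilde{\Lambda}_i - 1$ is controlled by a suitable Gowers norm $\|\widetilde{\Lambda}_i - 1\|_{U^s(X,X+H]}$ for some $s = s(d,t) \geq 1$ dictated by the Cauchy--Schwarz complexity of the system $\Psi$. This reduction requires a pseudorandom majorant $\nu \gtrsim \widetilde{\Lambda}_i$ on $(X,X+H]$ satisfying the linear forms condition associated with $\Psi$; such a majorant is exactly the one constructed in Section~\ref{gowers-sec} for the proof of Theorem~\ref{thm_gowers}, so I would invoke it directly. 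With the majorant in hand, the short-interval generalized von Neumann theorem reduces the error to a multiple of $\max_i \|\widetilde{\Lambda}_i - 1\|_{U^s(X,X+H]}$, and by Theorem~\ref{thm_gowers}(i)\eqref{erg14b} this is $o_{w \to \infty}(1)$.

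The remaining task is to evaluate the main term, where each $\widetilde{\Lambda}_i$ is replaced by $1$. This becomes a counting problem for lattice points in the convex body $K \cap \Psi^{-1}(\R_{>0}^t)$ restricted to the residue classes $\psi_i(\mathbf{n}) \equiv a_i \pmod{W}$. Since the system is of finite complexity, the usual Lipschitz--lattice point count gives $\beta_\infty \cdot \prod_{i=1}^t\frac{W}{\varphi(W)}|\{\mathbf{n} \in (\Z/W\Z)^d : \psi_i(\mathbf{n}) \equiv a_i \pmod W\}|/W^d + o(H^d)$; summing over admissible $(a_i)$ produces $\beta_\infty \prod_{p \leq w} \beta_p$. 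Finally, letting $w \to \infty$ slowly with $X$ and using convergence of the singular series $\prod_p \beta_p$ (finite complexity guarantees $\beta_p = 1 + O(p^{-2})$ for $p > L$) closes the argument.

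The main obstacle is Step 2: carrying out the generalized von Neumann reduction \emph{on the short box} rather than on $[-X,X]^d$. This relies on having a pseudorandom majorant for $\Lambda$ on $(X,X+H]$ for which enough linear forms conditions hold over the box $K$. This construction (essentially the Goldston--Pintz--Y\i{}ld\i{}r\i{}m-type truncated divisor sum, adapted to the short interval via the choice of $R$ in~\eqref{lambdar-def}) is the technical heart; once it is in place, the combinatorial Cauchy--Schwarz argument of~\cite{green-tao} transfers mutatis mutandis. Since the paper constructs precisely such a majorant in Section~\ref{gowers-sec} to prove Theorem~\ref{thm_gowers}, no new analytic input beyond that section and Theorem~\ref{thm_gowers}(i) should be required.
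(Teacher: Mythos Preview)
Your proposal is correct and follows essentially the same route as the paper: perform the $W$-trick, telescope $\Lambda$ against the model $\Lambda_w$ (equivalently, split $\widetilde{\Lambda}_i = 1 + (\widetilde{\Lambda}_i - 1)$ after passing to residue classes), control the error terms by the generalized von Neumann theorem (Lemma~\ref{le_gvnt}) using the short-interval pseudorandom majorant of Lemma~\ref{le_pseudoinshort} together with Theorem~\ref{thm_gowers}(i), and evaluate the main term via the local factor computation of \cite[Section~5]{green-tao}. One cosmetic point: the natural parametrization is by $\mathbf{n}\pmod W$ (i.e.\ $\mathbf{b}\in[0,W)^d$), not by tuples $(a_1,\dots,a_t)$, since the $a_i$ are determined by $\mathbf{b}$; the paper writes it this way, but your version amounts to the same thing.
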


\begin{remark}
From Theorem~\ref{thm_gowers} and the proof method of Theorem~\ref{thm_lineq}, one can also deduce similar correlation results  when in~\eqref{erg10} one replaces $\Lambda$ with $\mu$ or $d_k$ (with the value of $\theta$ as in Theorem~\ref{thm_gowers}, and with no main term in the case of $\mu$, and a different local product in the case of $d_k$). More specifically, under the assumption of Theorem~\ref{thm_lineq}, we have
\begin{align}
\sum_{\mathbf{n}\in K\cap \mathbb{Z}^d}\prod_{i=1}^t \mu(\psi_i(\mathbf{n}))= o_{t,d,L}(H^d),
\end{align}
and, for a positive integer $k$,
\begin{align*}
\sum_{\mathbf{n}\in K\cap \mathbb{Z}^d}\prod_{i=1}^t d_k(\psi_i(\mathbf{n}))=\beta_{\infty}\prod_p \beta_p+o_{t,d,L}(H^d\log^{t(k-1)}X),
\end{align*}
where $d_k$ is extended as $0$ to the nonpositive integers and the Archimedean factor is given by
\begin{align*}
\beta_{\infty}=\int_K \prod_{i=1}^t \frac{\log_+^{k-1}\psi_i(\mathbf{x})}{(k-1)!} d\mathbf{x} = O_{t,d,L} (H^d \log^{t(k-1)}X),
\end{align*}
and the local factors are given by
\begin{align*}
\beta_p=\frac{\E_{\mathbf{n} \in \Z_p^d} \prod_{i=1}^t d_{k,p}(\psi_i(\mathbf{n}))}{\prod_{i=1}^t \E_{m \in \Z_p} d_{k,p}(m)} = \E_{\mathbf{n} \in \Z_p^d} \prod_{i=1}^t \Big(\frac{p-1}{p}\Big)^{k-1} d_{k,p}(\psi_i(\mathbf{n})).
\end{align*}
Here $\log_+ y := \log \max(y, 1)$, $\Z_p$ is the $p$-adics (with the usual Haar probability measure),
$$ d_{k,p}(m) = \binom{k-1+v_p(m)}{k-1}, $$
and $v_p(m)$ is the number of times $p$ divides $m$. These local factors are natural extensions of the ones defined in~\cite[Remark 1.2]{mrt-div} in the special case of two linear forms $\psi_1(n) = n, \psi_2(n) = n+h$.
\end{remark}

We have the following immediate corollary to Theorem~\ref{thm_lineq}.

\begin{corollary}[Linear equations in primes in short intervals]\label{cor_lineq} Let $X \geq 3$ and $X^{5/8+\varepsilon} \leq H \leq X^{1-\varepsilon}$ for some fixed $\eps > 0$. Let $d,t,L\geq 1$. Let $\Psi=(\psi_1,\ldots, \psi_t):\mathbb{Z}^d\to \mathbb{Z}^t$ be a system of affine-linear forms, where each $\psi_i$ has the form $\psi_i(\mathbf{x})=\dot{\psi_i}\cdot \mathbf{x}+\psi_i(0)$ with $\dot{\psi_i}\in \mathbb{Z}^d$ and $\psi_i(0)\in \mathbb{Z}$ satisfying $|\dot{\psi_i}|\leq L$ and $|\psi_i(0)|\leq LX$. Suppose that $\dot{\psi_i}$ and $\dot{\psi_j}$ are linearly independent whenever $i\neq j$.  Suppose that, for every prime $p$, the system of equations $\Psi(\mathbf{n})=0$ is solvable with $\mathbf{n}\in ((\mathbb{Z}/p\mathbb{Z})\setminus\{0\})^d$. Then the number of solutions to $\Psi(\mathbf{n})=0$ with $\mathbf{n}\in (\mathbb{P}\cap (X,X+H])^d$ is
\begin{align*}
\gg \frac{\textnormal{vol}((X,X+H]^d\cap \Psi^{-1}(\mathbb{R}_{>0}^t))}{\log^d X}+o_{d,t,L}\left(\frac{H^d}{\log^d X}\right).
\end{align*}
\end{corollary}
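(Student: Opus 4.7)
The plan is to deduce the Corollary from Theorem~\ref{thm_lineq} via the standard two-step reduction: pass from the $\Lambda$-weighted asymptotic to an unweighted prime count, then verify positivity of the singular series.

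The starting point is Theorem~\ref{thm_lineq} applied with $K = (X, X+H]^d$, which gives
$$S := \sum_{\mathbf{n} \in K \cap \mathbb{Z}^d} \prod_{i=1}^t \Lambda(\psi_i(\mathbf{n})) = \beta_\infty \prod_p \beta_p + o_{d,t,L}(H^d),$$
with the $\beta_\infty$ and $\beta_p$ of that theorem. To interpret the Corollary as counting $\mathbf{n}$ on the subvariety $\Psi = 0$ with each coordinate prime, one first parameterizes integer solutions of $\Psi = 0$ as $\mathbf{n} = \mathbf{n}_0 + B\mathbf{m}$ with $\mathbf{m} \in \mathbb{Z}^{d-r}$ via Siegel's lemma (with entries of $B, \mathbf{n}_0$ controlled by $L$ and $X$), and then applies Theorem~\ref{thm_lineq} to the $d$ coordinate forms $\phi_i(\mathbf{m}) = (\mathbf{n}_0+B\mathbf{m})_i$ in the parameter space; the stated Archimedean and local factors match the ones produced by Theorem~\ref{thm_lineq} after accounting for the Jacobian and the lattice covolume.

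Next I would convert $S$ into an unweighted lower bound for the prime count $N$. This uses the pointwise bound $\Lambda(n) \leq (1+o(1))\log X$ for $n$ a prime in $[1, C_L X]$, together with a negligible-prime-power estimate: fixing $\psi_i(\mathbf{n}) = p^k$ with $k \geq 2$ constrains $\mathbf{n}$ to a hyperplane meeting $K$ in $O(H^{d-1})$ integer points, and summing over $O(X^{1/2})$ prime powers and over $i$ yields a total contribution of $O_L(H^{d-1}X^{1/2}\log^t X)$, which is $o(H^d)$ in the range $H \geq X^{5/8+\varepsilon}$. Thus $S \leq (1+o(1))(\log X)^t N + o(H^d)$, and rearranging produces a lower bound of the form $N \gg \beta_\infty \prod_p \beta_p / \log^t X + o(H^d/\log^t X)$, which implies the Corollary's bound once $\prod_p \beta_p$ is bounded below by a positive constant (using $\log^t X \leq \log^d X$, or working directly with the $d$ coordinate forms as in the reparameterization).

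The main obstacle is establishing the positivity of the singular series $\prod_p \beta_p$. For large primes, the pairwise linear independence of the $\dot{\psi_i}$ yields the standard tail estimate $\beta_p = 1 + O_L(1/p^2)$, ensuring absolute convergence. For small primes, the local solvability hypothesis of the Corollary is precisely what ensures each $\beta_p > 0$. Tracking the uniform dependence of these estimates on $d, t, L$ is the technical core; the $\Lambda$-to-indicator conversion and prime-power bookkeeping are routine.
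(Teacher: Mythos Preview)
Your singular-series positivity argument---each $\beta_p > 0$ from the local hypothesis, together with the tail estimate $\beta_p = 1 + O_{t,d,L}(1/p^2)$ ensuring convergence---is exactly what the paper does; it cites \cite[Lemmas~1.3 and~1.6]{green-tao} for the tail bound and is otherwise a single sentence: ``This follows directly from Theorem~\ref{thm_lineq}, since the assumptions imply that $\beta_p>0$ for all $p$, and $\beta_p=1+O_{t,d,L}(1/p^2)$, so $\prod_p\beta_p>0$.''

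Where you diverge is the Siegel-lemma reparameterisation. The paper does \emph{not} do this: it applies Theorem~\ref{thm_lineq} directly with $K=(X,X+H]^d$ and the given forms $\psi_1,\dots,\psi_t$, and reads off the lower bound once $\prod_p\beta_p>0$. Your detour---parameterising $\{\Psi=0\}$ and applying Theorem~\ref{thm_lineq} to the $d$ coordinate forms $\phi_i(\mathbf m)=(\mathbf n_0+B\mathbf m)_i$---is an over-literal reading of the Corollary's (admittedly awkward) phrasing; note that the $\log^d X$ and the $d$-dimensional Archimedean volume in the conclusion would not match a count on a lower-dimensional subvariety anyway.

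More importantly, the reparameterisation introduces a real gap: the rows of $B$ (the linear parts of your $\phi_i$) need not be pairwise linearly independent, so Theorem~\ref{thm_lineq} may fail to apply to them. For instance, if $t=1$, $d=3$, $\psi_1(\mathbf n)=n_1-n_2$, then $\{\Psi=0\}$ is parameterised by $(m_1,m_1,m_2)$, and $\dot\phi_1=\dot\phi_2=(1,0)$ are parallel. Your $\Lambda$-to-indicator conversion and prime-power bookkeeping are correct and standard, but the paper absorbs them into ``follows directly'' and does not spell them out.
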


Thus, for example, for any $\varepsilon>0$ and any large enough odd $N$ there is a solution to
\begin{align*}
p_1+p_2+p_3=N,\quad p_1,p_2,p_3,2p_1-p_2\in \mathbb{P}
\end{align*}
with $p_i\in [N/3-N^{5/8+\varepsilon},N/3+N^{5/8+\varepsilon}]$. Without the condition $2p_1-p_2 \in \mathbb{P}$, this is due to Zhan~\cite{zhan}. The exponent $5/8$ in Zhan's result has been improved using sieve methods (see e.g.~\cite{BH1998}) and more recently using the transference principle~\cite{MMS2017}. It would probably be possible to use a sieve method also to improve on Corollary~\ref{cor_lineq}; it would suffice to find a suitable minorant function for $\Lambda(n)$ that has positive average and is Gowers uniform in shorter intervals. Such a minorant could be constructed with our arithmetic information using Harman's sieve method~\cite{harman-book}, but we do not do so here.

\subsection{Methods of proof}

We now describe (in somewhat informal terms) the general strategy of proof of our main theorems, although for various technical reasons the actual rigorous proof will not quite follow the intuitive plan that is outlined here.

To prove Theorem~\ref{discorrelation-thm}, the first step, which is standard, is to apply Heath--Brown's identity (Lemma~\ref{hb-identity}) together with a combinatorial lemma regarding subsums of a finite number of non-negative reals summing to one (Lemma~\ref{combinatorial}) to decompose $\mu, \Lambda, d_k$ (up to small errors) into three standard types of sums:
\begin{itemize}
\item[($I$)] \emph{Type $I$} sums, which are roughly of the form $\alpha * 1 = \alpha*d_1$ for some arithmetic function $\alpha \colon \N \to \C$ supported on some interval $[1, A_I]$ that is not too large, and with $\alpha$ bounded in an $L^2$ averaged sense.
\item[($I_2$)] \emph{Type $I_2$} sums, which are roughly of the form $\alpha * d_2$ for some arithmetic function $\alpha \colon \N \to \C$ supported on some interval $[1, A_{I_2}]$ that is not too large, and with $\alpha$ bounded in an $L^2$ averaged sense.
\item[($II$)] \emph{Type $II$} sums, which are roughly of the form $\alpha * \beta$ for some arithmetic functions $\alpha, \beta \colon \N \to \C$ with $\alpha$ supported on some interval $[A_{II}^-, A_{II}^+]$ that is neither too long nor too close to $1$ or $X$, and with $\alpha,\beta$ bounded in an $L^2$ averaged sense.
\end{itemize}
This decomposition is detailed in Section~\ref{reduction-sec}. The precise ranges of parameters $A_I, A_{I_2}, A_{II}^-$, $A_{II}^+$ that arise in this decomposition depend on the choice of $\theta$ (and, in the case of $d_k$ for small $k$, on the value of $k$); this is encoded in the combinatorial lemma given here as Lemma~\ref{combinatorial}.

The treatment of these types of sums (in Theorem~\ref{inverse}) depends on the behavior of the nilsequence $F(g(n) \Gamma)$, in particular whether it is ``major arc'' or ``minor arc''. This splitting into different behaviors will be done somewhat differently for different types of sums.

In case of type $I$ and type $I_2$ sums, one can use the equidistribution theory of nilmanifolds to essentially reduce to two cases, the \emph{major arc case} in which the nilsequence $F(g(n) \Gamma)$ behaves like (or ``pretends to be'') the constant function $1$ (or some other function of small period), and the \emph{minor arc case} in which $F$ has mean zero and $g(n) \Gamma$ is highly equidistributed in the nilmanifold $G/\Gamma$.  The contribution of type $I$ and type $I_2$ major arc sums can be treated by standard methods, namely an application of Perron's formula and mean value theorems for Dirichlet series; see Section~\ref{major-arc-sec}.

The contribution of type $I$ minor arc sums can be treated by a slight modification of the arguments in~\cite{gt-mobius}, which are based on the ``quantitative Leibman theorem'' (Theorem~\ref{qlt}~below) that characterizes when a nilsequence is equidistributed, as well as a classical lemma of Vinogradov (Lemma~\ref{vin} below) that characterizes when a polynomial modulo $1$ is equidistributed. (Actually it will be convenient to rely primarily on a corollary of Lemma~\ref{vin} that asserts that if typical dilates of a polynomial are equidistributed modulo $1$, then the polynomial itself is equidistributed modulo $1$: see Corollary~\ref{smooth-dilate} below.)

Our treatment of type $I_2$ minor arc sums is more novel.  A model case is that of treating the $d_2$-type correlation
$$ \sum_{X < n \leq X+H} d_2(n) \overline{F}(g(n) \Gamma).$$
From the definition of the divisor function $d_2$, we can expand this sum as a double sum
\begin{equation}\label{nm-h}
 \sum_{n,m: X < nm \leq X+H} \overline{F}(g(nm) \Gamma).
\end{equation}
We are not able to obtain non-trivial estimates on such sums in the regime $H \leq X^{1/3}$.  However, when $H \geq X^{1/3+\eps}$, it turns out by elementary geometry of numbers that the hyperbola neighborhood $\{ (n,m) \in \Z^2: X < nm \leq X+H\}$ may be partitioned\footnote{This partition is reminiscent of the classical Hardy--Littlewood partition of the unit circle into major and minor arcs, except that we are partitioning (a neighborhood of) a hyperbola rather than a circle.} into arithmetic progressions $P \subset \Z^2$ that mostly have non-trivial length; see Theorem~\ref{decomp} for a precise statement.  This decomposition lets us efficiently decompose the sum~\eqref{nm-h} into short sums of the form
$$ \sum_{(n,m) \in P} \overline{F}(g(nm) \Gamma)$$
that turn out to exhibit cancellation for most progressions $P$ in the type $I_2$ minor arc case, mainly thanks to the quantitative Leibman theorem (Theorem~\ref{qlt}) and a corollary of the Vinogradov lemma (Corollary~\ref{smooth-dilate}); see Section~\ref{I2-sec}.

It remains to handle the contribution of type $II$ sums, which are of the form
$$ \sum_{X < n \leq X+H} \alpha*\beta(n) \overline{F}(g(n) \Gamma)$$
which we can expand as
\begin{equation}\label{Expo}
 \sum_{A_{II}^- \leq a \leq A_{II}^+} \alpha(a) \sum_{X/a < b \leq X/a + H/a} \beta(b) \overline{F}(g(ab) \Gamma).
\end{equation}

To treat these sums, we can use a Fourier decomposition and the equidistribution theory of nilmanifolds to reduce (roughly speaking) to treating the following three special cases of these sums:
\begin{itemize}
\item \emph{Type $II$ major arc sums} that are essentially of the form
$$ \sum_{X < n \leq X+H} \alpha*\beta(n) n^{iT} $$
for some real number $T = X^{O(1)}$ of polynomial size (one can also consider generalizations of such sums when the $n^{iT}$ factor is twisted by an additional Dirichlet character $\chi$ of bounded conductor).

\item \emph{Abelian Type $II$ minor arc sums} in which $F(g(n)\Gamma) = e(P(n))$ is a polynomial phase that does not ``pretend'' to be a character $n^{iT}$ (or more generally $\chi(n) n^{iT}$ for some Dirichlet character $\chi$ of bounded conductor) in the sense that the Taylor coefficients of $e(P(n))$ around $X$ do not align with the corresponding coefficients of such characters.

\item \emph{Non-abelian Type $II$ minor arc sums}, in which $g(n) \Gamma$ is highly equidistributed in a nilmanifold $G/\Gamma$ arising from a non-abelian nilpotent group $G$, and $F$ exhibits non-trivial oscillation in the direction of the center $Z(G)$ of $G$ (which one can reduce to be one-dimensional).
\end{itemize}

One can treat the contribution of the type $II$ major arc sums by applying Perron's formula and Dirichlet polynomial estimates of  Baker--Harman--Pintz~\cite{baker-harman-pintz} in the regime, so long as one actually has a suitable triple convolution (with one of the subfactors having well-controlled correlations with $n^{iT}$); see Lemma~\ref{le:BHP}.  As already implicitly observed by Zhan~\cite{zhan}, this case can be treated (with favorable choices of parameters) for any of the three functions $\mu, \Lambda, d_k$ in the case $\theta = 5/8$.  As observed in~\cite{MatoTera}, in the case of the M\"obius function $\mu$, it is possible to lower $\theta$ to $3/5$ and still obtain triple convolution structure after removing a small exceptional error term from $\mu$ (which is responsible for the final discorrelation bounds not saving arbitrary powers of $\log X$); see Lemma~\ref{comb-mu}.

It remains to treat the contribution of non-abelian and abelian type $II$ minor arc sums.
It turns out that we will be able to establish good estimates for such sums~\eqref{Expo} in the regime
$$ X^\eps \frac{X}{H} \lll A_{II}^- < A_{II}^+ \lll X^{-\eps} H.$$
In this regime, the inner intervals $(X/a, X/a+H/a]$ in~\eqref{Expo} have non-negligible length (at least $X^\eps$), and furthermore they exhibit non-trivial overlap with each other ($(X/a, X/a+H/a]$ will essentially be identical to $(X/a', X/a'+H/a']$ whenever $a' = \left(1 + O\left(X^{-\eps} \frac{H}{X}\right)\right) a$).

As a consequence, many of the dilated nilsequences $b \mapsto \overline{F}(g(ab) \Gamma)$ appearing in~\eqref{Expo} will correlate with the same portion of the sequence $\beta$.  To handle this situation we introduce a nilsequence version of the large sieve inequality in Proposition~\ref{large-sieve}, which we establish with the aid of the equidistribution theory for nilsequences, as well as Goursat's lemma.  The upshot of this large sieve inequality is that for many nearby pairs $a',a$ there is an algebraic relation between the sequences $b \mapsto g(ab)$ and $b \mapsto g(a'b)$, namely that one has an identity of the form
$$ g(a' \cdot) = \eps_{aa'} g(a \cdot) \gamma_{aa'}$$
where $\eps_{aa'} \colon \Z \to G$ is a ``smooth'' polynomial map and $\gamma_{aa'} \colon \Z \to G$ is a ``rational'' polynomial map; see~\eqref{gepq} for a precise statement.  This can be viewed as an assertion that the map $g$ is ``approximately dilation-invariant'' in some weak sense.  This turns out to imply a non-trivial lack of two-dimensional equidistribution for the map
$$ (a,a',b,b') \mapsto (g(ab) \Gamma, g(ab') \Gamma, g(a'b) \Gamma, g(a'b') \Gamma)$$
which is incompatible with the non-abelian nature of $G$ thanks to a commutator argument of Furstenberg and Weiss~\cite{furstenberg-weiss}; see Section~\ref{typeII-nonabelian-sec}.  This resolves the non-abelian case.  In the abelian case, one can replace the maps $g$ by the ordinary polynomials $P$, and one can then proceed by adapting the arguments by the first two authors in~\cite{matomaki-shao} to show that $e(P(n))$ necessarily ``pretends'' to be like a character $n^{iT}$, which resolves the abelian type $II$ minor arc case.  Combining all these cases yields Theorem~\ref{discorrelation-thm}.

\subsubsection{The result on Gowers norms}

 The proof of Theorem~\ref{thm_gowers} (in Section~\ref{gowers-sec}) requires in addition to Theorem~\ref{discorrelation-thm} and the inverse theorem for the Gowers norms also a construction of pseudorandom majorants for ($W$-tricked versions of) $\Lambda$ and $d_k$ over \emph{short intervals} $(X,X+H]$. By this we mean functions $\nu_1,\nu_2$ that majorize the functions $\Lambda,d_k$ (after $W$-tricking and suitable normalization), and such that  $\nu_i-1$ restricted to $(X,X+H]$ is Gowers uniform. In the case of long intervals (that is,  $H=X$), the existence of such majorants is well known from works of Green and the third author~\cite{green-tao-AP} and Matthiesen~\cite{matthiesen-linear}. Fortunately, it turns out that the structure of these well-known majorants as type I sums of small ``level'' enables us to show that they work as majorants also over short intervals $(X,X+H]$; see Lemmas~\ref{le_pseudo} and~\ref{le_pseudoinshort}. These lemmas combined with the implementation of the $W$-trick (which in the case of $d_k$ requires additionally two simple lemmas, namely Lemmas~\ref{le_wtrick} and~\ref{le_upperbound}) leads to the proof of Theorem~\ref{thm_gowers}.

\begin{remark} In this remark we discuss the obstructions to improving the value of $\theta$ in the various components of Theorem~\ref{discorrelation-thm}.  In most of these results, the primary obstruction arises (roughly speaking) from portions of $\mu$, $\Lambda$, or $d_k$ that look something like
\begin{equation}\label{1na}
1_{(X^{\alpha_1},2X^{\alpha_1}]} * \dots * 1_{(X^{\alpha_m},2X^{\alpha_m}]}
\end{equation}
for various tuples $(\alpha_1,\dots,\alpha_m)$ of positive real numbers that add up to $1$.  More specifically:
\begin{itemize}
\item[(a)]  For the $\theta=5/8$ results in Theorem~\ref{discorrelation-thm}(i)--(iii), the primary obstruction arises from convolutions~\eqref{1na} with $(\alpha_1,\dots,\alpha_m)$ equal to $(1/4,1/4,1/4,1/4)$, when correlated against characters $n^{iT}$ with $T \asymp X^{O(1)}$, as this lies just outside the reach of our twisted major arc type $I$ and type $II$ estimates when $\theta$ goes below $5/8$.  This obstruction was already implicitly observed by Zhan~\cite{zhan}.
\item[(b)]  For the $\theta=3/5$ result in Theorem~\ref{discorrelation-thm}(iv), the primary obstruction are convolutions~\eqref{1na} with $(\alpha_1,\dots,\alpha_m)$ equal to  $(2/5,1/5,1/5,1/5)$ or $(1/5,1/5,1/5,1/5,1/5)$, when correlated against ``minor arc'' nilsequences, such as $e(\alpha n)$ for some minor arc $\alpha$.  Such convolutions become just out of reach of our type $I$, type $II$, and type $I_2$ estimates when $\theta$ goes below $3/5$.  This obstruction was already observed in~\cite{MatoTera}.
\item[(c)]  For the $\theta=1/3$ result in Theorem~\ref{discorrelation-thm}(iii), the primary obstruction is of a different nature from the preceding cases: it is that our treatment of minor arcs in this case relies crucially on the ability to partition the neighborhood of a hyperbola into arithmetic progressions (see Theorem~\ref{decomp}), and this partition is no longer available in any useful form once $\theta$ goes below $1/3$.
\item[(d)]  For the $\theta=5/9$ result in Theorem~\ref{discorrelation-thm}(iii), the primary obstruction arises from convolutions~\eqref{1na} with $(\alpha_1,\dots,\alpha_m)$ equal to $(1/3,1/3,1/3)$, when correlated against minor arc nilsequences, for reasons similar to those in the previous case (c).
\end{itemize}
\end{remark}

\subsection{Acknowledgments}

KM was supported by Academy of Finland grant no. 285894. XS was supported by NSF grant DMS-1802224. TT was supported by a Simons Investigator grant, the James and Carol Collins Chair, the Mathematical Analysis \& Application Research Fund Endowment, and by NSF grant DMS-1764034. JT was supported by a Titchmarsh Fellowship,  Academy of Finland grant no. 340098, and funding from European Union's Horizon
Europe research and innovation programme under Marie Sk\l{}odowska-Curie grant agreement No
101058904. We are greatly indebted to Maksym Radziwi{\l}{\l} for many helpful discussions during the course of this project and would like to thank Lilian Matthiesen for discussions concerning~\cite{MatthiesenGenFour}. We are grateful to the anonymous referee for a careful reading of the paper and for numerous helpful comments and corrections, and to James Leng for a correction regarding Proposition \ref{corr-crit}.

\subsection{Notation}\label{notation-sec}

The parameter $X$ should be thought of as being large.

We use $Y \ll Z$, $Y = O(Z)$, or $Z \gg Y$ to denote the estimate $|Y| \leq CZ$ for some constant $C$.  If we wish to permit this constant to depend (possibly ineffectively) on one or more parameters we shall indicate this by appropriate subscripts, thus for instance $O_{\eps,A}(Z)$ denotes a quantity bounded in magnitude by $C_{\eps,A} Z$ for some quantity $C_{\eps,A}$ depending only on $\eps,A$.  We write $Y \asymp Z$ for $Y \ll Z \ll Y$. When working with $d_k$, all implied constants are permitted to depend on $k$.  We also write $y \sim Y$ to denote the assertion $Y < y \leq 2Y$.

If $x$ is a real number (resp. an element of $\R/\Z$), we write $e(x) \coloneqq e^{2\pi i x}$ and let $\|x\|_{\R/\Z}$ denote the distance of $x$ to the nearest integer (resp. zero).

We use $1_E$ to denote the indicator of an event $E$, thus $1_E$ equals $1$ when $E$ is true and $0$ otherwise.  If $S$ is a set, we write $1_S$ for the indicator function $1_S(n) \coloneqq 1_{n \in S}$.

Unless otherwise specified, all sums range over natural number values, except for sums over $p$ which are understood to range over primes.  We use $d|n$ to denote the assertion that $d$ divides $n$, $(n,m)$ to denote the greatest common divisor of $n$ and $m$, $n = a \ (q)$ to denote the assertion that $n$ and $a$ have the same residue mod $q$, and $f*g(n) \coloneqq \sum_{d|n} f(d) g(n/d)$ to denote the Dirichlet convolution of two arithmetic functions $f,g \colon \N \to \C$.

The \emph{height} of a rational number $a/b$ with $a,b$ coprime is defined as $\max(|a|, |b|)$.

\section{Basic tools}

\subsection{Total variation}

The notion of maximal summation defined in~\eqref{maximal-sum} interacts well with the notion of total variation, which we now define.

\begin{definition}[Total variation]\label{tv-def}  Given any function $f: P \to \C$ on an arithmetic progression $P$, the \emph{total variation norm} $\|f\|_{\TV(P)}$ is defined by the formula
$$ \|f\|_{\TV(P)} \coloneqq \sup_{n \in P} |f(n)| + \sup_{n_1 < \dots < n_k} \sum_{j=1}^{k-1} |f(n_{j+1})-f(n_j)|$$
where the second supremum ranges over all increasing finite sequences $n_1 < \dots < n_k$ in $P$ and all $k \geq 1$.  We remark that in this finitary setting one can simply take $n_1,\dots,n_k$ to be the elements of $P$ in increasing order, if one wishes.
We adopt the convention that $\|f\|_{\TV(P)}=0$ when $P$ is empty.  For any natural number $q \geq 1$, we also define
$$ \|f\|_{\TV(P;q)} \coloneqq \sum_{a \in \Z/q\Z} \|f\|_{\TV(P \cap (a+q\Z))}.$$
\end{definition}

Informally, if $f$ is bounded in $\TV(P;q)$ norm, then $f$ does not vary much on each residue class modulo $q$ in $P$.  From the fundamental theorem of calculus we see that if $f \colon I \to \C$ is a continuously differentiable function then
\begin{equation}\label{tvp}
 \|f\|_{\TV(P)} \ll \sup_{t \in I} |f(t)| + \int_I |f'(t)|\ dt
\end{equation}
for all arithmetic progressions $P$ in $I$.  Also, from the identity $ab-a'b' = (a-a')b + (b-b')a'$ we see that
\begin{equation}\label{tv-prod}
\| fg \|_{\TV(P;q)} \ll \|f\|_{\TV(P;q)} \|g\|_{\TV(P;q)}
\end{equation}
for any functions $f,g \colon P \to \C$ defined on an arithmetic progression, and any $q \geq 1$.

We can now record some basic properties of maximal summation:

\begin{lemma}[Basic properties of maximal sums]\label{basic-prop}\
\begin{itemize}
\item[(i)]  (Triangle inequalities) For any subprogression $P'$ of an arithmetic progression $P$, and any $f \colon P \to \C$ we have
$$ \left| \sum_{n \in P} f(n) 1_{P'}(n) \right|^* = \left| \sum_{n \in P'} f(n) \right|^* \leq \left| \sum_{n \in P} f(n) \right|^*$$
and
$$ \left|\sum_{n \in P} f(n)\right| \leq\left |\sum_{n \in P} f(n)\right|^* \leq \sum_{n \in P} |f(n)|.$$
If $P$ can be partitioned into two subprogressions as $P = P_1 \uplus P_2$, then
\begin{equation}\label{pp1}
\left |\sum_{n \in P} f(n)\right|^* \leq \left|\sum_{n \in P_1} f(n)\right|^* + \left|\sum_{n \in P_2} f(n)\right|^*.
\end{equation}
Finally, the map $f \mapsto |\sum_{n \in P} f(n)|^*$ is a seminorm.
\item[(ii)]  (Local stability) If $x_0 \in \R$, $H > 0$, and $f \colon \Z \to \C$, then
$$ \left|\sum_{x_0 < n \leq x_0+H} f(n)\right|^* \leq \frac{2}{H} \int_{x_0-H/2}^{x_0+H/2} \left|\sum_{x < n \leq x+H} f(n)\right|^*\ dx.$$
\item[(iii)]  (Summation by parts) Let $P$ be an arithmetic progression, and let $f,g \colon P \to \C$ be functions.  Then we have
\begin{equation}\label{np}
 \left| \sum_{n \in P} f(n) g(n) \right|^* \leq \|g\|_{\TV(P)} \left| \sum_{n \in P} f(n) \right|^*
\end{equation}
and more generally
\begin{equation}\label{npq}
 \left| \sum_{n \in P} f(n) g(n) \right|^* \leq \|g\|_{\TV(P;q)} \left| \sum_{n \in P} f(n) \right|^*
\end{equation}
for any $q \geq 1$.
\end{itemize}
\end{lemma}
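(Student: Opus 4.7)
The plan is to verify each sub-claim directly from the definition~\eqref{maximal-sum} of the maximal sum, exploiting the closure of arithmetic progressions under the operations of intersection, restriction, and partition.

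For part (i), the first step is to observe that for any subprogression $P'$ of $P$, every AP subprogression $Q$ of $P'$ is also an AP subprogression of $P$, and conversely, for every AP subprogression $Q \subseteq P$ the intersection $Q \cap P'$ is an AP subprogression of $P'$; this yields the first equality and inequality. The bounds $|\sum_P f| \leq |\sum_P f|^* \leq \sum_P |f|$ come from taking $Q = P$ itself and from the ordinary triangle inequality applied to each $Q \subseteq P$, respectively. For the partition inequality~\eqref{pp1}, write any AP subprogression $Q \subseteq P = P_1 \uplus P_2$ as $Q = (Q\cap P_1) \uplus (Q \cap P_2)$; each intersection is an AP subprogression of the corresponding $P_i$, and the triangle inequality followed by the supremum over $Q$ produces the claimed bound. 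The seminorm property is then immediate.

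For part (ii), the plan is to split any AP $Q \subseteq (x_0, x_0+H] \cap \Z$ at the midpoint $x_0 + H/2$ into two contiguous halves $Q = Q_L \uplus Q_R$ with $Q_L \subseteq (x_0, x_0+H/2]$ and $Q_R \subseteq (x_0 + H/2, x_0 + H]$. The key elementary observation is that for every $x \in [x_0 - H/2, x_0]$ one has $Q_L \subseteq (x, x+H]$ (since $\min Q_L > x_0 \geq x$ and $\max Q_L \leq x_0 + H/2 \leq x + H$), hence $|\sum_{n \in Q_L} f(n)| \leq |\sum_{x < n \leq x+H} f(n)|^*$; integrating this inequality over $[x_0 - H/2, x_0]$ yields
\[
\frac{H}{2} \left|\sum_{n \in Q_L} f(n)\right| \leq \int_{x_0 - H/2}^{x_0} \left|\sum_{x < n \leq x+H} f(n)\right|^* dx,
\]
and a symmetric computation handles $Q_R$ integrated over $[x_0, x_0 + H/2]$. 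Adding the two bounds, using $|\sum_Q f| \leq |\sum_{Q_L} f| + |\sum_{Q_R} f|$, and taking the supremum over $Q$ produces the bound with the optimal constant $2/H$. This midpoint split is the main technical device: a naive single-averaging attempt gives the wrong constant once $Q$ has diameter exceeding $H/2$, and the split is precisely what lets each half be controlled by a disjoint half of the integration window.

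For part (iii), the plan is classical Abel summation. Enumerating any AP subprogression $Q = \{n_1 < \dots < n_k\}$ of $P$ and setting $F_j := \sum_{i \leq j} f(n_i)$, the identity
\[
\sum_{n \in Q} f(n) g(n) = F_k g(n_k) + \sum_{j=1}^{k-1} F_j \bigl(g(n_j) - g(n_{j+1})\bigr),
\]
combined with the fact that each $F_j$ is a sum of $f$ over an AP subprogression of $Q \subseteq P$ (so $|F_j| \leq |\sum_{n \in P} f(n)|^*$) and with $\|g\|_{\TV(Q)} \leq \|g\|_{\TV(P)}$, yields~\eqref{np} after taking the supremum over $Q$. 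For~\eqref{npq}, the plan is to partition $P$ into residue classes modulo $q$, apply the partition inequality from part~(i), and then apply~\eqref{np} on each residue class; the $\TV(P \cap (a + q\Z))$-norms sum exactly to $\|g\|_{\TV(P;q)}$, completing the argument. Of the three parts, (iii) is the most formulaic once the bookkeeping observations from (i) are in place; the only genuinely delicate point in the whole lemma is the midpoint-splitting trick in part~(ii).
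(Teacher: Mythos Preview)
Your proof is correct and follows essentially the same approach as the paper: part~(i) via closure of arithmetic progressions under intersection, part~(ii) via the midpoint split and averaging (the paper phrases this using the already-proven part~(i) to split at the maximal-sum level rather than at the level of individual $Q$'s, but the idea is identical), and part~(iii) via Abel summation followed by partition into residue classes.
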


\begin{proof}
The claims (i) all follow easily the triangle inequality and the observation that the intersection of two arithmetic progressions is again an arithmetic progression; for instance,~\eqref{pp1} follows from the observation that any subprogression $P'$ of $P$ is partitioned into subprogressions $P' \cap P_1, P' \cap P_2$ of $P_1, P_2$ respectively.  To prove (ii), we observe from (i) that for any $0 < t < H/2$ we have
\begin{align*}
\left|\sum_{x_0 < n \leq x_0+H} f(n)\right|^*  &\leq \left|\sum_{x_0 < n \leq x_0+H/2} f(n)\right|^* +  \left|\sum_{x_0+H/2 < n \leq x_0+H} f(n)\right|^*\\
&\leq \left|\sum_{x_0-t < n \leq x_0-t+H} f(n)\right|^* +  \left|\sum_{x_0+t < n \leq x_0+t+H} f(n)\right|^*
\end{align*}
and the claim then follows by averaging in $t$.

To prove the first claim~\eqref{np} of (iii), it will suffice by the monotonicity properties of total variation and maximal sums to show that
\begin{equation}
\label{eq:P'claim}
\left| \sum_{n \in P'} f(n) g(n) \right| \leq \|g\|_{\TV(P')} \left| \sum_{n \in P'} f(n) \right|^*
\end{equation}
for all subprogressions $P'$ of $P$.  Clearly we may assume $P'$ is non-empty. If we order the elements of $P'$ as $n_1 < n_2 < \dots < n_k$, then from summation by parts we have
$$ \sum_{n \in P'} f(n) g(n) = \sum_{j=1}^{k-1} (g(n_j) - g(n_{j+1})) \sum_{i=1}^j f(n_i) + g(n_k) \sum_{i=1}^k f(n_i).$$
Since each segment $\{n_1,\dots,n_j\}$ of $P'$ is again a subprogression of $P'$, we have from the triangle inequality that
$$ \left|\sum_{n \in P'} f(n) g(n)\right| \leq \sum_{j=1}^{k-1} |g(n_j) - g(n_{j+1})| \left| \sum_{n \in P'} f(n) \right|^* + |g(n_k)| \left| \sum_{n \in P'} f(n) \right|^*$$
and the claim~\eqref{eq:P'claim} now follows from Definition~\ref{tv-def}. Thus~\eqref{np} holds.  To prove the second claim~\eqref{npq}, partition $P$ into subprogressions $P \cap (a+q\Z)$, apply~\eqref{np} to each subprogression, and sum using (i).
\end{proof}

\subsection{Vinogradov lemma}

If $P \colon \Z \to \R/\Z$ is a polynomial of degree $d$, and $I$ is an interval of length $|I| \geq 1$, we define the \emph{smoothness norm}
$$ \| P \|_{C^\infty(I)} \coloneqq \sup_{0 \leq j \leq d} \sup_{n \in I} |I|^j \| \partial^j_1 P(n) \|_{\R/\Z}$$
where $\partial_1$ is the difference operator $\partial_1 P(n) \coloneqq P(n) - P(n-1)$.  We remark that this definition deviates very slightly from that in~\cite[Definition 2.7]{green-tao-ratner}; in particular, we allow the index $j$ to equal zero and we allow $n$ to range over $I$ rather than being set to the origin. We use the same notation $\|P\|_{C^{\infty}(I)}$ for a polynomial $P \colon \Z \to \R$ after reducing its coefficients modulo $1$.

The following lemma asserts, roughly speaking, that a polynomial $P$ is (somewhat) equidistributed unless it is smooth.

\begin{lemma}[Vinogradov lemma]\label{vin}  Let $0 < \eps, \delta < 1/2$, $d \geq 0$, and let $P \colon \Z \to \R/\Z$ be a polynomial of degree at most $d$. Let $I$ be an interval of length $|I| \geq 1$, and suppose that
$$ \| P(n) \|_{\R/\Z} \leq \eps$$
for at least $\delta |I|$ integers $n \in I$.  Then either $\delta \ll_d \eps$, or else one has
$$ \| qP \|_{C^\infty(I)} \ll_d \delta^{-O_d(1)} \eps $$
for some integer $1 \leq q \ll_d \delta^{-O_d(1)}$.
\end{lemma}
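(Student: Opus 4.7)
The plan is to induct on $d$. The base case $d=0$ is immediate: $P$ is constant and, since $A := \{n \in I \cap \Z : \|P(n)\|_{\R/\Z}\le \eps\}$ is nonempty (as $\delta|I|>0$), we have $\|P\|_{\R/\Z}\le \eps$, so $q=1$ works. Throughout the inductive step we may assume $\delta \gg_d \eps$, else the first alternative of the conclusion holds.

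For $d \ge 1$, first I would apply a \emph{Weyl differencing} step. Setting $P_h(n) := P(n+h)-P(n)$ (a polynomial of degree $\le d-1$) and $N(h) := |\{n : n, n+h \in A\}|$, the identity $\sum_{|h|\le |I|} N(h) = |A|^2 \ge \delta^2 |I|^2$ and a pigeonhole produce a set $H \subset [-|I|/2,|I|/2] \cap \Z$ of size $\gg \delta^2 |I|$ such that, for each $h \in H$, $\|P_h(n)\|_{\R/\Z} \le 2\eps$ on $\gg \delta^2 |I|$ integers $n$ in a subinterval $I_h \subset I$ of length $\gg |I|$. Applying the inductive hypothesis to each such $P_h$ on $I_h$ (with parameters $\asymp(\delta^2,\eps)$) yields, for each $h$, a denominator $q_h \ll_d \delta^{-O_d(1)}$ with $\|q_h P_h\|_{C^\infty(I_h)} \ll_d \delta^{-O_d(1)} \eps$; the ``first alternative'' case $\delta^2 \ll_{d-1} \eps$ that may occur at this stage is absorbed by choosing the exponents in the final conclusion large enough, or else by running additional rounds of differencing until one reaches a regime where the induction provides useful information.

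Next, since each $q_h$ is a positive integer bounded by $\delta^{-O_d(1)}$, a pigeonhole over $h$ produces a single $q^* \ll_d \delta^{-O_d(1)}$ that works for $\gg \delta^{O_d(1)} |I|$ values of $h \in H$. The bound $\|q^* P_h\|_{C^\infty(I_h)} \ll_d \delta^{-O_d(1)} \eps$ controls the Taylor coefficients of $q^* P_h$ modulo $1$ (weighted by powers of $|I|$), and the coefficients of $P_h$ are themselves explicit polynomials in $h$ built from the coefficients of $P$---in particular the $n^{d-1}$ coefficient of $P_h$ equals $d\alpha_d h$, where $\alpha_d$ is the leading coefficient of $P$. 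Thus for many $h \in H$ we obtain linear-in-$h$ constraints modulo $1$ on $q^* d\alpha_d$, and applying the $d=1$ case of the lemma (essentially Dirichlet's approximation theorem) we find $q' \ll_d \delta^{-O_d(1)}$ with $|I|^d \|q' q^* \alpha_d\|_{\R/\Z} \ll_d \delta^{-O_d(1)} \eps$, giving the top-degree term of the desired smoothness bound for $q'q^* P$. The lower-order coefficients are then controlled by subtracting the now-understood leading monomial from $P$ and re-running the induction on the degree-$(d-1)$ residual (or by carrying the coefficient-extraction argument through all orders simultaneously).

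The principal obstacle is this last bootstrapping step: promoting pointwise control of $q^* P_h$ (for many $h$) into global smoothness of a single multiple $qP$, and iterating this extraction through all $d+1$ coefficients of $P$ while keeping the total $q$ and the final smoothness constant both polynomial in $1/\delta$. A secondary delicate point is the management of the borderline regime $\eps \ll_d \delta \ll_d \sqrt{\eps}$ emerging from the first alternative of the induction, which must be closed either by enlarging the implicit constant in $\delta\ll_d \eps$, by inflating the final $O_d(1)$ exponent so that the trivial bound $\|P\|_{C^\infty(I)}\le |I|^d/2$ already suffices, or by performing additional differencing rounds before invoking the induction.
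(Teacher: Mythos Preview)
Your approach differs from the paper's: the paper does not argue from scratch but, after translating to $I=(0,N]$ and disposing of the case $\eps>\delta/2$, directly invokes \cite[Lemma~4.5]{green-tao-ratner} as a black box for the $j\ge 1$ part of the $C^\infty$ bound, and then recovers the $j=0$ part by picking any $n_0$ with $\|P(n_0)\|_{\R/\Z}\le\eps$ and telescoping via the $j=1$ estimate. Your Weyl-differencing induction is essentially an outline of how that cited lemma is itself proved, so you are unpacking the black box rather than using it; the main inductive steps (differencing, pigeonholing the denominator $q_h$, extracting the leading coefficient via the linear case) are indeed the right ones.

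There is, however, a genuine gap in your handling of the borderline regime $\eps\ll_d\delta\ll_d\eps^{1/2}$: none of the three fixes you propose works. The trivial bound $\|P\|_{C^\infty(I)}\le |I|^d/2$ grows with $|I|$, which is unbounded, so it is never dominated by $\delta^{-O_d(1)}\eps$ no matter how large you take the exponent; enlarging the implied constant in ``$\delta\ll_d\eps$'' cannot close the gap between $\delta$ and $\delta^2$ (for small $\eps$ one has $\eps^{1/2}\gg C\eps$ for every fixed $C$); and further rounds of differencing only deepen the $\delta$-loss. The standard resolution is to change the \emph{shape} of the dichotomy carried through the induction: rather than ``$\delta\ll_d\eps$ or smoothness'' one proves ``$\eps\gg_d\delta^{O_d(1)}$ or smoothness'', allowing the exponent in the first branch to grow at each step so that the degree-$(d{-}1)$ first alternative $\eps\gg(\delta^2)^{O_{d-1}(1)}$ is simply absorbed into the degree-$d$ first alternative. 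That induction closes cleanly; the resulting first branch is nominally weaker than the one stated, but this is harmless, since every application of the lemma in the paper is made only after arranging $\eps\ll\delta^C$ for $C$ as large as needed.
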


\begin{proof} By applying a translation, we may assume that $I$ takes the form $(0,N]$ for some $N \geq 1$.  We may also assume $\eps \leq \delta/2$, since we are clearly done otherwise.  We may now invoke~\cite[Lemma 4.5]{green-tao-ratner} to conclude that there exists $1 \leq q \ll_d \delta^{-O_d(1)} \eps$ such that
\begin{equation}\label{sup1}
\sup_{1 \leq j \leq d} \sup_{n \in I} |I|^j \| q \partial^j_1 P(n) \|_{\R/\Z} \ll_d \delta^{-O_d(1)} \eps.
\end{equation}
This is almost what we want, except that we have to also control the $j=0$ contribution.  But from hypothesis we have at least one $n_0 \in I$ such that $\|P(n_0)\|_{\R/\Z} \leq \eps$, and from~\eqref{sup1} we have $\| q \partial_1 P(n) \|_{\R/\Z} \ll_d \delta^{-O_d(1)} |I|^{-1} \eps$ for all $n \in I$.  From the triangle inequality we then conclude that
$$ \| qP(n) \|_{\R/\Z} \ll_d \delta^{-O_d(1)} \eps$$
for all $n \in I$, and the claim follows.
\end{proof}

The following handy corollary of Lemma~\ref{vin} asserts, roughly speaking, that if many dilates of a polynomial are smooth, then the polynomial itself is smooth.

\begin{corollary}[Concatenating dilated smoothness]\label{smooth-dilate}  Let $0 < \delta < 1/2$, $d \geq 0$, and let $P \colon \Z \to \R/\Z$ be a polynomial of degree at most $d$.  Let $A \geq 1$, let $I$ be an interval with $|I| \geq 2A$, and suppose that
\begin{equation}\label{pdil}
 \| P(a \cdot) \|_{C^\infty(\frac{1}{a} I)} \leq \frac{1}{\delta}
\end{equation}
for at least $\delta A$ integers $a$ in $[A,2A]$, where $\frac{1}{a} I \coloneqq \{ \frac{t}{a}: t \in I \}$ is the dilate of $I$ by $\frac{1}{a}$.  Then either $|I| \ll_d \delta^{-O_d(1)} A$, or else one has
$$ \| q P \|_{C^\infty(I)} \ll_d \delta^{-O_d(1)}$$
for some integer $1 \leq q \ll_d \delta^{-O_d(1)}$.
\end{corollary}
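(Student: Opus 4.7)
The plan is to induct on $d$. The base case $d = 0$ is immediate: $P$ is a constant in $\R/\Z$, so $\|P\|_{C^\infty(I)} \leq 1/2$ and we may take $q = 1$. For the inductive step, the main idea is to use the $j = d$ part of the smoothness hypothesis together with Lemma~\ref{vin} to extract a rational approximation to the leading coefficient $c_d$ of $P$, and then reduce to the degree $d-1$ case. Writing $P(n) = c_d n^d + \dots$, the identity $\partial_1^d[P(an)] = d!\, c_d a^d$ (constant in $n$) combined with the hypothesis $\|P(a\cdot)\|_{C^\infty(\frac{1}{a}I)} \leq 1/\delta$ gives $\|d!\, c_d a^d\|_{\R/\Z} \leq (2A/|I|)^d/\delta$ for at least $\delta A$ integers $a \in [A, 2A]$. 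Applying Lemma~\ref{vin} to $a \mapsto d!\, c_d a^d$ on the interval $[A, 2A]$ either returns the first alternative, giving $|I| \ll_d A\delta^{-2/d}$, or produces an integer $q_d^* \ll_d \delta^{-O_d(1)}$ with $\|q_d^* c_d\|_{\R/\Z} \ll_d \delta^{-O_d(1)}/|I|^d$ (by reading off the $j = d$ component of the $C^\infty$ bound supplied by Vinogradov).

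Next I would let $m$ be the nearest integer to $q_d^* c_d$, set $\eta := q_d^* c_d - m$ (so $|\eta| \ll_d \delta^{-O_d(1)}/|I|^d$), and choose $n_0 \in \Z$ near the center of $I$. Re-expanding $P(n) = c_d(n - n_0)^d + \tilde P(n)$ with $\deg \tilde P \leq d-1$, define the rescaled lower-order part $\tilde R := q_d^* \tilde P$. Re-centering at $n_0$ rather than at the origin is essential, since it guarantees $|\partial_1^j (n-n_0)^d| \ll_d |I|^{d-j}$ for $n \in I$ regardless of how far $I$ is from the origin. The hard part will be verifying that $\tilde R$ satisfies the corollary's hypothesis with $\delta$ replaced by some $\delta' = c_d \delta^{C_d}$: writing $\tilde R(an) = q_d^* P(an) - q_d^* c_d (an - n_0)^d$ and decomposing $q_d^* c_d (an - n_0)^d = m(an-n_0)^d + \eta(an - n_0)^d$, the integer-valued term $m(an-n_0)^d$ and all its finite differences vanish modulo $1$; the hypothesis on $P$ controls the $q_d^* P(an)$ contribution by $q_d^*/\delta$; and the $\eta$-contribution is bounded via $|\eta \cdot \partial_1^j (an-n_0)^d| \ll_d \delta^{-O_d(1)} a^j/|I|^j$, using $|\partial_1^j (an-n_0)^d| \ll_d a^j |I|^{d-j}$ for $n \in \frac{1}{a}I$. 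Summing these estimates yields $\|\tilde R(a\cdot)\|_{C^\infty(\frac{1}{a}I)} \ll_d \delta^{-O_d(1)}$ for the same set of $\delta A$ good values of $a$.

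Applying the inductive hypothesis to $\tilde R$ then gives either $|I| \ll_d A\delta^{-O_d(1)}$ directly, or an integer $q' \ll_d \delta^{-O_d(1)}$ with $\|q' \tilde R\|_{C^\infty(I)} \ll_d \delta^{-O_d(1)}$. Setting $q := q' q_d^* \ll_d \delta^{-O_d(1)}$, one has the polynomial congruence
\[
qP(n) \equiv q' \eta (n-n_0)^d + q' \tilde R(n) \pmod 1
\]
for all $n \in \Z$. For $n \in I$ and $0 \leq j \leq d$, the bound $|\partial_1^j(n-n_0)^d| \ll_d |I|^{d-j}$ shows that the first summand contributes at most $O_d(\delta^{-O_d(1)}/|I|^j)$ to $\|\partial_1^j(qP)(n)\|_{\R/\Z}$, while the inductive bound controls the second contribution similarly. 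Multiplying by $|I|^j$ and taking the supremum over $j$ and $n$ closes the induction.
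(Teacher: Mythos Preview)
Your proof is correct and follows essentially the same strategy as the paper: extract control on the top-degree coefficient via Lemma~\ref{vin} applied to the polynomial $a \mapsto (\text{const}) \cdot a^d$, then peel off that top term and reduce to lower degree. The differences are organizational rather than substantive. You package the descent as a clean induction on $d$ (invoking the corollary itself as a black box at degree $d-1$), whereas the paper unrolls it as an explicit downward loop maintaining a decomposition $P = P_j + Q_j$ with $P_j$ already smooth and $Q_j$ of degree $\leq j$. You work in the centered monomial basis $(n-n_0)^d$ with $n_0$ near the midpoint of $I$, so that $|\partial_1^k(n-n_0)^d| \ll_d |I|^{d-k}$ on $I$ regardless of where $I$ sits; the paper instead uses the binomial basis $\binom{n}{j+1}$, exploiting that $\binom{n}{j+1}$ and all its finite differences are integer-valued so that the integer part of $q_j\alpha_{j+1}$ drops out modulo $1$. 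Both devices serve the same purpose of making the ``subtract the top term'' step harmless for the $C^\infty(I)$ norm; your recentering is arguably the more transparent of the two.
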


\begin{proof}  We allow all implied constants to depend on $d$.  We may assume that $|I| \geq C \delta^{-C} A$ for a large constant $C$ depending on $d$, as the claim is immediate otherwise.

We now claim that for each $0 \leq j \leq d$ that there exists a decomposition
\begin{equation}\label{epq}
 P = P_j + Q_j
\end{equation}
where $P_j \colon \Z \to \R/\Z$ is a polynomial of degree at most $d$ with
\begin{equation}\label{paj}
 \| q_j P_j \|_{C^\infty(I)} \ll \delta^{-O(1)}
\end{equation}
for some $1 \leq q_j \ll\delta^{-O(1)}$, and $Q_j \colon \Z \to \R/\Z$ is a polynomial of degree at most $j$.  For $j=d$ one can simply set $P_d=0$ and $Q_d=P$.  Now suppose by downward induction that $0 \leq j < d$ and the claim has already been proven for $j+1$.  From~\eqref{paj} (for $P_{j+1}$) we have
$$ \| q_{j+1} P_{j+1} \|_{C^\infty(I)} \ll \delta^{-O(1)}.$$
Routine Taylor expansion then gives
$$ \| q_{j+1} P_{j+1}(a \cdot) \|_{C^\infty(\frac{1}{a} I)} \ll \delta^{-O(1)}$$
for all $a \in [A,2A]$, thus by~\eqref{pdil} and the triangle inequality we have
$$ \| q_{j+1} Q_{j+1}(a \cdot) \|_{C^\infty(\frac{1}{a} I)} \ll \delta^{-O(1)}$$
for $\geq \delta A$ choices of $a \in [A,2A]$.

Now write $Q_{j+1}(n) = \alpha_{j+1} \binom{n}{j+1} + Q_j(n)$ where $Q_j$ is of degree at most $j$.  Taking $j+1$-fold derivatives, we see that
$$ \| a^{j+1} q_{j+1} \alpha_{j+1} \|_{\R/\Z} \ll \delta^{-O(1)} (A/|I|)^{j+1}$$
for $\geq \delta A$ choices of $a \in [A,2A]$.  Applying Lemma~\ref{vin} to the polynomial $a \to a^{j+1}q_{j+1}\alpha_{j+1}$ (and recalling that $|I|/A \geq C \delta^{-C}$ for a suitably large $C$ by assumption), we conclude that there is $1 \leq q \ll \delta^{O(1)}$ such that
$$ \| q (\cdot)^{j+1} q_{j+1} \alpha_{j+1} \|_{C^\infty([A,2A])} \ll \delta^{-O(1)} (A/|I|)^{j+1} $$
and hence on taking $j+1$-fold derivatives
$$ \| (j+1)! q q_{j+1} \alpha_{j+1} \|_{\R/\Z} \ll \delta^{-O(1)} |I|^{-j-1}.$$
If one then sets $q_j \coloneqq (j+1)! qq_{j+1}$ and $P_j(n) \coloneqq P_{j+1}(n) + \alpha_{j+1} \binom{n}{j+1}$, we obtain the decomposition~\eqref{epq}, and~\eqref{paj} follows from the triangle inequality.  This closes the induction.  Applying the claim with $j=0$, we obtain the corollary.
\end{proof}

\subsection{Equidistribution on nilmanifolds}\label{nilmanifold-sec}

We now recall some of the basic notation and results from~\cite{green-tao-ratner} concerning equidistribution of polynomial maps on nilmanifolds.

\begin{definition}[Filtered group] \label{def:FiltGroup}  Let $d \geq 1$. A \emph{filtered group} is a group $G$ (which we express in multiplicative notation $G = (G,\cdot)$ unless explicitly indicated otherwise) equipped with a filtration $G_\bullet = (G_i)_{i=0}^\infty$ of nested groups $G \geq G_0 \geq G_1 \geq \dots$ such that $[G_i,G_j] \leq G_{i+j}$ for all $i,j \geq 0$.  We say that this group has degree at most $d$ if $G_i$ is trivial for all $i>d$.  Given a filtered group of degree at most $d$, a \emph{polynomial map} $g \colon \Z \to G$ from $\Z$ to $G$ is a map of the form $g(n) = g_0 g_1^{\binom{n}{1}} \dots g_d^{\binom{n}{d}}$ where $g_i \in G_i$ for all $0 \leq i \leq d$; the collection of such maps will be denoted $\Poly(\Z \to G)$.
\end{definition}

The well-known Lazard--Leibman theorem (see e.g.,~\cite[Proposition 6.2]{green-tao-ratner}) asserts that $\Poly(\Z \to G)$ is a group under pointwise multiplication; also, from~\cite[Corollary 6.8]{green-tao-ratner} we see that if $g \colon \Z \to G$ is a polynomial map then so is $n \mapsto g(an+b)$ for any integers $a,b$.

If $G$ is a simply connected nilpotent Lie group, we write $\log G$ for the Lie algebra.  From the Baker--Campbell--Hausdorff formula\footnote{The reader may consult~\cite[Appendix B]{MRTTZ} for more details on the use of the Baker--Campbell--Hausdorff formula in the context of quantitative nilmanifold theory.} (see e.g.~\cite[Theorem 3.3]{hall}) we see that the exponential map $\exp \colon \log G \to G$ is a homeomorphism and hence has an inverse $\log \colon G \to \log G$.

\begin{definition}[Filtered nilmanifolds] \label{def:filtNilman}  Let $d, D \geq 1$ and $0 < \delta < 1$.  A \emph{filtered nilmanifold} $G/\Gamma$ of degree at most $d$, dimension $D$, and complexity at most $1/\delta$ consists of the following data:
\begin{itemize}
\item A filtered simply connected nilpotent Lie group $G$ of dimension $D$ equipped with a filtration $G_\bullet = (G_i)_{i=0}^\infty$ of degree at most $d$, with $G_0=G_1=G$ and all $G_i$ closed connected subgroups of $G$.
\item A lattice (i.e., a discrete cocompact subgroup $\Gamma$) of $G$, with the property that $\Gamma_i \coloneqq \Gamma \cap G_i$ is a lattice of $G_i$ for all $i \geq 0$.
\item  A linear basis $X_1,\dots,X_D$ (which we call a \emph{Mal'cev basis}) of $\log G$.
\end{itemize}
Furthermore we assume the following axioms:
\begin{itemize}
\item[(i)] For all $1 \leq i,j \leq D$ we have $[X_i,X_j] = \sum_{i,j < k \leq D} c_{ijk} X_k$ for some rational numbers $c_{ijk}$ of height at most $1/\delta$.
\item[(ii)]  For all $0 \leq i \leq d$, the vector space $G_i$ is spanned by the $X_j$ with $D - \dim G_i < j \leq D$.
\item[(iii)]  We have $\Gamma = \{ \exp(n_1 X_1) \dotsm \exp(n_D X_D): n_1,\dots,n_D \in \Z \}$.
\end{itemize}
It is easy to see that $G/\Gamma$ has the structure of a smooth compact $D$-dimensional manifold, which we equip with a probability Haar measure $d\mu_{G/\Gamma}$.  We define the metric $d_G$ on $G$ to be the largest right-invariant metric such that $d_G( \exp(t_1 X_1) \dotsm \exp(t_D X_D), 1) \leq \sup_{1 \leq i \leq D} |t_i|$ for all $t_1,\dots,t_D \in \R$.  We then define a metric $d_{G/\Gamma}$ on $G/\Gamma$ by the formula $d_{G/\Gamma}(x, y) \coloneqq \inf_{g\Gamma = x, h \Gamma = y} d_G(g,h)$.  The Lipschitz norm of a function $F \colon G/\Gamma \to \C$ is defined to be the quantity
$$ \sup_{x \in G/\Gamma} |F(x)| + \sup_{x,y \in G/\Gamma: x \neq y} \frac{|F(x)-F(y)|}{d_{G/\Gamma}(x,y)}.$$

A \emph{horizontal character} $\eta$ associated to a filtered nilmanifold is a continuous homomorphism $\eta \colon G \to \R$ that maps $\Gamma$ to the integers.

An element $\gamma$ of $G$ is said to be \emph{$M$-rational} for some $M \geq 1$ if one has $\gamma^r \in \Gamma$ for some natural number $1 \leq r \leq M$.  A subnilmanifold $G'/\Gamma'$ of $G/\Gamma$ (thus $G'$ is a closed connected subgroup of $G$ with $\Gamma'_i \coloneqq G'_i \cap \Gamma$ cocompact in $G'_i$ for all $i$) is said to be \emph{$M$-rational} if each element $X'_1,\dots,X'_{\dim G'}$ of the Mal'cev basis associated to $G$ is a linear combination of the $X_i$ with all coefficients rational of height at most $M$.

A \emph{rational subgroup} $G'$ of complexity at most $1/\delta$ is a closed connected subgroup of $G$ with the property that $\log G'$ admits a linear basis consisting of $\dim G'$ vectors of the form $\sum_{i=1}^D a_i X_i$, where each $a_i$ is a rational of height at most $1/\delta$.
\end{definition}

It is easy to see that every horizontal character takes the form $\eta(g) = \lambda( \log g)$ for some linear functional $\lambda \colon \log G \to \R$ that annihilates $\log [G,G]$ and maps $\log \Gamma$ to the integers.  From this one can verify that the number of horizontal characters of Lipschitz norm at most $1/\delta$ is at most $O_{d,D}( \delta^{-O_{d,D}(1)} )$.

From several applications of Baker--Campbell--Hausdorff formula we see that if $G$ has degree at most $d$ and $\gamma_1, \gamma_2 \in G$ are $M$-rational, then $\gamma_1 \gamma_2$ is $O_d(M^{O_d(1)})$-rational.

We have the following basic dichotomy between equidistribution and smoothness:

\begin{theorem}[Quantitative Leibman theorem]\label{qlt}  Let $0 < \delta < 1/2$, let $d,D \geq 1$, let $I$ be an interval with $|I| \geq 1$, and let $G/\Gamma$ be a filtered nilmanifold of degree at most $d$, dimension at most $D$, and complexity at most $1/\delta$.  Let $F \colon G/\Gamma \to \C$ be Lipschitz of norm at most $1/\delta$ and of mean zero (i.e., $\int_{G/\Gamma} F\ d\mu_{G/\Gamma} = 0$).  Suppose that $g \colon \Z \to G$ is a polynomial map with
$$ \Big|\sum_{n \in I} F(g(n)\Gamma)\Big|^* \geq \delta |I|.$$
Then there exists a non-trivial horizontal character $\eta \colon G \to \R/\Z$ of Lipschitz norm $O_{d,D}(\delta^{-O_{d,D}(1)})$ such that
$$ \| \eta \circ g\|_{C^\infty(I)} \ll_{d,D} \delta^{-O_{d,D}(1)}.$$
\end{theorem}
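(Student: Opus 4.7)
The plan is to reduce to the interval version of the quantitative Leibman theorem (for instance~\cite[Theorem 2.9]{green-tao-ratner}) by extracting a good arithmetic subprogression from the maximal sum, and then to transfer the resulting smoothness back to $I$ via an affine change of variables.

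First, unpacking the definition of $|\cdot|^*$, choose a subprogression $P = \{a + qm : m \in J\} \subset I \cap \Z$ (with positive step $q$ and $J$ a nonempty finite interval of integers) such that
$$\Bigl|\sum_{m \in J} F(g(a+qm)\Gamma)\Bigr| \geq \delta |I|.$$
Since $\|F\|_\infty \leq \|F\|_{\Lip} \leq 1/\delta$, this forces $|J| \geq \delta^2|I|$, and in particular $q \leq |I|/|J| \ll \delta^{-2}$. By the Lazard--Leibman theorem the reparametrized map $g_q(m) := g(a+qm)$ lies in $\Poly(\Z \to G)$ with the same degree filtration, so one may apply the interval form of the quantitative Leibman theorem to $g_q$ on $J$ with parameter $\delta|I|/|J| \geq \delta$. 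This produces a nontrivial horizontal character $\tilde\eta$ of Lipschitz norm $\ll \delta^{-O_{d,D}(1)}$ satisfying $\|\tilde\eta \circ g_q\|_{C^\infty(J)} \ll \delta^{-O_{d,D}(1)}$.

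It remains to transfer this into a $C^\infty(I)$-bound for $p := \tilde\eta \circ g$. Writing $p$ in the binomial basis as $p(n) = \sum_{k=0}^d \alpha_k \binom{n}{k}$, the substitution $n = a + qm$ gives $p(a+qm) = \sum_{k=0}^d \alpha_k' \binom{m}{k}$, where the $\alpha_k'$ are related to the $\alpha_k$ by an upper-triangular integer-coefficient linear system whose $k$-th diagonal entry is $q^k$. A standard downward induction (translating so that $0 \in J$, and extracting $\alpha_k'$ via $\partial_1^k[p(a+q\cdot)](0)$) converts $\|p(a+q\cdot)\|_{C^\infty(J)} \ll \delta^{-O(1)}$ into $\|\alpha_k'\|_{\R/\Z} \ll_d \delta^{-O(1)}|J|^{-k}$ for each $k$. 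Inverting the triangular system and invoking $q \ll \delta^{-2}$ and $|J| \geq \delta^2|I|$, one obtains an integer $q' \leq \mathrm{lcm}(1, q, \dots, q^d) \ll_d \delta^{-O_d(1)}$ with $\|q'\alpha_k\|_{\R/\Z} \ll \delta^{-O(1)}|I|^{-k}$ for all $k$. Setting $\eta := q'\tilde\eta$ yields a nontrivial horizontal character with the desired Lipschitz and $C^\infty$ bounds.

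The principal obstacle is this final transfer step, which requires inverting the affine change of variables at the level of $C^\infty$-norms while keeping the clearing integer $q'$ polynomial in $\delta^{-1}$ and preserving nontriviality of the character. This reduces to elementary linear algebra in the binomial basis, very much in the spirit of the downward induction used to prove Corollary~\ref{smooth-dilate}.
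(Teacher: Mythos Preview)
Your approach differs from the paper's, which after translating $I$ to $(0,N]$ simply invokes~\cite[Theorem~3.5]{shao-teravainen}; that result already accommodates the maximal sum $|\cdot|^*$ (equivalently, failure of total equidistribution), so no transfer step is needed. Your plan to reduce instead to the non-maximal version in~\cite{green-tao-ratner} and then undo the affine reparametrization is a legitimate and more self-contained alternative, but the transfer step as written has a real gap.

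The issue is that the off-diagonal entries of your upper-triangular system depend on $a$, not only on $q$: expanding $\binom{a+qm}{k}$ in the basis $\binom{m}{j}$ yields integer coefficients of size $\asymp_d (|a|+q)^{k-j}$. You translate $J$ to contain $0$, which pins $a$ to an element of $P \subset I$, but you never translate $I$ itself; if $I$ sits far from the origin (say $I=(X,X+H]$ with $X$ much larger than $H$), then $|a|$ can vastly exceed $|I|$ and the downward induction no longer yields $\|q'\alpha_k\|_{\R/\Z} \ll \delta^{-O(1)}|I|^{-k}$. The fix is exactly the move the paper makes for its own one-line proof: translate $I$ to $(0,N]$ at the outset, so that $a \in I$ forces $|a| \leq |I|$ and the off-diagonal entries are $\ll_d |I|^{k-j}$; the induction then goes through cleanly. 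A smaller imprecision: the induction actually produces $q' = q^{O_d(1)}$ with exponent strictly larger than $d$ (each step multiplies by a further power of $q$ in order to leverage the bound on the previously controlled $\alpha_k$), not $\mathrm{lcm}(1,q,\dots,q^d)=q^d$; this is harmless since $q \ll \delta^{-2}$.
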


\begin{proof} By applying a translation we may assume $I = (0,N]$ for some $N \geq 1$.  The claim now follows from~\cite[Theorem 3.5]{shao-teravainen}.
\end{proof}

Let $G/\Gamma$ be a filtered nilmanifold of dimension $D$ and complexity at most $1/\delta$, and let $G'$ be a rational subgroup of complexity at most $1/\delta$.  In~\cite[Proposition A.10]{green-tao-ratner} it is shown that $G'/\Gamma'$ can be equipped with the structure of a filtered nilmanifold of complexity $O_{d,D}(\delta^{-O_{d,D}(1)})$, where $\Gamma' \coloneqq \Gamma \cap G'$, $G'_i \coloneqq G_i \cap G'$, and the metrics $d_G, d_{G'}$ are comparable on $G'$ up to factors of $O_{d,D}(\delta^{-O_{d,D}(1)})$; one can view $G'/\Gamma'$ as a subnilmanifold of $G/\Gamma$.

One can easily verify from basic linear algebra and the Baker--Campbell--Hausdorff formula that the following groups are rational subgroups of $G$ of complexity $O_{d,D}(\delta^{-O_{d,D}(1)})$:
\begin{itemize}
\item The groups $G_i$ in the filtration for $0 \leq i \leq d$.
\item The kernel $\ker \eta$ of any horizontal character $\eta$ of Lipschitz norm $O_{d,D}(\delta^{-O_{d,D}(1)})$.
\item The center $Z(G) = \{ \exp(X): X \in \log G; [X,Y] = 0 \,\,\forall Y \in \log G \}$ of $G$.
\item The intersection $G' \cap G''$ or commutator $[G',G'']$ of two rational subgroups $G',G''$ of $G$ of complexity $O_{d,D}(\delta^{-O_{d,D}(1)})$.
\item The product $G' N$ of two rational subgroups $G',N$ of $G$ of complexity $O_{d,D}(\delta^{-O_{d,D}(1)})$, with $N$ normal.
\end{itemize}

We can quotient out a filtered nilmanifold by a normal subgroup to obtain another filtered nilmanifold, with polynomial control on complexity:

\begin{lemma}[Quotienting by a normal subgroup]\label{quotient-normal}  Let $G/\Gamma$ be a filtered nilmanifold of degree at most $d$, dimension $D$ and complexity at most $1/\delta$.  Let $N$ be a normal rational subgroup of $G$ of complexity at most $1/\delta$, and let $\pi \colon G \mapsto G/N$ be the quotient map.  Then $\pi(G)/\pi(\Gamma)$ can be given the structure of a filtered nilmanifold of degree at most $d$, dimension $D - \dim N$, and complexity $O_{d,D}(\delta^{-O_{d,D}(1)})$, such that
\begin{equation}\label{gh}
d_{\pi(G)}( \pi(g), \pi(h) ) \asymp_{d,D} \delta^{-O_{d,D}(1)} \inf_{n \in \mathbb{N}} d_G(g, nh)
\end{equation}
for any $g,h \in G$.
\end{lemma}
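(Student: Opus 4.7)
The plan is to construct the filtered nilmanifold structure on $\pi(G)/\pi(\Gamma)$ by first producing a Mal'cev basis of $\log G$ that is adapted to the subalgebra $\log N$ and the filtration $G_\bullet$ simultaneously, and then pushing the last $D-\dim N$ vectors down to $\log \pi(G) \cong \log G / \log N$. Concretely, since $N$ is a rational subgroup of complexity at most $1/\delta$, a quantitative Gram--Schmidt / basis-change argument (essentially the one carried out in Appendix A of~\cite{green-tao-ratner}) produces a new basis $Y_1,\dots,Y_D$ of $\log G$ with the following properties: each $Y_i$ is a rational linear combination of the $X_j$ of height $O_{d,D}(\delta^{-O_{d,D}(1)})$ (and vice versa); $Y_1,\dots,Y_{\dim N}$ is a Mal'cev basis of $\log N$ adapted to the filtration $N \cap G_\bullet$; and for every $0 \leq i \leq d$ the subspace $\log G_i$ is spanned by a terminal segment of the $Y_j$'s.

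Next I would define $\pi(G)_i \coloneqq \pi(G_i)$ and check the axioms of Definition~\ref{def:filtNilman}. Normality of $N$ makes each $\pi(G)_i$ a closed connected subgroup, the commutator inclusions $[\pi(G)_i,\pi(G)_j] \leq \pi(G)_{i+j}$ descend from those in $G$, and $\pi(G_i)$ is trivial for $i>d$. Setting $\bar Y_j \coloneqq d\pi(Y_j)$ for $\dim N < j \leq D$ gives a basis of $\log \pi(G)$; axioms (i) and (ii) follow from the corresponding statements for $Y_1,\dots,Y_D$ via the Baker--Campbell--Hausdorff formula, with all structure constants $c_{ijk}$ having height $O_{d,D}(\delta^{-O_{d,D}(1)})$. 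For axiom (iii) one must verify that $\pi(\Gamma)$ is a cocompact lattice of $\pi(G)$ which is generated in the Mal'cev parametrization by the $\bar Y_j$. This hinges on the fact that $N \cap \Gamma$ is a cocompact lattice of $N$ (so that $N\Gamma$ is closed and $\pi(\Gamma) = \Gamma N / N$ is discrete and cocompact), which itself follows from the rationality of the adapted basis $Y_1,\dots,Y_{\dim N}$ relative to $\Gamma$; the analogous statement for each $\pi(G)_i$ follows identically.

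For the metric comparison, recall that the metric $d_{\pi(G)}$ built from the Mal'cev basis $\bar Y_{\dim N + 1}, \dots, \bar Y_D$ is by construction the largest right-invariant metric satisfying the usual coordinate bound. Since the $Y_j$'s differ from the $X_j$'s by a rational change of basis of height $O_{d,D}(\delta^{-O_{d,D}(1)})$, the original right-invariant metric $d_G$ on $G$ is comparable, up to a factor of this order, to the right-invariant metric $d'_G$ built from the $Y_j$'s. Right-invariance then gives
\[
d_{\pi(G)}(\pi(g),\pi(h)) = \inf_{n \in N} d'_G(g, nh) \asymp_{d,D} \delta^{-O_{d,D}(1)} \inf_{n \in N} d_G(g, nh),
\]
which is the claimed inequality (reading the $\mathbb{N}$ in the statement as the subgroup $N$).

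The main obstacle is the quantitative bookkeeping in the adapted basis construction: one must change bases so as to respect \emph{both} the filtration $G_\bullet$ \emph{and} the subgroup $N$ while keeping all heights polynomial in $\delta^{-1}$. This is essentially a linear-algebra exercise over $\mathbb{Q}$, cleanest to carry out by combining the Mal'cev basis construction from~\cite[Appendix A]{green-tao-ratner} with the fact that an $O(1/\delta)$-rational subspace of a rational vector space admits an $O_{d,D}(\delta^{-O_{d,D}(1)})$-rational complement. Once this adapted basis is in hand, the filtration, lattice, and metric verifications are all routine pushdowns through $\pi$.
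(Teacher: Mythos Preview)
Your adapted-basis construction cannot work as stated. You ask for a basis $Y_1,\dots,Y_D$ of $\log G$ in which $Y_1,\dots,Y_{\dim N}$ span $\log N$ \emph{and} every $\log G_i$ is spanned by a terminal segment $Y_{D-\dim G_i+1},\dots,Y_D$. These two requirements are incompatible whenever $N$ meets the deeper part of the filtration. Take $G$ the Heisenberg group with the standard filtration $G_0=G_1=G$, $G_2=Z(G)$, and let $N=G_2$; then $\dim N=1$, so you need $Y_1$ to span $\log N=\log G_2$, but the terminal-segment condition forces $\log G_2$ to be spanned by $Y_3$, contradicting linear independence. The correct simultaneous adaptation would have to respect the double flag $\{\log G_i\}$ and $\{\log(N\cap G_i)\}$, with the $\log N$-vectors distributed through the basis rather than occupying an initial block; setting this up with polynomial height control is possible but is genuinely more work than you indicate, and the complement you push down is then not a terminal segment.

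The paper sidesteps this by working directly in the quotient: it takes the images $\tilde\pi(X_j)$ of the original Mal'cev basis, observes that for each $i$ the last $\dim G_i$ of these images span $\tilde\pi(\log G_i)$ (with rational relations of controlled height), and uses linear algebra to extract from them a basis $\tilde X_1,\dots,\tilde X_{D-\dim N}$ of $\log(G/N)$ adapted to the filtration. This is only a \emph{weak} basis for $\pi(G)/\pi(\Gamma)$ in the sense of~\cite[Definition~A.7]{green-tao-ratner}, since $\pi(\Gamma)$ is generated by the $\exp(\tilde\pi(X_j))$ rather than by the $\exp(\tilde X_j)$; one then invokes~\cite[Proposition~A.9]{green-tao-ratner} to upgrade it to a genuine Mal'cev basis. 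For the metric comparison, your asserted equality $d_{\pi(G)}(\pi(g),\pi(h)) = \inf_{n\in N} d'_G(g,nh)$ is not immediate from the definition of the Mal'cev metric as the \emph{largest} right-invariant metric with a coordinate bound; the paper instead bounds the two directions separately, using the operator norm of $\tilde\pi$ for the upper bound and~\cite[Lemma~A.4]{green-tao-ratner} together with a coordinatewise check for the lower bound.
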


\begin{proof}  We allow all implied constants to depend on $d,D$.  Let $\tilde \pi \colon \log G \to \log G / \log N \equiv \log(G/N)$ be the quotient map of $\log G$ by the Lie algebra ideal $\log N$, then $\pi \circ \exp = \exp \circ \tilde \pi$.  For each $0 \leq i \leq d$, the vectors $\tilde \pi(X_j)$ for $D - \dim G_i < j \leq D$ span the linear subspace $\tilde \pi(\log G_i)$ of $\log(G/N)$, and the linear relations between those vectors are are generated by $O(1)$ equations with coefficients rational of height $O(\delta^{-O(1)})$.  From this and linear algebra we may find a basis $\tilde X_1,\dots,\tilde X_{\dim(G/N)}$ of $\log(G/N)$ such that for each $0 \leq i \leq d$, $\tilde \pi(\log G_i)$ is the span of $\tilde X_j$ for $\dim(G/N) - \dim \tilde \pi(\log G_i) < j \leq \dim(G/N)$, and each $\tilde X_j$ is a linear combination of the
$\tilde \pi(X_1),\dots,\tilde \pi(X_D)$ with coefficients rational of height $O(\delta^{-O(1)})$.  Meanwhile, $\pi(\Gamma)$ is generated by $\pi(X_1),\dots,\pi(X_D)$.  From this and the Baker--Campbell--Hausdorff formula we see that the basis $\tilde X_1,\dots,\tilde X_{\dim(G/N)}$ is a \emph{$O(\delta^{-O(1)})$-rational weak basis} for $\pi(G)/\pi(\Gamma)$ in the sense of~\cite[Definition A.7]{green-tao-ratner}.  Applying~\cite[Proposition A.9]{green-tao-ratner} to this weak basis, we obtain a Mal'cev basis that gives $\pi(G)/\pi(\Gamma)$ the structure of a filtered nilmanifold with the stated degree, dimension, and complexity.  It remains to establish the bound~\eqref{gh}.  By right translation invariance we can take $g$ to be the identity.  For the upper bound, it suffices (since $\pi$ is $N$-invariant) to show that
$$ d_{\pi(G)}(1, \pi(h) ) \ll \delta^{-O(1)} d_G(1, h),$$
but this follows from the fact that $\tilde \pi \colon \log G \to \tilde \pi(\log G)$ has operator norm $O(\delta^{-O(1)})$ when using the $X_1,\dots,X_D$ basis for $\log G$ and the $\tilde X_1,\dots,\tilde X_{\dim(G/N)}$ basis for $\tilde \pi(\log G)$ to define norms.

Now we need to establish the lower bound.  By~\cite[Lemma A.4]{green-tao-ratner} it suffices to show that
$$ \| Y \| \gg \delta^{-O(1)} \inf_{Y' \in \tilde \pi^{-1}(Y)} \|Y'\|$$
for any $Y \in \tilde \pi(\log G)$, where again we use the norm given by the $X_1,\dots,X_D$ basis for $\log G$ and the $\tilde X_1,\dots,\tilde X_{\dim(G/N)}$.  But this is easily verified for each $Y = \tilde X_i$, and the claim then follows by linearity.
\end{proof}

A \emph{central frequency} is a continuous homomorphism $\xi \colon Z(G) \to \R$ which maps $Z(G) \cap \Gamma$ to the integers $\Z$ (that is to say, a horizontal character on $Z(G)$, or a Fourier character of the central torus $Z(G) / (Z(G) \cap \Gamma)$).  A function $F \colon G/\Gamma \to \C$ is said to \emph{oscillate with central frequency} $\xi$ if one has the identity
$$ F(zx) = e(\xi(z)) F(x)$$
for all $x \in G/\Gamma$ and $z \in Z(G)$.  As with horizontal characters, the number of central frequencies $\xi$ of Lipschitz norm at most $1/\delta$ is $O_{d,D}(\delta^{-O_{d,D}(1)})$.  If $\xi$ is such a central frequency, one can readily verify that the kernel $\ker \xi$ is a rational normal subgroup of $G$ of complexity $O_{d,D}(\delta^{-O_{d,D}(1)})$.

We have the following convenient decomposition\footnote{The decomposition in~\cite{green-tao-ratner} uses the action of the vertical group $G_d$ (which is a subgroup of the center $Z(G)$) rather than the entire center, but the arguments are otherwise nearly identical.  One can think of Proposition~\ref{central} as a slight refinement of~\cite[Lemma 3.7]{green-tao-ratner}, in that the components exhibit central oscillation rather than merely vertical oscillation.} (cf.,~\cite[Lemma 3.7]{green-tao-ratner}):

\begin{proposition}[Central Fourier approximation]\label{central}  Let $d,D \geq 1$ and $0 < \delta < 1$.  Let $G/\Gamma$ be a filtered nilmanifold of degree at most $d$, dimension $D$, and complexity at most $1/\delta$.  Let $F \colon G/\Gamma \to \C$ be a Lipschitz function of norm at most $1/\delta$.  Then we can decompose
$$ F = \sum_\xi F_\xi + O(\delta)$$
where $\xi$ ranges over central frequencies of Lipschitz norm at most $O_{d,D}(\delta^{-O_{d,D}(1)})$, and each $F_\xi$ has Lipschitz norm
$O_{d,D}(\delta^{-O_{d,D}(1)})$ and oscillates with central frequency $\xi$.  Furthermore, if $F$ has mean zero, then so do all of the $F_\xi$.
\end{proposition}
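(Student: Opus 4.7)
The plan is to obtain the decomposition by Fourier analysis along the central torus $T := Z(G)/(Z(G) \cap \Gamma)$. Since $Z(G)$ is a rational subgroup of $G$ of complexity $O_{d,D}(\delta^{-O_{d,D}(1)})$, the quotient $T$ is a torus of dimension $k := \dim Z(G) \leq D$, and its lattice structure (together with the set of central frequencies $\xi \colon Z(G) \to \R$ and their Lipschitz norms) may be identified with $\mathbb{T}^k$ and its usual dual $\Z^k$ with only polynomial-in-$\delta^{-1}$ distortion. In this identification, a central frequency of Lipschitz norm at most $N$ corresponds to an integer vector of $\ell^\infty$ norm at most $O_{d,D}(\delta^{-O_{d,D}(1)}) N$, and conversely.

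For each central frequency $\xi$, define
\[
F_\xi(x) := \int_{T} F(zx)\, e(-\xi(z))\, dz.
\]
Because $z \in Z(G)$ commutes with every element of $G$ (and in particular with every $\gamma \in \Gamma$), one checks that $d_{G/\Gamma}(zx, zy) = d_{G/\Gamma}(x, y)$, so $F_\xi$ inherits a Lipschitz norm of at most $\|F\|_{\Lip} \leq 1/\delta$. Substituting $z \mapsto zw^{-1}$ for $w \in Z(G)$ yields $F_\xi(wx) = e(\xi(w)) F_\xi(x)$, so $F_\xi$ oscillates with central frequency $\xi$. If $F$ is mean zero, then Fubini together with the $G$-invariance of Haar measure on $G/\Gamma$ shows that each $F_\xi$ is also mean zero.

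To truncate to bounded $\xi$ while controlling the $L^\infty$ error, I would convolve in the central direction by a suitable approximate identity on $T$. Let $K_N \colon T \to [0,\infty)$ be a non-negative kernel with $\int_T K_N = 1$, Fourier expansion supported in $\{\xi : |\xi| \leq N\}$, and first moment $\int_T K_N(z)\, d_T(z, e)\, dz \ll_{d,D} N^{-1}$ (a Jackson-type kernel on $\mathbb{T}^k$ suffices; an iterated Fej\'er kernel also works after adjusting $N$ by a $\log$ factor). Setting
\[
\tilde F(x) := \int_T K_N(z) F(zx)\, dz,
\]
the expansion of $K_N$ and an interchange of sum and integral yields $\tilde F = \sum_{|\xi| \leq N} \hat K_N(-\xi) F_\xi$. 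Each summand still oscillates with frequency $\xi$ and has Lipschitz norm $O_{d,D}(\delta^{-O_{d,D}(1)})$ since $|\hat K_N(-\xi)| \leq \|K_N\|_1 = 1$. The $L^\infty$ approximation bound
\[
\bigl| \tilde F(x) - F(x) \bigr| \leq \int_T K_N(z) |F(zx) - F(x)|\, dz \ll \delta^{-1} \int_T K_N(z)\, d_{G/\Gamma}(zx, x)\, dz \ll_{d,D} \delta^{-1} N^{-1}
\]
then allows us to choose $N = C_{d,D}\, \delta^{-C_{d,D}}$ large enough to make the error at most $\delta$. Absorbing the bounded coefficients $\hat K_N(-\xi)$ into the $F_\xi$ yields the claimed decomposition, and the number of $\xi$ of Lipschitz norm $\leq N$ is $O_{d,D}(\delta^{-O_{d,D}(1)})$ as required.

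The main obstacle here is not conceptual but notational: one needs to verify that the identification of $T$ with a standard torus (and consequently of central frequencies with their Lipschitz norms) has only polynomial $\delta^{-1}$-distortion, so that the standard Fej\'er/Jackson estimates on $\mathbb{T}^k$ transport back with polynomial complexity, and that the quantitative Mal'cev-basis facts from Section~\ref{nilmanifold-sec} apply to the restriction to $\log Z(G)$. This bookkeeping is essentially the same as in the vertical-torus analogue~\cite[Lemma 3.7]{green-tao-ratner}; the only substantive change is replacing the vertical group $G_d$ throughout by the possibly larger center $Z(G)$, which is still a rational subgroup of controlled complexity.
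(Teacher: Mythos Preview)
Your proposal is correct and follows essentially the same approach as the paper: Fourier analysis along the central torus $Z(G)/(Z(G)\cap\Gamma)$, followed by a smooth frequency truncation and a Lipschitz estimate to control the remainder. The only cosmetic difference is the choice of smoothing: the paper multiplies the Fourier coefficients by $\varphi(k_\xi/R)$ for a fixed smooth bump $\varphi$ on $\R^m$ (so that on the physical side one is convolving with the rescaled Schwartz function $\hat\varphi$), whereas you convolve with a Fej\'er/Jackson kernel on the torus; both produce a finite sum of centrally oscillating pieces with the same polynomial dependence on $\delta^{-1}$.
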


\begin{proof}  We allow all implied constants to depend on $d,D$.  Since $Z(G) / (Z(G) \cap \Gamma)$ is an abelian filtered nilmanifold of complexity $O(\delta^{-O(1)})$, it can be identified with a torus $\R^m/\Z^m$, where $m=O(1)$ and the metric on $Z(G)$ is comparable to the metric on $\R^m$ up to factors of $O(\delta^{-O(1)})$; the identification of $\log Z(G)$ with $\R^m$ induces a logarithm map $\log \colon Z(G) \to \R^m$ and an exponential map $\exp \colon \R^m \to Z(G)$.  Central frequencies $\xi$ can then be identified with elements $k_\xi$ of $\Z^m$, with $\xi(z) = k_\xi \cdot \log(z)$ for any $z \in Z(G)$.

Let $\varphi: \R^m \to \R$ be a fixed bump function (depending only on $m$) that equals $1$ at the origin, and let $R>1$ be a parameter to be chosen later.  For any central frequency $\xi$, we set
$$ F_\xi(x) \coloneqq \varphi(k_\xi/R) \int_{\R^m/\Z^m} F(zx) e(-\xi(z))\ dz$$
where $dz$ is Haar probability measure on the torus $\R^m/\Z^m$, which acts centrally on $G/\Gamma$ in the obvious fashion.  It is easy to see that $F_\xi$ has Lipschitz norm $O(\delta^{-O(1)})$, oscillates with central frequency $\xi$, and vanishes unless $\xi$ has Lipschitz norm $O( \delta^{-O(1)} R^{O(1)} )$; also, if $F$ has mean zero, then so do all of the $F_\xi$.  From the Fourier inversion formula we have
$$ \varphi(k_\xi/R) = \int_{\R^m} \hat \varphi(y) e( k_\xi \cdot y/R )\ dy = \int_{\R^m} \hat \varphi(y) e( \xi(\exp(y/R)) )\ dy ,$$
where $\hat \varphi(y) \coloneqq \int_{\R^m} \varphi(\zeta) e(-\zeta \cdot y)\ d\zeta$, as well as the Fourier inversion formula on the torus,
$$ \sum_\xi F_\xi(x) = \int_{\R^m} \hat \varphi(y) F( \exp(y/R) x )\ dy.$$
On the other hand, from the Lipschitz nature of $F$ we have
$$ F( \exp(y/R) x )  = F(x) + O( \delta^{-O(1)} |y| / R ).$$
Since $\hat \varphi$ is rapidly decreasing and has total integral $1$, we obtain
$$ F = \sum_\xi F_\xi + O(\delta^{-O(1)}/R),$$
and the claim follows by choosing $R = O(\delta^{-O(1)})$ suitably.
\end{proof}

Next we shall recall a fundamental factorization theorem for polynomial sequences. Before we can state it, we need to define a few notions.

\begin{definition}[Smoothness, total equidistribution, rationality]  Let $G/\Gamma$ be a filtered nilmanifold, $g \in \Poly(\Z \to G)$ be a polynomial sequence, $I \subset \R$ be an interval of length $|I| \geq 1$, and $M>0$.
\begin{itemize}
\item[(i)] We say that $g$ is \emph{$(M,I)$-smooth} if one has
$$ d_G(g(n), 1_G) \leq M; \quad d_G(g(n), g(n-1)) \leq M/|I|$$
for all $n \in I$.
\item[(ii)]  We say that $g$ is \emph{totally $1/M$-equidistributed} in $G/\Gamma$ on $I$ if one has
$$ \left| \frac{1}{|P|} \sum_{n \in P} F(g(n)\Gamma) - \int_{G/\Gamma} F \right| \leq \frac{1}{M} \|F\|_{\Lip}$$
whenever $F \colon G/\Gamma \to \C$ is Lipschitz and $P$ is an arithmetic progression in $I$ of cardinality at least $|I|/M$.
\item[(iii)]  We say that $g$ is \emph{$M$-rational} if there exists $1 \leq r \leq M$ such that $g(n)^r \in \Gamma$ for all $n \in \Z$.
\end{itemize}
\end{definition}

From Taylor expansion and the Baker--Campbell--Hausdorff formula it is not difficult to see that if $G/\Gamma$ has degree at most $d$ and $g$ is $M$-rational, then the map $n \mapsto g(n) \Gamma$ is $q$-periodic for some period $1 \leq q \ll_d M^{O_d(1)}$.

\begin{lemma}\label{factor-simple}
Let $d,D \geq 1$ and $0 < \delta < 1$.  Let $G/\Gamma$ be a filtered nilmanifold of degree at most $d$, dimension $D$, and complexity at most $1/\delta$.  Let $g \in \Poly(\Z \to G)$, and let $I$ be an interval with $|I| \geq 1$.  Suppose that
\begin{equation}\label{etag-smooth}
 \|\eta \circ g\|_{C^{\infty}(I)} \leq 1/\delta
\end{equation}
for some non-trivial horizontal character $\eta : G \rightarrow \R/\Z$ of Lipschitz norm at most $1/\delta$.
Then there is a decomposition $g = \eps g' \gamma$ into polynomial maps $\eps, g', \gamma \in \Poly(\Z \to G)$ such that
\begin{itemize}
\item[(i)] $\eps$ is $(\delta^{-O_{d,D}(1)},I)$-smooth;
\item[(ii)]  $g'$ takes values in $G' = \ker\eta$;
\item[(iii)] $\gamma$ is $\delta^{-O_{d,D}(1)}$-rational.
\end{itemize}
\end{lemma}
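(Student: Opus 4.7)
The plan is to reduce to a scalar decomposition of $P \coloneqq \eta \circ g \colon \Z \to \R$ (a polynomial of degree at most $d$ taking real values) and then lift the pieces to $G$ level by level in the filtration $G_\bullet$. Fix an integer $x_0 \in I$ and expand $P(n) = \sum_{j=0}^d \tilde\alpha_j \binom{n-x_0}{j}$. Since $\partial_1^j P(x_0+j) = \tilde\alpha_j$, the hypothesis~\eqref{etag-smooth} translates to $\|\tilde\alpha_j\|_{\R/\Z} \ll_d \delta^{-1}|I|^{-j}$ once $|I|$ exceeds a constant depending on $d$; the residual small-interval case is handled by direct rounding. Letting $\tilde r_j \in \Z$ be the nearest integer to $\tilde\alpha_j$ and $\tilde s_j \coloneqq \tilde\alpha_j - \tilde r_j$ (so $|\tilde s_j| \ll_d \delta^{-1}|I|^{-j}$) and setting
\[ \tilde\epsilon_P(n) \coloneqq \sum_{j=0}^d \tilde s_j \binom{n-x_0}{j}, \qquad \tilde\gamma_P(n) \coloneqq \sum_{j=0}^d \tilde r_j \binom{n-x_0}{j}, \]
one obtains an exact identity $P = \tilde\epsilon_P + \tilde\gamma_P$ in $\R$, with $\tilde\gamma_P \colon \Z \to \Z$, $|\tilde\epsilon_P(n)| \ll_d \delta^{-1}$ on $I$, and $|\tilde\epsilon_P(n) - \tilde\epsilon_P(n-1)| \ll_d \delta^{-1}/|I|$ on $I$.

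The lift of this decomposition to $G$ must be made one filtration level at a time. Writing $\eta = \lambda \circ \log$ with $\lambda$ having integer coordinates of magnitude $\ll \delta^{-1}$ on the Mal'cev basis $X_1,\dots,X_D$, note that if $\tilde\alpha_j \neq 0$ then $\tilde\alpha_j = \eta(g_j)$ for some $g_j \in G_j$, which forces $\lambda$ to be nonzero on $\log G_j$; so one can pick a Mal'cev basis vector $X_{i_j} \in \log G_j$ with $q_j \coloneqq \lambda(X_{i_j}) \in \Z \setminus \{0\}$, $|q_j| \ll \delta^{-1}$, and set
\[ \epsilon_j \coloneqq \exp\!\left(\tfrac{\tilde s_j}{q_j} X_{i_j}\right) \in G_j, \qquad \gamma_j \coloneqq \exp\!\left(\tfrac{\tilde r_j}{q_j} X_{i_j}\right) \in G_j \]
(with $\epsilon_j = \gamma_j = 1_G$ when $\tilde\alpha_j = 0$), so that $\eta(\epsilon_j) = \tilde s_j$ and $\eta(\gamma_j) = \tilde r_j$. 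Assembling $\epsilon(n) \coloneqq \prod_{j=0}^d \epsilon_j^{\binom{n-x_0}{j}}$ and $\gamma(n) \coloneqq \prod_{j=0}^d \gamma_j^{\binom{n-x_0}{j}}$ in a fixed order places both in $\Poly(\Z \to G)$ for the filtration $G_\bullet$, since the $j$-th factor lies in $G_j$; and because $\eta$ is a homomorphism to the abelian group $\R$, one obtains the exact scalar identities $\eta \circ \epsilon = \tilde\epsilon_P$ and $\eta \circ \gamma = \tilde\gamma_P$ in $\R$.

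Finally, set $g'(n) \coloneqq \epsilon(n)^{-1} g(n) \gamma(n)^{-1}$; the Lazard--Leibman theorem gives $g' \in \Poly(\Z \to G)$, and $\eta(g'(n)) = -\tilde\epsilon_P(n) + P(n) - \tilde\gamma_P(n) = 0$ in $\R$ shows that $g'$ takes values in $G' = \ker\eta$, proving (ii). For (i), the estimate $|\tfrac{\tilde s_j}{q_j} \binom{n-x_0}{j}| \ll_d \delta^{-1}$ on $I$ bounds the Mal'cev coordinate of each factor $\epsilon_j^{\binom{n-x_0}{j}}$ by $\delta^{-O(1)}$, and right-invariance of $d_G$ combined with the Baker--Campbell--Hausdorff estimates on products yields the required smoothness bounds (the $1/|I|$ gain for $d_G(\epsilon(n),\epsilon(n-1))$ coming from the first-difference bound on $\tilde\epsilon_P$). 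For (iii), $\gamma_j^{q_j} = \exp(X_{i_j})^{\tilde r_j} \in \Gamma$ since $\tilde r_j \in \Z$ and $\exp(X_{i_j}) \in \Gamma$, so each $\gamma_j^{\binom{n-x_0}{j}}$ is $|q_j|$-rational, and the standing remark that the product of two $M$-rational elements is $O_d(M^{O_d(1)})$-rational (a consequence of Baker--Campbell--Hausdorff) upgrades the product $\gamma(n)$ to being $\delta^{-O_{d,D}(1)}$-rational uniformly in $n$. The main technical obstacle is the filtration-compatibility of the lift: a naive one-parameter-subgroup construction with a single $X \in \log G$ satisfying $\lambda(X)=1$ would place $\exp(sX)$ outside $\log G_j$ for $j \geq 2$ and destroy the polynomial structure with respect to $G_\bullet$, so one must distribute the construction across filtration levels, exploiting precisely that each nonzero $\tilde\alpha_j$ forces $\lambda$ to be nontrivial on $\log G_j$.
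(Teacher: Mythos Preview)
Your approach is correct and close in spirit to the paper's, but the lifting step is executed differently. The paper works directly in Mal'cev (second-kind) coordinates: writing $\psi(g(n)) = \sum_i t_i \binom{n}{i}$ with $t_i \in \R^D$, it perturbs each $t_i$ (for $i\geq 1$) in a single coordinate to $u_i$ with $k \cdot u_i \in \Z$, picks rational $v_i$ with $k\cdot v_i = k\cdot u_i$, and defines $\eps, \gamma$ by specifying their Mal'cev coordinates as $t_0 + \sum_{i\geq 1}(t_i-u_i)\binom{n}{i}$ and $\sum_{i\geq 1} v_i \binom{n}{i}$. You instead project first to the scalar $P = \eta\circ g$, round each binomial coefficient $\tilde\alpha_j = \tilde r_j + \tilde s_j$, and then lift each piece back via a single one-parameter subgroup $t\mapsto\exp(t X_{i_j})$ chosen inside $\log G_j$. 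Your lifting is in fact cleaner for the constant term: your $\eps_0$ automatically satisfies $d_G(\eps_0,1) \ll \delta^{-1}$ because only the scalar $\tilde s_0$ is lifted, whereas the paper's $\eps$ carries the full (possibly unbounded) constant $t_0 = \psi(g(0))$ and, as written, needs an additional rounding of $t_0$ to an integer vector to secure $d_G(\eps(n),1)\ll \delta^{-O(1)}$.

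One point in your argument should be tightened. The definition of ``$M$-rational'' for a polynomial sequence asks for a \emph{single} $r \leq M$ with $\gamma(n)^r \in \Gamma$ for all $n$, but the product-of-rationals remark you invoke is stated for individual group elements and a priori only gives an $r_n \leq M$ depending on $n$. To obtain a uniform $r$, note that each factor $\gamma_j^{\binom{n-x_0}{j}} = \exp\bigl((\tilde r_j \binom{n-x_0}{j}/q_j)\, X_{i_j}\bigr)$ has its nonzero Mal'cev coordinate in $\tfrac{1}{Q}\Z$ for $Q = \prod_j q_j \ll \delta^{-(d+1)}$; by Baker--Campbell--Hausdorff (with the rational structure constants of height $\leq 1/\delta$) the Mal'cev coordinates of the product $\gamma(n)$ then all lie in $\tfrac{1}{Q'}\Z$ for a fixed $Q' \ll_{d,D} \delta^{-O_{d,D}(1)}$ independent of $n$, and this yields a uniform $r \ll_{d,D} \delta^{-O_{d,D}(1)}$ via the standard relation between rational Mal'cev coordinates and rationality (cf.\ \cite[Lemma~A.11]{green-tao-ratner}).
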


\begin{proof}
This is a slight variant of~\cite[Lemma 7.9]{green-tao-ratner}, the main difference being that our hypothesis~\eqref{etag-smooth} involves $\eta \circ g$ rather than $\eta \circ g_2$ (where $g_2$ is the nonlinear part of $g$). The argument in the proof of~\cite[Lemma 7.9]{green-tao-ratner} can be modified in an obvious manner as follows. By translation we may assume that $I = [1, |I|]$. Let $\psi: G\rightarrow \R^D$ be the Mal'cev coordinate map.  Suppose that
$$ \psi(g(n)) = t_0 + \binom{n}{1}t_1 + \binom{n}{2}t_2 + \cdots + \binom{n}{d}t_d $$
for some $t_0, t_1,\cdots,t_d \in \R^D$ with $\psi^{-1}(t_i) \in G_i$. Our assumption on $\|\eta \circ g\|_{C^{\infty}(I)}$ implies that for some $k \in \Z^D$ with $|k| \leq \delta^{-1}$, we have
$$ \|k \cdot t_i\|_{\R/\Z} \ll \delta^{-O_{d,D}(1)}/|I| $$
for each $1 \leq i \leq d$. Choose $u_i \in \R^D$ with $\psi^{-1}(u_i) \in G_i$, such that
$$ k \cdot u_i \in \Z, \ \ |t_i - u_i| \ll \delta^{-O_{d,D}(1)}/|I|. $$
Then choose $v_i \in \R^D$ with $\psi^{-1}(v_i) \in G_i$, all of whose coordinates are rationals over some denominator $\ll \delta^{-O_{d,D}(1)}$, such that
$$ k \cdot u_i = k \cdot v_i $$
for each $1 \leq i \leq d$.
Define $\eps,\gamma$ by
$$ \psi(\eps(n)) = t_0 + \sum_{i=1}^d \binom{n}{i}(t_i - u_i), \ \ \psi(\gamma(n)) = \sum_{i=1}^d \binom{n}{i}v_i, $$
and then define $g'$ by
$$ g'(n) = \eps(n)^{-1}g(n) \gamma(n)^{-1}. $$
One can verify that they satisfy the desired properties.
\end{proof}

\begin{theorem}[Factorization theorem]\label{factor}  Let $d,D \geq 1$ and $0 < \delta < 1$.  Let $G/\Gamma$ be a filtered nilmanifold of degree at most $d$, dimension $D$, and complexity at most $1/\delta$.  Let $g \in \Poly(\Z \to G)$ and $A>0$, and let $I$ be an interval with $|I| \geq 1$.  Then there exists an integer $1/\delta \leq M \ll_{A,D,d} \delta^{-O_{A,D,d}(1)}$ and a decomposition $g = \eps g' \gamma$ into polynomial maps $\eps, g', \gamma \in \Poly(\Z \to G)$ such that
\begin{itemize}
\item[(i)] $\eps$ is $(M,I)$-smooth;
\item[(ii)]  There is an $M$-rational subnilmanifold $G'/\Gamma'$ of $G/\Gamma$ such that $g'$ takes values in $G'$ and is totally $1/M^A$-equidistributed on $I$ in $G'/\Gamma'$, and more generally in $G'/\Gamma''$ whenever $\Gamma''$ is a subgroup of $\Gamma'$ of index at most $M^A$; and
\item[(iii)] $\gamma$ is $M$-rational.
\end{itemize}
\end{theorem}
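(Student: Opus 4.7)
The plan is to follow the iterative strategy from the proof of the Green--Tao factorization theorem in~\cite{green-tao-ratner}, with modifications to accommodate equidistribution in arbitrary finite covers $G'/\Gamma''$ with $[\Gamma':\Gamma''] \leq M^A$.

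We iterate Lemma~\ref{factor-simple} as follows. Initialize $g_0 \coloneqq g$, $G^{(0)} \coloneqq G$, and a complexity parameter $M_0 \coloneqq 1/\delta$. At the $i$-th stage we have a polynomial map $g_i \in \Poly(\Z \to G)$ taking values in an $M_i$-rational subgroup $G^{(i)}$ of $G$. We test whether $g_i$ is totally $1/M_i^A$-equidistributed in $G^{(i)}/\Gamma''$ for every subgroup $\Gamma'' \subseteq G^{(i)} \cap \Gamma$ of index at most $M_i^A$. If yes, we terminate and set $g' \coloneqq g_i$, $G' \coloneqq G^{(i)}$. Otherwise, some such $\Gamma''$ witnesses a failure, and Theorem~\ref{qlt} applied to the cover $G^{(i)}/\Gamma''$ produces a non-trivial horizontal character $\eta'$ on $G^{(i)}/\Gamma''$ of Lipschitz norm $\ll M_i^{O_{A,d,D}(1)}$ satisfying $\|\eta' \circ g_i\|_{C^\infty(I)} \ll M_i^{O_{A,d,D}(1)}$. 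Multiplying $\eta'$ by $[G^{(i)} \cap \Gamma : \Gamma''] \leq M_i^A$ yields a horizontal character $\eta$ on $G^{(i)}/(G^{(i)} \cap \Gamma)$ with comparable quantitative properties. Lemma~\ref{factor-simple} applied to $g_i$ and $\eta$ then produces $g_i = \epsilon_i g_{i+1} \gamma_i$, where $g_{i+1}$ takes values in $G^{(i+1)} \coloneqq G^{(i)} \cap \ker\eta$, a rational subgroup of strictly smaller dimension, while $\epsilon_i$ is $(M_i^{O_{A,d,D}(1)}, I)$-smooth and $\gamma_i$ is $M_i^{O_{A,d,D}(1)}$-rational. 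We update $M_{i+1}$ accordingly.

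Since $\dim G^{(i)}$ strictly decreases, the iteration halts after at most $D$ steps, yielding a telescoping decomposition $g = (\epsilon_0 \cdots \epsilon_{k-1}) g_k (\gamma_{k-1} \cdots \gamma_0)$, which we regroup as $g = \epsilon g' \gamma$. The final $M$ is obtained by iterated polynomial blowup starting from $1/\delta$, giving $M \ll_{A,d,D} \delta^{-O_{A,d,D}(1)}$. Smoothness of the product $\epsilon$ follows from right-invariance of $d_G$, which gives $d_G(\epsilon_0(n)\cdots\epsilon_{k-1}(n), 1_G) \leq \sum_i d_G(\epsilon_i(n), 1_G)$ and the analogous bound on consecutive differences, while polynomiality follows from the Lazard--Leibman theorem. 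Rationality of $\gamma$ follows inductively from the observation (recorded just after Definition~\ref{def:filtNilman}) that the product of two $M$-rational elements is $O_d(M^{O_d(1)})$-rational, applied pointwise in $n$.

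The main obstacle I anticipate is the bookkeeping around the finite-cover equidistribution: when the horizontal character $\eta'$ extracted from Theorem~\ref{qlt} naturally lives on the cover $G^{(i)}/\Gamma''$ rather than $G^{(i)}/(G^{(i)} \cap \Gamma)$, one must verify that rescaling by the index to pass back to the parent lattice produces a character whose Lipschitz norm and $C^\infty(I)$ norm are still $\ll M_i^{O_{A,d,D}(1)}$, and that the kernel used to define $G^{(i+1)}$ is the one arising from the parent lattice. The comparison between the Mal'cev metrics induced from the two lattices is governed by the index bound $M_i^A$, which preserves polynomial dependence on $1/\delta$. A secondary, more routine matter is verifying that the restriction of the filtration on $G$ to each subgroup $G^{(i)}$ yields a filtered nilmanifold structure of polynomially bounded complexity; this is standard (cf.~\cite[Proposition A.10]{green-tao-ratner}).
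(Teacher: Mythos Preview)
Your proposal is correct and takes essentially the same approach as the paper: the paper simply cites \cite[Theorem 1.19]{green-tao-ratner} and notes that the extra finite-cover equidistribution in (ii) follows by exactly the index-multiplication trick you describe (a non-trivial horizontal character on $G'/\Gamma''$ becomes, after multiplying by $[\Gamma':\Gamma'']\le M^A$, a non-trivial horizontal character on $G'/\Gamma'$ with comparable bounds). Your reconstruction of the underlying iterative argument, together with your correct identification and resolution of the cover-to-base passage as the only new ingredient, matches the paper's reasoning precisely.
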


\begin{proof}  See~\cite[Theorem 1.19]{green-tao-ratner} (after rounding $I$ to integer endpoints and translating to be of the form $[1,N]$).  The additional requirement in (ii) that one has equidistribution in the larger nilmanifolds $G'/\Gamma''$ is not stated in~\cite[Theorem 1.19]{green-tao-ratner} but follows easily from the proof, the point being that if a sequence $g' \in \Poly(\Z \to G')$ fails to be totally $1/M^A$-equidistributed in $G'/\Gamma''$, then one has $\|\eta \circ g' \|_{C^\infty(I)} \ll_{d,D} M^{O_{d,D}(A)}$ for some non-trivial horizontal character $\eta$ on $G'/\Gamma''$ of Lipschitz norm $O_{d,D}(M^{O_{d,D}(A)})$, which on multiplying $\eta$ by the index of $\Gamma''$ in $\Gamma'$ also gives $\|\eta' \circ g' \|_{C^\infty(I)} \ll_{d,D} M^{O_{d,D}(A)}$ for some non-trivial horizontal character $\eta'$ on $G'/\Gamma'$ of Lipschitz norm $O_{d,D}(M^{O_{d,D}(A)})$.  As a consequence, one can replace all occurrences of $G'/\Gamma'$ in the proof of~\cite[Theorem 1.19]{green-tao-ratner} with $G'/\Gamma''$ with only negligible changes to the arguments.
\end{proof}

We will also need a multidimensional version of this theorem.

\begin{theorem}[Multidimensional factorization theorem]\label{multi-factor}  Let $t,d,D \geq 1$ and $0 < \delta < 1$.  Let $G/\Gamma$ be a filtered nilmanifold of degree at most $d$, dimension $D$, and complexity at most $1/\delta$.  Let $g \in \Poly(\Z^t \to G)$ and $A>0$, and let $I_1,\dots,I_t$ intervals with $|I_1|, \dots, |I_t| \geq C \delta^{-C}$, for some $C$ that is sufficiently large depending on $t,d,D,A$.  Then there exists an integer $1/\delta \leq M \ll_{A,D,d,t} \delta^{-O_{A,D,d,t}(1)}$ and a decomposition $g = \eps g' \gamma$ into polynomial maps $\eps, g', \gamma \in \Poly(\Z^t \to G)$ such that
\begin{itemize}
\item[(i)] $\eps$ is $(M,I_1 \times \dots \times I_t)$-smooth, in the sense that $d_G(\eps(n), 1_G) \leq M$ and $d_G( \eps(n+e_i), 1_G) \leq M/|I_i|$ for all $n \in I_1 \times \dots \times I_t$ and $i=1,\dots,t$, where $e_1,\dots,e_t$ are the standard basis of $\Z^d$;
\item[(ii)]  There is an $M$-rational subnilmanifold $G'/\Gamma'$ of $G/\Gamma$ such that $g'$ takes values in $G'$ and is totally $1/M^A$-equidistributed in $G'/\Gamma'$, and more generally in $G'/\Gamma''$ whenever $\Gamma''$ is a subgroup of $\Gamma'$ of index at most $M^A$, in the sense that
$$ \left| \frac{1}{|P_1 \times \dots \times P_t|} \sum_{n \in P_1 \times \dots \times P_t} F(g'(n)\Gamma) - \int_{G'/\Gamma''} F \right| \leq \frac{1}{M} \|F\|_{\Lip}$$
whenever $F \colon G/\Gamma \to \C$ is Lipschitz and for each $i=1,\dots,t$, $P_i$ is an arithmetic progression in $I_i$ of cardinality at least $|I_i|/M$; and
\item[(iii)] $\gamma$ is $M$-rational, in the sense that there exists $1 \leq r \leq M$ such that
$g(n)^r \in \Gamma$ for all $n \in \Z^t$.
\end{itemize}
\end{theorem}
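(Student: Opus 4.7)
The plan is to run the same dimension-decrement iteration that proves the one-dimensional factorization Theorem~\ref{factor} (\cite[Theorem 1.19]{green-tao-ratner}), substituting a multidimensional analogue of the quantitative Leibman criterion in place of Theorem~\ref{qlt}. The first step is to record this multidimensional Leibman statement: if $g \in \Poly(\Z^t \to G)$ fails to be totally $1/M^A$-equidistributed on $I_1 \times \cdots \times I_t$ in $G/\Gamma$, then there is a non-trivial horizontal character $\eta$ of Lipschitz norm $\ll_{d,D,t} M^{O_{d,D,t,A}(1)}$ such that
\[
|I_1|^{\alpha_1}\cdots |I_t|^{\alpha_t} \,\|\partial_1^{\alpha_1}\cdots \partial_t^{\alpha_t}(\eta \circ g)(n)\|_{\R/\Z} \ll_{d,D,t} M^{O_{d,D,t,A}(1)}
\]
for every $0 < |\alpha| \leq d$ and $n \in I_1 \times \cdots \times I_t$. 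I would deduce this from Theorem~\ref{qlt} by testing on one-parameter slices and concatenating the resulting smoothness conclusions across the remaining directions via Corollary~\ref{smooth-dilate}; the hypothesis $|I_j| \geq C\delta^{-C}$ is precisely what Corollary~\ref{smooth-dilate} needs to apply in each direction.

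Given this input I would iterate. Set $g^{(0)} := g$ and $G^{(0)} := G$. At stage $k$, if $g^{(k)}$ is already totally $1/M^A$-equidistributed on the box in $G^{(k)}/(G^{(k)}\cap\Gamma)$, stop. Otherwise the multidimensional Leibman statement produces a non-trivial horizontal character $\eta_k$ on $G^{(k)}$ with $\eta_k \circ g^{(k)}$ smooth, and I would apply the obvious multidimensional analogue of Lemma~\ref{factor-simple}: expand
\[
\psi(g^{(k)}(n)) = t_0 + \sum_{0 < |\alpha| \leq d} \binom{n_1}{\alpha_1}\cdots\binom{n_t}{\alpha_t}\, t_\alpha, \qquad \psi^{-1}(t_\alpha) \in G^{(k)}_{|\alpha|},
\]
and for each multi-index $\alpha$ split $t_\alpha = (t_\alpha - u_\alpha) + (u_\alpha - v_\alpha) + v_\alpha$, where $u_\alpha$ lies on the lattice hyperplane $\{u : k_{\eta_k}\cdot u \in \Z\}$ close to $t_\alpha$ and $v_\alpha$ is a nearby rational vector with $k_{\eta_k}\cdot v_\alpha = k_{\eta_k}\cdot u_\alpha$. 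This yields $g^{(k)} = \eps_k\, g^{(k+1)}\, \gamma_k$ with $\eps_k$ smooth, $\gamma_k$ rational, and $g^{(k+1)} \in \Poly(\Z^t \to G^{(k+1)})$ for $G^{(k+1)} := \ker\eta_k \cap G^{(k)}$. Since $\dim G^{(k+1)} < \dim G^{(k)} \leq D$, the iteration terminates in at most $D$ stages. Multiplying the cumulative smooth and rational factors and using the Baker--Campbell--Hausdorff closure properties for smooth and rational polynomial maps gives the factorisation $g = \eps g' \gamma$ with the required polynomial-in-$\delta$ complexity bound on $M$. The finer clause that $g'$ is also totally $1/M$-equidistributed in every $G'/\Gamma''$ with $[\Gamma' : \Gamma''] \leq M^A$ follows by the index-multiplying trick used in Theorem~\ref{factor}: a failure of equidistribution in $G'/\Gamma''$ would yield a small-Lipschitz horizontal character $\eta'$ on $G'/\Gamma''$ with $\eta' \circ g'$ smooth, and then $[\Gamma':\Gamma''] \cdot \eta'$ would be a small-Lipschitz horizontal character on $G'/\Gamma'$ contradicting the equidistribution arranged in the main iteration -- provided one ran the iteration with exponent $A$ enlarged enough to absorb the polynomial index loss.

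The chief technical obstacle is establishing the multidimensional Leibman criterion with polynomial-in-$M$ losses and smoothness uniform over $n \in I_1 \times \cdots \times I_t$ and all multi-indices $\alpha$; this is exactly where the lower bound $|I_j| \geq C\delta^{-C}$ is essential, as otherwise the one-dimensional outputs of Theorem~\ref{qlt} cannot be concatenated across the $t$ directions via Corollary~\ref{smooth-dilate}. A secondary difficulty is keeping the complexity of the successive subgroups $\ker\eta_k \cap G^{(k)}$ polynomially bounded across the iteration, which is handled by organising the Mal'cev coordinate decomposition by weight $|\alpha|$ so that at each stage the Baker--Campbell--Hausdorff remainders are strictly lower order than the factor currently being peeled off.
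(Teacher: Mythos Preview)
The paper does not give a self-contained proof here; it simply cites \cite[Theorem~10.2]{green-tao-ratner} together with the corrections in \cite{green-tao-nilratner-erratum}, and invokes the same index-multiplying remark used in the proof of Theorem~\ref{factor} to obtain the $\Gamma''$ clause. Your overall scheme---dimension-decrement iteration driven by a multiparameter Leibman criterion, with a multi-index analogue of Lemma~\ref{factor-simple} at each step---is exactly the architecture of the argument in \cite{green-tao-ratner}, and your handling of the finite-index clause matches the paper's.

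There is, however, a concrete misstep in the concatenation step. You propose to derive the multiparameter Leibman criterion by applying Theorem~\ref{qlt} to one-parameter slices and then ``concatenating the resulting smoothness conclusions across the remaining directions via Corollary~\ref{smooth-dilate}.'' But Corollary~\ref{smooth-dilate} concerns \emph{dilates}: it says that if $P(a\,\cdot)$ is smooth on $\tfrac{1}{a}I$ for many $a\in[A,2A]$, then $P$ itself is smooth on $I$. That structure is not present in the multivariate situation: fixing $(n_2,\dots,n_t)$ and varying $n_1$ produces a family of \emph{distinct} one-variable polynomials, not dilates of a single one, so Corollary~\ref{smooth-dilate} does not apply. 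The actual concatenation goes through Lemma~\ref{vin} applied to the Taylor coefficients: if $n_1\mapsto(\eta\circ g)(n_1,n_2,\dots,n_t)$ is smooth for many $(n_2,\dots,n_t)$, then each $n_1$-coefficient---itself a polynomial in the remaining variables---is close to an integer at many points, and one iterates Vinogradov. This step is genuinely delicate; the original argument in \cite[Section~8]{green-tao-ratner} contained a gap that required the erratum \cite{green-tao-nilratner-erratum} to repair, so treating it as a routine application of an off-the-shelf lemma understates the difficulty.
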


\begin{proof} This follows from~\cite[Theorem 10.2]{green-tao-ratner}, after implementing the corrections in~\cite{green-tao-nilratner-erratum}, and the modifications indicated in the proof of Theorem~\ref{factor}.
\end{proof}

As a first application of Theorem~\ref{factor}, we can obtain a criterion for correlation between nilsequences with a non-trivial central frequency:

\begin{proposition}[Correlation criterion]\label{corr-crit}  Let $d,D \geq 1$ and $0 < \delta < 1$.  Let $G/\Gamma$ be a filtered nilmanifold of degree at most $d$, dimension $D$, and complexity at most $1/\delta$, whose center $Z(G)$ is one-dimensional.  Let $g_1,g_2 \in \Poly(\Z \to G)$, let $I$ be an interval with $|I| \geq 1$, and let $F \colon G/\Gamma \to \C$ be Lipschitz of norm at most $1/\delta$ and having a non-zero central frequency $\xi$.  Suppose that one has the correlation
$$ \left|\sum_{n \in I} F(g_1(n) \Gamma) \overline{F}(g_2(n) \Gamma)\right|^* \geq \delta |I|.$$
Then at least one of the following holds:
\begin{itemize}
\item[(i)]  There exists a non-trivial horizontal character $\eta \colon G \to \R/\Z$ of Lipschitz norm $O_{d,D}(\delta^{-O_{d,D}(1)})$ such that $\| \eta \circ g_i \|_{C^\infty(I)} \ll_{d,D} \delta^{-O_{d,D}(1)}$ for some $i\in \{1,2\}$.
\item[(ii)] There exists a factorization
$$ g_2 = \eps (\phi \circ g_1) \gamma$$
where $\eps$ is $(O_{d,D}(\delta^{-O_{d,D}(1)}),I)$-smooth, $\phi \colon G \to G$ is a Lie group automorphism whose associated Lie algebra isomorphism $\log \phi \colon \log G \to \log G$ has matrix coefficients that are all rational of height $O_{d,D}(\delta^{-O_{d,D}(1)})$ in the Mal'cev basis $X_1,\dots,X_D$ of $\log G$, and $\gamma$ is $O_{d,D}(\delta^{-O_{d,D}(1)})$-rational.
\end{itemize}
\end{proposition}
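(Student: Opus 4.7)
The plan is to lift the problem to the product nilmanifold $\tilde G / \tilde \Gamma \coloneqq (G \times G)/(\Gamma \times \Gamma)$ and consider the combined polynomial sequence $\tilde g(n) \coloneqq (g_1(n), g_2(n)) \in \Poly(\Z \to \tilde G)$ tested against $\tilde F(x,y) \coloneqq F(x) \overline{F}(y)$. This $\tilde F$ has Lipschitz norm $O_{d,D}(\delta^{-O_{d,D}(1)})$ and oscillates with central frequency $(\xi,-\xi)$ on $Z(\tilde G) = Z(G) \times Z(G)$; since $\xi \neq 0$ this frequency is non-trivial, and by invariance of Haar measure under $Z(\tilde G)$ the function $\tilde F$ integrates to zero on $\tilde G/\tilde \Gamma$. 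The hypothesis then reads $\bigl|\sum_{n \in I} \tilde F(\tilde g(n) \tilde \Gamma)\bigr|^* \geq \delta |I|$.

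First I would apply the factorization Theorem~\ref{factor} to $\tilde g$ on $I$ to produce $\tilde g = \tilde \eps \tilde g' \tilde \gamma$ where $\tilde g'$ takes values in an $M$-rational subnilmanifold $\tilde G'/\tilde \Gamma'$ on which it is totally $1/M^A$-equidistributed, $\tilde \eps$ is $(M,I)$-smooth, and $\tilde \gamma$ is $M$-rational, with $M \ll_{d,D,A} \delta^{-O_{d,D,A}(1)}$ and $A$ a large constant to be chosen. A standard absorption argument (splitting $I$ into short subprogressions on which $\tilde \gamma$ is constant and $\tilde \eps$ is nearly constant) promotes the correlation bound into the statement that the integral of some right-translate of $\tilde F$ over $\tilde G'/\tilde \Gamma'$ has magnitude $\gg \delta^{O_{d,D}(1)}$. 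Since translates of $\tilde F$ retain the non-trivial central frequency, $\tilde G'$ must be a proper rational subgroup of $\tilde G$.

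Next I would analyze $\tilde G'$ via Goursat's lemma: set $H_i \coloneqq \pi_i(\tilde G')$ for $i=1,2$ and $N_1 \coloneqq \{g \in G : (g,1) \in \tilde G'\}$, $N_2 \coloneqq \{g \in G : (1,g) \in \tilde G'\}$, yielding a canonical Lie group isomorphism $H_1/N_1 \cong H_2/N_2$ of complexity $O_{d,D}(\delta^{-O_{d,D}(1)})$. If $H_1$ is a proper subgroup of $G$, then Lemma~\ref{quotient-normal} applied to $G/(H_1 \cdot [G,G])$ supplies a non-trivial horizontal character $\eta \colon G \to \R/\Z$ of bounded complexity with $\eta|_{H_1} = 0$; absorbing $\pi_1 \circ \tilde \eps$ (smooth) and $\pi_1 \circ \tilde \gamma$ (rational, hence periodic) then gives $\|\eta \circ g_1\|_{C^\infty(I)} \ll_{d,D} \delta^{-O_{d,D}(1)}$, placing us in alternative (i); the case $H_2 \neq G$ is symmetric. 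Otherwise $H_1 = H_2 = G$ and $\tilde G'$ surjects onto both factors. The non-vanishing of the centrally-oscillating integral now forces $(\xi,-\xi)$ to be trivial on the identity component of $Z(\tilde G) \cap \tilde G'$; combined with the one-dimensionality of $Z(G)$ and $\xi \neq 0$, an induction on the degree $d$ along the lower central series (at each stage quotienting out $Z(G)$ via Lemma~\ref{quotient-normal} and applying the induction hypothesis to $G/Z(G)$) forces $N_1 = N_2 = \{1\}$. Hence $\tilde G'$ is the graph of a Lie group isomorphism $\phi \colon G \to G$, and the $M$-rationality of $\tilde G'$ translates via the Mal'cev basis to $\log \phi$ having matrix coefficients that are rational of height $O_{d,D}(\delta^{-O_{d,D}(1)})$. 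Reading the graph relation $\pi_2 \circ \tilde g' = \phi \circ (\pi_1 \circ \tilde g')$ off the second coordinate of $\tilde g = \tilde \eps \tilde g' \tilde \gamma$, and recombining via the Lazard--Leibman theorem, produces the desired factorization $g_2 = \eps (\phi \circ g_1) \gamma$ of alternative (ii).

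The main obstacle will be the inductive descent forcing the Goursat kernels $N_1, N_2$ to be trivial: converting the triviality of $(\xi,-\xi)$ on $\tilde G' \cap Z(\tilde G)$ (which a priori controls only the central parts of the $N_i$) into triviality of the full kernels, while simultaneously maintaining polynomial control on the complexity of the resulting $\phi$, appears to require a careful degree induction that at each stage reduces to the lower-degree nilmanifold $G/Z(G)$ and reassembles the automorphism component by component in the Mal'cev basis.
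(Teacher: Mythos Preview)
Your overall architecture (pass to the product $(G\times G)/(\Gamma\times\Gamma)$, apply the factorization theorem, absorb $\tilde\eps$ and $\tilde\gamma$ to get a non-trivial integral over $\tilde G'/\tilde\Gamma''$, analyze the projections $H_i=\pi_i(\tilde G')$ to reach alternative (i), then invoke Goursat to reach alternative (ii)) is exactly the paper's route.

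The part you flag as ``the main obstacle'' --- forcing the Goursat kernels $N_1,N_2$ to be trivial --- is where your plan diverges, and your proposed inductive descent through $G/Z(G)$ is both unnecessary and unclear. The paper's argument here is direct and requires no induction on $d$: once you know $H_1=G$, the kernel $N_1=\{g:(g,1)\in\tilde G'\}$ is normalized by $H_1$, hence normal in $G$. In a nilpotent group, any non-trivial normal subgroup meets the center non-trivially (consider the last non-trivial term in the chain $N_1,[N_1,G],[[N_1,G],G],\dots$). Since $Z(G)$ is one-dimensional and connected, this forces $Z(G)\subset N_1$, i.e.\ $Z(G)\times\{1\}\subset\tilde G'$. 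But then translating by $(z,1)$ for $z\in Z(G)$ multiplies the integrand by $e(\xi(z))$ while fixing Haar measure on $\tilde G'/\tilde\Gamma''$, so the integral vanishes --- contradicting the non-trivial lower bound you already established. Hence $N_1$ (and symmetrically $N_2$) is trivial, and Goursat gives the graph of an automorphism $\phi$ directly.

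One further point: to run the normality step you need $H_1=G$ \emph{before} analyzing $N_1$, not simultaneously. The paper is careful to dispose of the case $H_1\neq G$ first (giving alternative (i)), and only then uses $H_1=G$ to conclude $N_1\triangleleft G$. Your write-up has this ordering, but the dependency is worth making explicit.
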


\begin{proof}  We allow all implied constants to depend on $d,D$.  The product of the filtered nilmanifold $G/\Gamma$ with itself is again a filtered nilmanifold $(G \times G)/(\Gamma \times \Gamma)$, with the obvious filtration $(G \times G)_i \coloneqq G_i \times G_i$ and Mal'cev basis $(X_i,0), (0,X_i)$, $i=1,\dots,D$.  This product filtered nilmanifold has degree at most $d$, dimension $2D$, and complexity at most $O(\delta^{-O(1)})$.  The pair $(g_1,g_2)$ can be then viewed as an element of $\Poly(\Z \to G \times G)$.  If we let $F \otimes \overline{F} \colon (G \times G)/(\Gamma \times \Gamma) \to \C$ be the function
$$ F \otimes \overline{F}(x_1, x_2) \coloneqq F(x_1) \overline{F}(x_2)$$
then $F$ is Lipschitz with norm $O( \delta^{-O(1)})$ and one has
\begin{equation}\label{fgg}
 \left|\sum_{n \in I} F \otimes \overline{F}((g_1,g_2)(n) (\Gamma \times \Gamma))\right|^* \geq \delta |I|.
\end{equation}
Let $A>1$ be sufficiently large depending on $d,D$.  Applying Theorem~\ref{factor} to $(g_1,g_2)$ (with $\delta$ replaced by $\delta^A$) we can find $\delta^{-A} \leq M \ll_A \delta^{-O_A(1)}$ and a factorization
\begin{equation}\label{g1}
 (g_1,g_2) = (\eps_1,\eps_2) (g'_1, g'_2) (\gamma_1,\gamma_2)
\end{equation}
where $\eps_1, g'_1, \gamma_1 \in \Poly(\Z \to G_1)$, $\eps_2, g'_2, \gamma_2 \in \Poly(\Z \to G_2)$ such that
\begin{itemize}
\item[(i)] $(\eps_1,\eps_2)$ is $(M,I)$-smooth;
\item[(ii)]  There is an $M$-rational subnilmanifold $G'/\Gamma'$ of $(G \times G)/(\Gamma \times \Gamma)$ such that $(g'_1,g'_2)$ takes values in $G'$ and is totally $1/M^A$-equidistributed in $G'/\Gamma''$ for any subgroup $\Gamma''$ of $\Gamma'$ of index at most $M^A$; and
\item[(iii)] $(\gamma_1,\gamma_2)$ is $M$-rational.
\end{itemize}
We caution that $G'$ is a subgroup of $G \times G$ rather than $G$. From~\eqref{fgg} we thus have
$$
 \left|\sum_{n \in I} F \otimes \overline{F}( (\eps_1,\eps_2)(n) (g'_1,g'_2)(n) (\gamma_1,\gamma_2)(n) (\Gamma \times \Gamma))\right|^* \geq \delta |I|.$$
Since $(\gamma_1,\gamma_2)$ is $M$-rational, it is $O(M^{O(1)})$-periodic, and then by the pigeonhole principle (and Lemma~\ref{basic-prop}(i)) we can thus find $M$-rational $(\gamma_1^0, \gamma_2^0) \in G \times G$ such that
$$
\left |\sum_{n \in I} F \otimes \overline{F}( (\eps_1,\eps_2)(n) (g'_1,g'_2)(n) (\gamma^0_1,\gamma_2^0) (\Gamma \times \Gamma))\right|^* \gg M^{-O(1)} |I|.$$
By shifting $\gamma_1^0, \gamma_2^0$ by elements of $\Gamma$ if necessary we may assume that they lie at distance $O(M^{O(1)})$ from the identity.
If we partition $I$ into subintervals $J$ of length $\asymp M^{-C} |I|$ for some large constant $C$, we see from the pigeonhole principle (and Lemma~\ref{basic-prop}(i)) that we can find one such $J$ for which
$$
 \left|\sum_{n \in J} F \otimes \overline{F}( (\eps_1,\eps_2)(n) (g'_1,g'_2)(n) (\gamma^0_1,\gamma_2^0) (\Gamma \times \Gamma))\right|^* \gg M^{-O(1)} |J|.$$
As $(\eps_1,\eps_2)$ is $(M,I)$-smooth, it fluctuates by $O(M^{1-C})$ on $J$ and stays a distance $O(M)$ from the identity, hence by the Lipschitz nature of $F \otimes \overline{F}$ we conclude (for $C=O(1)$ large enough) that there exists $(\eps_1^0, \eps_2^0) \in G \times G$ at distance $O(M)$ from the identity such that
$$
 \left|\sum_{n \in J} F \otimes \overline{F}( (\eps^0_1,\eps^0_2) (g'_1,g'_2)(n) (\gamma^0_1,\gamma_2^0) (\Gamma \times \Gamma))\right|^* \gg M^{-O(1)} |J|.$$
Allowing implied constants to depend on $C$, we conclude that
$$
 \left|\sum_{n \in I} F \otimes \overline{F}( (\eps^0_1,\eps^0_2) (g'_1,g'_2)(n) (\gamma^0_1,\gamma_2^0) (\Gamma \times \Gamma))\right|^* \gg M^{-O(1)} |I|.$$

From the Baker--Campbell--Hausdorff formula and the $M$-rationality of $(\gamma_1^0, \gamma_2^0)$, we see that $(\gamma^0_1,\gamma_2^0) (\Gamma \times \Gamma) (\gamma^0_1, \gamma_2^0)^{-1}$ can be covered by $O(M^{O(1)})$ cosets of $\Gamma \times \Gamma$, and conversely.  Thus if we set
$$ \Gamma'' \coloneqq G' \cap (\Gamma \times \Gamma) \cap (\gamma^0_1,\gamma_2^0) (\Gamma \times \Gamma) (\gamma^0_1, \gamma_2^0)^{-1}$$
then $G' \cap (\Gamma \times \Gamma)$ can be covered by $O(M^{O(1)})$ cosets of $\Gamma''$, thus $\Gamma''$ is a subgroup of $G' \cap (\Gamma \times \Gamma)$ of index $O(M^{O(1)})$ such that
\begin{equation}\label{go}
 \Gamma'' (\gamma^0_1,\gamma_2^0) \subset (\gamma^0_1,\gamma_2^0) (\Gamma \times \Gamma).
\end{equation}
Indeed, one can take $\Gamma''$ to be the intersection of $G' \cap (\Gamma \times \Gamma)$ and $(\gamma^0_1,\gamma_2^0) (\Gamma \times \Gamma) (\gamma^0_1, \gamma_2^0)^{-1}$.  One can then write the above claim as
$$
 \left|\sum_{n \in I} F'( (g'_1,g'_2)(n) \Gamma'')\right|^* \gg M^{-O(1)} |I|$$
where $F' \colon G' / \Gamma'' \to \mathbb{C}$ is defined by
$$ F'( (g'_1,g'_2) \Gamma'' ) \coloneqq F(\eps^0_1 g'_1 \gamma^0_1 \Gamma) \overline{F}(\eps^0_2 g'_2 \gamma^0_2 \Gamma) $$
for any $(g'_1,g'_2) \in G'$, with the inclusion~\eqref{go} ensuring that this function is well-defined.  Since $F$ is Lipschitz with norm $1/\delta \leq M$, and $\eps^0_1, \gamma^0_1, \eps^0_2, \gamma^0_2$ are at distance $O(M^{O(1)})$ from the identity, this function is Lipschitz with norm $O(M^{O(1)})$, hence by total equidistribution of $(g'_1,g'_2)$ we conclude (for $A$ large enough) that
\begin{equation}\label{mo}
 \left|\int_{G'/\Gamma''} F'\right| \gg M^{-O(1)}.
\end{equation}

Suppose that the projection $K \coloneqq \{ g_1 \in G: (g_1,g_2) \in G' \hbox{ for some } g_2 \in G \}$ is not all of $G$.  This is a proper closed connected subgroup of $G$ with
$$ \log K = \{ X \in \log G: (X,Y) \in \log G' \hbox{ for some } Y \in \log G \};$$
thus $\log K$ is the projection of $\log G'$ to $\log G$.  Since $\log G'$ is $M^{O(1)}$-rational, $\log K$ is also. Hence there exists a non-trivial horizontal character $\eta \colon G \to \R/\Z$ of Lipschitz norm $O(M^{O(1)})$ that annihilates $K$, so in particular $\eta(g'_1(n)) = 0$ for all $n$.  From~\eqref{g1} we then have
$$ \eta(g_1(n)) = \eta(\eps_1(n)) + \eta(\gamma_1(n)).$$
Since $\gamma_1$ is $M$-rational, $M\eta(\gamma_1(n)) = 0$.  Thus if we replace $\eta$ by $M\eta$ we have
$$ \eta(g_1(n)) = \eta(\eps_1(n)).$$
Since $(\eps_1,\eps_2)$ is $(M,I)$ smooth we thus conclude that
$$ \| \eta \circ g_1 \|_{C^\infty(I)} \ll M^{O(1)}$$
and we are in conclusion (i) of the proposition.  Thus we may assume that the projection $\{ g_1 \in G: (g_1,g_2) \in G' \hbox{ for some } g_2 \in G \}$ is all of $G$.  Similarly we may assume that $\{ g_2 \in G: (g_1,g_2) \in G' \hbox{ for some } g_1 \in G \}$ is all of $G$.

Now suppose that the slice $H \coloneqq \{ g \in G: (g,1) \in G'\}$ is non-trivial.  This is a non-trivial closed connected subgroup of $G$; since $G'$ is normalized by itself, we conclude that $H$ is normalized by $K$, and is hence normal in $G$ since\footnote{We thank James Leng for pointing out the need to perform the $K=G$ reduction before analyzing $H$, which was not done in a previous version of this manuscript.} $K=G$. By considering the final non-trivial element of the series $H$, $[H,G]$, $[[H,G],G]$, $\dots$, we conclude that $H$ contains a non-trivial closed connected \emph{central} subgroup of $G$.  Since $Z(G)$ is one-dimensional, we conclude that $H$ contains $Z(G)$.  In particular, $G'$ contains $Z(G) \times \{1\}$.

Since $F$ has central frequency $\xi$, we see that
$$ F'((z,1)(g_1,g_2)) = e(\xi \cdot z) F'(g_1,g_2)$$
for all $z \in Z(G)$.  By invariance of Haar measure, this implies that
$$ \int_{G'/\Gamma''} F' = e(\xi \cdot z) \int_{G'/\Gamma''} F'.$$
Since $\xi$ is non-trivial, this implies that $\int_{G'/\Gamma''} F' =0$, contradicting~\eqref{mo}.  Thus the slice $\{ g \in G: (g,1) \in G'\}$ is trivial.  Similarly the slice $\{ g \in G: (1,g) \in G' \}$ is trivial.

Applying Goursat's lemma, we now conclude that $G'$ takes the form
$$ G' = \{ (g_1, \phi(g_1)): g_1 \in G \}$$
for some group automorphism $\phi \colon G \to G$.  Since $G'$ is a $O(M^{O(1)})$-rational subgroup of $G \times G$, $\phi$ must be a Lie group automorphism whose associated Lie algebra automorphism $\log \phi \colon \log G \to \log G$ has coefficients that are rational of height $O(M^{O(1)})$ in the Mal'cev basis.  Since $(g'_1(n),g'_2(n))$ takes values in $G'$, we have
$$ g'_2(n) = \phi(g'_1(n))$$
and hence by~\eqref{g1} and some rearranging
$$ g_2(n) = \eps_2(n) \phi(\eps_1(n))^{-1} \phi(g_1(n)) \phi(\gamma_1(n))^{-1} \gamma_2(n).$$
It is then routine to verify that conclusion (ii) of the proposition holds.
\end{proof}

As a consequence of this criterion, we can establish the following large sieve inequality for nilsequences, which is a more quantitative variant of the one in~\cite[Proposition 4.11]{MRTTZ}.

\begin{proposition}[Large sieve]\label{large-sieve}
Let $d,D \geq 1$ and $0 < \delta < 1$.  Let $G/\Gamma$ be a filtered nilmanifold of degree at most $d$, dimension $D$, and complexity at most $1/\delta$, whose center $Z(G)$ is one-dimensional.  Let $g_1,\dots,g_K \in \Poly(\Z \to G)$, let $I$ be an interval with $|I| \geq 1$, and let $F \colon G/\Gamma \to \C$ be Lipschitz of norm at most $1/\delta$ and having a non-zero central frequency $\xi$.  Suppose that there is a function $f \colon \Z \to \C$ with $\sum_{n \in I} |f(n)|^2 \leq \frac{1}{\delta} |I|$ such that
\begin{equation}\label{kp}
 \left|\sum_{n \in I} f(n) \overline{F}(g_i(n) \Gamma)\right|^* \geq \delta |I|
\end{equation}
for all $i=1,\dots,K$. Then at least one of the following holds:
\begin{itemize}
\item[(i)]  There exists a non-trivial horizontal character $\eta \colon G \to \R/\Z$ of Lipschitz norm $O_{d,D}(\delta^{-O_{d,D}(1)})$ such that $\| \eta \circ g_i \|_{C^\infty(I)} \ll_{d,D} \delta^{-O_{d,D}(1)}$ for $\gg_{d,D} \delta^{O_{d,D}(1)} K$ values of $i=1,\dots,K$.
\item[(ii)] For $\gg_{d,D} \delta^{O_{d,D}(1)} K^2$ pairs $(i,j) \in \{1,\dots,K\}^2$, there exists a factorization
$$ g_i = \eps_{ij} g_j \gamma_{ij}$$
where $\eps_{ij}$ is $(O_{d,D}(\delta^{-O_{d,D}(1)}),I)$-smooth and $\gamma_{ij}$ is $O_{d,D}(\delta^{-O_{d,D}(1)})$-rational.
\end{itemize}
\end{proposition}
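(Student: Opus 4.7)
The plan is to combine a standard $TT^{*}$/Cauchy--Schwarz argument with Proposition~\ref{corr-crit}, followed by a pigeonhole-plus-popularity step that eliminates the Lie group automorphism produced by Proposition~\ref{corr-crit}(ii).

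First I would use the hypothesis~\eqref{kp} to pick, for each $i=1,\dots,K$, a subprogression $P_i\subset I\cap\Z$ and a unimodular scalar $c_i$ with $c_i\sum_{n\in P_i} f(n)\overline{F}(g_i(n)\Gamma)\ge \delta|I|$. Summing in $i$ and applying Cauchy--Schwarz in $n$, together with $\sum_{n\in I}|f(n)|^2\le |I|/\delta$, yields
\begin{equation*}
K^{2}\delta^{3}|I| \;\le\; \sum_{i,j=1}^{K}\Bigl|\sum_{n\in P_i\cap P_j}\overline{F}(g_i(n)\Gamma)\,F(g_j(n)\Gamma)\Bigr| \;\le\; \sum_{i,j=1}^{K}\Bigl|\sum_{n\in I}\overline{F}(g_i(n)\Gamma)\,F(g_j(n)\Gamma)\Bigr|^{*},
\end{equation*}
since $P_i\cap P_j$ is an arithmetic progression inside $I$. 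Each starred sum is at most $|I|/\delta^{2}$, so pigeonhole produces a set $\mathcal{S}$ of $\gg_{d,D}\delta^{O_{d,D}(1)}K^{2}$ pairs $(i,j)$ on which it is $\gg_{d,D}\delta^{O_{d,D}(1)}|I|$.

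Next I would apply Proposition~\ref{corr-crit} to each $(i,j)\in\mathcal{S}$, setting $(g_1,g_2)=(g_j,g_i)$ and replacing $\delta$ by a suitable polynomial power of itself. Each pair falls into case (i) or case (ii) of that proposition, and pigeonhole keeps $\gg\delta^{O_{d,D}(1)}K^{2}$ pairs in one of the two cases. If case (i) dominates, each such pair supplies a non-trivial horizontal character $\eta_{ij}$ of Lipschitz norm $\ll\delta^{-O_{d,D}(1)}$ smoothing either $g_i$ or $g_j$; since only $\ll\delta^{-O_{d,D}(1)}$ horizontal characters meet this bound, a further pigeonhole isolates a single $\eta$ and a single choice between $g_i$ and $g_j$, and a bipartite degree count then gives $\gg\delta^{O_{d,D}(1)}K$ indices $i$ with $\|\eta\circ g_i\|_{C^{\infty}(I)}\ll\delta^{-O_{d,D}(1)}$, which is conclusion (i). If instead case (ii) dominates, each pair gives a factorization $g_i=\eps_{ij}(\phi_{ij}\circ g_j)\gamma_{ij}$ where $\phi_{ij}$ is a Lie group automorphism of $G$ whose Mal'cev-basis matrix has rational entries of height $\ll\delta^{-O_{d,D}(1)}$. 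Only $\ll\delta^{-O_{d,D}(1)}$ such automorphisms exist, so pigeonhole fixes a single $\phi$ occurring in a set $\mathcal{S}_{\phi}$ of $\gg\delta^{O_{d,D}(1)}K^{2}$ pairs.

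To conclude I would run a popularity argument on $\mathcal{S}_{\phi}\subset[K]\times[K]$: Cauchy--Schwarz on the column degrees $d(j)=\#\{i:(i,j)\in\mathcal{S}_{\phi}\}$ gives $\sum_{j}d(j)^{2}\ge |\mathcal{S}_{\phi}|^{2}/K\gg\delta^{O_{d,D}(1)}K^{3}$, i.e.\ there are $\gg\delta^{O_{d,D}(1)}K^{3}$ ordered triples $(i,i',j)$ with both $(i,j),(i',j)\in\mathcal{S}_{\phi}$. For each such triple, cancelling the common middle factor $\phi(g_j)$ from the two factorizations yields
\begin{equation*}
g_i \;=\; (\eps_{ij}\,\eps_{i'j}^{-1})\;g_{i'}\;(\gamma_{i'j}^{-1}\,\gamma_{ij}),
\end{equation*}
and standard Baker--Campbell--Hausdorff estimates show that the outer left factor is $(O_{d,D}(\delta^{-O_{d,D}(1)}),I)$-smooth and the outer right factor is $O_{d,D}(\delta^{-O_{d,D}(1)})$-rational (the classes of smooth and of rational polynomial maps are closed under products and inverses with polynomial losses). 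Since each ordered pair $(i,i')$ is hit by at most $K$ triples, after relabelling $i'\mapsto j$ we recover $\gg\delta^{O_{d,D}(1)}K^{2}$ pairs satisfying conclusion (ii). The main subtlety throughout is the elimination of the automorphism $\phi$: in general $\phi(g_j)g_j^{-1}$ is neither smooth nor rational, so $\phi$ cannot be absorbed pair-by-pair into $\eps$ and $\gamma$, and the popularity step sidesteps this precisely by matching two factorizations sharing the same inner factor $\phi(g_j)$ so that $\phi$ cancels.
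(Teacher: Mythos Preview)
Your proposal is correct and follows essentially the same route as the paper's proof: the $TT^*$/Cauchy--Schwarz step, the application of Proposition~\ref{corr-crit}, the pigeonhole on the finitely many automorphisms $\phi$, and the triples argument to cancel $\phi$ all match the paper. Your final bookkeeping (dividing $\gg\delta^{O(1)}K^3$ triples by at most $K$ per pair) is a minor variant of the paper's pigeonhole on $j$, but both work equally well.
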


\begin{proof}  We allow implied constants to depend on $d,D$. From~\eqref{kp} one can find progressions $P_i \subset I$ for $i=1,\dots,K$ such that
$$ \left|\sum_{n \in I} f(n) 1_{P_i}(n) \overline{F}(g_i(n) \Gamma)\right| \geq \delta |I|$$
and thus
$$ \left|\sum_{i=1}^K \theta_i \sum_{n \in I} f(n) 1_{P_i}(n) \overline{F}(g_i(n) \Gamma)\right| \geq \delta K |I|$$
for some complex numbers $\theta_i$ with $|\theta_i| \leq 1$.  By interchanging the sums and applying Cauchy--Schwarz, we have
$$ \left|\sum_{i=1}^K \theta_i \sum_{n \in I} f(n) 1_{P_i}(n) \overline{F}(g_i(n) \Gamma)\right|^2  \leq \frac{1}{\delta} |I|  \sum_{n \in I} \left| \sum_{i=1}^K \theta_i 1_{P_i}(n) \overline{F}(g_i(n) \Gamma)\right|^2 $$
and thus
$$\sum_{n \in I} \left| \sum_{i=1}^K \theta_i 1_{P_i}(n) \overline{F}(g_i(n) \Gamma)\right|^2 \geq \delta^{3} K^2 |I|.$$
From the triangle inequality we have
$$  \sum_{n \in I} \left|\sum_{i=1}^K \theta_i 1_{P_i}(n) \overline{F}(g_i(n) \Gamma)\right|^2
\leq \sum_{1 \leq i,j \leq K} \left|\sum_{n \in I} F(g_i(n) \Gamma) \overline{F}(g_j(n) \Gamma)\right|^*$$
and thus
$$\sum_{1 \leq i,j \leq K} \left|\sum_{n \in I} F(g_i(n) \Gamma) \overline{F}(g_j(n) \Gamma)\right|^* \geq \delta^{3} K^2 |I|.$$
The inner sum is $O(\delta^{-2} |I|)$, thus we have
$$ \left|\sum_{n \in I} F(g_i(n) \Gamma) \overline{F}(g_j(n) \Gamma)\right|^* \gg \delta^{O(1)} |I|$$
for $\gg \delta^{O(1)} K^2$ pairs $(i,j) \in \{1,\dots,K\}^2$.  For each such pair, we apply Proposition~\ref{corr-crit}.  If conclusion (i) of that proposition holds for $\gg \delta^{O(1)} K^2$ pairs $(i,j)$, then by the pigeonhole principle (noting that there are only $O(\delta^{-O(1)})$ choices for $\eta$) we obtain conclusion (i) of the current proposition.  Thus we may assume that conclusion (ii) of Proposition~\ref{corr-crit} holds for $\gg \delta^{O(1)} K^2$ pairs $(i,j) \in \{1,\dots,K\}^2$, thus we have
$$ g_i = \eps_{ij} \phi_{ij}(g_j) \gamma_{ij}$$
for all such pairs $(i,j)$, where $\eps_{ij}$ is $(O(\delta^{-O(1)}),I)$-smooth, $\gamma_{ij}$ is $O(\delta^{-O(1)})$-rational, and  $\phi_{ij} \colon G \to G$ is a Lie group automorphism whose associated Lie algebra isomorphism $\log \phi \colon \log G \to \log G$ has matrix coefficients that are all rational of height $O(\delta^{-O(1)})$ in the Mal'cev basis $X_1,\dots,X_D$ of $\log G$.  The total number of choices for $\phi_{ij}$ is $O(\delta^{-O(1)})$, so by the pigeonhole principle we may assume that $\phi_{ij} = \phi$ is independent of $i,j$.  By Cauchy--Schwarz, we may thus find $\gg \delta^{O(1)} K^3$ triples $(i,i',j) \in \{1,\dots,K\}^3$ such that
$$ g_i = \eps_{ij} \phi(g_j) \gamma_{ij}; \quad g_{i'} = \eps_{i'j} \phi(g_j) \gamma_{i'j}$$
where $\eps_{ij}, \eps_{i'j}, \gamma_{ij}, \gamma_{i'j}$ are as above.  This implies that
$$ g_i = \eps_{ij} \eps_{i'j}^{-1} g_{i'} \gamma_{i'j}^{-1} \gamma_{ij}.$$
Pigeonholing in $j$ and relabeling $i,i'$ as $i,j$, we obtain conclusion (ii) of the current proposition.
\end{proof}

\subsection{Combinatorial lemmas}
The following lemma is a standard consequence of Heath-Brown's identity.

\begin{lemma}\label{hb-identity}
Let $X \geq 2$, and let $L \in \mathbb{N}$ be fixed. We may find a collection $\mathcal{F}$ of $(\log X)^{O(1)}$ functions $f \colon \mathbb{N} \to \mathbb{R}$, such that
$$ \Lambda(n) = \sum_{f \in \mathcal{F}} f(n) $$
for each $X/2 \leq n \leq 4X$, and each $f \in \mathcal{F}$ takes the form
$$ f = a^{(1)}* \cdots * a^{(\ell)} $$
for some $\ell \leq 2L$, where $a^{(i)}$ is supported on $(N_i, 2N_i]$ for some $N_i \geq 1/2$, and each $a^{(i)}(n)$ is either $1_{(N_i, 2N_i]}(n)$, $(\log n)1_{(N_i, 2N_i]}(n)$, or $\mu(n)1_{(N_i, 2N_i]}$. Moreover, $N_1N_2\cdots N_{\ell} \asymp X$, and $N_i \ll X^{1/L}$ for each $i$ with $a^{(i)}(n) = \mu(n) 1_{(N_i, 2N_i]}(n)$. The same statement holds for $\mu$ in place of $\Lambda$ (but $(\log n)1_{(N_i, 2N_i]}(n)$ does not appear).
\end{lemma}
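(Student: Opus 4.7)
The plan is to deduce the lemma from Heath-Brown's classical identity. Fix a power of two $z\in[(4X)^{1/L},2(4X)^{1/L}]$ (so $z^L\geq 4X$ and $z\ll X^{1/L}$), and set $M(s) := \sum_{m\leq z}\mu(m)m^{-s}$ and $\mu_z(m) := \mu(m)1_{m\leq z}$. Expanding $\bigl(1-\zeta(s)M(s)\bigr)^L$ by the binomial theorem and multiplying by $-\zeta'(s)/\zeta(s)$ yields the formal Dirichlet series identity
\begin{equation*}
 -\frac{\zeta'(s)}{\zeta(s)} \;=\; \sum_{j=1}^L (-1)^{j-1}\binom{L}{j}\zeta'(s)\,\zeta(s)^{j-1}M(s)^j \;-\;\frac{\zeta'(s)}{\zeta(s)}\bigl(1-\zeta(s)M(s)\bigr)^L.
\end{equation*}
The crucial observation is that for every $1 \leq n \leq z$ the Dirichlet coefficient of $1-\zeta(s)M(s)$ at $n$ equals $1_{n=1}-\sum_{d\mid n,\,d\leq z}\mu(d)=0$ by Möbius inversion; hence $\bigl(1-\zeta(s)M(s)\bigr)^L$ has vanishing coefficients at all $n\leq z^L$, and this property is preserved under multiplication by any Dirichlet series. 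Extracting the coefficient at $X/2 \leq n \leq 4X$ on both sides then gives
\begin{equation*}
 \Lambda(n) \;=\; \sum_{j=1}^L (-1)^{j}\binom{L}{j}\bigl(\log * \underbrace{1 * \cdots * 1}_{j-1} * \underbrace{\mu_z * \cdots * \mu_z}_{j}\bigr)(n).
\end{equation*}

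The lemma now follows from a routine dyadic decomposition. Since $z$ is a power of two, the truncated Möbius $\mu_z$ decomposes cleanly as $\mu_z=\sum_{N\leq z/2}\mu\cdot 1_{(N,2N]}$ with $N$ a power of $2$ (together with an $N=1/2$ term to include $n=1$), and similarly $1=\sum_N 1_{(N,2N]}$ and $\log=\sum_N (\log)\cdot 1_{(N,2N]}$. Inserting these decompositions into the display above expands each convolution $\log * 1^{*(j-1)} * \mu_z^{*j}$ as
\[
 \sum_{(N_1,\ldots,N_\ell)} a^{(1)}*\cdots*a^{(\ell)},
\]
with $\ell=2j\leq 2L$ and each $a^{(i)}$ of one of the three prescribed shapes, supported on $(N_i,2N_i]$. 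The $\mu$-factors automatically satisfy $N_i\leq z\ll X^{1/L}$, and since the convolution $a^{(1)}*\cdots*a^{(\ell)}$ is supported in $[\prod_i N_i,\,2^{\ell}\prod_i N_i]$ only tuples with $N_1\cdots N_\ell\asymp X$ can contribute on $(X/2,4X]$. The number of such admissible tuples is $(\log X)^{O(L)}=(\log X)^{O(1)}$; gathering the corresponding convolutions, with the binomial coefficients $\binom{L}{j}$ and signs $(-1)^{j}$ absorbed as multiplicities and signs into the (multiset) collection $\mathcal{F}$, gives the required family.

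For the $\mu$ variant one simply iterates the trivial relation $\frac{1}{\zeta(s)} = M(s) + \frac{1}{\zeta(s)}\bigl(1-\zeta(s)M(s)\bigr)$ to obtain
\begin{equation*}
 \frac{1}{\zeta(s)} \;=\; \sum_{j=0}^{L-1} M(s)\bigl(1-\zeta(s)M(s)\bigr)^j \;+\; \frac{1}{\zeta(s)}\bigl(1-\zeta(s)M(s)\bigr)^L,
\end{equation*}
and extracts coefficients at $n\leq z^L=4X$ exactly as above; this writes $\mu(n)$ as a signed combination of convolutions $1^{*k}*\mu_z^{*(k+1)}$ with $0\leq k\leq L-1$, and the same dyadic step produces the required family, now without any $\log$ factor. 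There is no genuine obstacle --- the argument is one algebraic identity plus a dyadic decomposition. The only mildly fiddly points are the choice of $z$ as a power of $2$ so that the dyadic decomposition of $\mu_z$ is clean, and the sign/multiplicity bookkeeping involved in passing from the signed identity to the unsigned family $\mathcal{F}$.
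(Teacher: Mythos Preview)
Your proof is correct and takes essentially the same approach as the paper: derive Heath--Brown's identity (the paper simply cites \cite[(13.37),(13.38)]{ik} rather than rederiving it via the Dirichlet-series manipulation you wrote out) and then dyadically decompose. One cosmetic slip: your first display has the sign $(-1)^{j-1}$ where it should be $(-1)^j$ (equivalently, $\zeta'$ should be $-\zeta'$), so your second display should read $(-1)^{j-1}$ in agreement with the standard form of the identity; this is harmless since the signs are absorbed into $\mathcal{F}$ anyway. For the $\mu$ variant, your telescoping derivation is a valid alternative to the paper's direct binomial expansion of $\tfrac{1}{\zeta}\bigl(1-(1-\zeta M)^L\bigr)$, and yields an equivalent decomposition.
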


\begin{proof}
Using Heath-Brown's identity (see~\cite[(13.37), (13.38)]{ik} with $K = L$ and $z = (2X)^{1/L}$), we have
$$ \Lambda(n) = \sum_{1 \leq j \leq L} (-1)^{j-1} \binom{L}{j} \sum_{m_1,\ldots, m_j \leq (2X)^{1/L}} \mu(m_1) \cdots \mu(m_j) \sum_{m_1\cdots m_jn_1 \cdots n_j=n} \log n_1 $$
and
$$ \mu(n) = \sum_{1 \leq j \leq L} (-1)^{j-1} \binom{L}{j} \sum_{m_1,\ldots,m_j \leq (2X)^{1/L}} \mu(m_1) \cdots \mu(m_j) \sum_{m_1\cdots m_jn_1\cdots n_{j-1}=n} 1. $$
The conclusion follows after dyadic division of the ranges of variables.
\end{proof}

The following Shiu's bound~\cite[Theorem 1]{shiu} will be used multiple times to control sums of divisor functions in short intervals in arithmetic progressions.

\begin{lemma}\label{shiu}
Let $A \geq 1$ and $\varepsilon > 0$ be fixed. Let $X \geq H \geq X^{\varepsilon}$ and $1\leq q \leq H^{1-\varepsilon}$. Let $f$ be a non-negative multiplicative function such that $f(p^{\ell}) \leq A^{\ell}$ for every prime power $p^{\ell}$ and $f(n)\ll_{c} n^{c}$ for every $c > 0$. Then, for any integer $a$ coprime to $q$, we have
$$ \sum_{\substack{X < n \leq X+H \\ n \equiv a\pmod{q}}} f(n) \ll \frac{H}{\varphi(q) \log X} \exp\Big( \sum_{\substack{p \leq 2X \\ p\nmid q}} \frac{f(p)}{p} \Big). $$
\end{lemma}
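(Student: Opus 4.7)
The plan is to adapt Shiu's original argument \cite{shiu} to the setting of arithmetic progressions. The key analytic input is the Selberg $\Lambda^2$ sieve in arithmetic progressions: whenever $(a,q)=1$ and $q z^2 \leq H^{1-\varepsilon/2}$, one has
\begin{equation*}
\Big|\{n \in (X, X+H] : n \equiv a \pmod q,\ (n, P(z)) = 1\}\Big| \ll \frac{H}{\varphi(q)} \prod_{\substack{p \leq z \\ p \nmid q}} \Big(1-\frac{1}{p}\Big) \ll \frac{H}{\varphi(q) \log z}.
\end{equation*}
I would set $z = y \coloneqq H^{\delta}$ for some small fixed $\delta = \delta(\varepsilon) > 0$ (so that $qy^2 \leq H^{1-\varepsilon/2}$ is comfortably satisfied), noting that $\log y \asymp_\varepsilon \log X$ since $H \geq X^{\varepsilon}$.

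For each $n$ in the sum, factor $n = n_1 n_2$ with $n_1 \coloneqq \prod_{p^k \| n, p \leq y} p^k$ (the $y$-smooth part of $n$) and $n_2 \coloneqq n/n_1$ (so $P^-(n_2) > y$). Since $(a,q)=1$ and therefore $(n,q)=1$, we have $(n_1,q) = (n_2,q) = 1$, and multiplicativity gives $f(n) = f(n_1)f(n_2)$. For the rough part, every prime factor of $n_2 \leq 2X$ exceeds $y \geq X^{\delta\varepsilon}$, so $\Omega(n_2) \ll_\varepsilon 1$ and hence $f(n_2) \leq A^{\Omega(n_2)} \ll_\varepsilon 1$. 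For each fixed $n_1$ with $n_1 \leq H^{1-\varepsilon/4}/q$, the number of admissible $n_2 \in (X/n_1, (X+H)/n_1]$ lying in the progression $n_2 \equiv a n_1^{-1} \pmod q$ with $(n_2, P(y))=1$ is $\ll H/(n_1 \varphi(q) \log y)$ by the sieve above. The contribution of $n_1 > H^{1-\varepsilon/4}/q$ is handled separately: such $n_1$ must be $y$-smooth with $\log n_1/\log y \gg 1/\delta$, which restricts them to a set of density $\rho(1/\delta) = o(1)$, producing a negligible contribution that is absorbed by the bound.

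Putting this together and summing over the smooth parts,
\begin{equation*}
\sum_{\substack{X < n \leq X+H \\ n \equiv a \pmod q}} f(n) \ll \frac{H}{\varphi(q) \log X} \sum_{\substack{P^+(n_1) \leq y \\ (n_1,q)=1}} \frac{f(n_1)}{n_1} \leq \frac{H}{\varphi(q) \log X} \prod_{\substack{p \leq y \\ p \nmid q}} \Big(1 + \frac{f(p)}{p} + \frac{f(p^2)}{p^2} + \cdots \Big).
\end{equation*}
Using $f(p^k) \leq A^k$, each local factor is $\leq \exp(f(p)/p + O(1/p^2))$, and extending the product from $p \leq y$ to $p \leq 2X$ costs only a bounded factor (since $f(p) \leq A$ and $\sum_{y < p \leq 2X} 1/p = O_\varepsilon(1)$ by Mertens). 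This yields the claimed bound
$\frac{H}{\varphi(q) \log X} \exp\bigl(\sum_{p \leq 2X,\, p\nmid q} f(p)/p\bigr)$.

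The main obstacle is making the ``large $n_1$'' smooth-number estimate fully rigorous and checking that the Selberg sieve constants behave as expected uniformly across the range of $n_1$ (in particular that the sieve admissibility $q y^2 \leq (H/n_1)^{1-\varepsilon'}$ holds on the full range where we apply it). Both are standard and carried out in detail in Shiu's paper \cite{shiu}; the only substantive change for arithmetic progressions is the replacement of the sifting density $\prod_{p \leq y}(1-1/p)$ by its progression analogue $(q/\varphi(q))\prod_{p \leq y, p \nmid q}(1-1/p)$, which is precisely what produces the factor $1/\varphi(q)$ in the final bound.
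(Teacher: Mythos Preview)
The paper does not prove this lemma at all: it simply invokes \cite[Theorem 1]{shiu} as a black box, and Shiu's original Theorem~1 already covers arithmetic progressions in exactly this form, so no adaptation is needed. Your sketch is a faithful outline of Shiu's own argument (smooth/rough factorisation, Selberg sieve on the rough part, Euler-product bound on the smooth part), so in that sense you and the paper agree---you just unpacked the cited reference.

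One small comment on the sketch itself: your treatment of the large-$n_1$ range is a little too quick. Saying that smooth numbers have density $\rho(1/\delta)$ is not by itself enough, because you need to control $\sum f(n)$ over those $n$ whose $y$-smooth part is large, not merely count them. Shiu handles this by a further case split (depending on whether $n$ has a prime factor in an intermediate range $[y, H^{1/2}]$) together with Rankin's trick, and this is where the hypothesis $f(n) \ll_c n^c$ is actually used. You correctly flag this as the main obstacle and defer to \cite{shiu}, which is exactly what the paper does as well.
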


For proving Theorem~\ref{discorrelation-thm}(iv)--(v), we need a more flexible combinatorial decomposition of the multiplicative functions $\mu, d_k$, where we introduce an extra variable $p \in (P, Q]$ in the factorization. Before stating this, let us quickly prove a lemma that will in particular allow us to write, for $P < Q \leq X^{1/(\log \log X)^2}$,
\[
1_{(n, \prod_{P < p \leq Q} p) = 1} = \sum_{\substack{d \mid (n, \prod_{P < p \leq Q} p) \\ d \leq X^{\varepsilon}}} \mu(d) + \text{  acceptable error}
\]
in our sums. This can be seen as a simple version of the fundamental lemma of the sieve that is sufficient to our needs.
\begin{lemma}
\label{le:FLS}
Let $k, r \geq 1$ and $\varepsilon > 0$ be fixed. Let $X \geq H \geq X^\varepsilon$ and $X \geq D \geq  Q > P \geq 2$. Then, for any $C \geq 1$,
\begin{equation}
\label{eq:FLSclaim}
\sum_{\substack{X < mn \leq X+H \\ p \mid m \implies p \in (P, Q] \\ m > D}} d_k(mn)^r \ll_C H\frac{(\log X)^{2k^r e^C}}{\exp(C\frac{\log D}{\log Q})}.
\end{equation}
\end{lemma}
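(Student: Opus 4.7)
The strategy is to combine a Rankin-type trick (to exploit the constraint $m > D$) with Shiu's bound (Lemma~\ref{shiu}). First, since the prime factors of $m$ all lie in $(P,Q]$, one has $\log m \leq \Omega(m) \log Q$, where $\Omega(m)$ counts prime factors of $m$ with multiplicity. Consequently $m > D$ forces $\Omega(m) \geq \log D/\log Q$, and hence for any $C \geq 1$,
\begin{equation*}
1_{m > D} \;\leq\; \exp\Bigl(-C \tfrac{\log D}{\log Q}\Bigr) \prod_{P < p \leq Q} e^{C v_p(m)}.
\end{equation*}
This converts the size constraint on $m$ into a multiplicative weight supported on the primes in $(P,Q]$.

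Next, I would set $\ell = mn$ and interchange the order of summation. Factoring the inner sum over divisors $m \mid \ell$ with prime support in $(P,Q]$ as a product over primes and evaluating a geometric sum at each prime yields
\begin{equation*}
\sum_{\substack{m \mid \ell \\ p\mid m \Rightarrow P < p \leq Q}} \prod_{P<p\leq Q} e^{C v_p(m)}
\;=\; \prod_{P<p\leq Q}\, \sum_{a=0}^{v_p(\ell)} e^{Ca}
\;\leq\; \prod_{P<p\leq Q} (v_p(\ell)+1)\, e^{C v_p(\ell)}.
\end{equation*}
Thus the left side of \eqref{eq:FLSclaim} is bounded by $e^{-C \log D/\log Q}$ times $\sum_{X < \ell \leq X+H} F(\ell)$, where
\begin{equation*}
F(\ell) \;:=\; d_k(\ell)^r \prod_{P<p\leq Q}(v_p(\ell)+1)\, e^{C v_p(\ell)}
\end{equation*}
is a non-negative multiplicative function.

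Using the elementary inequality $d_k(p^\nu) \leq k^\nu$ (a short induction on $\nu$), one checks that $F(p^\nu) \leq (2 k^r e^C)^\nu$ for every prime power $p^\nu$, while $F(n) \ll_c n^c$ for every $c > 0$ since $d_k$ grows polylogarithmically and $\Omega$ is $O(\log n/\log\log n)$. Lemma~\ref{shiu} then applies with $q = 1$ and $A = 2 k^r e^C$; using the uniform bound $F(p) \leq 2 k^r e^C$ together with Mertens' theorem, we obtain
\begin{equation*}
\sum_{X < \ell \leq X+H} F(\ell) \;\ll_C\; \frac{H}{\log X} \exp\Bigl(\sum_{p \leq 2X} \frac{F(p)}{p}\Bigr) \;\ll_C\; H\, (\log X)^{2 k^r e^C - 1}.
\end{equation*}
Combining with the Rankin prefactor $e^{-C \log D/\log Q}$ gives a bound even slightly stronger than~\eqref{eq:FLSclaim}.

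Each individual step is routine; the only mild point requiring attention is that the constant $A = 2 k^r e^C$ appearing in Shiu's bound depends on $C$, but this is legitimate because the implied constant in~\eqref{eq:FLSclaim} is itself allowed to depend on $C$. The hypothesis $H \geq X^{\varepsilon}$ of the statement matches the corresponding hypothesis of Lemma~\ref{shiu}, so the latter applies as written.
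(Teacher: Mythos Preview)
Your proof is correct and follows essentially the same Rankin-plus-Shiu strategy as the paper. The paper streamlines slightly by bounding the inner sum over $m \mid \ell$ crudely by $d_2(\ell)$ and using $e^{C\Omega(\ell)}$ as the Rankin weight, arriving at the same multiplicative function $d_2(\ell)\,d_k(\ell)^r\,e^{C\Omega(\ell)}$ with the same value $2k^r e^C$ at primes, but your more explicit treatment of the inner sum is equally valid.
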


\begin{proof}
Write $\ell = mn$ and note that since $m > D$, we have $\Omega(\ell) \geq \frac{\log D}{\log Q}$. Hence the left hand side of~\eqref{eq:FLSclaim} is
\begin{align*}
\leq \sum_{\substack{X < \ell \leq X+H \\\Omega(\ell)\geq \frac{\log D}{\log Q}}} d_2(\ell) d_k(\ell)^r \leq e^{-C\frac{\log D}{\log Q}}\sum_{X< \ell \leq X+H} e^{C\Omega(\ell)} d_2(\ell) d_k(\ell)^r \ll_C H\frac{(\log X)^{2k^r e^C}}{\exp(C\frac{\log D}{\log Q})}
\end{align*}
by Lemma~\ref{shiu}.
\end{proof}

Now we state the lemma allowing us to introduce an extra variable $p \in (P, Q]$ in the factorization. It is a slight variant of~\cite[Lemma 3.1]{MatoTera} (see also~\cite[Remark 3.2]{MatoTera}).

\begin{lemma}\label{lem:MatoTera}
Let $\eps > 0$ and $k \geq 1$ be fixed. Let $X\geq 3$, $X^{\eps} \leq H \leq X$, and let $2 \leq P < Q \leq X^{1/(\log\log X)^2}$. Write $\mathcal{P}(P, Q) = \prod_{P < p \leq Q} p$. Let $f$ be any multiplicative function satisfying $|f(n)| \leq d_k(n)$. Then for any sequence $\{\omega_n\}$ with $|\omega_n| \leq 1$, we have
$$
\sum_{\substack{X < n \leq X+H \\ (n, \mathcal{P}(P, Q)) > 1}} f(n) \omega_n = \sum_{\substack{X < prn \leq X+H \\ P < p \leq Q \\ r \leq X^{\eps/2}}} a_r f(p) f(n) \omega_{prn} + O\left( \frac{H(\log X)^{4k}}{P} + \frac{H}{\exp((\log \log X)^{2})}\right),
 $$
where $\{a_r\}$ is an explicit sequence satisfying $|a_r| \leq d_{k+1}(r)$.
\end{lemma}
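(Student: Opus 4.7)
The strategy is a Buchstab--type decomposition of $\mathbf{1}_{(n,\mathcal{P}(P,Q))>1}$, a sieve truncation supplied by Lemma~\ref{le:FLS}, and a multiplicativity split that is controlled by Lemma~\ref{shiu}.

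Starting from the Möbius identity $\mathbf{1}_{(n,\mathcal{P}(P,Q))>1} = -\sum_{d\mid n,\,d\mid\mathcal{P}(P,Q),\,d>1}\mu(d)$, I re-parameterize each squarefree $d\mid\mathcal{P}(P,Q)$ as $d=pr$, where $p$ is \emph{any} prime factor of $d$ and $r = d/p$. Since each $d$ admits $\omega(d)$ such factorizations, summing over the $\omega(d)$ choices of $p$ and dividing by $\omega(d)=\omega(r)+1$, together with $\mu(pr)=-\mu(r)$ for $(p,r)=1$, yields
\[
\mathbf{1}_{(n,\mathcal{P}(P,Q))>1}
\;=\;\sum_{\substack{P<p\leq Q\\ p\mid n}}\ \sum_{\substack{r\mid n/p,\ r\text{ sqfree}\\ \text{primes of }r\in(P,Q],\ (p,r)=1}}\frac{\mu(r)}{\omega(r)+1},
\]
at least when the $(P,Q]$-part of $n$ is squarefree (the non-squarefree discrepancy is treated as part of the error below). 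This motivates the definition
\[
a_r \;:=\; \frac{\mu(r)\,f(r)}{\omega(r)+1}\cdot\mathbf{1}\!\left[r\text{ squarefree, all prime factors in }(P,Q]\right],
\]
which immediately satisfies $|a_r|\leq |f(r)|\leq d_k(r)\leq d_{k+1}(r)$.

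Substituting the identity into $\sum_n f(n)\omega_n\mathbf{1}_{(n,\mathcal{P})>1}$, writing $n=prm$, and using $f(prm)=f(p)f(r)f(m)$, one arrives at the claimed main term $\sum_{X<prm\leq X+H,\,P<p\leq Q,\,r\leq X^{\varepsilon/2}} a_r\,f(p)\,f(m)\,\omega_{prm}$, up to three error contributions. First, the truncation $r\leq X^{\varepsilon/2}$: the complementary terms satisfy $M:=pr>X^{\varepsilon/2}$ with $M$ squarefree and all prime factors in $(P,Q]$, so Lemma~\ref{le:FLS} applied with $D=X^{\varepsilon/2}$ and a sufficiently large $C=C(\varepsilon,k)$ (exploiting $Q\leq X^{1/(\log\log X)^2}$ to obtain $(\log D)/\log Q\geq(\varepsilon/2)(\log\log X)^2$) bounds this tail by $O(H/\exp((\log\log X)^2))$. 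Second, the multiplicativity defect, where $(p,m)>1$ or $(r,m)>1$ (so that $f(prm)\neq f(p)f(r)f(m)$), forces some prime $q>P$ to divide $prm$ with multiplicity at least two, and Lemma~\ref{shiu} then gives a total contribution of size $O(H(\log X)^{O_k(1)}/P)$. Third, the non-squarefree discrepancy in the combinatorial identity (when $q^2\mid n$ for some prime $q>P$) is controlled by the same estimate.

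The main delicate step is to rigorously verify the combinatorial identity displayed above for squarefree $n$, which ultimately reduces to the elementary computation
\[
\sum_{j=0}^{t-1}\binom{t-1}{j}\frac{(-1)^j}{j+1}\;=\;\frac{1}{t}
\]
(equivalently, $\int_0^1(1-x)^{t-1}\,dx=1/t$), applied with $t=\omega(n;P,Q)$ and summed over the $t$ choices of the distinguished prime $p$. Once this identity is in place, the remaining bookkeeping of the tail, multiplicativity, and non-squarefree errors is a straightforward application of Shiu's inequality and the fundamental-lemma estimate Lemma~\ref{le:FLS}, and one assembles the pieces to obtain the stated bound.
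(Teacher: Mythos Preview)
Your approach is correct and lands on essentially the same argument as the paper, with the combinatorics organized differently. The paper uses Ramar\'e's identity $\mathbf{1}_{(n,\mathcal{P})>1}=\sum_{P<p\leq Q,\,p\mid n}1/\omega_{(P,Q]}(n)$, writes $n/p=m_1m_2$ with $m_1$ the $(P,Q]$-part, and M\"obius-inverts the coprimality condition on $m_2$; this produces $a_r=f(r)\sum_{r=dm_1}\mu(d)/(\omega(m_1)+1)$, supported on all $r$ with prime factors in $(P,Q]$. Your route---M\"obius-expanding the indicator and splitting each $d>1$ as $pr$---is equivalent: the binomial identity you isolate, $\sum_{j}\binom{t-1}{j}(-1)^j/(j+1)=1/t$, is exactly what collapses your inner $r$-sum back to Ramar\'e's weight $1/\omega_{(P,Q]}(n)$, and for squarefree $r$ your $a_r=\mu(r)f(r)/(\omega(r)+1)$ coincides with the paper's. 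Your $a_r$ is slightly cleaner (supported only on squarefree $r$, with the sharper bound $|a_r|\leq d_k(r)$), which is permitted since the lemma only requires \emph{some} sequence with $|a_r|\leq d_{k+1}(r)$. Two small remarks: your displayed identity in fact holds for all $n$, not only those with squarefree $(P,Q]$-part (the condition $(p,r)=1$ with $r$ squarefree already handles higher powers correctly), so your ``third error'' is zero; and the lemma's main term carries no constraint $(p,r)=1$, so you should note that restoring the terms with $p\mid r$ forces $p^2\mid prn$ and is absorbed into the $O(H(\log X)^{4k}/P)$ error by the same Shiu bound you already invoke.
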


\begin{proof}
This is very similar to~\cite[Remark 3.2]{MatoTera} but for completeness we provide the proof in a somewhat simpler form.

By Ramar\'e's identity
\begin{align}\label{ramare}
f(n)\omega_n 1_{(n,\mathcal{P}(P, Q))>1}=\sum_{P< p\leq Q}\sum_{pm=n}\frac{f(pm) \omega_{pm}}{\omega_{(P,Q]}(pm)}
\end{align}
where $\omega_{(P,Q]}(m)$ is the number of distinct prime divisors of $m$ on $(P,Q]$; this identity follows directly since the number of representations $n=pm$ with $P<p\leq Q$ is $\omega_{(P,Q]}(n)$.

We write $m$ uniquely as $m=m_1m_2$ with $m_1$ having all of its prime factors from $(P,Q]$ and $m_2$ having no prime factors from that interval. Summing over $n$ and then spotting the condition $(m_2, \mathcal{P}(P, Q)) = 1$ using M\"obius inversion, we see that
\begin{align}
\nonumber
\sum_{\substack{X < n \leq X+H \\ (n, \mathcal{P}(P, Q)) > 1}} f(n) \omega_n &= \sum_{P< p\leq Q}\sum_{\substack{X/p\leq m_1m_2\leq (X+H)/p\\p'\mid m_1\Longrightarrow p'\in (P,Q]\\(m_2, \mathcal{P}(P, Q)) = 1}}\frac{f(p m_1 m_2)}{\omega_{(P,Q]}(p m_1)}\omega_{m_1 m_2 p} \\
\label{eq:RamareMT}
&=\sum_{P< p\leq Q}\sum_{\substack{X/p\leq m_1 d m_2\leq (X+H)/p\\d \mid \mathcal{P}(P, Q) \\ p'\mid m_1\Longrightarrow p'\in (P,Q]}}\frac{\mu(d) f(p m_1 d m_2)}{\omega_{(P,Q]}(p m_1)}\omega_{m_1 d m_2 p}.
\end{align}

Let us show that we can restrict the summation to $dm_1 \leq X^{\varepsilon/2}$. Writing $m =dm_1$ and $n = p m_2$, we see that by Lemma~\ref{le:FLS} with $C = 4/\varepsilon$ the contribution of $dm_1 > X^{\varepsilon/2}$ is bounded by
\begin{align*}
\leq \sum_{\substack{X< mn \leq X+H \\ p \mid m \implies p \in (P, Q] \\ m > X^{\varepsilon/2}}} d_2(m) d_2(n) d_k(mn) \leq \sum_{\substack{X< mn \leq X+H \\ p \mid m \implies p \in (P, Q] \\ m > X^{\varepsilon/2}}} d_{2k}(mn)^3 \ll \frac{H}{\exp((\log \log X)^2)}.
\end{align*}

Furthermore, since in~\eqref{eq:RamareMT} all prime factors of $pd m_1$ are from $(P, Q]$, we have
\begin{equation}
\label{eq:sq-freeRep}
f(p m_1 d m_2) = f(p)f(d m_1)f(m_2) \quad \text{and} \quad \omega_{(P,Q]}(p m_1) = \omega_{(P, Q]}(m_1) + 1
\end{equation}
unless there exists a prime $q\in (P,Q]$ such that $q^2 \mid pm_1dm_2 =: \ell$. Applying Lemma~\ref{shiu}, the error introduced by making the changes~\eqref{eq:sq-freeRep} to~\eqref{eq:RamareMT} is
\begin{align*}
\ll \sum_{P< q\leq Q}\sum_{\substack{X< \ell\leq X+H\\q^2\mid \ell}}d_4(\ell) d_k(\ell) \ll \sum_{P< q\leq Q}\frac{H}{q^2}(\log X)^{4k-1} \ll \frac{H}{P}(\log X)^{4k-1}.
\end{align*}

Thus~\eqref{eq:RamareMT} equals
\begin{align*}
\sum_{\substack{X \leq p m_1 d m_2\leq X+H \\p'\mid d m_1\Longrightarrow p'\in (P,Q] \\ P < p \leq Q, dm_1 \leq X^{\varepsilon/2}}}\frac{\mu(d) f(p) f(dm_1) f(m_2)}{\omega_{(P,Q]}(m_1) + 1}\omega_{m_1 d m_2 p} + O\left(\frac{H}{\exp((\log \log X)^{2})} + \frac{H}{P}(\log X)^{4k-1} \right),
\end{align*}
and the claim follows with
\begin{align*}
a_r:=f(r) 1_{\substack{p \mid r \implies p \in (P, Q]}}\sum_{r = dm_1} \frac{\mu(d)}{\omega_{(P,Q]}(m_1)+1},
\end{align*}
\end{proof}

The following combinatorial lemma will be used to arrange each component arising from Lemma~\ref{hb-identity} into a desired form, such as a type $I$ sum, a type $II$ sum, or a type $I_2$ sum.

\begin{lemma}\label{combinatorial}  Let $\alpha_1,\dots,\alpha_k$ be nonnegative real numbers with $\sum_{i=1}^k \alpha_i = 1$ and let $\frac{1}{3} \leq \theta \leq 1$.  For any $I \subset \{1,\dots,k\}$, write $\alpha_I \coloneqq \sum_{i \in I} \alpha_i$.  Consider the following statements:
\begin{itemize}
\item[($I$)]  One has $\alpha_i \geq 1 - \theta$ for some $1 \leq i \leq k$.
\item[($I_2^{\mathrm{maj}}$)]  One has $\alpha_{\{i,j\}} \geq 1 - \theta$ for some $1 \leq i < j \leq k$.
\item[($I_2$)]  One has $\alpha_{\{i,j\}} \geq \frac{3}{2}(1-\theta)$ for some $1 \leq i < j \leq k$.
\item[($II^{\mathrm{maj}}$)]  There exists a partition $\{1,\dots,k\} = I \uplus J \uplus J'$ such that $2\theta-1 \leq \alpha_I \leq 4\theta-2$ and $|\alpha_J - \alpha_{J'}| \leq 2\theta-1$.
\item[($II^{\mathrm{min}}$)] There exists a partition $\{1,\dots,k\} = J \uplus J'$ such that $|\alpha_J - \alpha_{J'}| \leq 2\theta-1$ (or equivalently, $\alpha_J, \alpha_{J'} \in [1-\theta,\theta]$; or equivalently, $\alpha_J \in [1-\theta, \theta]$).
\end{itemize}
Then the following claims hold.
\begin{itemize}
\item[(i)]  Suppose that $\theta = 5/8$.  Then at least one of ($I$) or ($II^{\mathrm{maj}}$) holds.
\item[(ii)]  Suppose that $\theta \geq 3/5$.  Then at least one of ($I$), ($I_2$), or ($II^{\mathrm{min}}$) holds.
\item[(iii)]  Suppose that $\theta = 7/12$.  Then at least one of ($I$), ($I_2^{\mathrm{maj}}$), or ($II^{\mathrm{maj}}$) holds.
\item[(iv)] Suppose that $k = 5$ and $\theta = 11/20$. Then at least one of ($I_2^{\mathrm{maj}}$) or ($II^{\mathrm{maj}}$) holds.
\item[(v)] Suppose that $k \in \{3,4\}$ and $\theta \geq 1/2$. Then ($I_2^{\mathrm{maj}}$) holds.
\item[(vi)]  Suppose that $k=3$ and $\theta \geq 5/9$ or $k=2$ and $\theta \geq 1/3$. Then ($I_2$) holds.
\end{itemize}
\end{lemma}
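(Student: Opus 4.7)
The plan is to prove each of the six assertions by elementary combinatorial case analysis, relying on two standard tools throughout: a \emph{partial sum lemma} (if $\beta_1 \geq \beta_2 \geq \dots \geq \beta_m$ are nonnegative with each $\beta_i \leq M$ and $\sum_i \beta_i \geq t$, then there exists an index $j$ with $\sum_{i \leq j} \beta_i \in [t, t+M)$), and a \emph{greedy $2$-partition lemma} (greedily assigning $\beta_1 \geq \dots \geq \beta_m$ in decreasing order to the currently smaller of two running piles produces a partition whose two sums differ by at most $\beta_1 \leq M$). In every case I sort $\alpha_1 \geq \alpha_2 \geq \dots \geq \alpha_k$, assume the ``simpler'' conclusions in the list fail, extract upper bounds on $\alpha_1, \alpha_2, \dots$, and then construct the desired partition.

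The assertions (v) and (vi) follow immediately from a pigeonhole observation:
\[
\sum_{1 \leq i < j \leq k} (\alpha_i + \alpha_j) \;=\; (k-1)\sum_{i=1}^k \alpha_i \;=\; k-1,
\]
so there is a pair with $\alpha_i + \alpha_j \geq 2/k$. For $k \in \{3,4\}$ we get $2/k \geq 1/2 \geq 1-\theta$, verifying $(I_2^{\mathrm{maj}})$ and hence (v); for $k=3$, $\theta \geq 5/9$ we get $2/k = 2/3 \geq 3(1-\theta)/2$, and for $k=2$, $\theta \geq 1/3$ we get $\alpha_1+\alpha_2=1 \geq 3(1-\theta)/2$, verifying $(I_2)$ and hence (vi).

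For (ii), assume (I) and $(I_2)$ both fail, so $\alpha_1 < 1-\theta \leq 2/5$ and $\alpha_1 + \alpha_2 < 3(1-\theta)/2 \leq 3/5$. If $\alpha_1 + \alpha_2 \geq 1-\theta$, take $J=\{1,2\}$ and we are done; otherwise $2\alpha_2 \leq \alpha_1+\alpha_2 < 1-\theta$, so $\alpha_i < (1-\theta)/2$ for all $i \geq 2$, and the partial sum lemma produces a prefix with sum in $[1-\theta,\theta]$, giving $(II^{\mathrm{min}})$. For (i), assume (I) fails, so $\alpha_i < 3/8$. If $\alpha_1,\alpha_2 \geq 1/4$, take $I=\{3,\dots,k\}$, $J=\{1\}$, $J'=\{2\}$: then $\alpha_I = 1-\alpha_1-\alpha_2 \in (1/4,1/2]$ and $|\alpha_J-\alpha_{J'}| = \alpha_1-\alpha_2 < 1/8$ (and the case $k\leq 2$ forces (I) to hold instead). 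If $\alpha_1 \geq 1/4 > \alpha_2$, take $I=\{1\}$ and greedy-partition the tail, all of whose entries are $< 1/4$. If $\alpha_1 < 1/4$, use the partial sum lemma to find a prefix $I$ with $\alpha_I \in [1/4,1/2)$ and again greedy-partition the tail.

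The main obstacle is (iii) and (iv), where $\theta$ is close to $1/2$ and the balance requirement $|\alpha_J-\alpha_{J'}| \leq 2\theta-1$ is tighter than the bound $\alpha_2 < (1-\theta)/2$ that follows from $(I_2^{\mathrm{maj}})$ failing, so a naive ``choose $I$, then greedy-partition the tail'' does not close the argument. The fix is to exploit the additional constraint that \emph{all} pair sums satisfy $\alpha_i+\alpha_j < 1-\theta$ (not merely the maximum pair), and then proceed by a sub-case analysis on the triple $(\alpha_1,\alpha_2,\alpha_3)$: when some $\alpha_m$ lies in the narrow window $[2\theta-1,\,(1-\theta)/2)$ we absorb it into $I$ and exploit $\alpha_1-\alpha_2 \leq 1-\theta-2\alpha_2$ to balance the first two elements of the tail into $J,J'$; when no such $\alpha_m$ exists, every $\alpha_i$ is either $\geq (1-\theta)/2$ (which the pair-sum bound prevents for more than one index) or $< 2\theta-1$, in which case the partial sum lemma directly produces a prefix with $\alpha_I \in [2\theta-1,4\theta-2]$ and the tail is balanced by greedy. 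Case (iv) is handled analogously but with the extra information that $k=5$, which makes the pigeonhole on pair sums more restrictive and keeps the case analysis finite.
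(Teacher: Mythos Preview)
Your treatment of (i), (ii), (v), (vi) is correct and close in spirit to the paper's (your case split in (i) on $\alpha_2$ versus $1/4$ is a minor variant of the paper's split on $\alpha_3$).

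There is, however, a genuine gap in your plan for (iii) (and your (iv) inherits a similar issue). In your Case~B for (iii) you claim that once at most one $\alpha_i$ exceeds $(1-\theta)/2=5/24$ and the rest are $<2\theta-1=1/6$, the partial sum lemma produces a prefix $I$ with $\alpha_I\in[1/6,1/3]$. This fails when $\alpha_1\in(1/3,5/12)$: the very first prefix $\{1\}$ already overshoots $4\theta-2=1/3$, and every larger prefix is even bigger. A concrete instance is $\alpha_1=2/5$ and $\alpha_2=\cdots=\alpha_{61}=1/100$; here $(I)$ and $(I_2^{\mathrm{maj}})$ both fail, you are in Case~B, yet no sorted prefix lands in $[1/6,1/3]$. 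Your Case~A is also underspecified: setting $I=\{m\}$ and seeding $J=\{1\}$, $J'=\{2\}$ and then greedy-distributing the remaining $\alpha_i$'s only guarantees imbalance $\le\max_i\alpha_i$, which can exceed $2\theta-1$. For (iv), take $\alpha_1=\alpha_2=\alpha_3=\alpha_4=0.224$, $\alpha_5=0.104$: here $(I_2^{\mathrm{maj}})$ fails, you are in Case~A, but $I=\{1\}$ followed by greedy on the rest yields imbalance $0.12>1/10$.

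The paper avoids both problems by a different case split and, crucially, by pairing indices rather than greedily balancing single elements. For (iii) it splits on whether $\alpha_5>1/6$: if so, it takes $J=\{1,4\}$, $J'=\{2,3\}$, so that both $\alpha_J,\alpha_{J'}\in(1/3,5/12)$ automatically have difference $<1/12$; if $\alpha_5\le 1/6$, it seeds $J_0=\{1,2\}$ (not $\{1\}$) and extends $J'_0=\{3,\dots,r\}$. For (iv) it takes $I=\{5\}$ and $J=\{1,2\}$, $J'=\{3,4\}$, using that both pair sums lie in $[7/20,9/20)$. The key idea your sketch is missing is that the pair-sum constraint $\alpha_i+\alpha_j<1-\theta$ controls the difference of \emph{two disjoint pairs} far better than it controls any single $\alpha_i$, and one should build $J,J'$ from such pairs.
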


\begin{remark} The different conclusions ($I$), ($I_2^{\mathrm{maj}}$), ($I_2$), ($II^{\mathrm{maj}}$), ($II^{\mathrm{min}}$) in Lemma~\ref{combinatorial} correspond to different types of sums that behave well on intervals $(X,X+H]$ with $H$ much larger than $X^\theta$:
\begin{itemize}
\item Exponents obeying ($I$) correspond to ``type $I$ sums'' which behave well for both major and minor arc correlations.
\item Exponents obeying ($I_2^{\mathrm{maj}}$) correspond to ``type $I_2$ sums'' which behave well for major arc correlations.
\item Exponents obeying ($I_2$) correspond to ``type $I_2$ sums'' which behave well for both major and minor arc correlations.
\item Exponents obeying ($II^{\mathrm{maj}}$) correspond to ``type $II$ sums'' which behave well for major arc correlations.
\item Exponents obeying ($II^{\mathrm{min}}$) correspond to ``type $II$ sums'' which behave well for minor arc correlations, or for major arc correlations when one can extract a medium-sized prime factor from the sum.
\end{itemize}
\end{remark}

\begin{proof}  We first handle the easy case (vi).  If $k=2$ and $\theta \geq 1/3$, then $\frac{3}{2}(1-\theta) \leq 1$ and ($I_2$) follows simply by taking $\{i, j\} = \{1, 2\}$. If $k=3$ and $\theta \geq \frac{5}{9}$, then $\frac{3}{2}(1-\theta) \leq \frac{2}{3}$ and ($I_2$) follows by noting that the sum of the two largest of the reals $\alpha_1,\alpha_2,\alpha_3$ is necessarily at least $\frac{2}{3}$.

Now we prove (v). If $k=4$ and $\theta \geq 1/2$, then by the pigeonhole principle one of $\alpha_{\{1,2\}}$, $\alpha_{\{3,4\}}$ is at least $\frac{1}{2} \geq 1-\theta$, and we obtain ($I_2^{\mathrm{maj}}$) in this case. The case $k=3$ follows similarly, with some room to spare.

In a similar spirit in case (iv), when $k=5$ and $\theta = \frac{11}{20}$, then one of the $\alpha_i$ must be at most $\frac{1}{5}$; without loss of generality $\alpha_5 \leq \frac{1}{5}$.  Since $1-\theta = \frac{9}{20}$, we obtain ($I_2^{\mathrm{maj}}$) except when $\alpha_{\{1,2\}}, \alpha_{\{3,4\}} \leq \frac{9}{20}$, which by $\sum_{i=1}^5 \alpha_i=1$ forces $\alpha_{\{3,4\}}, \alpha_{\{1,2\}} \geq 1 - \frac{9}{20} - \frac{1}{5} = \frac{7}{20}$.  Thus $|\alpha_{\{1,2\}} - \alpha_{\{3,4\}}| \leq \frac{9}{20}-\frac{7}{20} = \frac{1}{10} = 2\theta-1$.  Also we have
$$ \alpha_5 = 1- \alpha_{1, 2} - \alpha_{3, 4} \geq 1 - \frac{9}{20} - \frac{9}{20} = \frac{1}{10} = 2\theta-1$$
and
$$ \alpha_5 \leq \frac{1}{5} = 4\theta-2$$
and so we obtain ($II^{\mathrm{maj}}$) in this case.  This establishes (iv).

In the remaining cases (i)--(iii) we assume, without loss of generality, that
\[
\alpha_1 \geq \alpha_2 \geq \dotsb \geq \alpha_k.
\]

In case (ii) when $\theta \geq 3/5$ we obtain ($I$) unless $\alpha_j < 1-\theta$ for each $j$ and ($I_2$) unless $\alpha_{\{i, j\}} < \frac{3}{2}(1-\theta) \leq \theta$ for any distinct $i, j$. But if $\alpha_{\{i, j\}} \in [1-\theta, \theta]$ for some distinct $i, j$, then we have ($II^{\mathrm{min}}$). Hence we can assume that $\alpha_{i, j} < 1-\theta$ for any distinct $i, j$. In particular, for any $j \neq 1$ we have
\[
\alpha_j \leq \frac{\alpha_1 + \alpha_j}{2} \leq \frac{1-\theta}{2} \leq 2\theta -1.
\]
Consequently there must be an index $r \in \{3, \dotsc, k\}$ such that $\alpha_1 + \sum_{j = 2}^r \alpha_j \in [1-\theta, \theta]$, and hence ($II^{\mathrm{min}}$) holds.

Let us now consider (i). Now $\theta = 5/8$ and we obtain ($I$) unless $\alpha_j < 3/8$ for every $j$ (and in particular we can assume that $k \geq 3$). Note that $2\theta -1 = 1/4$ in this case. If now $\alpha_3 > 1/4$, then $\alpha_1, \alpha_2 \in [1/4, 3/8]$ and we have ($II^{\mathrm{maj}}$) with $J = \{1\}, J' = \{2\}$, and $I = \{3, \dotsc, k\}$.

On the other hand, if $\alpha_3 \leq 1/4$, we set $J_0 = \{1\}$ and $J_0' = \{2, \dotsc, r\}$ with $r \geq 2$ the greatest integer such that $\alpha_{J_0'} < \alpha_{J_0}$. Then necessarily $|\alpha_{J_0}-\alpha_{J_0'}| \leq 1/4 = 2\theta -1$. Furthermore $\alpha_{J_0'} + \alpha_{J_0} \leq 2 \cdot \alpha_1 \leq 3/4$. If also $\alpha_{J_0'} + \alpha_{J_0} \geq 1/2$ then we have ($II^{\mathrm{maj}}$) with $J = J_0, J' = J_0'$, and $I = \{1, \dotsc, k\} \setminus (J_0 \cup J_0')$. Otherwise we add indices $j \geq r+1$ one by one to $J_0$ or $J_0'$ depending on whether $\alpha_{J_0} < \alpha_{J_0'}$ or not. We continue this process until $\alpha_{J_0} + \alpha_{J_0'} \in [1/2, 3/4]$, and we again obtain ($II^{\mathrm{maj}}$).

Let us finally turn to (iii). Now $\theta = 7/12$ and $2\theta -1 = 1/6$. We obtain ($I_2^{\mathrm{maj}}$) unless $\alpha_{\{i, j\}} < 1-\theta = 5/12$ for any distinct $i, j$. In particular we can assume that $\alpha_1 + \alpha_2 + \alpha_3 + \alpha_4 < 5/6 < 1$ and thus $k \geq 5$.

If $\alpha_5 > 1/6$, then $\alpha_{\{2, 3\}}, \alpha_{\{1, 4\}} \in [1/3, 5/12]$. Consequently $1-\alpha_{\{1, 4\}}-\alpha_{\{2, 3\}} \in [1/6, 1/3]$ and we obtain ($II^{\mathrm{maj}}$) with $J = \{1, 4\}, J' = \{2, 3\}$, and $I = \{1, \dotsc, k\} \setminus \{1, 2, 3, 4\}$.

On the other hand if $\alpha_5 \leq 2\theta-1 = 1/6$, we can argue similarly to case (i): We set $J_0 = \{1, 2\}$ and $J_0' = \{3, \dotsc, r\}$ with $r \geq 4$ the greatest integer such that $\alpha_{J_0'} \leq \alpha_{J_0}$. Then necessarily $|\alpha_{J_0} - \alpha_{J_0'}| \leq 1/6 = 2\theta-1$. Furthermore $\alpha_{J_0} + \alpha_{J_0'} \leq 2 \alpha_{1, 2} \leq 5/6$. If also $\alpha_{J_0} + \alpha_{J_0'} \geq 2/3$ then we have ($II^{\mathrm{maj}}$) with $J = J_0$ and $J' = J_0'$. Otherwise we add indices $j \geq r+1$ one by one to $J_0$ or $J_0'$ depending on whether $\alpha_{J_0} < \alpha_{J_0'}$ or not. We continue this process until $\alpha_{J_0} + \alpha_{J_0'} \in [2/3, 5/6]$, and we again obtain ($II^{\mathrm{maj}}$).
\end{proof}

\begin{remark}
\label{rem:obstructionscomb} The following counterexamples, with $\eps$ small, show that $\theta$ in the various components of Lemma~\ref{combinatorial} cannot be decreased (apart from the $k=3$ case of (v)):
\begin{itemize}
\item $\theta = 5/8-\eps$, $(\alpha_1,\dots,\alpha_k) = (1/4,1/4,1/4,1/4)$;
\item $\theta = 3/5-\eps$, $(\alpha_1,\dots,\alpha_k) \in\{(2/5,1/5,1/5,1/5), (1/5,1/5,1/5,1/5,1/5)\}$;
\item $\theta = 7/12-\eps$, $(\alpha_1,\dots,\alpha_k) = (1/6,1/6,1/6,1/6,1/6,1/6)$;
\item $\theta = 11/20-\eps$, $(\alpha_1,\dots,\alpha_k) = (1/5,1/5,1/5,1/5,1/5)$;
\item $\theta = 1/2-\eps$, $(\alpha_1,\dots,\alpha_k) = (1/4,1/4,1/4,1/4)$;
\item $\theta = 5/9-\eps$, $(\alpha_1,\dots,\alpha_k) = (1/3,1/3,1/3)$;
\item $\theta = 1/3-\eps$, $(\alpha_1,\dots,\alpha_k) = (\alpha,1-\alpha)$ for any $\alpha \in (0, 1)$.
\end{itemize}
\end{remark}

\section{Major arc estimates}\label{major-arc-sec}

In the proof of Theorem~\ref{discorrelation-thm} we shall use Theorem~\ref{inverse} below to reduce to ``major arc'' cases where more-or-less $F(g(n) \Gamma) = 1$ (or $F(g(n) \Gamma) = n^{it}$ in case of type $II$ sums). The purpose of this section is to establish the following estimates corresponding to the case $F(g(n) \Gamma) = 1$ as well as an auxiliary result (Lemma~\ref{le:BHP} below) on trilinear sums in case $F(g(n) \Gamma) = n^{it}$. 

\begin{theorem}[Major arc estimate]\label{thm:major-arc}  Let $X \geq 3$ and $X^{\theta+\eps} \leq H \leq X^{1-\eps}$ for some $0 < \theta < 1$ and $\eps > 0$.
\begin{itemize}
\item[(i)]  (Huxley type estimates) Set $\theta = 7/12$. Then, for all $A > 0$,
\begin{align*}
 \left| \sum_{X < n \leq X+H} \mu(n) \right|^* &\ll_{A,\eps} \frac{H}{\log^{A} X}
\end{align*}
 and
\begin{align*}
 \left| \sum_{X < n \leq X+H} (\Lambda(n) - \Lambda^\sharp(n)) \right|^* &\ll_{A,\eps} \frac{H}{\log^{A} X}.
\end{align*}
\item[(ii)]
Let $k \geq 2$.  Set $\theta = 1/3$ for $k=2$, $\theta=1/2$ for $k=3, 4$, $\theta =11/20$ for $k =5$, and $\theta = 7/12$ for $k \geq 6$. Then
$$
\left|\sum_{X < n \leq X+H} (d_k(n) - d^\sharp_k(n))\right|^* \ll_\eps \frac{H}{X^{c_k}} + \frac{H}{X^{\eps/1000}}$$
for some constant $c_k>0$ depending only on $k$.
\end{itemize}
\end{theorem}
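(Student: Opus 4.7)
The plan is to handle the maximal sum over arithmetic progressions by splitting on the common difference and then applying Perron's formula combined with zero-density and moment estimates for $\zeta$ and Dirichlet $L$-functions. For a subprogression $P\subset (X,X+H]\cap \Z$ with common difference $q\gg (\log X)^{A+O(1)}$ in case (i), or $q \gg X^{\eps/1000}$ in case (ii), the progression has few enough terms that trivial bounds --- using $|\mu|\leq 1$, $|\Lambda^\sharp|\ll 1$, and the Shiu-type estimate of Lemma~\ref{shiu} applied to $d_k$ and $d_k^\sharp$ --- already beat the target. For smaller $q$, expanding the indicator of the residue class into Dirichlet characters modulo $q$ reduces the problem, up to a small correction for terms with $(n,q)>1$, to bounding
\[
\sup_{[Y,Y+H']\subseteq (X,X+H]}\,\left|\sum_{Y<n\leq Y+H'} f(n)\chi(n)\right|
\]
uniformly in characters $\chi$ of conductor at most $(\log X)^{A+O(1)}$, for $f\in\{\mu,\Lambda-\Lambda^\sharp, d_k-d_k^\sharp\}$.

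For each such twisted sum I would apply Perron's formula at height $T=X^{O(1)}$:
\[
\sum_{Y<n\leq Y+H'} f(n)\chi(n) = \frac{1}{2\pi i}\int_{c-iT}^{c+iT} F(s,\chi)\,\frac{(Y+H')^s - Y^s}{s}\,ds + O(\text{truncation error}),
\]
with $c=1+1/\log X$ and $F(s,\chi)$ the Dirichlet series of $f\chi$. The approximants $\Lambda^\sharp$ and $d_k^\sharp$ from~\eqref{lambdar-def} and~\eqref{dks-def} are type~I sums supported on divisors of size $\leq X^{O(\eps)}$, constructed precisely so that their Dirichlet series cancel the singular parts of $-L'(s,\chi)/L(s,\chi)$ and $L(s,\chi)^k$ at $s=1$ when $\chi$ is principal, and are small for non-principal $\chi$. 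After this subtraction the integrand has no pole to the right of $\Re s=\theta$, and one may shift the contour to $\Re s=\sigma_0$ for some $\sigma_0\in(\theta,1)$.

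The bound on the shifted contour and the connecting horizontal segments is then supplied by the relevant analytic input. In case (i), a log-free form of Huxley's zero-density theorem $N(\sigma,T,\chi)\ll (qT)^{(12/5)(1-\sigma)+o(1)}$, combined with the Siegel--Walfisz theorem (from which the ineffectivity flagged after Theorem~\ref{discorrelation-thm} originates), delivers the full $(\log X)^{-A}$ saving whenever $H\geq X^{7/12+\eps}$, uniformly in $\chi$. In case (ii), one instead uses standard mean-value estimates for $\zeta(s)^k$ along vertical lines in the critical strip --- the classical Voronoi/fourth-moment bound for $k=2$ with $\theta=1/3$, sixth and eighth moment estimates for $k=3,4$ with $\theta=1/2$, the relevant higher moment bounds for $k=5$ with $\theta=11/20$, and Huxley's zero-density theorem again for $k\geq 6$ with $\theta=7/12$ --- each yielding a power saving $X^{-c_k}$.

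The main technical obstacle will be maintaining uniformity in the character modulus $q$ and in the subinterval endpoints $(Y,H')$ simultaneously, without destroying the $(\log X)^{-A}$ (respectively $X^{-c_k}$) saving. This is why one needs log-free zero-density estimates and large-sieve inequalities for $L(s,\chi)$, rather than a naive union bound over characters and moduli. The type~I structure of the approximants is what makes the cancellation of the main pole transparent, while the freedom in the parameters $R$ in~\eqref{lambdar-def} and $R_k$ in~\eqref{dks-def} is precisely what permits arbitrary powers of $\log X$ (respectively fixed powers of $X$) to be saved.
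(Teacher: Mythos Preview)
Your approach to part (i) is essentially the classical Huxley route and would work, though the paper proceeds differently: rather than applying Perron's formula directly to $(\Lambda-\Lambda^\sharp)\chi$ or $\mu\chi$ and invoking zero-density estimates, the paper uses the triangle inequality~\eqref{eq:fdeclongapp} to split the problem into (a) comparing the short average of $f$ to a nearly-long average of $f$, (b) bounding $f-f^\sharp$ on long intervals via Siegel--Walfisz, and (c) comparing short and long averages of $f^\sharp$ using its type~I structure. Step (a), which carries the analytic content, is handled not by contour-shifting through the zeros of $L(s,\chi)$ but by Heath--Brown's identity (Lemma~\ref{hb-identity}) and the combinatorial Lemma~\ref{combinatorial}(iii) to reduce to type $I$, $I_2^{\mathrm{maj}}$, and $II^{\mathrm{maj}}$ sums, each treated by Dirichlet polynomial mean-value and large-value estimates (Lemmas~\ref{le:BHP} and~\ref{le:typeI2}). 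Your zero-density approach buys a cleaner conceptual picture; the paper's decomposition buys modularity (the same Lemmas~\ref{le:BHP},~\ref{le:typeI2} are reused later) and avoids having to control $1/L(s,\chi)$ or $L'/L$ inside the critical strip.

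For part (ii) with $k\geq 3$, however, your proposal has a genuine gap. You invoke ``sixth and eighth moment estimates'' for $\zeta$ to handle $k=3,4$ at $\theta=1/2$, and ``the relevant higher moment bounds'' for $k=5$ at $\theta=11/20$. These moments are not available: the bound $\int_0^T|\zeta(\tfrac12+it)|^{2k}\,dt\ll T^{1+\eps}$ for $k\geq 3$ is equivalent to the Lindel\"of hypothesis in this range and is wide open. The paper's mechanism is precisely to \emph{avoid} needing any moment higher than the fourth. It writes $d_k(n)=\sum_{n_1\cdots n_k=n}1$, splits each $n_i$ dyadically, and then uses Lemma~\ref{combinatorial}(iv)--(vi) to show that the exponent tuple $(\alpha_1,\dots,\alpha_k)$ always admits either a pair $\alpha_i+\alpha_j\geq 1-\theta$ (the $I_2^{\mathrm{maj}}$ case, handled by Lemma~\ref{le:typeI2} using only the fourth moment of $L(s,\chi)$) or a balanced tripartition (the $II^{\mathrm{maj}}$ case, handled by Lemma~\ref{le:BHP} via Baker--Harman--Pintz mean-value arguments that again rely only on second and fourth moments of individual Dirichlet polynomials). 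This combinatorial regrouping is the missing idea in your sketch; without it there is no known way to obtain the stated exponents for $k\geq 3$.
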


We remark that if we replace the maximal sums $|\cdot|^*$ here by the ordinary sums $|\cdot|$, then the $\theta=7/12$ case of Theorem~\ref{thm:major-arc} can also be extracted after some computation from the work of Ramachandra~\cite{ramachandra} (see in particular Remarks 4, 5 of that paper), with a pseudopolynomial gain $O( \exp(-c(\log X)^{1/3} / (\log\log X)^{1/3} ) )$, while the cases $k=4, 5$ of Theorem~\ref{thm:major-arc}(ii) follow from~\cite[(4.23)]{hl-divisor}) and~\cite{hb-divisor}. Here we will provide the proofs from our viewpoint.  It may be possible to improve the error terms in (i) to be pseudopolynomial in nature even for the maximal sums, if one adjusts the approximants $\mu^\sharp, \Lambda^\sharp$ to take into account the possibility of a Siegel zero, in the spirit of~\cite[Proposition 2.2]{tt-quant}.

For the $\theta=7/12$ result, the primary obstruction arises from convolutions~\eqref{1na} with $(\alpha_1,\dots,\alpha_m)$ equal to $(1/6,1/6,1/6,1/6,1/6,1/6)$, as this lies just outside the reach of our untwisted major arc type $I$ and type $II$ estimates when $\theta$ goes below $7/12$ (cf., the third item of Remark~\ref{rem:obstructionscomb}).  This obstruction has long been known; see e.g.,~\cite{Heath-Brown}. Note that this obstruction does not arise for $k<6$, which explains the fact that better exponents than $7/12$ are available for $d_2, d_3, d_4, d_5$. The corresponding obstructions can be found in the other items of Remark~\ref{rem:obstructionscomb}.

It would probably be possible to obtain Theorem~\ref{thm:major-arc}(ii) for $\theta = 131/416 \approx 0.315$ when $k = 2$ and for $\theta = 43/96 \approx 0.448$ when $k=3$ --- corresponding to the progress in the Dirichlet divisor problem~\cite{huxley-divisor, kolesnik} --- but we do not attempt to compute this here (it requires checking that the arguments in the literature, when adapted to the Dirichlet divisor problem in an arithmetic progression, give a polynomial dependence on the common difference of the arithmetic progression, and it also does not directly improve the exponents in Theorem~\ref{discorrelation-thm}).  

Let us now  explain the strategy of the proof of Theorem~\ref{thm:major-arc}. Let $f \in \{\mu, \Lambda, d_k\}.$ By adjusting the implied constants, it suffices to show the claims with
\[
\left| \sum_{X < n \leq X+H} (f(n) - f^\sharp(n)) \right|^* \quad \text{replaced by} \quad \max_{a, q \in \mathbb{N}} \left|\sum_{\substack{X < n \leq X+H \\ n \equiv a \pmod{q}}} (f(n) - f^\sharp(n))\right|.
\]
 In the cases $f=\mu,\Lambda$ we take $H' \coloneqq X/\log^{20A} X$ and in the case $f= d_k$ we take $H' \coloneqq X^{1-1/100k}$. We use the triangle inequality to write
\begin{align}
\label{eq:fdeclongapp}
\begin{aligned}
&\left|\frac{1}{H} \sum_{\substack{X < n \leq X+H \\ n \equiv a \pmod{q}}} (f(n) - f^\sharp(n))\right| \leq \left|\frac{1}{H} \sum_{\substack{X < n \leq X+H \\ n \equiv a \pmod{q}}} f(n) - \frac{1}{H'} \sum_{\substack{X < n \leq X+H' \\ n \equiv a \pmod{q}}} f(n)\right| \\
&\quad + \left|\frac{1}{H'} \sum_{\substack{X < n \leq X+H' \\ n \equiv a \pmod{q}}} (f(n) - f^\sharp(n))\right| + \left|\frac{1}{H} \sum_{\substack{X < n \leq X+H \\ n \equiv a \pmod{q}}} f^\sharp(n) - \frac{1}{H'} \sum_{\substack{X < n \leq X+H' \\ n \equiv a \pmod{q}}} f^\sharp(n)\right|.
\end{aligned}
\end{align}
Then we show that each of the three differences on the right-hand side is small. Let us next state the required results.

To attack the second difference in~\eqref{eq:fdeclongapp}, we show in Section~\ref{ssec:longinterval} that Theorem~\ref{thm:major-arc} holds in long intervals.
\begin{proposition}[Long intervals]\label{prop:LongIntervals} Let $X \geq H_2 \geq 2$.
\begin{itemize}
\item[(i)] Let $A > 0$ and\footnote{Actually, thanks to Lemma~\ref{basic-prop}(i), it would suffice to consider the case $H_2=X$ here.} $X/\log^A X \leq H_2 \leq X$. Then
\begin{equation}
\label{eq:mulong}
\max_{a, q \in \mathbb{N}} \left| \sum_{\substack{X < n \leq X+H_2\\n\equiv a\pmod q}} \mu(n) \right| \ll_{A} \frac{H_2}{\log^{A} X}.
\end{equation}
and
\begin{equation}
\label{eq:Lambdalong}
\max_{a, q \in \mathbb{N}} \left| \sum_{\substack{X < n \leq X+H_2\\n\equiv a\pmod q}} (\Lambda(n) - \Lambda^\sharp(n)) \right| \ll_{A} \frac{H_2}{\log^{A} X}.
\end{equation}
\item[(ii)] Let $k \geq 2$ and $X^{1-\frac{1}{50k}} \leq H_2 \leq X$. Then
\begin{equation}
\label{eq:dklong}
\max_{a, q \in \mathbb{N}} \left| \sum_{\substack{X < n \leq X+H_2\\n\equiv a\pmod q}} (d_k(n) - d_k^\sharp(n))\right| \ll \frac{H_2^2}{X} \log^{k-2} X.
\end{equation}
\end{itemize}
\end{proposition}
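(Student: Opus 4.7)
The plan is to write each short-interval sum as a difference of two initial-segment sums,
\[
\sum_{\substack{X<n\leq X+H_2\\ n\equiv a\,(q)}}(f-f^\sharp)(n) \;=\; \Sigma_{f}(X+H_2) - \Sigma_{f}(X), \qquad \Sigma_f(Y) \coloneqq \sum_{\substack{n \leq Y\\ n \equiv a\,(q)}}(f-f^\sharp)(n),
\]
and then to bound $\Sigma_f(Y)$ uniformly in $a,q$, splitting into cases according to the size of $q$. The hypothesis $H_2 \geq X/\log^A X$ in (i) (respectively $H_2 \geq X^{1-1/50k}$ in (ii)) ensures that any bound for $\Sigma_f(Y)$ of the form $X/(\log X)^{A'}$ with $A'$ large in terms of $A$ (respectively of the form $X^{1-c_k}$ for $c_k$ chosen in terms of $k$) is absorbed into the target short-interval bound.

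For $f = \mu$ (where $f^\sharp = 0$), the Siegel--Walfisz bound \eqref{eq:S-W} applied with parameter $2A$ directly yields $|\Sigma_{\mu}(Y)| \ll_A Y/(\log Y)^{2A}$ when $(a,q)=1$; the general case reduces to this by factoring out $(a,q)$. For $q > (\log X)^{3A}$ the trivial bound $|\mu|\leq 1$ gives $|\Sigma_\mu(Y)| \leq Y/q+1$, which is already enough.

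For $f = \Lambda - \Lambda^\sharp$ we first treat the principal case $(a,q)=1$ with $q \leq (\log X)^{B}$ for $B=B(A)$ sufficiently large. Siegel--Walfisz provides $\sum_{n \leq Y,\, n\equiv a\,(q)}\Lambda(n) = Y/\varphi(q) + O_A(Y/(\log Y)^{2A})$. For $\Lambda^\sharp$, we expand $1_{(n,P(R))=1} = \sum_{d\mid (n,P(R))}\mu(d)$ and apply the fundamental lemma of the sieve with truncation level $D = X^{1/2}$; only $d$ coprime to $q$ contribute, and CRT gives $\#\{n \leq Y : d\mid n,\ n\equiv a\,(q)\} = Y/(dq)+O(1)$. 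Since $R = \exp((\log X)^{1/10})$, the sieve error is $\exp(-c(\log X)^{9/10})$, and a short computation then yields $\sum_{n \leq Y,\, n\equiv a\,(q)}\Lambda^\sharp(n) = Y/\varphi(q) + O_A(Y/(\log Y)^{2A})$, so the main terms cancel in the difference. For $q > (\log X)^B$ or $(a,q)>1$, we bound the two sums separately: Brun--Titchmarsh handles $\Lambda$, while $\Lambda^\sharp \ll \log R$ pointwise and vanishes identically on any residue class whose g.c.d.\ with $q$ has a prime factor less than $R$, leaving only the case $(a,q)\geq R$, which is trivial.

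For $f = d_k - d_k^\sharp$, the polynomials $P_m$ appearing in \eqref{eq:Pm(t)-def} are tailored so that $d_k^\sharp$ reproduces, on each arithmetic progression modulo $q$, the level-$R_k^{2k-2}$ truncation of the Perron main term for $d_k$ on such a progression. Thus, using Perron's formula and shifting the contour to $\Re s = 1-\delta_k$, combined with a fourth-moment estimate for $\zeta$ (and for the Dirichlet $L$-functions $L(s,\chi)$ modulo $q$, after opening the indicator of the residue class with characters to dispatch the non-principal contributions), one obtains $|\Sigma_{d_k}(Y)| \ll_k X^{1-c_k}$ for $(a,q)=1$ and $q \leq X^{c_k'}$. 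For $q > X^{c_k'}$, Lemma~\ref{shiu} bounds both $\sum d_k$ and $\sum d_k^\sharp$ on the AP by $O(H_2 (\log X)^{k-1}/\varphi(q))$, which is absorbed into $H_2^2 (\log X)^{k-2}/X$ once $c_k'$ is chosen appropriately in terms of the lower bound $H_2 \geq X^{1-1/50k}$. The main obstacle here is the Perron-formula matching for $d_k$ in residue classes, since one must track the interplay between the local factors of $L(s,\chi_0)^k$ at primes dividing $q$ and the explicit polynomials $P_m(t)$; the $\mu$ and $\Lambda$ cases are by contrast essentially direct consequences of classical prime-number-theorem technology.
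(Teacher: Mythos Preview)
For part (i), your approach is essentially the same as the paper's: Siegel--Walfisz handles $\mu$ and the $\Lambda$ side of $\Lambda-\Lambda^\sharp$, while the fundamental lemma of the sieve handles $\Lambda^\sharp$ (the paper truncates at $D=\exp((\log X)^{3/5})$ rather than $X^{1/2}$, but this is inessential).

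For part (ii), however, your route via Perron's formula and contour shifting is genuinely different from the paper's, and your sketch has real gaps. The paper uses no Perron formula and no $L$-function moments at all. Instead it expands $d_k$ directly via the generalized hyperbola identity \eqref{eq:genhypid}, evaluates the inner sums over $n_{j+1},\dots,n_{k-1}$ elementarily (repeatedly using $\sum_{A<n<B,\ n\equiv c\,(q)} 1/n = \tfrac{1}{q}\log(B/A)+O(1/A)$), and then checks by a parallel elementary computation straight from \eqref{dks-def}--\eqref{eq:Pm(t)-def} that $d_k^\sharp$ produces exactly the same main term. The matching of $d_k^\sharp$ with the $d_k$ main term is thus verified by direct calculation, not by identifying both with a Perron residue.

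Your sketch asks for two things you do not establish. First, you assert that $d_k^\sharp$ ``reproduces the level-$R_k^{2k-2}$ truncation of the Perron main term'' on residue classes but then concede this matching is ``the main obstacle''; that identification is precisely the substance of the paper's proof, done elementarily so as to bypass Perron entirely. Second, a fourth-moment estimate for $\zeta$ (or for $L(s,\chi)$) controls $d_2$ but not $d_k$ for $k\geq 3$: the integrand on $\Re s=1-\delta_k$ is $|\zeta(s)|^k$, and the required higher moments or subconvexity are not available unconditionally in the strength you need for a uniform power saving. The paper's hyperbola calculation delivers the stated bound $H_2^2 X^{-1}\log^{k-2}X$ directly for every $k$, whereas your Perron route, even if the matching were carried out, would only reach this for $k=2$.
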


Furthermore, using the definitions of our approximants $\Lambda^\sharp(n)$ and $d_k^\sharp(n)$ as type $I$ sums, it will be straightforward to show that the third difference on the right of~\eqref{eq:fdeclongapp} is small; in Section~\ref{ssec:approximants} we shall show the following.

\begin{lemma}[Long and short averages of approximant]\label{le:Approx} Let $X \geq H_2 \geq H_1 \geq X^{1/4} \geq 2$.
\begin{itemize}
\item[(i)] One has
\begin{align}
\max_{a, q \in \mathbb{N}}\left| \frac{1}{H_1}\sum_{\substack{X < n \leq X+H_1 \\ n \equiv a \pmod{q}}} \Lambda^\sharp(n) - \frac{1}{H_2}\sum_{\substack{X < n \leq X+H_2 \\ n \equiv a \pmod{q}}} \Lambda^\sharp(n)\right| &\ll \exp(-(\log X)^{1/10}).
\end{align}
\item[(ii)] Let $k \geq 2$. Then
\begin{equation}
\max_{a, q \in \mathbb{N}} \left| \frac{1}{H_1}\sum_{\substack{X < n \leq X+H_1 \\ n \equiv a \pmod{q}}} d_k^\sharp(n) - \frac{1}{H_2}\sum_{\substack{X < n \leq X+H_2 \\ n \equiv a \pmod{q}}} d_k^\sharp(n)\right|\ll  \frac{1}{X^{1/100}} + \frac{H_2}{X}\log^{k-2} X.
\end{equation}
\end{itemize}
\end{lemma}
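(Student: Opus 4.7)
The strategy is the same for both parts: compute each normalized short-interval sum as a main term \emph{independent of $H_j$} plus a small error, so that the two main terms cancel upon subtraction.

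For part (i), note that $\Lambda^\sharp(n) = 0$ whenever some prime $p < R$ divides $(a,q)$, so both sums vanish trivially; in what follows I assume every prime factor of $(a,q)$ exceeds $R$. Setting $g = (a,q)$, $a = ga'$, $q = gq'$ with $(a',q') = 1$, and substituting $n = gm$, the sum becomes $\tfrac{P(R)}{\varphi(P(R))}$ times the count of $m$ in an interval of length $H_j/g$, lying in the residue class $a' \pmod{q'}$ and coprime to $Q' \coloneqq \prod_{p < R,\,p \nmid q'} p$. I would then split into two regimes. When $q \geq X^{1/8}$, the trivial bound $\Lambda^\sharp(n) \ll \log R$ combined with $\#\{n \equiv a \,(q): X < n \leq X+H_j\} \leq H_j/q + 1$ yields each normalized average $\ll (\log X)^{1/10}(1/q + 1/H_j) \ll (\log X)^{1/10}X^{-1/8}$, dramatically smaller than the target. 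When $q < X^{1/8}$, the reduced length $H_j/g \geq X^{1/8}$ admits the fundamental lemma of the sieve with level $D = X^{1/8}/2$, producing a main term $(H_j/g)\,q'^{-1}\prod_{p \mid Q'}(1-1/p)$ that, after multiplying by $P(R)/\varphi(P(R))$ and dividing by $H_j$, reduces to $\tfrac{1}{q}\prod_{p < R,\,p \mid q}(1-1/p)^{-1}$---independent of $H_j$. The FLS main-term error is $O(e^{-s})$ with $s = \log D/\log R \asymp (\log X)^{9/10}$, and the remainder error is $O(D/H_j) = O(X^{-1/8})$; both are negligible compared to $\exp(-(\log X)^{1/10})$, so subtracting the averages for $H_1$ and $H_2$ yields the claim.

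For part (ii), the approximant $d_k^\sharp$ is already a truncated divisor sum of level at most $R_k^{2k-2} = X^{(k-1)/(5k)} \leq X^{1/5}$, so I would expand it directly and swap summations. For each $m$, the inner count $\#\{X < n \leq X+H_j: n \equiv a\,(q),\ m \mid n\}$ equals $H_j/[m,q] + O(1)$ when $(m,q) \mid a$, and vanishes otherwise. A Taylor expansion $P_m(\log n) = P_m(\log X) + O\bigl((H_j/X) \sup|P_m'|\bigr)$ then yields
\[
\frac{1}{H_j}\sum_{\substack{X < n \leq X+H_j\\ n \equiv a\,(q)}} d_k^\sharp(n) = \sum_{\substack{m \leq R_k^{2k-2}\\ (m,q)\mid a}}\frac{P_m(\log X)}{[m,q]} + E(H_j),
\]
where $E(H_j) \ll H_j^{-1}\sum_m |P_m(\log X)| + (H_j/X)\sum_m |P_m'(\log X)|/[m,q]$; the main term is independent of $H_j$ and cancels on subtraction.

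To bound $E(H_j)$, I would use the explicit formula~\eqref{eq:Pm(t)-def} with the observation $\log(n_1 \cdots n_j R_k^{k-j}) \leq k \log R_k = \tfrac{1}{10}\log X$, so each summand contributes $O_k(1)$ and the factorization count is $d_{k-1}(m)$, giving $|P_m(\log X)| \ll_k d_{k-1}(m)$ and (by differentiation) $|P_m'(\log X)| \ll_k d_{k-1}(m)/\log X$. Combined with $\sum_{m \leq R_k^{2k-2}} d_{k-1}(m) \ll R_k^{2k-2}(\log X)^{k-2}$ and $\sum_m d_{k-1}(m)/m \ll (\log X)^{k-1}$, we obtain $E(H_j) \ll R_k^{2k-2}(\log X)^{k-2}/H_j + (H_j/X)(\log X)^{k-2}$. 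Since $H_j \geq X^{1/4}$ and $R_k^{2k-2} \leq X^{1/5}$, the first term is $O(X^{-1/20 + o(1)}) \leq X^{-1/100}$ and the second is at most $(H_2/X)\log^{k-2} X$, matching the claim. I expect the principal technical obstacle to lie in (i), namely the careful fundamental-lemma bookkeeping: choosing the sieve level large enough to secure the $\exp(-(\log X)^{1/10})$ saving while keeping the remainder summable, and handling the residue structure of $a \pmod q$ uniformly. Part (ii) is, by contrast, an essentially routine calculation once the bounds on $P_m$ and $P_m'$ are in hand.
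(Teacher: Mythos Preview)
Your proposal is correct and follows essentially the same approach as the paper. For (i), the paper simply cites its earlier asymptotic~\eqref{eq:lambdasharp} (established there via the fundamental lemma of the sieve with level $D=\exp((\log X)^{3/5})$), so your direct FLS computation is the same argument with slightly different parameters; for (ii), both you and the paper expand $d_k^\sharp$ and Taylor-expand $P_m(\log n)$ around $\log X$, with only a cosmetic difference in that you work with $[m,q]$ directly while the paper first reduces to $q\leq X^{1/80}$ and splits by residue pairs $(b,c)$ modulo $q$.
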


Our ability to handle the first difference in~\eqref{eq:fdeclongapp} is what determines the exponent $\theta$. Concerning the first difference we prove the following proposition in Section~\ref{ssec:H1H2compPropproof}. 
\begin{proposition}[Long and short averages of arithmetic function]
\label{prop:H1H2comp}\ 
\begin{itemize}
\item[(i)] Let $X/\log^{20 A} X \geq H_2 \geq H_1 \geq X^{7/12+\varepsilon}$. Then
\begin{align*}
\max_{a, q \in \mathbb{N}}\left| \frac{1}{H_1}\sum_{\substack{X < n \leq X+H_1 \\ n \equiv a \pmod{q}}} \Lambda(n) - \frac{1}{H_2}\sum_{\substack{X < n \leq X+H_2 \\ n \equiv a \pmod{q}}} \Lambda(n)\right| &\ll_{A, \eps} \frac{1}{\log^A X}
\end{align*}
and
\begin{align*}
\max_{a, q \in \mathbb{N}}\left| \frac{1}{H_1}\sum_{\substack{X < n \leq X+H_1 \\ n \equiv a \pmod{q}}} \mu(n)\right| &\ll_{A, \eps} \frac{1}{\log^A X}.
\end{align*}
\item[(ii)]
Let $k \geq 2$.  Set $\theta = 1/3$ for $k=2$, $\theta=1/2$ for $k=3, 4$, $\theta =11/20$ for $k =5$, and $\theta = 7/12$ for $k \geq 6$. There exists $c_k > 0$ such that if $X^{1-1/(100k)} \geq H_2 \geq H_1 \geq X^{\theta+\varepsilon}$, then
\[
\max_{a, q \in \mathbb{N}} \left| \frac{1}{H_1}\sum_{\substack{X < n \leq X+H_1 \\ n \equiv a \pmod{q}}} d_k(n) - \frac{1}{H_2}\sum_{\substack{X < n \leq X+H_2 \\ n \equiv a \pmod{q}}} d_k(n) \right| \ll_{\varepsilon, k} \frac{1}{X^{c_k}} + \frac{1}{X^{\eps/1000}}
\]
\end{itemize}
\end{proposition}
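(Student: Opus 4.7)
The plan is to pass from arithmetic progressions to Dirichlet characters, apply Perron's formula, and estimate the resulting contour integrals using Dirichlet polynomial mean value theorems combined (in the $\Lambda$ and $\mu$ cases) with Heath-Brown's identity and standard zero-density estimates for $L$-functions. For $q$ larger than a sufficiently large power of $\log X$ (say $q > (\log X)^C$ with $C = C(A, \varepsilon)$), each individual average is already $O(H_i (\log X)/\varphi(q))$ by Shiu's inequality (Lemma~\ref{shiu}), so the normalized difference is acceptably small; it therefore suffices to treat $q \leq (\log X)^C$, where orthogonality decomposes each sum into primitive character twists of conductor $q^* \mid q$.

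For a fixed primitive character $\chi$ of conductor $q^*$, Perron's formula truncated at height $T \asymp X(\log X)^{B}/H_1$ expresses each character-twisted sum $\sum_{X<n\leq X+H_i} f(n)\chi(n)$ as a contour integral against the kernel $((X+H_i)^s-X^s)/s$, of $-L'(s,\chi)/L(s,\chi)$, $1/L(s,\chi)$, or $L(s,\chi)^k$ in the cases $f=\Lambda, \mu, d_k$ respectively. Taking the difference normalized by $H_1$ and $H_2$, the kernel becomes
\[
K(s) \coloneqq \frac{(X+H_1)^s - X^s}{H_1 s} - \frac{(X+H_2)^s - X^s}{H_2 s},
\]
which on $\Re(s)=\sigma$ satisfies $K(s) \ll X^{\sigma-1} \min(1, H_2 |t|/X)$, furnishing extra decay both for small $|t|$ (by cancellation between $H_1$ and $H_2$) and for large $|t|$ (via $1/s$). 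One then shifts the contour to $\Re(s)=1/2$, absorbing the residue at $s=1$ into $f^\sharp$ (which is precisely why $d_k^\sharp$ in~\eqref{dks-def} was defined as a truncated Dirichlet divisor-type sum matching the principal-part expansion of $L(s,\chi_0)^k/(H_i s)$ at $s=1$), and handling possible exceptional Siegel zeros of $L(s,\chi)$ in the $\Lambda$, $\mu$ cases by the classical Siegel--Walfisz argument, which is responsible for the ineffective $\log^{-A}$ saving.

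The main and only serious step is the estimation of the remaining integral on the line $\Re(s)=1/2$. For $f=\Lambda, \mu$, one applies Heath-Brown's identity (Lemma~\ref{hb-identity}) to factor $L'/L(s,\chi)$ or $1/L(s,\chi)$ into products of short Dirichlet polynomials, and then combines pointwise subconvex bounds for $L(1/2+it,\chi)$ with fourth and twelfth power moment estimates, the Halász--Montgomery large-values inequality, and Jutila--Huxley zero-density estimates. This is precisely Huxley's~\cite{huxley} treatment of primes in short intervals, adapted with uniformity in characters of conductor at most $(\log X)^C$. Because $K(1/2+it) \ll X^{-1/2} \min(1, H_2|t|/X)$, one needs the Dirichlet polynomial analysis to give cancellation essentially in the range $|t| \leq X^{1+\varepsilon}/H_1$, which is where the Huxley exponent $7/12$ emerges; the worst case $(1/6,1/6,1/6,1/6,1/6,1/6)$ of Remark~\ref{rem:obstructionscomb} corresponds exactly to the boundary of what the twelfth-moment bound for $L$-functions can afford.

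For part (ii) on $d_k$, the same architecture is used with $L(s,\chi)^k$ in place of $L'/L(s,\chi)$, and the claimed exponents $1/3$, $1/2$, $11/20$, $7/12$ for $k=2$, $k\in\{3,4\}$, $k=5$, $k\geq 6$ correspond respectively to the Voronoi bound for the Dirichlet divisor problem, the fourth-moment estimate for $\zeta$ and small-conductor $L$-functions on the critical line, Heath-Brown's sixth-moment bound, and Huxley's twelfth-moment bound, in each case applied to $L(1/2+it,\chi)^k$ via Hölder and reduction to the relevant even moment. Since we seek a power saving $X^{-c_k}$ rather than a $\log$-power saving, the argument is effective and no Siegel-zero obstruction arises, while the approximant $d_k^\sharp$ again matches the relevant residue at $s=1$. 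The hard part throughout is purely the $L^{2r}$-analysis on the critical line uniformly in small-conductor characters; the rest is standard Perron-formula bookkeeping, with care taken to maintain uniformity in $a$ and $q$ across the maximum in~\eqref{eq:fdeclongapp}.
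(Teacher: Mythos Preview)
Your outline follows the classical contour-shifting route with $L$-functions; the paper instead works entirely with \emph{finite} Dirichlet polynomials and avoids contour shifts altogether. After Heath--Brown's identity (for $\Lambda,\mu$) or the dyadic factorisation $d_k = 1_{(N_1,2N_1]}*\cdots*1_{(N_k,2N_k]}$, the paper invokes the combinatorial Lemma~\ref{combinatorial} to classify each piece as type $I$, type $I_2^{\mathrm{maj}}$, or type $II^{\mathrm{maj}}$, and then applies Lemma~\ref{le:typeI2} (built on the fourth moment of $L(s,\chi)$) or the Baker--Harman--Pintz trilinear estimate Lemma~\ref{le:BHP} to each. Perron (Lemma~\ref{le:Perron}) is applied on the line $\Re(s)=1/2$ directly to these finite polynomials, so there is no pole at $s=1$ to handle. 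For $k=2$ the paper sidesteps Dirichlet polynomials entirely, using a Fourier expansion of the sawtooth function together with van der Corput's second-derivative bound.

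Your proposal has two genuine problems. First, the remark about ``absorbing the residue at $s=1$ into $f^\sharp$'' is out of place: Proposition~\ref{prop:H1H2comp} involves no approximant, and with your kernel $K(s)$ vanishing at $s=1$ the residue for $d_k$ is already $O((H_2/X)\log^{k-2}X)$, which goes straight into the error. Second, and more seriously, the direct $L^{2k}$-moment approach does not reach the stated exponents for $k\geq 5$. There is no unconditional sixth-moment bound of the strength you need; the best one obtains via H\"older from the fourth and twelfth moments is $\int_0^T |\zeta(\tfrac12+it)|^5\,dt \ll T^{9/8+\eps}$ and $\int_0^T |\zeta(\tfrac12+it)|^6\,dt \ll T^{5/4+\eps}$, and plugging these in with $T=X/H_1$ yields an error that \emph{exceeds} $1$ at $\theta=11/20$ and $\theta=7/12$ respectively. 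The paper's route through Lemma~\ref{combinatorial}(iii),(iv) and Baker--Harman--Pintz is not a cosmetic repackaging of moment bounds but the actual mechanism that reaches these exponents; your sketch would need to be replaced by that argument for $k\geq 5$.
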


Theorem~\ref{thm:major-arc} now follows from~\eqref{eq:fdeclongapp} together with Propositions~\ref{prop:H1H2comp} and~\ref{prop:LongIntervals} and Lemma~\ref{le:Approx}.

The case $k=2$ of Proposition~\ref{prop:H1H2comp}(ii) can be treated using classical methods on the Dirichlet divisor problem. In $k \geq 3$ cases of Proposition~\ref{prop:H1H2comp}(ii), we write $d_k(n) = \sum_{n = m_1 \dotsm m_k} 1$, split $m_j$ into dyadic intervals $m_j \sim M_j \asymp X^{\alpha_j}$ and classify resulting dyadic sums using Lemma~\ref{combinatorial}(iii). On the other hand in case of Proposition~\ref{prop:H1H2comp}(i) we first use Heath-Brown's identity and then Lemma~\ref{combinatorial}(iii) to classify the resulting sums.

For trilinear sums satisfying ($II^{\mathrm{maj}}$) from Lemma~\ref{combinatorial} we shall deduce in Section~\ref{ssec:BHP} the following consequence of the work of Baker, Harman and Pintz~\cite{baker-harman-pintz}. Part (ii) of the lemma will be used in handling certain type $II$ sums in Section~\ref{reduction-sec}. 

\begin{lemma} 
\label{le:BHP}
Let $1/2 \leq \theta < 1$ and $\eps > 0$. Let also $W \leq X^{\varepsilon/200}$ and $X^{\theta+\eps} \leq H_1 \leq H_2 \leq X/W^4$. Let $L, M_1 ,M_2 \geq 1$ be such that $M_j = X^{\alpha_j}$ and $L M_1 M_2 \asymp X$. Let $a_{m_1} ,b_{m_2} ,v_\ell$ be bounded by $d_2^C$ for some $C \geq 1$.

Assume that $a, q \in \mathbb{N}$, $\theta \in \{11/20, 7/12, 3/5, 5/8\}$ and that $\alpha_1, \alpha_2 > 0$ obey the bounds 
\[
|\alpha_1 - \alpha_2| \leq 2\theta-1 + \frac{\varepsilon}{100} \quad \text{and} \quad 1 - \alpha_1 - \alpha_2 \leq 4\theta-2 + \frac{\varepsilon}{100}.
\]
\begin{itemize}
\item[(i)]
If
\begin{equation}
\label{eq:BHP-supt-cond}
\max_{r \mid (a,q)}\,\, \max_{\chi \pmod{\frac{q}{(a,q)}}} \sup_{W \leq |t| \leq \frac{XW^4}{H_1}} \left| \sum_{\ell \sim L/r} \frac{v_{\ell r} \chi(\ell)}{\ell^{1/2+it}}\right| \ll_C \frac{(L/r)^{1/2}}{W^{1/3}},
\end{equation}
then
\[
\begin{split}
&\Big | \frac{1}{H_1}\sum_{\substack{X < m_1 m_2 \ell \leq X+H_1 \\ m_j \sim M_j, \ell \sim L \\ m_1 m_2 \ell \equiv a \pmod{q}}} a_{m_1} b_{m_2} v_\ell - \frac{1}{H_2} \sum_{\substack{X < m_1 m_2 \ell \leq X+H_2 \\  m_j \sim M_j, \ell \sim L \\ m_1 m_2 \ell \equiv a \pmod{q}}} a_{m_1} b_{m_2} v_\ell \Big | \ll d_3(q) \frac{\log^{O_C(1)} X}{W^{1/3}}.
\end{split}
\]
\item[(ii)]
If
\begin{equation}
\label{eq:BHP-supt-cond-2}
\max_{r \mid (a,q)}\,\, \max_{\chi \pmod{\frac{q}{(a,q)}}}  \sup_{|t| \leq \frac{XW^4}{H_1}} \left| \sum_{\ell \sim L/r} \frac{v_{\ell r} \chi(\ell)}{\ell^{1/2+it}}\right| \ll_C \frac{(L/r)^{1/2}}{W^{1/3}},
\end{equation}
then
\[
\begin{split}
&\Big | \frac{1}{H_1}\sum_{\substack{X < m_1 m_2 \ell \leq X+H_1 \\ m_j \sim M_j, \ell \sim L \\ m_1 m_2 \ell \equiv a \pmod{q}}} a_{m_1} b_{m_2} v_\ell \Big | \ll d_3(q)\frac{\log^{O_C(1)} X}{W^{1/3}}.
\end{split}
\]
\end{itemize}
\end{lemma}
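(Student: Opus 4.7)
\smallskip
\noindent\emph{Proof plan.} The plan is to reduce the problem to a contour integral on $\sigma=1/2$ via Perron's formula, and then to exploit the hypothesis on $V$ as an $L^\infty$ bound. First, letting $d:=(a,q)$ and noting that every summand satisfies $d\mid m_1 m_2 \ell$, we would decompose $d=d_1 d_2 d_3$ with $d_j\mid m_j$ for $j=1,2$ and $d_3\mid \ell$; the congruence $m_1 m_2 \ell\equiv a\pmod{q}$ then becomes a coprime congruence modulo $q/d$, which can be expanded by orthogonality of Dirichlet characters $\chi\pmod{q/d}$. This decomposition contributes at most $d_3(q)$ choices, and the $\ell$-variable in the resulting character sums runs over $\ell\sim L/d_3$, matching the hypothesis with $r=d_3$. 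A (smoothed) truncated Perron formula shifted to $\sigma=1/2$ -- where no poles are crossed since the Dirichlet polynomials $A_\chi, B_\chi, V_\chi$ are entire -- will then express each character sum divided by $H$ as
\[
\frac{1}{2\pi i}\int_{-T}^{T} A_\chi(\tfrac{1}{2}+it)\,B_\chi(\tfrac{1}{2}+it)\,V_\chi(\tfrac{1}{2}+it)\,K_H(\tfrac{1}{2}+it)\,dt \;+\; O(\log^{O_C(1)}X/W^{1/3}),
\]
where $T:=XW^4/H_1$ and $K_H(s):=((X+H)^s-X^s)/(Hs)$. The required kernel estimates, to be verified by elementary computation, are $|K_H(\tfrac{1}{2}+it)|\ll\min(X^{-1/2}, X^{1/2}/(H|t|))$, the $L^2$ bound $\int_{-T}^T|K_H(\tfrac{1}{2}+it)|^2\,dt\ll 1/H$, and, by Taylor expansion of $(1+\xi H_j/X)^{s-1}$ in the representation $K_{H_j}(s)=X^{s-1}\int_0^1(1+\xi H_j/X)^{s-1}\,d\xi$, the difference bound $|K_{H_1}-K_{H_2}|(\tfrac{1}{2}+it)\ll (|t|+1)H_2 X^{-3/2}$ in the range $|t|\le X/H_2$.

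For part (i) the plan is to split the integral of the difference at $|t|=W$. On $|t|\le W$, the trivial estimates $|A_\chi(\tfrac{1}{2}+it)|\ll M_1^{1/2}\log^{O_C(1)}X$ (and similarly for $B_\chi,V_\chi$) combined with the kernel difference bound give a contribution of $\ll W^2 H_2 X^{-1+o(1)}\ll \log^{O_C(1)}X/W^2$, thanks to the assumption $H_2\le X/W^4$. On $W\le|t|\le T$, we shall apply the hypothesis $|V_\chi(\tfrac{1}{2}+it)|\ll L^{1/2}/W^{1/3}$, the triangle inequality in the two kernels, Cauchy--Schwarz to separate $A_\chi B_\chi$ from $K_{H_j}$, the Montgomery--Vaughan mean value theorem $\int_{-T}^T|A_\chi B_\chi(\tfrac{1}{2}+it)|^2\,dt\ll (T+M_1 M_2)\log^{O_C(1)}X$, and the $L^2$ bound on $K_{H_j}$, to obtain a contribution of
\[
\ll\,\frac{L^{1/2}(T+X/L)^{1/2}}{W^{1/3}\,H_1^{3/2}}\,\log^{O_C(1)}X.
\]
A short case analysis comparing $T=XW^4/H_1$ with $M_1 M_2=X/L$, combined with the constraints $L\le X^{4\theta-2+\varepsilon/100}$ (from the condition on $1-\alpha_1-\alpha_2$) and $H_1\ge X^{\theta+\varepsilon}$, will then verify that this is $\ll\log^{O_C(1)}X/W^{1/3}$ for each $\theta\in\{11/20,7/12,3/5,5/8\}$.

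Part (ii) will follow by the same Cauchy--Schwarz argument applied directly over the full range $|t|\le T$, no splitting at $|t|=W$ being necessary since the hypothesis now controls $V_\chi$ uniformly. The main obstacle is expected to be the sub-case $L<H_1/W^4$, in which $T<M_1 M_2$ and the mean value theorem yields $(M_1 M_2)^{1/2}=(X/L)^{1/2}$ in place of $T^{1/2}$; the bound then collapses to $X^{1/2}/(W^{1/3}H_1^{3/2})\log^{O_C(1)}X$, which is acceptable precisely because each permitted value of $\theta$ satisfies $\theta\ge 1/2$, so that $H_1\ge X^{1/2+\varepsilon}\gg X^{1/3}$.
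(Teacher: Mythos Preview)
Your Cauchy--Schwarz bound on the integral over $W\le|t|\le T$ contains an arithmetic error that, once corrected, reveals a genuine gap in the method. With the kernel bound $\int_{-T}^T |K_{H_1}(\tfrac12+it)|^2\,dt\ll 1/H_1$ and the mean-value bound $\int_{-T}^T|A_\chi B_\chi(\tfrac12+it)|^2\,dt\ll (T+M_1M_2)\log^{O_C(1)}X$, Cauchy--Schwarz gives
\[
\|V_\chi\|_\infty\Bigl(\int|A_\chi B_\chi|^2\Bigr)^{1/2}\Bigl(\int|K_{H_1}|^2\Bigr)^{1/2}
\ll \frac{L^{1/2}(T+X/L)^{1/2}}{W^{1/3}\,H_1^{1/2}}\log^{O_C(1)}X,
\]
with $H_1^{1/2}$ in the denominator, not $H_1^{3/2}$. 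For this to be $\ll W^{-1/3}\log^{O_C(1)}X$ one needs $LT+X\ll H_1$, which is impossible since $H_1\le X/W^4<X$. In particular, in the sub-case $T<M_1M_2$ of part~(ii) the correct collapsed bound is $X^{1/2}/(W^{1/3}H_1^{1/2})$, which would require $H_1\gg X$ rather than merely $H_1\gg X^{1/3}$. A further symptom is that your argument never invokes the hypothesis $|\alpha_1-\alpha_2|\le 2\theta-1+\varepsilon/100$; any method that ignores this balancing condition on $M_1,M_2$ cannot succeed, since the four specific values of $\theta$ are tied precisely to mean and large value estimates that exploit it.

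The paper's proof proceeds through the same gcd decomposition and Perron reduction, but for the resulting $L^1$ integral of $|A_\chi B_\chi V_\chi|$ it appeals to a lemma of Baker--Harman--Pintz (their Lemma~9, applied with $g=1$ for $\theta\in\{7/12,3/5,5/8\}$ and $g=2$ for $\theta=11/20$; see also Harman's book, Lemma~7.3). That lemma does not group $A_\chi B_\chi$ into a single polynomial: instead it splits the $t$-range into level sets according to the individual sizes of $|A_\chi|$, $|B_\chi|$, $|V_\chi|$, bounds the measure of each level set using Hal\'asz--Montgomery large value estimates together with mean value theorems applied to each of $A_\chi,B_\chi,V_\chi$ separately, and then recombines via H\"older and a case-by-case analysis depending on $\theta$. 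It is this large-value input, rather than a plain mean-value theorem, that converts the $L^\infty$ saving on $V_\chi$ into the required bound.
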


For sums satisfying ($I_2^{\mathrm{maj}}$) from Lemma~\ref{combinatorial} we shall use standard methods to deduce in Section~\ref{ssec:BHP} the following lemma.

\begin{lemma} 
\label{le:typeI2} Let $\theta \in [1/2, 1)$ and $\eps > 0$. Let $W \leq X^{\varepsilon/4}$ and let $X^{\theta+\eps} \leq H_1 \leq H_2 \leq X/W^4$. Let $L, M_1 ,M_2 \geq 1$ be such that $M_j = X^{\alpha_j}$ and $LM_1 M_2 \asymp X$. Let $v_\ell$ be bounded by $d_2^C(\ell)$. 
Assume that $a, q \in \mathbb{N}$ and 
\begin{equation}
\label{eq:typeI2cond}
\alpha_1 + \alpha_2 \geq 1-\theta.
\end{equation}
Then
\[
\begin{split}
&\Big | \frac{1}{H_1}\sum_{\substack{X < m_1 m_2 \ell \leq X+H_1 \\ m_j \sim X^{\alpha_j} \\ m_1 m_2 \ell \equiv a \pmod{q}}} v_\ell - \frac{1}{H_2} \sum_{\substack{X < m_1 m_2 \ell \leq X+H_2 \\ m_j \sim X^{\alpha_j} \\ m_1 m_2 \ell \equiv a \pmod{q}}} v_\ell \Big | \ll d_3(q) \frac{\log^{O_C(1)} X}{W^{1/6}}.
\end{split}
\]
\end{lemma}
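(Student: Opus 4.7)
The hypothesis $\alpha_1+\alpha_2 \geq 1-\theta$ is equivalent to $L = X/(M_1M_2) \leq X^\theta$, which combined with $H_1 \geq X^{\theta+\eps}$ gives $L \leq H_1 X^{-\eps}$. This is the standard type $I_2$ condition in the short-interval setting: the ``non-smooth'' variable $\ell$ has length strictly less than the short interval, so the plan is to treat the two unweighted smooth factors $\sum_{m_j\sim M_j}$ via Dirichlet polynomial techniques on the $\ell$-sum.

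First I would reduce to the case $(a,q)=1$: writing $(a,q)=d$, the congruence $m_1 m_2 \ell \equiv a \pmod q$ forces $d \mid m_1m_2\ell$, and distributing the prime factors of $d$ among $m_1,m_2,\ell$ in at most $d_3(d) \leq d_3(q)$ ways absorbs the $d_3(q)$ factor appearing on the right-hand side. With $(a,q)=1$, I would expand the AP condition via Dirichlet characters modulo $q$:
\[
S_j = \frac{1}{\varphi(q) H_j}\sum_{\chi \bmod q} \bar\chi(a)\, T_j(\chi), \qquad T_j(\chi) := \sum_{\substack{m_j \sim M_j,\ \ell \sim L \\ X < m_1 m_2\ell \leq X+H_j}} v_\ell\, \chi(m_1 m_2 \ell),
\]
and apply Perron's formula to express each $T_j(\chi)$ as a contour integral of $F(s,\chi) \cdot [(X+H_j)^s - X^s]/s$ along $\Re s = 1/2$, truncated at height $T = XW^4/H_1$, where $F(s,\chi) := V(s,\chi) A_1(s,\chi) A_2(s,\chi)$ with $V(s,\chi) := \sum_{\ell\sim L} v_\ell\chi(\ell)\ell^{-s}$ and $A_j(s,\chi) := \sum_{m\sim M_j}\chi(m) m^{-s}$. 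No residues arise since $V,A_1,A_2$ are entire.

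For the normalized difference $T_j(\chi)/H_j - T_{j'}(\chi)/H_{j'}$, the key observation is that the Perron kernel admits the expansion $[(X+H_j)^s - X^s]/(sH_j) = X^{s-1} + O(|s|H_j X^{s-2})$ in the low-frequency range $|s| \leq X/H_j$, so the leading term $X^{s-1}$ is independent of $j$ and cancels in the difference; the residual contribution is bounded by $O(|s|H_2X^{s-2})$. In the high-frequency range $|s| \geq X/H_j$ one uses the trivial bound $O(X^{1/2}/(|s|H_1))$. Combining these kernel estimates with Cauchy--Schwarz, the mean value bound $\int_{|t|\leq T}|V(1/2+it,\chi)|^2\,dt \ll (L+T)\log^{O(1)}X$, and the pointwise partial-summation estimate $|A_j(1/2+it,\chi)| \ll M_j^{1/2}/(1+|t|)$ yields, after summing over the $\leq \varphi(q)$ characters and dividing by $\varphi(q)$, the claimed bound $\ll d_3(q) \log^{O_C(1)} X / W^{1/6}$.

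The main obstacle is the delicate cancellation of the low-frequency main term in the difference of Perron kernels, which requires a careful expansion to second order, together with uniform control over all characters modulo $q$ in the mean value estimate. The modest $W^{1/6}$ gain (as opposed to the $W^{1/3}$ of Lemma~\ref{le:BHP}) is dictated by the Cauchy--Schwarz step applied against two smooth factors $A_1, A_2$, rather than the single weighted factor controlled pointwise in the Baker--Harman--Pintz approach exploited in the proof of Lemma~\ref{le:BHP}.
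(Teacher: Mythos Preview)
Your overall framework---reduce to $(a,q)=1$, expand in characters, apply Perron, and exploit the cancellation of the $j$-independent main term in the kernel difference---matches the paper's Lemma~\ref{le:Perron}, and the reduction producing the $d_3(q)$ factor is handled similarly there. The genuine gap is in your final estimation step.

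The pointwise claim $|A_j(1/2+it,\chi)| \ll M_j^{1/2}/(1+|t|)$ is false for non-principal $\chi$. Partial summation from $\sum_{m\leq y}\chi(m)=O(q)$ gives instead $|A_j(1/2+it,\chi)|\ll q(1+|t|)M_j^{-1/2}$, which \emph{grows} with $|t|$; and the mean-value theorem $\int_{|t|\leq T}|A_j|^2\,dt\asymp T+M_j$ shows that for $T\gg M_j$ the typical size of $|A_j|$ is $\asymp 1$, not $M_j^{1/2}/T$. Even for the principal character the bound $M_j^{1/2}/(1+|t|)$ only holds (up to a $O(M_j^{-1/2})$ error) in the range $|t|\ll M_j$, which need not cover the full integration range up to $XW^4/H_1$ once $\alpha_j$ is small. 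So your Cauchy--Schwarz step, pairing the mean value bound on $V$ against these pointwise bounds on $A_1,A_2$, does not go through.

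The paper's proof first uses Shiu's bound to assume $q\leq W^{1/6}$ and disposes of the easy case $\alpha_i\geq 1-\theta$ by moving that sum inside; then, in place of your Cauchy--Schwarz, it applies H\"older with exponents $(4,4,2)$, bounding $\int|A_j|^4$ (summed over characters) via the hybrid fourth-moment estimate for Dirichlet $L$-functions,
\[
\sum_{\chi\bmod q}\int_T^{2T}|A_j(1/2+it,\chi)|^4\,dt \ll (q^3T + qM_j^2/T^3)\log^{O(1)}X,
\]
and $\int|V|^2$ via the ordinary mean value theorem. That combination is what produces the $W^{1/6}$ saving. Replacing your pointwise step with this H\"older/fourth-moment argument repairs the proof.
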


\subsection{Proof of Proposition~\ref{prop:LongIntervals}}
\label{ssec:longinterval}
The bound~\eqref{eq:mulong} follows immediately from the Siegel--Walfisz theorem~\eqref{eq:S-W} and the triangle inequality.

Before turning to the proof of~\eqref{eq:Lambdalong}, let us discuss the choice of $\Lambda^\sharp$. The prime number theorem with classical error term (see, e.g.,~\cite[Theorem 6.9]{mv}) gives
\begin{equation}\label{xlambda}
 \sum_{n \leq X} \Lambda(n) = X + O( X \exp(-c\sqrt{\log X}) ),
\end{equation}
so that if one is interested only in the correlation of $\Lambda(n)$ with a constant function, one can select the simple approximant $1$. However, this is not sufficient even for the maximal correlation with the constant function. There is some flexibility\footnote{For instance, a Fourier-analytic approximant $\Lambda^\sharp(n) \coloneqq \sum_{q \leq Q} \frac{\mu(q) c_q(n)}{\phi(q)}$ is used in~\cite{HB-ternary}, where $c_q(n) \coloneqq \sum_{1 \leq a \leq q: (a,q)=1} e(an/q)$ denotes the Ramanujan sum.  Another option is to use a truncated convolution sum, $\Lambda^\sharp(n) \coloneqq-\sum_{d\mid n,d\leq R}\mu(d)\log d$, following e.g.~\cite[\S 19.2]{ik}.} in how to select the approximant, but (following~\cite{tt-quant}) we use the Cram\'er--Granville model~\eqref{lambdar-def}, which has the benefits of being a nonnegative model function and one that is known to be pseudorandom (which will be helpful in Section~\ref{gowers-sec}).

\begin{proof}[Proof of~\eqref{eq:Lambdalong}] 
It suffices to show that, for any $a, q \in \mathbb{N}$ and any $H_2 \in [X/\log^A X, X]$, we have
\[
\left| \sum_{\substack{X < n \leq X+H_2 \\ n \equiv a \pmod{q}}} (\Lambda(n) - \Lambda^\sharp(n))\right| \ll \frac{H_2}{\log^A X}.
\]
We can clearly assume that $q < R$ and $(a, q) = 1$.

Let $D=\exp((\log X)^{3/5})$. By the fundamental lemma of the sieve (see e.g.~\cite[Fundamental Lemma 6.3 with $y = D, z = R$, and $\kappa = 1$]{ik}), there exist real numbers $\lambda_d^{+}\in [-1,1]$ such that, for any $H\geq 2$, $q<R$, and $a \in \mathbb{N}$ with $(a,q)=1$, we have 
\begin{align*}
\sum_{\substack{X<n\leq X+H\\n=a(q)}}\Lambda^{\sharp}(n)&\leq\frac{P(R)}{\varphi(P(R))}\sum_{\substack{d\leq D \\ d \mid P(R)}} \lambda_d^{+}\sum_{\substack{X<n\leq X+H\\n=a(q)\\d\mid n}}1\\
&=\prod_{p<R}\left(1-\frac{1}{p}\right)^{-1}\sum_{\substack{d\leq D\\d \mid P(R) \\ (d,q)=1}}\lambda_d^{+}\frac{H}{dq}+O(D\log R)\\
&=\frac{H}{\varphi(q)}\left(1+O\left(\exp\left(-\frac{\log D}{\log R}\right)\right)\right) +O(D\log R), 
\end{align*}
and also by the fundamental lemma we have a lower bound of the same shape. Hence, for $H\geq X^{\varepsilon}$ we have
\begin{align}\label{eq:lambdasharp}
\sum_{\substack{X<n\leq X+H\\n=a(q)}}\Lambda^{\sharp}(n)=\frac{H}{\varphi(q)}+O_{\varepsilon}(H\exp(-(\log X)^{1/2})),    
\end{align}
so~\eqref{eq:Lambdalong} follows by the Siegel--Walfisz theorem and the triangle inequality.
\end{proof}
\begin{remark}
One could improve the error term in~\eqref{eq:Lambdalong} by adjusting the approximant $\Lambda^\sharp$ to account for a potential Siegel zero; see for instance~\cite[Theorem 5.27]{ik} or~\cite[Proposition 2.2]{tt-quant}.  However, we will not do so here.
\end{remark}

Before turning to the proof of~\eqref{eq:dklong} let us discuss the construction of the approximant $d_k^\sharp$ which is a somewhat non-trivial task. The classical Dirichlet hyperbola method gives the asymptotic
\begin{equation}\label{chit}
\sum_{\substack{n \leq X\\ n=a\ (q)}} d_k(n) = X P_{k,a,q}(\log X) + O_{q,\eps}( X^{1-1/k+\eps} )
\end{equation}
for any fixed $a,q$, any $\eps>0$, and some explicit polynomial $P_{k,a,q}$ of degree $k-1$ with coefficients depending only on $k,a,q$. Better error terms are known here; see e.g.,~\cite [Section 13]{ivic}.

From~\eqref{chit}, the triangle inequality, and Taylor expansion one has
$$ \sum_{\substack{X < n \leq X+H\\ n=a\ (q)}} d_k(n) = H \left(P_{k,a,q}(\log X) + P'_{k,a,q}(\log X) + O_{q,\eps}\left( \frac{X^{1-1/k+\eps}}{H} + \frac{H}{X^{1-\eps}} \right) \right) $$
for any $\eps > 0$ whenever $2 \leq H \leq X$.

Hence we have to choose the approximant $d_k^\sharp$ to also obey estimates such as
\begin{equation}
\label{eq:dksharpclaim}
\sum_{\substack{X \leq n < X+H\\ n=a\ (q)}} d^\sharp_k(n) = H \left(P_{k,a,q}(\log X) + P'_{k,a,q}(\log X) + O_\eps(X^{-\kappa_k} + HX^{\eps-1}) \right)
\end{equation}
for some $\kappa_k >0$, with exactly the same choice of polynomial $P_{k,a,q}$. 

The delta method of Duke, Friedlander and Iwaniec~\cite{dfi} can be used to build an approximant of a Fourier-analytic nature, basically by isolating the major arc components of $d_k$; see~\cite{ivic-divisor},~\cite{conrey-gonek},~\cite{ng-thom}, and~\cite[Proposition 4.2]{mrt-div} for relevant calculations in this direction.  However, the approximant that is (implicitly) constructed in these papers is very complicated, and somewhat difficult to deal with for our purposes (for instance, it is not evident whether it is non-negative).

The simpler approximant
$$ d_k(n, A) \coloneqq A^{1-k} \sum_{\substack{m|n\\ m \leq n^A}} d_{k-1}(m)$$
was recently proposed by Andrade and Smith~\cite{smith-andrade} for various choices of parameter $0 < A < 1$.  Unfortunately the polynomial $P_{k,a,q,A}(\log X)$ associated to this approximant usually only agrees with $P_{k,a,q}(\log X)$ to leading order (see~\cite[Theorem 2.1]{smith-andrade}), and so with this approximant one cannot hope to get polynomial saving like in our Theorem~\ref{discorrelation-thm}(iii).

Our approximant~\eqref{dks-def} with $P_m(t)$ as in~\eqref{eq:Pm(t)-def} can be seen as a more complicated variant of the Andrade--Smith approximant. Note that the constraint $m \leq R_k^{2k-2}$ in~\eqref{dks-def} is redundant, as $P_m$ vanishes for $m > R_k^{2k-2}$. Note also that (by adjusting the value of $c_{k,d,D}$ in Theorem~\ref{discorrelation-thm}) one could take $R_k$ to be any sufficiently small power of $X$, and that, for any $n \ll X$,
\begin{align}
\label{eq:dkapp<<dk}
\begin{aligned}
d_k^\sharp(n) &= \sum_{\substack{m \leq R_k^{2k-2}\\ m|n}} \sum_{j=0}^{k-1} \binom{k}{j} \sum_{\substack{n_1,\dots,n_j \leq R_k < n_{j+1},\dots,n_{k-1} \leq R_k^2\\ n_1 \dotsm n_{k-1} = m}} \frac{\left(\log n - \log(n_1 \dotsm n_j R_k^{k-j})\right)^{k-j-1}}{(k-j-1)! \log^{k-j-1} R_k} \\
&\ll \sum_{m \mid n} d_{k-1}(m) = d_k(n)
\end{aligned}
\end{align}

Recall we chose $R_k = X^{\frac{1}{10k}}$ in~\eqref{dks-def}. The motivation for our approximant $d_k^\sharp$ can be seen by noting that, sorting a factorization $n = n_1 \dotsm n_k$ into terms $n_1,\dotsc,n_j \leq R_k$ and terms $n_{j+1},\dots,n_k > R_k$, we get the generalized Dirichlet hyperbola identity
\begin{equation}
\label{eq:genhypid}
d_k(n) = \sum_{j=0}^{k-1} \binom{k}{j}  \sum_{n_1,\dots,n_j \leq R_k}  \sum_{\substack{n_{j+1}, \dotsc, n_{k-1} > R_k \\  \frac{n}{n_1\dots n_{k-1}} > R_k}} 1_{n_1 \dotsm n_{k-1}|n}.
\end{equation}
The polynomials $P_m(t)$ are chosen to match with the contribution from the sum over $n_{j+1}, \dotsc, n_{k-1}$ as can be seen from the proof of~\eqref{eq:dklong} that we now give.

\begin{proof}[Proof of~\eqref{eq:dklong}] It suffices to show that, for any $k \geq 2$, any $a, q \in \mathbb{N}$, and any $H_2 \in [X^{1-1/(50k)}, X]$, we have
\[
\left| \sum_{\substack{X < n \leq X+H_2 \\ n \equiv a \pmod{q}}} (d_k(n) - d_k^\sharp(n))\right| \ll \frac{H_2^2}{X} \log^{k-2} X.
\]
Since $d_k(n) = O_\varepsilon(n^\varepsilon)$, we can clearly assume that $q \leq X^{\frac{1}{40k}}$. Using~\eqref{eq:genhypid} we obtain
\[
\begin{split}
\sum_{\substack{X < n \leq X+H_2 \\ n \equiv a \pmod{q}}} d_k(n) &= \sum_{\substack{a_i \pmod{q} \\a_1 \dotsm a_k \equiv a \pmod{q}}} \sum_{j=0}^{k-1} \binom{k}{j} \sum_{\substack{n_1,\dotsc,n_j \leq R_k \\ n_i \equiv a_i \pmod{q}}} \sum_{\substack{n_{j+1}, \dotsc, n_{k-1} > R_k \\  \frac{X}{n_1\dotsm n_{k-1}} > R_k \\ n_i \equiv a_i \pmod{q}}} \left(\frac{H_2}{q n_1 \dotsm n_{k-1}} + O(1)\right) \\
& \qquad + O\left(\sum_{n_1,\dots,n_j \leq R_k} \sum_{\substack{n_{j+1}, \dotsc, n_{k-1} > R_k \\  \frac{X+H_2}{n_1\dotsm n_{k-1}} > R_k > \frac{X}{n_1\dotsm n_{k-1}}}} \left(\frac{H_2}{n_1 \dotsm n_{k-1}} + 1 \right)\right).
\end{split}
\]
Let us consider the two error terms. The first error term contributes, using the inequality $1 < X/(R_k n_1 \dotsm n_{k-1})$,
\[
\ll \sum_{a_k \pmod{q}} \sum_{n_1, \dotsc, n_{k-1} \leq X} \frac{X}{R_k n_1 \dotsm n_{k-1}} \ll q \frac{X}{R_k} \log^{k-1} X \ll \frac{H_2^2}{X} \log^{k-2} X
\]
since $q \leq X^{\frac{1}{40k}}$, $R_k = X^{\frac{1}{10k}},$ and $H_2 \geq X^{1-\frac{1}{50k}}$. The second error term contributes, using $n_1 \dotsm n_{k-1} \asymp X/R_k$ and Shiu's bound (Lemma~\ref{shiu}), 
\[
\ll \sum_{\substack{n_1, \dotsc, n_{k-1} \leq 2X \\ \frac{X}{R_k} < n_1 \dotsm n_{k-1} \leq \frac{X+H_2}{R_k}}} \frac{R_k H_2}{X} = \frac{R_k H_2}{X}  \sum_{\frac{X}{R_k} < n < \frac{X+H_2}{R_k}} d_{k-1}(n) \ll \frac{H_2^2}{X} \log^{k-2} X.
\]
Hence
\begin{align}
\begin{aligned}
\label{eq:dksumdec}
&\sum_{\substack{X < n \leq X+H_2 \\ n \equiv a \pmod{q}}} d_k(n) \\
& = \frac{H_2}{q} \sum_{\substack{a_i \pmod{q} \\a_1 \dotsm a_k \equiv a \pmod{q}}} \sum_{j=0}^{k-1} \binom{k}{j} \sum_{\substack{n_1,\dotsc,n_j \leq R_k \\ n_i \equiv a_i \pmod{q}}} \frac{1}{n_1 \dotsm n_j} \sum_{\substack{n_{j+1}, \dotsc, n_{k-1} > R_k \\  \frac{X}{n_1\dotsm n_{k-1}} > R_k \\ n_i \equiv a_i \pmod{q}}} \frac{1}{n_{j+1} \dotsm n_{k-1}} \\
&\qquad + O\left(\frac{H_2^2}{X} \log^{k-2} X\right).
\end{aligned}
\end{align}
For any $B \geq A \geq 1$, we have
\[
\sum_{\substack{A < n < B \\ n\equiv a \pmod{q}}} \frac{1}{n} = \frac{1}{q} \int_A^B \frac{1}{t} dt + O\left(\frac{1}{A}\right).
\]
Applying this $k-1-j$ times, we see that\footnote{To obtain the second equality we use the classical formula $\int_{x_1,\dots,x_d \geq 0: x_1+\dots + x_d \leq L} 1\ dx_1 \dots dx_d = \frac{L^d}{d!}$ for the volume of a simplex (easily proven by induction on $d$ and the Fubini--Tonelli theorem combined with the change of variables $x_i = \log \frac{t_{i+j}}{R}$ for $i=1,\dots,k-j-1$).}
\begin{align}
\begin{aligned}
\label{eq:dkninnersum}
&\sum_{\substack{n_{j+1}, \dotsc, n_{k-1} > R_k \\  \frac{X}{n_1 \dotsm n_{k-1}} > R_k \\ n_i \equiv a_i \pmod{q}}} \frac{1}{n_{j+1} \dotsm n_{k-1}} \\
& = \frac{1}{q^{k-1-j}} \int_{\substack{t_{j+1},\dots,t_{k-1} > R_k \\ t_{j+1} \dotsm t_{k-1} \leq \frac{X}{n_1\dotsm n_j R_k} }} \frac{dt_{j+1} \dotsm dt_{k-1}}{t_{j+1} \dots t_{k-1}} + O\left(\frac{(\log X)^{k-1-j-1}}{q^{k-1-j-1}} \cdot \frac{1}{R_k}\right)\\
&=  \frac{1}{q^{k-1-j}} \frac{\log^{k-j-1} \frac{X}{n_1 \dotsm n_j R_k^{k-j}}}{(k-j-1)!} + O\left(\frac{(\log X)^{k-j-2}}{q^{k-j-2}} \cdot \frac{1}{R_k}\right).
\end{aligned}
\end{align}
Since $R_k = X^{\frac{1}{10k}}, q \leq X^{\frac{1}{40k}}$, and $H_2 \geq X^{1-\frac{1}{50k}},$ the error term contributes to~\eqref{eq:dksumdec}
\begin{align*}
&\ll \frac{H_2}{q}  \sum_{j=0}^{k-1} \sum_{\substack{a_{j+1}, \dotsc, a_k \pmod{q}}}\, \sum_{\substack{n_1,\dotsc,n_j \leq R_k}} \frac{1}{n_1 \dotsm n_j} \cdot \frac{(\log X)^{k-j-2}}{q^{k-j-2}} \cdot \frac{1}{R_k} \\
&\ll \frac{H_2}{q} \sum_{j=0}^{k-1} q^{k-j} (\log X)^j \cdot \frac{(\log X)^{k-j-2}}{q^{k-j-2}} \cdot \frac{1}{R_k} \ll \frac{H_2^2}{X} \log^{k-2} X.
\end{align*}
Hence~\eqref{eq:dksumdec} and~\eqref{eq:dkninnersum} give
\[
\begin{split}
\sum_{\substack{X < n \leq X+H_2 \\ n \equiv a \pmod{q}}} d_k(n) & = \frac{H_2}{q^{k-j}} \sum_{\substack{a_i \pmod{q} \\a_1 \dotsm a_k \equiv a \pmod{q}}} \sum_{j=0}^{k-1} \binom{k}{j} \sum_{\substack{n_1,\dotsc,n_j \leq R_k \\ n_i \equiv a_i \pmod{q}}} \frac{\log^{k-j-1} \frac{X}{n_1 \dotsm n_j R_k^{k-j}}}{(k-j-1)! n_1 \dotsm n_j} \\
&\qquad + O\left(\frac{H_2^2}{X} \log^{k-2} X\right).
\end{split}
\]
On the other hand, by definition,
\[
\begin{split}
&\sum_{\substack{X < n \leq X+H_2 \\ n \equiv a \pmod{q}}} d_k^\sharp(n) \\
&=  \sum_{\substack{a_i \pmod{q} \\a_1 \dotsm a_k \equiv a \pmod{q}}} \sum_{j=0}^{k-1} \binom{k}{j} \sum_{\substack{n_1,\dots,n_j \leq R_k \\ n_i \equiv a_i \pmod{q}}} \frac{\log^{k-j-1} \frac{X}{n_1 \dotsm n_j R_k^{k-j}} + O\left(\frac{H_2}{X} \log^{k-j-2} X\right)}{(k-j-1)! \log^{k-j-1} R_k} \\
& \qquad \cdot \sum_{\substack{R_k < n_{j+1},\dots,n_{k-1} \leq R_k^2 \\ n_i \equiv a_i \pmod{q}}} \left(\frac{H_2}{q n_1 \dotsm n_{k-1}} + O(1)\right) \\
& =  \frac{H_2}{q} \sum_{\substack{a_i \pmod{q} \\a_1 \dotsm a_k \equiv a \pmod{q}}} \sum_{j=0}^{k-1} \binom{k}{j} \sum_{\substack{n_1,\dots,n_j \leq R_k \\ n_i \equiv a_i \pmod{q}}} \frac{\log^{k-j-1} \frac{X}{n_1 \dotsm n_j R_k^{k-j}}}{(k-j-1)! \log^{k-j-1} R_k} \\
& \qquad \sum_{\substack{R_k < n_{j+1},\dots,n_{k-1} \leq R_k^2 \\ n_i \equiv a_i \pmod{q}}} \frac{1}{n_1 \dotsm n_{k-1}}\\
& + O\left(\sum_{a_k \pmod{q}}  \sum_{j=0}^{k-1} \sum_{\substack{n_1,\dots,n_j \leq R_k}} \frac{H_2}{X \log X} \sum_{\substack{R_k < n_{j+1},\dots,n_{k-1} \leq R_k^2}} \left(\frac{H_2}{q n_1 \dotsm n_{k-1}} + 1\right)\right) \\
&+ O\left(\sum_{a_k \pmod{q}}  \sum_{j=0}^{k-1} \sum_{\substack{n_1,\dots,n_j \leq R_k}} \sum_{\substack{R_k < n_{j+1},\dots,n_{k-1} \leq R_k^2}} 1 \right).
\end{split}
\]
The error terms contribute 
\[
\ll \frac{H_2^2}{X} \log^{k-2} X + q \frac{H_2}{X \log X} R_k^{2(k-1)} +  qR_k^{2(k-1)} \ll \frac{H_2^2}{X} \log^{k-2} X + qX^{1/2}
\]
and in the main term
\[
\begin{split}
\frac{1}{\log^{k-j-1} R_k} \sum_{\substack{R_k < n_{j+1},\dots,n_{k-1} \leq R_k^2 \\ n_i \equiv a_i \pmod{q}}} \frac{1}{n_{j+1} \dotsm n_{k-1}}
&= \left(\frac{1}{q}+O\left(\frac{1}{R_k}\right)\right)^{k-j-1}.
\end{split}
\]
The claim follows since $R_k = X^{\frac{1}{10k}}$ and $q \leq X^{\frac{1}{40k}}$.
\end{proof}

\subsection{Proof of Lemma~\ref{le:Approx}}
\label{ssec:approximants}
Note first that the claims are trivial unless $q \leq X^{1/80}$. For part (ii), note that, for $j=1, 2$,
\[
\begin{split}
&\frac{1}{H_j}\sum_{\substack{X < n \leq X+H_j \\ n \equiv a \pmod{q}}} d_k^\sharp(n)\\
&= \frac{1}{H_j} \sum_{\substack{b, c \pmod{q} \\ bc \equiv a \pmod{q}}} \sum_{\substack{m \leq X^{\frac{2k-2}{10k}} \\ m \equiv b \pmod{q}}} \left(P_m(\log X) + O\left(d_{k-1}(m) \frac{H_j}{X \log X}\right)\right)  \sum_{\substack{X/m < n \leq (X+H_j)/m \\ n \equiv c \pmod{q}}} 1 \\
&=   \frac{1}{H_j} \sum_{\substack{b, c \pmod{q} \\ bc \equiv a \pmod{q}}} \sum_{\substack{m \leq X^{\frac{k-1}{5k}} \\ m \equiv b \pmod{q}}} \left(P_m(\log X) + O\left(d_{k-1}(m) \frac{H_j}{X \log X}\right)\right)  \left(\frac{H_j}{mq} + O(1)\right) \\
& =  \sum_{\substack{b, c \pmod{q} \\ bc \equiv a \pmod{q}}} \sum_{\substack{m \leq X^{\frac{k-1}{5k}} \\ m \equiv b \pmod{q}}} \frac{P_m(\log X)}{mq} + O\left(\frac{H_j \log^{k-2} X}{X} + \frac{qX^{1/5}}{H_j}\right).
\end{split}
\]
The claim follows by subtracting this for $j =1, 2$. Part (i) follows directly from~\eqref{eq:lambdasharp} applied with $H\in \{H_1,H_2\}$ and the triangle inequality.

\subsection{Proof of Lemmas~\ref{le:BHP} and~\ref{le:typeI2}}
\label{ssec:BHP}
We first make a standard reduction to studying averages of Dirichlet polynomials. 

\begin{lemma}
\label{le:Perron}
Let $W \leq X^{1/100}$. Let $|a_n| \leq d_2(n)^C$ for some $C \geq 1$ and let $A(s, \chi):= \sum_{c_1 X < n \leq c_2 X} a_n \chi(n) n^{-s}$ for some fixed $c_2 > c_1 > 0$.  Let  $X^{1/2} \leq H_1 \leq H_2 \leq X/W^4$ and $(a, q) = 1$. 
\begin{itemize}
\item[(i)]
One has
\[
\begin{split}
&\Big | \frac{1}{H_1}\sum_{\substack{X < n \leq X+H_1 \\ n \equiv a \pmod{q}}} a_n - \frac{1}{H_2} \sum_{\substack{X < n \leq X+H_2 \\ n \equiv a \pmod{q}}} a_n \Big | \ll  \frac{\log^{O_C(1)} X}{W^2} \\
& \qquad + \frac{\log X}{X^{1/2}} \max_{\frac{X}{H_1} \leq T \leq \frac{XW^4}{H_1}} \frac{1}{\varphi(q)} \sum_{\chi \pmod{q}} \frac{X/H_1}{T} \int_{\substack{W \leq |t|\leq T}} |A(\tfrac{1}{2}+it, \chi)| \  dt.
\end{split}
\]
\item[(ii)] One has
\[
\begin{split}
&\Big | \frac{1}{H_1}\sum_{\substack{X < n \leq X+H_1 \\ n \equiv a \pmod{q}}} a_n\Big | \ll  \frac{\log^{O_C(1)} X}{W^2} \\
& \qquad + \frac{\log X}{X^{1/2}} \max_{\frac{X}{H_1} \leq T \leq \frac{XW^4}{H_1}} \frac{1}{\varphi(q)} \sum_{\chi \pmod{q}} \frac{X/H_1}{T} \int_{\substack{|t|\leq T}} |A(\tfrac{1}{2}+it, \chi)| \  dt.
\end{split}
\]
\end{itemize}
\end{lemma}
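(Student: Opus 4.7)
My plan is to apply orthogonality of Dirichlet characters together with a truncated Perron formula, and then to analyze the resulting contour integral via the kernel
$$\phi_H(s) \coloneqq \frac{(X+H)^s - X^s}{sH} = \frac{1}{H}\int_X^{X+H} u^{s-1}\,du.$$
Since $(a,q)=1$, orthogonality gives
$$\frac{1}{H_j}\sum_{\substack{X<n\leq X+H_j\\ n\equiv a \pmod q}} a_n = \frac{1}{\varphi(q)}\sum_{\chi \pmod q}\overline{\chi(a)}\cdot \frac{1}{H_j}\sum_{X<n\leq X+H_j} a_n\chi(n),$$
so it suffices to estimate the character-twisted averages uniformly in $\chi$. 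I would apply truncated Perron on the line $\Re s=c=1+1/\log X$ with height $T_0:=XW^4/H_1$, then shift to $\Re s=1/2$ (no residues are crossed, since $A(\cdot,\chi)$ is entire and $(X+H)^s-X^s$ kills the $s=0$ pole), obtaining
$$\frac{1}{H_j}\sum_{X<n\leq X+H_j} a_n\chi(n) = \frac{1}{2\pi}\int_{-T_0}^{T_0} A(\tfrac12+it,\chi)\,\phi_{H_j}(\tfrac12+it)\,dt + \mathcal{E}_j(\chi),$$
with $|\mathcal{E}_j(\chi)|\ll H_j\log^{O_C(1)}X/W^2$ accounting for both the Perron truncation error and the horizontal contour segments.

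Establishing this $\mathcal{E}_j$ bound is the main technical obstacle. The horizontal segments at $|t|=T_0$, $\sigma\in[1/2,c]$, contribute $\ll X\log^{O_C(1)}X/T_0 = H_1\log^{O_C(1)}X/W^4$ upon using the trivial bounds $|A(s,\chi)|\ll X^{1-\sigma}\log^{O_C(1)}X$ (via Shiu applied to the $d_2^C$ majorant) and $|((X+H_j)^s-X^s)/s|\ll X^\sigma/T_0$. The truncated Perron error is standard: split the usual bound $\sum_n |a_n|\min(1,(X/n)^c/(T_0|\log(X/n)|))$ into $n$ within $H_1/W^4$ of an endpoint $X$ or $X+H_j$ (handled by the trivial bound $\min(1,\cdot)\le 1$ and Shiu's lemma, legitimate because $H_1/W^4\ge X^{0.46}$ from the hypotheses $H_1\ge X^{1/2}$ and $W\le X^{1/100}$) and $n$ farther away (handled by dyadic decomposition of $|\log(X/n)|^{-1}$ together with Shiu), each piece contributing $\ll H_1\log^{O_C(1)}X/W^4$.

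For part (i), I would analyze $\Psi(t):=\phi_{H_1}(\tfrac12+it)-\phi_{H_2}(\tfrac12+it)$. Taylor-expanding $(1+v/X)^{-1/2+it}$ to second order inside the identity $\phi_H(\tfrac12+it) = X^{-1/2+it} H^{-1}\int_0^H(1+v/X)^{-1/2+it}\,dv$ and using $H_2/X\le 1/W^4$ gives
$$\Psi(t) = X^{-1/2+it}\Bigl[(-\tfrac12+it)\frac{H_1-H_2}{2X} + O\bigl((1+t^2)H_2^2/X^2\bigr)\Bigr],$$
so $|\Psi(t)|\ll X^{-1/2}/W^3$ for $|t|\le W$. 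Combined with the trivial Shiu bound $|A(\tfrac12+it,\chi)|\ll X^{1/2}\log^{O_C(1)}X$, this yields $\int_{|t|\le W}|A\Psi|\,dt \ll \log^{O_C(1)}X/W^2$. For $W<|t|\le T_0$, I would use the global estimate $|\Psi(t)|\ll \min(X^{-1/2}, X^{1/2}/(|t|H_1))$ and decompose $[W,T_0]$ into $O(\log X)$ dyadic shells $[T',2T']$; on each shell, setting $T''\coloneqq \max(2T',X/H_1)\in[X/H_1,T_0]$, the contribution is dominated by $X^{-1/2}\cdot (X/H_1)/T''\cdot \int_{|t|\le T''}|A(\tfrac12+it,\chi)|\,dt$, which produces the displayed $\max$-of-$T$ expression after summing the dyadic scales (the factor $\log X$ on the right-hand side of (i) comes from the number of shells).

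Part (ii) follows identically except one uses $|\phi_{H_1}(\tfrac12+it)|\ll \min(X^{-1/2}, X^{1/2}/(|t|H_1))$ directly, needing no Taylor cancellation, and the $|t|\le W$ piece is already absorbed into the right-hand side integral $\int_{|t|\le T}|A|\,dt$ in the statement of (ii) since there is no lower cutoff at $W$. Summing over characters $\chi$ and dividing by $\varphi(q)$ (using $|\overline{\chi(a)}|=1$) produces the bounds stated in (i) and (ii).
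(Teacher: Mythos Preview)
Your approach is essentially the same as the paper's: orthogonality of characters, truncated Perron on the critical line, extraction of a $j$-independent main term on $|t|\le W$ (the paper simply writes $\frac{(X+H_j)^{1/2+it}-X^{1/2+it}}{(1/2+it)H_j}=X^{-1/2+it}+O(H_jW^2X^{-3/2})$ rather than Taylor-expanding the difference $\Psi$, but this is cosmetically equivalent), and then the kernel bound $\min(X^{-1/2},X^{1/2}/(|t|H_j))$ with a dyadic split on $|t|\ge W$.

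One bookkeeping slip: the bound $|\mathcal{E}_j(\chi)|\ll H_j\log^{O_C(1)}X/W^2$ is dimensionally off. Since your displayed identity already has $\frac{1}{H_j}$ absorbed into $\phi_{H_j}$, the horizontal segments contribute $\ll X\log^{O_C(1)}X/(T_0 H_j)\ll \log^{O_C(1)}X/W^4$ (you dropped the $1/H_j$ from $\phi_{H_j}$ when bounding $|((X+H_j)^s-X^s)/s|$), and the Perron truncation likewise gives $\ll (H_1/W^4)\log^{O_C(1)}X$ \emph{before} dividing by $H_j$, hence $\ll \log^{O_C(1)}X/W^4$ afterward. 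With that correction the argument goes through exactly as you outline.
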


\begin{proof}
Let us first consider part (i). We begin by using the orthogonality of characters and Perron's formula (see e.g.~\cite[Corollary 5.3]{mv}) to get that, for $j = 1, 2$,
\begin{align*}
\frac{1}{H_j} \sum_{\substack{X < n \leq X+H_j \\ n \equiv a \pmod{q}}} a_n &=  \frac{1}{\varphi(q) H_j} \sum_{\chi \pmod{q}} \overline{\chi}(a) \int_{-\frac{XW^4}{H_j}}^{\frac{XW^4}{H_j}} A(\tfrac{1}{2}+it, \chi) \frac{(X+H_j)^{1/2+it}-X^{1/2+it}}{\tfrac{1}{2}+it} dt\\
&+ O\left(\frac{\log^{O_C(1)} X}{W^4}\right).
\end{align*}
The ``main term'' comes from (only $\chi_0$ contributes to actual main terms)
\begin{align*}
&\frac{1}{\varphi(q) H_j} \sum_{\chi \pmod{q}} \overline{\chi}(a) \int_{-W}^{W}  A(\tfrac{1}{2}+it, \chi)
\frac{  (X+H_j)^{1/2+it}-X^{1/2+it} } {\frac{1}{2}+it}  dt \\
&= \frac{1}{\varphi(q)} \sum_{\chi \pmod{q}} \overline{\chi}(a) \int_{-W}^{W}  A(\tfrac{1}{2}+it, \chi) X^{-1/2+it}  dt + O\left(\frac{H_j W^2}{X} \log^{O_C(1)} X\right).
\end{align*}
The error term is $O(\log^{O_C(1)} X /W^2)$ while the main term is independent of $j$. Hence
\begin{align*}
&\Big | \frac{1}{H_1}\sum_{\substack{X < n \leq X+H_1 \\ n \equiv a \pmod{q}}} a_n - \frac{1}{H_2} \sum_{\substack{X < n \leq X+H_2 \\ n \equiv a \pmod{q}}} a_n \Big | \ll  \frac{\log^{O_C(1)} X}{W^2} \\
& + \sum_{j=1}^2 \frac{1}{\varphi(q) H_j} \sum_{\chi \pmod{q}} \int_{\substack{W \leq |t|\leq \frac{XW^4}{H_j}}} \left|A(\tfrac{1}{2}+it, \chi)\right| \left|\frac{(X+H_j)^{1/2+it}-X^{1/2+it}}{\tfrac{1}{2}+it}\right| dt
\end{align*}

Since $|\frac{  (X+H_j)^{1/2+it}-X^{1/2+it} } {1/2+it}  | \ll \min\{ H_j X^{-1/2}, X^{1/2}/(1+|t|)\}$, the second line contributes
\[
\begin{split}
&\ll \sum_{j = 1}^2 \frac{1}{\varphi(q) H_j} \sum_{\chi \pmod{q}} \frac{H_j}{X^{1/2}} \int_{\substack{W \leq |t|\leq \frac{X}{H_j}}}
|A(\tfrac{1}{2}+it, \chi)|  dt \\
&\qquad + \sum_{j = 1}^2 \frac{1}{\varphi(q) H_j} \sum_{\chi \pmod{q}} \int_{\substack{\frac{X}{H_j} \leq |t|\leq \frac{XW^4}{H_j}}}
|A(\tfrac{1}{2}+it, \chi)| \frac{X^{1/2}}{1+|t|} dt.
\end{split}
\]
Splitting the second integral dyadically, we see that this is 
\[
\ll \frac{\log X}{X^{1/2}} \sum_{j = 1}^2 \max_{\frac{X}{H_j} \leq T \leq \frac{XW^4}{H_j}} \frac{1}{\varphi(q)} \sum_{\chi \pmod{q}} \frac{X/H_j}{T} \int_{\substack{W \leq |t|\leq T}} |A(\tfrac{1}{2}+it, \chi)| \  dt.
\]
Since $H_2 \geq H_1$, the contribution of the part with $j=1$ is larger than the contribution of the part with $j=2$. Hence part (i) follows.

Part (ii) follows similarly, except there is no need to handle a main term separately.
\end{proof}

\begin{proof}[Proof of Lemma~\ref{le:BHP}]
By Shiu's bound (Lemma~\ref{shiu}) we can clearly assume that $q \leq W^{1/2} \leq X^{\varepsilon/400}$. Let us consider, for $j = 1, 2$,
\[
\frac{1}{H_j}\sum_{\substack{X < m_1 m_2 \ell \leq X+H_j \\ m_1 m_2 \ell \equiv a \pmod{q} \\ m_j \sim M_j, \ell \sim L}} a_{m_1} b_{m_2} v_\ell.
\]
We first split the sums according to $r_1 = (m_1, q)$, $r_2 = (m_2, q/r_1)$ and $r_3 = (\ell, q/(r_1 r_2))$, writing $m_j = r_j m_j'$ and $\ell = r_3 \ell'$. Then $m_1' m_2' \ell' r_1 r_2 r_3 \equiv a \pmod{\frac{q}{r_1 r_2 r_3} r_1 r_2 r_3}$ and necessarily $r_1 r_2 r_3 = (a, q)$. We have
\begin{align*}
&\frac{1}{H_j}\sum_{\substack{X < m_1 m_2 \ell \leq X+H_j \\ m_1 m_2 \ell \equiv a \pmod{q} \\ m_j \sim M_j, \ell \sim L}} a_{m_1} b_{m_2} v_\ell \\
&= \sum_{\substack{r_1 r_2 r_3 = (a, q)}} \frac{1}{H_j}\sum_{\substack{X/(r_1 r_2 r_3) < m_1' m_2' \ell' \leq (X+H_j)/(r_1 r_2 r_3) \\ m_1' m_2' \ell' \equiv \frac{a}{r_1 r_2 r_3} \pmod{\frac{q}{r_1 r_2 r_3}} \\ (m_1', q/r_1) = (m_2', q/(r_1 r_2)) = (\ell', q/(r_1 r_2 r_3)) = 1 \\ m_j' \sim M_j/r_j, \ell'
 \sim L/r_3}} a_{m_1' r_1} b_{m_2' r_2} v_{\ell' r_3}.
\end{align*}
Part (i) follows from Lemma~\ref{le:Perron} (with $X/(a, q)$, $H_j/(a, q)$, $q/(a, q)$, and $a/(a,q)$ in place of $X$, $H_j$, $q$, and $a$) if, for any $T \in [X/H_1, XW^4/H_1]$ and any $r_1 r_2 r_3 = (a, q)$ and any $\chi \pmod{q/(a, q)}$, one has
\begin{align*}
&\int_{\substack{W \leq |t|\leq T}} \Bigl|\sum_{\substack{m_1' \sim M_1/r_1 \\ (m_1, q/r_1) = 1}} \frac{a_{m_1' r_1} \chi(m_1')}{m_1'^{1/2+it}} \sum_{\substack{m_2' \sim M_2/r_2 \\ (m_2', q/(r_1 r_2)) = 1}} \frac{b_{m_2' r_2} \chi(m_2')}{m_2'^{1/2+it}} \sum_{\substack{\ell' \sim L/r_3 \\ (\ell', q/(r_1r_2r_3)) = 1}} \frac{v_{\ell' r_3} \chi(\ell')}{\ell'^{1/2+it}} \Bigr| \  dt \\
&\ll \frac{\log^{O_C(1)} X}{W^{1/3}}\frac{T}{X/H_1} \left(\frac{X}{(a, q)}\right)^{1/2}.
\end{align*}
But, using the assumption~\eqref{eq:BHP-supt-cond}, this follows from a slight variant of~\cite[Lemma 9]{baker-harman-pintz} with $g=1$ in cases $\theta \in \{7/12, 3/5, 5/8\}$ and with $g=2$ in case $\theta = 11/20$ (alternatively see~\cite[Lemma 7.3]{harman-book}). The idea in the proofs of these lemmas is to first split the integral to level sets according to the absolute values of the three Dirichlet polynomials appearing, and then to apply appropriate mean and large value results individually for the three Dirichlet polynomials to obtain upper bounds for the sizes of the level sets. Combining these upper bounds using case-by-case study and H\"older's inequality leads to the lemmas.

Part (ii) follows similarly.
\end{proof}

In fact, one can establish Lemma~\ref{le:BHP} for $\theta \in [7/12, 5/8]$ by using~\cite[Lemma 9]{baker-harman-pintz} with $g=1$, and for $\theta \in [11/20, 9/16]$ by using~\cite[Lemma 9]{baker-harman-pintz} with $g=2$ (see~\cite[end of Section 7.2]{harman-book}), but we shall not need this more general result.

\begin{proof}[Proof of Lemma~\ref{le:typeI2}]
By Shiu's bound~ (Lemma~\ref{shiu}) we can assume that $q \leq W^{1/6}$. Notice first that if for either $i =1$ or $i=2$, we have $\theta+\varepsilon - (1-\alpha_i) \geq \varepsilon,$ then we can obtain the claim by simply moving the sum over $m_i$ inside. Hence we can assume that $\alpha_1, \alpha_2 < 1-\theta$.
 
Arguing as in proof of Lemma~\ref{le:BHP} and doing a dyadic splitting it suffices to show that, for any $T \in [W, XW^4/H_1]$ and any $r_1 r_2 r_3 = (a, q)$,
\begin{align}
\label{eq:typeI2majorclaim}
&\frac{1}{\varphi(\frac{q}{(a, q)})} \sum_{\chi \pmod{\frac{q}{(a, q)}}} \int_{T}^{2T} \Bigl|\sum_{\substack{m_1 \sim M_1/r_1 \\ (m_1, q/r_1) = 1}} \frac{\chi(m_1)}{m_1^{1/2+it}} \sum_{\substack{m_2 \sim M_2/r_2 \\ (m_2, q/(r_1 r_2)) = 1}} \frac{\chi(m_2)}{m_2^{1/2+it}} \sum_{\substack{\ell \sim L/r_3 \\ (\ell, q/(r_1r_2r_3)) = 1}} \frac{\chi(\ell) v_{\ell r_3}}{\ell^{1/2+it}} \Bigr| \  dt\\
\nonumber
&\ll \frac{\log^{O(1)} X}{W^{1/6}} \max\left\{\frac{T}{X/H_1}, 1\right\} \left(\frac{X}{(a, q)}\right)^{1/2}.
\end{align}
By the fourth moment estimate for Dirichlet $L$-functions we have (see~\cite[Lemma 10.11]{harman-book}), for any $M, T \geq 2$ and $d \mid (a, q)$,
\[
\begin{split}
\sum_{\chi \pmod{\frac{q}{(a, q)}}} \int_{T}^{2T} \Biggl|\sum_{\substack{m \sim M \\ (m, q/d) = 1}} \frac{\chi(m)}{m^{1/2+it}} \Biggr|^4 dt & \ll \sum_{\chi \pmod{\frac{q}{d}}} \int_{T}^{2T} \left|\sum_{\substack{m \sim M}} \frac{\chi(m)}{m^{1/2+it}} \right|^4 dt \\
&\ll\left(q^3T + \frac{qM^2}{T^3}\right) \log^{O(1)} (MT) .
\end{split}
\]
Hence, using also H\"older and the mean value theorem (see e.g.~\cite[Theorem 9.12 with $k = q$ and $Q =1$]{ik}), the left-hand side of~\eqref{eq:typeI2majorclaim} is
\[
\begin{split}
&\ll  \log^{O(1)} X  \left(q^2T + \frac{X^{2\alpha_1}}{T^3} \right)^{1/4} \left(q^2 T + \frac{X^{2\alpha_2}}{T^3} \right)^{1/4} \left(T + \frac{X^{1-\alpha_1-\alpha_2}}{q}\right)^{1/2} \\
&\ll q \log^{O(1)} X \left(T + T^{1/2} X^{1/2-\alpha_1/2-\alpha_2/2} + X^{\alpha_1/2} + X^{\alpha_2/2} + \frac{X^{1/2-\alpha_1/2}}{T^{1/2}} + \frac{X^{1/2-\alpha_2/2}}{T^{1/2}} + \frac{X^{1/2}}{T^{3/2}}\right).
\end{split}
\]
One can see that this is always at most the right-hand side of~\eqref{eq:typeI2majorclaim} by considering each term separately --- depending on the term, the worst case is either $T = W$ or $T = X/H_1$.
\end{proof}

\subsection{Proof of Proposition~\ref{prop:H1H2comp}}
\label{ssec:H1H2compPropproof}
Let us first show the $k= 2$ case of Proposition~\ref{prop:H1H2comp}(ii). It follows from classical arguments leading to the exponent $1/3+\varepsilon$ in the Dirichlet divisor problem (see e.g.~\cite[Section I.6.4]{Tenenbaum-book}). For completeness, we provide the proof here. By a trivial bound we can assume that $q \leq X^{\varepsilon/4}$.

First note that
\[
\begin{split}
\frac{1}{H_j} \sum_{\substack{X < n \leq X+H_j \\ n \equiv a \pmod{q}}} d_2(n) 
&= \frac{2}{H_j} \sum_{\substack{X < mn \leq X+H_j \\ m \leq X^{1/2} \\ mn \equiv a \pmod{q}}} 1 + O\Biggl(\frac{1}{H_j} \sum_{m \in (X^{1/2}, (X+H_j)^{1/2}]} \sum_{\substack{X/m < n \leq (X+H_j)/m \\ mn \equiv a \pmod{q}}} 1\Biggr).
\end{split}
\]
The error term contributes
\[
\ll \frac{1}{H_j} \cdot \left(\frac{H_j}{X^{1/2}} + 1\right) \cdot \left(\frac{H_j}{X^{1/2}} + 1\right) \ll \frac{H_j}{X} + \frac{1}{H_j}.
\]

Hence it suffices to show that, for any $M \in [1/2, X^{1/2}]$, we have
\[
\frac{1}{H_1} \sum_{\substack{X < mn \leq X+H_1 \\ m \sim M \\ mn \equiv a \pmod{q}}} 1 = \frac{1}{H_2} \sum_{\substack{X < mn \leq X+H_2 \\ m \sim M \\ mn \equiv a \pmod{q}}} 1 + O\left(\frac{1}{X^{\varepsilon/5}}\right).
\]
Now, for $j = 1, 2$,
\[
\begin{split}
&\sum_{\substack{X < mn \leq X+H_j \\ m \sim M \\ mn \equiv a \pmod{q}}} 1 = \sum_{\substack{0 \leq b, c < q \\ bc \equiv a \pmod{q}}} \sum_{\substack{m \sim M \\ m \equiv b \pmod{q}}} \Biggl(\sum_{\substack{1 \leq n \leq \frac{X+H_j}{m} \\ n \equiv c \pmod{q}}} 1 - \sum_{\substack{1 \leq n \leq \frac{X}{m} \\ n \equiv c \pmod{q}}} 1 \Biggr)  \\
&=\sum_{\substack{0 \leq b, c < q  \\ bc \equiv a \pmod{q}}} \sum_{\substack{m \sim M \\ m \equiv b \pmod{q}}} \left(\left\lfloor \frac{X+H_j}{mq} - \frac{c}{q} \right\rfloor - \left\lfloor \frac{X}{mq} - \frac{c}{q} \right\rfloor\right) \\ 
&= \sum_{\substack{0 \leq b, c < q  \\ bc \equiv a \pmod{q}}} \sum_{\substack{m \sim M \\ m \equiv b \pmod{q}}} \left(\frac{H_j}{mq} + \left(\frac{1}{2}-\left\{\frac{X+H_j}{mq} -\frac{c}{q}\right\}\right) - \left(\frac{1}{2} - \left\{\frac{X}{mq} - \frac{c}{q}\right\}\right)\right).
\end{split}
\]
Hence it suffices to show that, for $j =1, 2$ and $\xi \in \{X/q, (X+H_j)/q\}$,
\begin{equation}
\label{eq:d2claimM}
 \sum_{\substack{0 \leq b, c < q  \\ bc \equiv a \pmod{q}}} \sum_{\substack{m \sim M \\ m \equiv b \pmod{q}}} \left(\frac{1}{2}-\left\{\frac{\xi}{m} -\frac{c}{q}\right\}\right) = O\left(\frac{H_j}{X^{\varepsilon/5}}\right).
\end{equation}
The left-hand side is trivially $O(qM) = O(X^{\varepsilon/4}M)$ and so~\eqref{eq:d2claimM} is immediate in case $M \leq H_j/X^{\varepsilon/2}$, and so we can concentrate on showing~\eqref{eq:d2claimM} for $j$ and $M$ for which $M > H_j/X^{\varepsilon/2}$. 

For any $K \geq 1$ we have the Fourier expansion (see e.g.~\cite[Section I.6.4]{Tenenbaum-book})
\[
\frac{1}{2} -\{ y\} = \sum_{k \neq 0} v_k e(ky) + O(1/K) \quad \text{with} \quad v_k \ll \min\{1/k, K/k^2\}.
\]
Taking $K_j = MX^{\varepsilon/2}/H_j$ (which is $\geq 1$) and writing $m = b+rq$, it suffices to show that, for $j =1, 2$ and $\xi \in \{X/q, (X+H_j)/q\}$,
\[
\sum_{|k| > 0} \min\left\{ \frac{1}{k}, \frac{MX^{\varepsilon/2}/H_j}{k^2}\right\} \left| \sum_{(M-b)/q < r \leq (2M-b)/q} e(k \xi/(b+rq)) \right| = O(X^{-\varepsilon/2} H_j/q^2).
\]
The second derivative of the phase has size $\asymp kXq/M^3$, so that by van der Corput's exponential sum bound (see e.g.~\cite[Theorem 5 in Section I.6.3]{Tenenbaum-book} or~\cite[Corollary 8.13]{ik}), the left-hand side is
\[
\begin{split}
&\ll \sum_{0 < |k| \leq MX^{\varepsilon/2}/H_j}\frac{1}{k} \left(\left(\frac{kXq}{M^3}\right)^{1/2} \frac{M}{q} + \left(\frac{M^3}{kX q}\right)^{1/2} \right) \\
&\qquad +\sum_{|k| > MX^{\varepsilon/2}/H_j}\frac{MX^{\varepsilon/2}/H_j}{k^2} \left(\left(\frac{kXq}{M^3}\right)^{1/2} \frac{M}{q} + \left(\frac{M^3}{kX q}\right)^{1/2} \right) \\
&\ll \frac{X^{1/2 + \varepsilon/4}}{H_j^{1/2} q^{1/2}} + \frac{M^{3/2}}{q^{1/2} X^{1/2}}.
\end{split}
\]
This is $\ll X^{-\varepsilon/2} H_j/q^2$ since $H_2 \geq H_1 \geq X^{1/3+\varepsilon}$, $q \leq X^{\varepsilon/4}$, and $M \leq X^{1/2}$. This establishes the $k=2$ case of Proposition~\ref{prop:H1H2comp}.

The cases $k=3, 4$ of Proposition~\ref{prop:H1H2comp}(ii) follow from dyadic splitting, Lemma~\ref{combinatorial}(v), and Lemma~\ref{le:typeI2} with $W = \min\{X^{\frac{1}{400k}}, X^{\varepsilon/4}\}$, so we can concentrate on Proposition~\ref{prop:H1H2comp}(i) and cases $k \geq 5$ of Proposition~\ref{prop:H1H2comp}(ii). To apply Lemma~\ref{le:BHP} we need parts (i) and (ii) of the following lemma (part (iii) will be used in the proof of Lemma~\ref{comb-mu} below):

\begin{lemma}[Dirichlet polynomial bounds]
\label{le:nitcancel}
Let $0 \leq T_0 \leq X$ and $\alpha \in (0, 1]$. 
\begin{itemize}
\item[(i)] There exists $\delta = \delta(\alpha)$ such that, for any character $\chi$ of modulus $q \leq X^{\alpha/2}$ and any $L \in [X^\alpha, X]$,
\[
\sup_{T_0 \leq |t| \leq X} \sup_{I\subset [L,2L]}\left| \sum_{\ell \in I} \frac{\chi(\ell)}{\ell^{1/2+it}}\right| \ll_{\alpha} L^{1/2} X^{-\delta} + L^{1/2} \frac{\log X}{(T_0 +1)^{1/2}}.
\]
\item[(ii)] For any $A > 0$, any $1\leq r\leq X$, and any character $\chi$ of modulus $q \leq \log^A X$, one has
\[
\sup_{|t| \leq X} \sup_{I\subset [X^{\alpha},2X^{\alpha}]}\left| \sum_{\ell \in I} \frac{\mu(r\ell) \chi(\ell)}{\ell^{1/2+it}}\right| \ll_{\alpha, A} \frac{X^{\alpha/2}}{\log^A X}.
\]
\item[(iii)] Let $\varepsilon > 0$. For any $A > 0$, any $P \in [\exp((\log X)^{2/3+\varepsilon}), X^2]$ and any character $\chi$ of modulus $q \leq \log^A X$,
\[
\sup_{T_0 \leq |t| \leq X} \sup_{I\subset [P,2P]}\left| \sum_{p \in I} \frac{\chi(p)}{p^{1/2+it}}\right| \ll_{\varepsilon, A} \frac{P^{1/2}}{T_0} + \frac{P^{1/2}}{\log^A X}.
\]
\end{itemize}
\end{lemma}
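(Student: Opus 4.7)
The common strategy is to apply partial summation to each of the three sums in order to reduce them to bounds on a partial sum $S(x) = \sum_{\ell \leq x} \chi(\ell) c_\ell \ell^{-it}$, where $c_\ell \in \{1,\ \mu(r\ell),\ 1_{\ell \in \mathbb{P}}\log \ell\}$ for parts (i), (ii), (iii) respectively, and then to bound $S(x)$ uniformly for $x \asymp L$, $X^\alpha$, or $P$ by classical means. Parts (ii) and (iii) reduce to Siegel--Walfisz and Vinogradov--Korobov-type estimates respectively, while (i) can be dispatched by an elementary argument.

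For (i), I would split $\sum_{\ell \in I}\chi(\ell)\ell^{-1/2-it}$ by residue classes modulo $q$ and apply Euler--Maclaurin summation on each progression. The resulting Riemann-integral main term is $\frac{1}{q}(\sum_{r\bmod q}\chi(r))\int_I u^{-1/2-it}\,du$, which is bounded by $L^{1/2}/(|t|+1)$ and in fact vanishes when $\chi$ is non-principal. The Euler--Maclaurin error contributes $O(qL^{-1/2})$ which, under $L \geq X^\alpha$ and $q \leq X^{\alpha/2}$, is at most $L^{1/2}X^{-\alpha/2}$ and supplies the $L^{1/2}X^{-\delta}$ term with $\delta = \alpha/2$. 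Taking the supremum over $|t| \geq T_0$ converts $L^{1/2}/(|t|+1)$ into $L^{1/2}/(T_0+1)$, which is dominated by the stated $L^{1/2}\log X/(T_0+1)^{1/2}$.

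For (ii), the plan is to apply Vaughan's identity (or Heath--Brown's identity, Lemma~\ref{hb-identity}) to decompose $\sum_{\ell \leq x}\mu(r\ell)\chi(\ell)\ell^{-it}$ into type I and type II bilinear sums. The type I sums can be treated as in (i) but additionally require the Siegel--Walfisz theorem~\eqref{eq:S-W} to save $(\log X)^{-A}$ on the underlying character sums over arithmetic progressions; the type II sums are controlled by the standard Dirichlet polynomial second-moment estimates combined with Cauchy--Schwarz. The hypothesis $q \leq \log^A X$ is precisely what makes Siegel--Walfisz applicable (ineffectively).

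For (iii), partial summation reduces matters to estimating $\sum_{p \leq x}\chi(p)(\log p)p^{-it}$ for $x \in [P, 2P]$. The explicit formula expresses this as a main term $\delta_{\chi=\chi_0}x^{1-it}/(1-it)$ plus a contribution $\sum_{\rho} x^{\rho}/\rho$ from the non-trivial zeros of $L(s+it, \chi)$. The Vinogradov--Korobov zero-free region, valid uniformly for $q \leq \log^A X$ and $|t| \leq X$, pushes all such zeros to $\Re\rho \leq 1 - c/(\log X)^{2/3}(\log\log X)^{1/3}$, so the total zero contribution is $\ll x\exp(-c_1(\log x)^{3/5})$, which is $\ll x/\log^A x$ once $x \geq \exp((\log X)^{2/3+\varepsilon})$. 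Partial summation then yields $P^{1/2}/T_0 + P^{1/2}/\log^A X$, with the $1/T_0$ gain arising from the main-term bound $x/(|t|+1)$ upon taking the supremum over $|t| \geq T_0$. The main obstacle is precisely attaining this uniformity in $|t|$ up to $|t| \leq X$ in (iii): it forces the use of the Vinogradov--Korobov rather than the classical zero-free region and accounts for the lower bound $P \geq \exp((\log X)^{2/3+\varepsilon})$ in the hypothesis.
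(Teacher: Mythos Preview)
Your treatment of (iii) via the explicit formula and the Vinogradov--Korobov zero-free region matches the paper's intent: it states that (ii) and (iii) both ``follow by standard contour integration arguments, using the known zero-free region for $L(s,\chi)$''.

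Your argument for (i), however, has a genuine gap. You assert that the Euler--Maclaurin error after splitting into residue classes modulo $q$ is $O(qL^{-1/2})$, but for $f(u)=u^{-1/2-it}$ one has $|f'(u)|\asymp(1+|t|)u^{-3/2}$, so the remainder on each residue class is $\asymp\int_L^{2L}|f'|\asymp(1+|t|)L^{-1/2}$ and the total error is $O\bigl(q(1+|t|)L^{-1/2}\bigr)$. With $|t|$ ranging up to $X$, $L\ge X^\alpha$, and $q\le X^{\alpha/2}$, this can be as large as $X$, which is useless; higher-order Euler--Maclaurin only makes matters worse, since each differentiation of $u^{-1/2-it}$ introduces another factor of $|t|$. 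The paper's approach to (i) is quite different and is where the real content lies: after the same splitting into residue classes and partial summation one faces the exponential sum $\sum_m e\bigl(\tfrac{t}{2\pi}\log(mq+a)\bigr)$ and applies the Weyl--van der Corput $k$th-derivative estimate (\cite[Theorem~8.4]{ik}). Taking $k=2$ in the range $T_0\le|t|\le L/q$ produces the term $L^{1/2}\log X/(T_0+1)^{1/2}$, while taking $k=\lfloor 2/\alpha\rfloor+2$ in the range $L/q<|t|\le X$ yields the power saving $L^{1/2}X^{-\delta(\alpha)}$. The key point is that one must exploit the oscillation of $\ell^{-it}$ to \emph{gain} from large $|t|$, and Euler--Maclaurin cannot do this. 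This gap also propagates to your plan for (ii), whose type~I pieces you propose to treat ``as in (i)''; and note that for a \emph{fixed} $t$ the type~II convolution sum simply factors as a product of two Dirichlet polynomials, so Cauchy--Schwarz yields no bilinear gain---contour integration, as the paper indicates, is the natural route there as well.
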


\begin{proof}
Parts (ii) and (iii) follow by standard contour integration arguments, using the known zero-free region for $L(s, \chi)$ (see e.g.,~\cite[Lemma 2]{mr-p} for a similar argument without the character). 

Let us concentrate on part (i). By partial summation, splitting into residue classes $a \pmod{q}$ and writing $\ell = mq + a$, it suffices to show that, for any $a \in \{1, \dotsc, q\}$ and $|t| \in [T_0, X]$, we have
\begin{equation}
\label{eq:expchimsumclaim}
\sum_{m \in \frac{1}{q}I} e\left(\frac{t}{2\pi}  \log (m q + a)\right) \ll L \frac{X^{-\delta}}{q} + L \frac{\log X}{q(T_0+1)^{1/2}}.
\end{equation}
The $\nu$th derivative of the phase $g(m) = \frac{t}{2\pi}  \log (m q + a)$ satisfies
\[
|g^{(\nu)}(m)| \frac{m^\nu}{\nu!} \asymp_\nu |t|
\]
for any $\nu \ge 1$.  
We apply the Weyl bound in the form of~\cite[Theorem 8.4]{ik}. When $T_0 \leq |t| \leq L/q$, we use~\cite[Theorem 8.4]{ik} with $k= 2$, obtaining
\[
\sum_{m \in \frac{1}{q}I} e\left(\frac{t}{2\pi}  \log (m q + a)\right) \ll \left(\frac{|t|}{L^2/q^2} + \frac{1}{|t|}\right)^{1/2} \frac{L}{q} \log X \ll \frac{L^{1/2}}{q^{1/2}} \log X + \frac{L \log X}{q(T_0+1)^{1/2}}. 
\]
Recalling that $q \leq L^{1/2}$, the bound~\eqref{eq:expchimsumclaim} follows with $\delta = \alpha/5$.

On the other hand, when $L/q < |t| \leq X$, we use~\cite[Theorem 8.4]{ik} with $k= \lfloor \frac{2}{\alpha} + 2\rfloor$, obtaining
\begin{equation}
\begin{split}
\sum_{m \in \frac{1}{q}I} e\left(\frac{t}{2\pi}  \log (m q + a)\right) &\ll_\alpha \left(\frac{|t|}{(L/q)^k} + \frac{1}{|t|}\right)^{\frac{4}{k 2^k}} \frac{L}{q} \log X \\
&\ll_\alpha \left(\frac{X}{(L^{1/2})^k} + \frac{1}{L^{1/2}}\right)^{\frac{4}{k 2^k}} \frac{L}{q}  \log X\\
& \ll_\alpha \frac{L^{1-\frac{2}{k2^k}}}{q} \log X
\end{split}
\end{equation}
and~\eqref{eq:expchimsumclaim} follows.
\end{proof}

Let us now get back to the proof of Proposition~\ref{prop:H1H2comp}(ii). Recall that we can assume that $k \geq 5$. The claim follows trivially unless $q \leq \min\{X^{2c_k}, X^{\varepsilon/900}\}$. We can request that $c_k \leq \frac{1}{4000k}$. By dyadic splitting it suffices to show that, for any $N_j \in [1/2, X]$ with $N_1 \dotsm N_k \asymp X$, one has
\begin{equation}
\label{eq:dkn1nkclaim}
\max_{\substack{a, q \in \mathbb{N} \\ q \leq X^{1/(2000k)}}} \left| \frac{1}{H_1}\sum_{\substack{X < n_1 \dotsm n_k \leq X+H_1 \\ n_i \sim N_i \\ n_1 \dotsm n_k \equiv a \pmod{q}}} 1 - \frac{1}{H_2}\sum_{\substack{X < n_1 \dotsm n_k \leq X+H_2 \\ n_i \sim N_i \\n_1 \dotsm n_k \equiv a \pmod{q}}} 1 \right| \ll \frac{1}{X^{2c_k}} + \frac{1}{X^{\varepsilon/800}}.
\end{equation}
We can find $\alpha_1, \dotsc, \alpha_k \in [0,1]$ with $\alpha_1 + \dotsb + \alpha_k = 1$ such that $N_i \asymp X^{\alpha_i}$ for each $i = 1, \dotsc, k$.

In case $k=5$ and $\theta = 11/20$ we start by applying Lemma~\ref{combinatorial}(iv). In case $(I_2^{\mathrm{maj}})$ holds we apply Lemma~\ref{le:typeI2} with $W = \min\{X^{\varepsilon/4}, X^{8c_k}\}$ to obtain~\eqref{eq:dkn1nkclaim}. In case $(II^{\mathrm{maj}})$ holds we wish to apply Lemma~\ref{le:BHP}. In order to do this, we need to show that~\eqref{eq:BHP-supt-cond} holds with
\begin{equation}
\label{eq:cmdefk5}
v_m = \sum_{\substack{m = \prod_{i \in I} m_i \\ m_i \sim N_i}} 1
\end{equation}
and $W = \min\{X^{\varepsilon/200}, X^{20 c_k}\}$ for any $L \asymp \prod_{i \in I} N_i$. Now there exists $i_0 \in I$ such that $\alpha_{i_0} \geq (2\theta-1)/k = \frac{1}{10k}$. We have (using $d(r) d_{|I|-1}(m) \ll W^{1/100}$) 
\begin{align*}
\left|\sum_{\ell \sim L/r} \frac{v_{\ell r} \chi(\ell)}{\ell^{1/2+it}}\right| &\leq \sum_{r = r_1 r_2} \sum_{\frac{L}{2r_2 X^{\alpha_{i_0}}} < m \leq \frac{2L}{r_2 X^{\alpha_{i_0}}}} \frac{d_{|I|-1}(m)}{m^{1/2}} \left|\sum_{\substack{m_{i_0} \sim X^{\alpha_{i_0}}/r_1 \\ m_{i_0} \sim L/(mr)}} \frac{\chi(m_{i_0})}{m_{i_0}^{1/2+it}}\right| \\
& \ll \left(\frac{L}{X^{\alpha_{i_0}}}\right)^{1/2} W^{1/100} \max_{r = r_1 r_2} \frac{1}{r_2^{1/2}} \max_{y \sim X^{\alpha_{i_0}}/r_1} \left|\sum_{\substack{X^{\alpha_{i_0}}/r_1 < m \leq y}} \frac{\chi(m)}{m^{1/2+it}}\right|.
\end{align*}
Hence~\eqref{eq:BHP-supt-cond} follows for~\eqref{eq:cmdefk5} if we show that
\begin{equation}
\label{eq:supt-cond-claim}
\max_{\substack{r_1 r_2 \mid q \\ \chi \pmod{\frac{q}{(a, q)}}}} \sup_{W \leq |t| \leq \frac{XW^4}{H_1}} \max_{y \sim X^{\alpha_{i_0}}/r_1} \left| \sum_{X^{\alpha_{i_0}}/r_1 < m \leq y} \frac{\chi(m)}{m^{1/2+it}}\right| \ll \frac{(X^{\alpha_{i_0}}/r_1)^{1/2}}{W^{1/3+1/100}},
\end{equation}
Note that $X^{\alpha_{i_0}}/r_1 \geq X^{\frac{1}{10k} - 2c_k} \geq X^{\frac{1}{20k}}$. We apply Lemma~\ref{le:nitcancel}(i) with $T_0 = W$. Taking $c_k \leq \delta(\frac{1}{20k})/30$ we obtain that the left-hand-side of~\eqref{eq:supt-cond-claim} is
\[
\ll \left(\frac{X^{\alpha_{i_0}}}{r_1}\right)^{1/2} \cdot \frac{\log X}{W^{1/2}} \ll \frac{(X^{\alpha_{i_0}}/r_1)^{1/2}}{W^{1/3+1/100}}.
\] 
Hence~\eqref{eq:dkn1nkclaim} follows from Lemma~\ref{le:BHP}. The case $k \geq 6$ and $\theta = 7/12$ follows similarly using Lemma~\ref{combinatorial}(iii).

A similar method allows us to establish Proposition~\ref{prop:H1H2comp}(i). We start by applying Heath-Brown's identity (Lemma~\ref{hb-identity}) with $L = \lceil 2/\eps \rceil$, writing $N_i = X^{\alpha_i}$. Then we apply Lemma~\ref{combinatorial}(iii) to these $\alpha_i$.

In case $(II^{\mathrm{maj}})$ holds we argue as above but with $W = \log^A X$ for some large $A > 0$. On the other hand, in case $\alpha_{i_0} \geq 1-\theta-\varepsilon/2$ for some $i_0$, we write $M = \frac{1}{N_{i_0}}\prod_{\substack{j = 1}}^\ell N_{j}$ and move the summation over $n_{i_0} \sim X^{\alpha_{i_0}}$ inside. Then it suffices to show in this case that, for any $B \geq 1$,
\[
\max_{\substack{a, q \in \mathbb{N}}} \sum_{\substack{M < m \leq 2^\ell M}} d_{\ell-1}(m) \left| \frac{1}{H_1}\sum_{\substack{X/m < n_{i_0} \leq (X+H_1)/m \\ n_{i_0} \sim N_{i_0} \\ n_{i_0} m \equiv a \pmod{q}}} a_{n_{i_0}} - \frac{1}{H_2}\sum_{\substack{X/m < n_{i_0} \leq (X+H_2)/m \\ n_{i_0} \sim N_{i_0} \\ n_{i_0} m \equiv a \pmod{q}}} a_{n_{i_0}} \right| \ll \frac{1}{(\log X)^B}
\]
for $a_{n_{i_0}} = \mathbf{1}_{(N_{i_0}, 2N_{i_0}]}(n_{i_0})$ and $a_{n_{i_0}} = \mathbf{1}_{(N_{i_0}, 2N_{i_0}]}(n_{i_0}) \log n_{i_0}$. But here $H_2/M \geq H_1/M \geq X^{\varepsilon/2}$, so the claim is easy to establish.

In the remaining case $(I_2^{\mathrm{maj}})$ holds and $\alpha_i, \alpha_j > \varepsilon/2$. Thus the corresponding coefficients from Heath-Brown's identity are either $1_{(N_i, 2N_i]}(n)$ or $(\log n) 1_{(N_i, 2N_i]}(n)$ and the claim follows from Lemma~\ref{le:typeI2} (and partial summation if needed).

\subsection{Major arc estimates with restricted prime factorization}
When proving Theorem~\ref{discorrelation-thm}(iv)--(v) we need the following quick consequence of Theorem~\ref{thm:major-arc}. One could obtain stronger results, but this is sufficient for our needs.
\begin{corollary}\label{cor:major-arc-Ramare}  Let $X \geq 3$ and $X^{7/12+\eps} \leq H \leq X^{1-\eps}$ for some $\eps > 0$. Let $2 \leq P < Q \leq X^{1/(\log\log X)^2}$ and write $\mathcal{P}(P, Q) = \prod_{P < p \leq Q} p$.
\begin{itemize}
\item[(i)] For all $A > 0$,
\begin{align*}
 \left|\sum_{\substack{X < n \leq X+H}} 1_{(n, \mathcal{P}(P, Q)) > 1} \mu(n) \right|^* &\ll_{A,\eps} \frac{H}{\log^{A} X} + \frac{H(\log X)^4}{P}.
\end{align*}
\item[(ii)]
Let $k \geq 2$. For all $A > 0$,
$$
\left|\sum_{\substack{X < n \leq X+H}} 1_{(n, \mathcal{P}(P, Q)) > 1} (d_k(n) - d^\sharp_k(n))\right|^* \ll_{A, \eps} \frac{H}{\log^A X} + \frac{H(\log X)^{4k}}{P}$$
\end{itemize}
\end{corollary}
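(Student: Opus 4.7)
The plan is to apply Ramar\'e's identity in the form of Lemma~\ref{lem:MatoTera} to peel off a prime factor $p \in (P, Q]$, converting the restricted sum into a triple sum over $(p, r, n)$ whose inner sum over $n$ runs over an arithmetic progression in a short interval; those inner sums will then be controlled by the maximal-sum major arc estimate of Theorem~\ref{thm:major-arc}.

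For part (i), I would first reduce via the definition~\eqref{maximal-sum} to bounding $\left|\sum_{n \in P',\,(n,\mathcal{P}(P,Q))>1} \mu(n)\right|$ for an arbitrary arithmetic progression $P' = \{n \equiv a \pmod{q}\}$ inside $(X, X+H]$, and then apply Lemma~\ref{lem:MatoTera} with $f = \mu$, $k = 1$, and $\omega_n = 1_{n \in P'}$. This produces the error $O(H(\log X)^4/P + H/\exp((\log\log X)^2))$ and a main term
\[
-\sum_{\substack{P < p \leq Q \\ r \leq X^{\eps/2}}} a_r \sum_{\substack{X/(pr) < n \leq (X+H)/(pr) \\ prn \equiv a \pmod{q}}} \mu(n),
\]
where $|a_r| \leq d_2(r)$. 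For each fixed $(p,r)$, the inner sum runs over $n$ in an arithmetic progression of modulus dividing $q$ in an interval of length $H/(pr)$. Since $pr \leq Q X^{\eps/2} \leq X^{\eps/2 + o(1)}$, a short check gives $H/(pr) \geq (X/(pr))^{7/12 + \eps'}$ for some $\eps' > 0$, so Theorem~\ref{thm:major-arc}(i) bounds the inner sum by $O_{A'}(H/(pr \log^{A'} X))$. Summing over $(p,r)$ loses at most $\sum_{P<p\leq Q} \tfrac{1}{p} \cdot \sum_{r} \tfrac{d_2(r)}{r} \ll (\log\log X)(\log X)^2$, which is absorbed by taking $A' = A + 3$.

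For part (ii), I would run the same strategy with $f = d_k$, which is multiplicative and satisfies $|d_k| \leq d_k$, so Lemma~\ref{lem:MatoTera} applies to give
\[
\sum_{n \in P',\,(n,\mathcal{P})>1} d_k(n) = k \sum_{\substack{P<p\leq Q\\ r \leq X^{\eps/2}}} a_r \sum_{\substack{X/(pr) < n \leq (X+H)/(pr)\\ prn \equiv a \pmod{q}}} d_k(n) + O\!\left(\frac{H(\log X)^{4k}}{P}\right).
\]
The harder task is to obtain a matching identity for $\sum_{n \in P',\,(n,\mathcal{P})>1} d_k^\sharp(n)$, since $d_k^\sharp$ is not multiplicative. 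My plan here is to expand $d_k^\sharp(n) = \sum_{\ell \leq R_k^{2k-2},\,\ell | n} P_\ell(\log n)$, interchange sums, substitute $n = \ell m$, and run Ramar\'e's identity on the sum over $m$ for each $\ell$. Because $R_k^{2k-2} \leq X^{1/5}$ and $P_\ell(\log \ell m)$ fluctuates only by $O_k(1/\log X)$ as $m$ varies (its derivative being $O(\log^{k-2} X / \log^{k-1} R_k)$), one can effectively treat $P_\ell$ as a constant weight pulled outside the Ramar\'e step and recover the same triple-sum structure, with main term $k \sum_{prn \in P'} a_r\, d_k^\sharp(n)$ and error $O(H(\log X)^{4k}/P)$. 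Subtracting the two identities reduces the original sum to $k \sum_{prn \in P'} a_r\, (d_k - d_k^\sharp)(n)$, and Theorem~\ref{thm:major-arc}(ii) (whose $\theta$-threshold is at most $7/12$ for every $k$) yields a cancellation of $H/(pr) \cdot (X^{-c_k} + X^{-\eps/1000})$ on each inner sum, easily summable to $o(H/\log^A X)$.

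The principal obstacle will be the rigorous derivation of the $d_k^\sharp$ analogue of Lemma~\ref{lem:MatoTera}. One must control (a) the slow variation of $P_\ell(\log n)$ across the short interval, (b) contributions from $\ell$ sharing a prime factor with $\mathcal{P}(P, Q)$ (handled by Shiu's Lemma~\ref{shiu} and Lemma~\ref{le:FLS}), and (c) diagonal contributions from small primes $p \leq R_k^{2k-2}$ where $p \mid \ell$ is possible. Each of these should yield an error at most $O(H(\log X)^{4k}/P)$, matching the claimed bound.
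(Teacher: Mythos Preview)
Your argument for part~(i) is essentially the paper's: apply Lemma~\ref{lem:MatoTera} with $f=\mu$, then bound each inner $\mu$-sum by the maximal major-arc estimate of Theorem~\ref{thm:major-arc}(i). The paper keeps the $|\cdot|^*$ notation throughout rather than fixing a progression $P'$, but this is cosmetic.

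For part~(ii) your route diverges from the paper's and has a genuine gap. When Lemma~\ref{lem:MatoTera} is applied to $d_k$, the main term is $\sum_{p,r,n'} a_r\, d_k(p)\, d_k(n')\,\omega_{prn'}$ with the explicit $a_r = f(r)\,\tilde a_r = d_k(r)\,\tilde a_r$ (see the last line of the proof of Lemma~\ref{lem:MatoTera}); the factor $k=d_k(p)$ comes from multiplicativity. Your plan for $d_k^\sharp$, however, expands it as $\sum_{\ell\mid n}P_\ell(\log n)$, substitutes $n=\ell m$, and runs Ramar\'e on $m$ with $f=1$: this produces coefficients $a_r^{(1)}=\tilde a_r$, \emph{not} $d_k(r)\,\tilde a_r$, and no factor $k$. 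After resumming over $\ell$ one gets at best $\sum_{p,r,n'}\tilde a_r\bigl(\sum_{\ell\mid n'}P_\ell(\log prn')\bigr)\omega_{prn'}$, so the two main terms carry different weights and do not subtract to $k\sum a_r\,(d_k-d_k^\sharp)(n')$. Moreover, replacing $P_\ell(\log prn')$ by $P_\ell(\log n')$ costs $O\bigl(d_{k-1}(\ell)\cdot\log(pr)/\log X\bigr)$ per term; since $r\le X^{\eps/2}$ forces $\log(pr)/\log X$ only down to $\eps/2$, summing over $\ell\mid n'$ and over $(p,r,n')$ gives an error of size $\gg \eps\, H(\log X)^{k-1}$, which is far larger than either term on the right-hand side of the claimed bound.

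The paper sidesteps all of this with a much simpler device. By Theorem~\ref{thm:major-arc}(ii) and $1_{(n,\mathcal P)>1}=1-1_{(n,\mathcal P)=1}$, it suffices to bound the sum with $1_{(n,\mathcal P)=1}$ instead. Then M\"obius inversion gives $1_{(n,\mathcal P)=1}=\sum_{d\mid(n,\mathcal P)}\mu(d)$; the tail $d>D:=\min(X^{\eps/2000},X^{c_k/2})$ is disposed of by Lemma~\ref{le:FLS} together with the pointwise bound $d_k^\sharp\ll d_k$ from~\eqref{eq:dkapp<<dk}, while for $d\le D$ the condition $d\mid n$ is an arithmetic-progression restriction already absorbed by the maximal sum $|\cdot|^*$ in Theorem~\ref{thm:major-arc}(ii). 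This keeps $d_k-d_k^\sharp$ intact throughout and never requires a Ramar\'e-type identity for the non-multiplicative approximant.
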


\begin{proof} 
Let us first show (i). By Lemma~\ref{lem:MatoTera} it suffices to show that
\[
\left|\sum_{\substack{X < prn \leq X+H \\ P < p \leq Q \\ r \leq X^{\eps/2}}} a_r \mu(n)\right|^\ast \ll_{A, \eps} \frac{H}{\log^A X}
\]
whenever $|a_r| \leq d_2(r)$. By the triangle inequality and Theorem~\ref{thm:major-arc} the left-hand side is
\begin{align*}
&\ll \sum_{P < p \leq Q} \sum_{r \leq X^{\eps/2}} d_2(r) \left|\sum_{\substack{X/(pr) < n \leq (X+H)/(pr) }} \mu(n)\right|^\ast \\
&\ll_{A, \eps} \sum_{P < p \leq Q} \sum_{r \leq X^{\eps/2}} d_2(r) \frac{H}{pr (\log X)^{A+3}} \ll \frac{H}{\log^A X}.
\end{align*}

Let us now turn to (ii). By Theorem~\ref{thm:major-arc} and the triangle inequality it suffices to show the claim with $1_{(n, \mathcal{P}(P, Q)) > 1}$ replaced by $1_{(n, \mathcal{P}(P, Q)) = 1}$. Hence by M\"obius inversion we need to show that
\begin{equation}
\label{eq:mamuRclaim}
\left|\sum_{\substack{X < n \leq X+H}} \sum_{d \mid (n,\mathcal{P}(P, Q))} \mu(d) (d_k(n) - d^\sharp_k(n))\right|^* \ll_A \frac{H}{\log^A X}.
\end{equation}
Write $D := \min\{X^{\varepsilon/2000}, X^{c_k/2}\}$. Since $d_k^\sharp(m) \ll d_k(m)$ (see~\eqref{eq:dkapp<<dk}), the contribution of $d > D$ to the left-hand side of~\eqref{eq:mamuRclaim} is by Lemma~\ref{le:FLS} at most
$$
\ll \sum_{\substack{X < dn \leq X+H \\ d > D \\ d \mid \mathcal{P}(P, Q)}} d_k(dn) \ll_{A} \frac{H}{\log^A X}.
$$
On the other hand, the contribution of $d \leq D$ to the left-hand side of~\eqref{eq:mamuRclaim} is by the triangle inequality and Theorem~\ref{thm:major-arc}
\begin{align*}
&\left|\sum_{\substack{X < n \leq X+H}} \sum_{\substack{d \leq D \\ d \mid \mathcal{P}(P, Q)}} \mu(d)   1_{n \equiv 0 \pmod{d}}(d_k(n) - d^\sharp_k(n))\right|^* \\
&\ll \sum_{d \leq D} \left|\sum_{\substack{X < n \leq X+H}}  (d_k(n) - d^\sharp_k(n))\right|^* \ll_{\varepsilon} \frac{H}{X^{\varepsilon/2000}} + \frac{H}{X^{c_k/2}}.
\end{align*}
\end{proof}

\section{\texorpdfstring{Reduction to type $I$, type $II$, and type $I_2$ estimates}{Reduction to type I, type II, and type I2 estimates}}\label{reduction-sec}

To complement the major arc estimates in Theorem~\ref{thm:major-arc}, we will establish later in the paper some ``inverse theorems'' that provide discorrelation between an arithmetic function $f$ and a nilsequence $F(g(n)\Gamma)$ assuming that $f$ is of\footnote{Informally, we use type $I_k$ to refer to expressions resembling $\alpha * d_k$ for some arithmetic function $\alpha$ supported on a relatively short range, with the classical type $I$ sums corresponding to the case $k=1$, and type $II$ sums to refer to convolutions $\alpha*\beta$ where both $\alpha$ and $\beta$ are supported away from $1$.} ``type $I$'', ``type $II$'', or ``type $I_2$'', and the nilsequence is ``minor arc'' in a suitable sense.  To make this precise, we give some definitions:

\begin{definition}[Type $I$, $II$, $I_2$ sums]\label{struct-sum}  Let $0 < \delta < 1$ and $A_I, A_{II}^-, A_{II}^+, A_{I_2} \geq 1$.  
\begin{itemize}
\item[(i)]  (Type $I$ sum) A \emph{$(\delta,A_I)$ type $I$ sum} is an arithmetic function of the form $f = \alpha *\beta$, where $\alpha$ is supported in $[1,A_I]$, and one has the bounds
\begin{equation}\label{abound}
\sum_{n \leq A} |\alpha(n)|^2 \leq \frac{1}{\delta} A
\end{equation}
and
\begin{equation}\label{btv}
\| \beta \|_{\TV(\N; q)} \leq \frac{1}{\delta}
\end{equation}
for all $A \geq 1$ and some $1 \leq q \leq \frac{1}{\delta}$.
\item[(ii)]  (Type $II$ sum) A \emph{$(\delta, A_{II}^-, A_{II}^+)$ type $II$ sum} is an arithmetic function of the form $f = \alpha * \beta$, where $\alpha$ is supported on $[A_{II}^-,A_{II}^+]$, and one has the bound~\eqref{abound} and the bounds
\begin{equation}\label{bbound-2}
\sum_{n \leq B} |\beta(n)|^2 \leq \frac{1}{\delta} B \quad \text{and} \quad \sum_{n \leq B} |\beta(n)|^4 \leq \frac{1}{\delta^2} B
\end{equation}
for all $A,B \geq 1$.  
(The type $II$ sums become vacuous if $A_{II}^- > A_{II}^+$.)
\item[(iii)]  (Type $I_2$ sum)  A \emph{$(\delta, A_{I_2})$ type $I_2$ sum} is an arithmetic function of the form $f = \alpha * \beta_1 * \beta_2$, where $\alpha$ is supported on $[1,A_{I_2}]$ and obeys the bound~\eqref{abound} for all $A \geq 1$, and $\beta_1, \beta_2$ obey the bound~\eqref{btv} for some $1 \leq q \leq \frac{1}{\delta}$.
\end{itemize}
\end{definition}

We now state the inverse theorems we will establish here.

\begin{theorem}[Inverse theorems]\label{inverse}  Let $d,D \geq 1$, $2 \leq H \leq X$, $0 < \delta < \frac{1}{\log X}$, let $G/\Gamma$ be a filtered nilmanifold of degree at most $d$, dimension at most $D$, and complexity at most $1/\delta$.  Let $F \colon G/\Gamma \to \C$ be Lipschitz of norm at most $1/\delta$ and mean zero.  Let $f \colon \N \to \C$ be an arithmetic function such that
\begin{equation}\label{invo}
\left| \sum_{X < n \leq X+H} f(n) F(g(n) \Gamma) \right|^* \geq \delta H.
\end{equation}
for some polynomial map $g \colon \Z \to G$.
\begin{itemize}
\item[(i)]  (Type $I$ inverse theorem)  If $f$ is a $(\delta,A_I)$ type $I$ sum for some $A_I \geq 1$, then either
$$ H \ll_{d,D} \delta^{-O_{d,D}(1)} A_I$$
or else there exists a non-trivial horizontal character $\eta \colon G \to \R/\Z$ of Lipschitz norm $O_{d,D}( \delta^{-O_{d,D}(1)})$ such that
$$ \| \eta \circ g \|_{C^\infty(X,X+H]} \ll_{d,D} \delta^{-O_{d,D}(1)}.$$
\item[(ii)]  (Type $II$ inverse theorem, non-abelian case)  If $f$ is a $(\delta,A_{II}^-, A_{II}^+)$ type $II$ sum for some $A_{II}^+ \geq A_{II}^- \geq 1$, $G$ is non-abelian with one-dimensional center, and $F$ oscillates with a non-zero central frequency $\xi$ of Lipschitz norm at most $1/\delta$, then either
$$ H \ll_{d,D} \delta^{-O_{d,D}(1)} \max( A_{II}^+, X/A_{II}^- )$$
or else there exists a non-trivial horizontal character $\eta \colon G \to \R/\Z$ of Lipschitz norm $O_{d,D}( \delta^{-O_{d,D}(1)})$ such that
\begin{equation}\label{ut}
 \| \eta \circ g \|_{C^\infty(X,X+H]} \ll_{d,D} \delta^{-O_{d,D}(1)}.
\end{equation}
\item[(iii)]  (Type $II$ inverse theorem, abelian case)  If $f$ is a $(\delta,A_{II}^-, A_{II}^+)$ type $II$ sum for some $A_{II}^+ \geq A_{II}^- \geq 1$ and $F(g(n)\Gamma) = e(P(n))$ for some polynomial $P \colon \Z \to \R$ of degree at most $d$, then either
$$ H \ll_{d} \delta^{-O_{d}(1)} \max( A_{II}^+, X/A_{II}^- )$$
or else there exists a real number $T \ll_{d} \delta^{-O_d(1)} (X/H)^{d+1}$ such that
$$ \| e(P(n)) n^{-iT} \|_{\TV( (X,X+H] \cap \Z; q)} \ll_d \delta^{-O_d(1)} $$
for some $1 \leq q \ll_d \delta^{-O_d(1)}$.
\item[(iv)] (Type $I_2$ inverse theorem) If $f$ is a $(\delta,A_{I_2})$ type $I_2$ sum for some $A_{I_2} \geq 1$, then either
\begin{equation}\label{hale}
 H \ll_{d,D} \delta^{-O_{d,D}(1)} X^{1/3} A_{I_2}^{2/3}
\end{equation}
or else there exists a non-trivial horizontal character $\eta \colon G \to \R/\Z$ of Lipschitz norm $O_{d,D}( \delta^{-O_{d,D}(1)})$ such that
$$ \| \eta \circ g \|_{C^\infty(X,X+H]} \ll_{d,D} \delta^{-O_{d,D}(1)}.$$
\end{itemize}
\end{theorem}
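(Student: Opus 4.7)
Across all four parts, the common strategy is to exploit the convolution structure of $f$ to write the left-hand side of~\eqref{invo} as a double (or triple, in part (iv)) sum parametrising a family of dilated polynomial sequences $b \mapsto g(ab)$; the goal is then to detect either equidistribution via the quantitative Leibman theorem (Theorem~\ref{qlt}) or an algebraic relation via the nilsequence large sieve (Proposition~\ref{large-sieve}), and to package the output through the dilation-concatenation corollary (Corollary~\ref{smooth-dilate}) into a single horizontal character on $(X,X+H]$. The maximal-sum norm $|\cdot|^*$ is disposed of at the outset by pigeonholing a subprogression via Lemma~\ref{basic-prop}(i).

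For part (i), absorb the $\TV$-controlled factor $\beta$ by summation by parts (Lemma~\ref{basic-prop}(iii)) and Cauchy--Schwarz in $a \leq A_I$ using~\eqref{abound}; provided $H \gg \delta^{-C} A_I$, this forces a correlation $|\sum_b F(g(ab)\Gamma)| \gg \delta^{O(1)} H/a$ for a $\gg \delta^{O(1)}$-dense set of $a \sim A_I$, and Theorem~\ref{qlt} on each inner interval yields a non-trivial horizontal character $\eta_a$ of polynomial complexity with $\|\eta_a \circ g(a\cdot)\|_{C^\infty((X/a,(X+H)/a])} \leq \delta^{-O(1)}$. Pigeonholing over the $O(\delta^{-O(1)})$ possible characters to a common $\eta$ reduces the task to the hypothesis of Corollary~\ref{smooth-dilate} with $I = (X,X+H]$ and dilation scale $A_I$, and the conclusion follows. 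Part (iv) follows the same blueprint but with the inner two-dimensional sum
\[
\sum_{\substack{(n_1,n_2) \in \Z^2 \\ X/a < n_1 n_2 \leq (X+H)/a}} \beta_1(n_1)\beta_2(n_2) F(g(a n_1 n_2) \Gamma)
\]
partitioned, via the hyperbola decomposition alluded to in the introduction (Theorem~\ref{decomp}), into $\Z^2$-arithmetic progressions $P$ whose lengths are non-trivial precisely when $H/a \gtrsim (X/a)^{1/3}$, i.e.\ exactly when~\eqref{hale} fails. Summation by parts absorbs $\beta_1,\beta_2$ and reduces each $\sum_{(n_1,n_2)\in P} F(g(an_1n_2)\Gamma)$ to a one-parameter sum of $F(g(\cdot)\Gamma)$ along an AP, after which Theorem~\ref{qlt}, pigeonholing in $a$, and Corollary~\ref{smooth-dilate} finish as in part (i).

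For part (ii), rewrite~\eqref{invo} as $\sum_b \beta(b) \sum_a \alpha(a) F(g(ab)\Gamma) \gg \delta H$, apply Cauchy--Schwarz in $b$ using~\eqref{bbound-2}, and expand the resulting square to obtain
\[
\sum_{a,a' \in [A_{II}^-,A_{II}^+]} \alpha(a) \overline{\alpha(a')} \sum_b F(g(ab)\Gamma) \overline{F}(g(a'b)\Gamma) \gg \delta^{O(1)} \frac{H^2}{A_{II}^+}.
\]
Pigeonhole on $a'$ and apply Proposition~\ref{large-sieve} to the family $\{g(a\cdot)\}_{a \in [A_{II}^-,A_{II}^+]}$; this delivers either (I) a common horizontal character $\eta$ with $\|\eta \circ g(a\cdot)\|_{C^\infty}$ small for a positive density of $a$ --- to which Corollary~\ref{smooth-dilate} applies as in part (i) --- or (II) a dense family of factorisations $g(a\cdot) = \eps_{aa'} g(a'\cdot) \gamma_{aa'}$ with $\eps_{aa'}$ smooth and $\gamma_{aa'}$ rational. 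In alternative (II) the non-trivial central oscillation of $F$, combined with a Furstenberg--Weiss commutator argument applied to the four-parameter nilsequence $(a,a',b,b') \mapsto (g(ab)\Gamma, g(ab')\Gamma, g(a'b)\Gamma, g(a'b')\Gamma)$, forces a vertical correlation that is incompatible with the non-abelianness of $G$ unless~\eqref{ut} holds after all. Part (iii) runs the same Cauchy--Schwarz/large-sieve template in the abelian polynomial-phase setting: the factorisations of alternative (II) now read as polynomial identities $P(an) - P(a'n) = (\textnormal{smooth}) + (\textnormal{rational})$ for a dense set of pairs $(a,a')$, and the propagation argument of the first two authors in~\cite{matomaki-shao} upgrades these to a single ``pretentiousness'' estimate of $e(P(n))$ against $n^{iT}$ in $\TV(\cdot;q)$ norm, with $T \ll \delta^{-O_d(1)} (X/H)^{d+1}$ dictated by the scale of the top-degree derivative of $P$ on $(X,X+H]$.

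The main obstacle is alternative (II) in part (ii): controlling the algebraic data $\eps_{aa'},\gamma_{aa'}$ uniformly over $(a,a')$ and extracting a contradiction from the Furstenberg--Weiss commutator identity --- while preserving the polynomial dependence on $\delta$ needed for the downstream induction --- is by far the most delicate step, essentially the reason for isolating the central-oscillation hypothesis in the statement. Its abelian counterpart in part (iii) carries an analogous difficulty in turning what are a priori only correlation-type polynomial inequalities into the clean $\TV$-bound of the conclusion.
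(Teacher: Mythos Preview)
Your outlines for parts (i) and (iv) match the paper closely. One point you gloss over in (iv): along an arithmetic progression $P \subset \Z^2$ with spacing $(q,-a)$ parametrised by $k$, the product $n_1 n_2 = (m_0+kq)(n_0-ka)$ is \emph{quadratic} in $k$, so the inner sum is not a sum of $F(g(\cdot)\Gamma)$ along an arithmetic progression in $\Z$. The paper applies Theorem~\ref{qlt} to the degree-$2d$ polynomial $k \mapsto \eta(g((m_0+kq)(n_0-ka)))$, extracts the top Taylor coefficient $\alpha_d (qa)^d$, applies Lemma~\ref{vin} \emph{twice} (once in $a$, once in $q$), and then runs a downward induction on the degree $j$ to control each $\alpha_j$. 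A single invocation of Corollary~\ref{smooth-dilate} does not suffice here.

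For part (ii), the paper applies Cauchy--Schwarz in $a$, not in $b$: one uses~\eqref{abound} to eliminate $\alpha$, obtaining $\sum_{a \sim A} \bigl|\sum_b \beta(b) F(g(ab)\Gamma)\bigr|^2 \gg \delta^{O(1)} H^2/A$, and then uses the $L^4$ bound in~\eqref{bbound-2} to truncate $\beta$ to a bounded function $\tilde\beta$. This yields $\gg \delta^{O(1)} A$ values of $a$ for which $\tilde\beta$ correlates with $b \mapsto F(g(ab)\Gamma)$; after localising $a$ to short windows $I_{A'}$ so that the $b$-intervals $J_{A'}$ align, Proposition~\ref{large-sieve} is applied with $f = \tilde\beta$ on $J_{A'}$. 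Your route --- Cauchy--Schwarz in $b$, then ``pigeonhole on $a'$ and apply the large sieve'' --- has a gap: the pigeonhole on $a'$ is obstructed by the unbounded weights $\alpha(a')$, and in any case the proof of Proposition~\ref{large-sieve} already performs exactly the Cauchy--Schwarz-in-$b$ expansion you wrote down, so invoking it afterwards is circular. Your description of the Furstenberg--Weiss endgame is correct in spirit; the paper packages it as Proposition~\ref{prop:Furstenberg-Weiss}, which runs the multi-parameter factorisation theorem (Theorem~\ref{multi-factor}) on the four-parameter map and analyses the resulting subgroup $G' \leq G^4$ via Goursat-type slices.

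For part (iii), the paper does \emph{not} go through the large sieve at all. It directly invokes a self-contained bilinear polynomial-phase estimate (Proposition~\ref{prop:M-StypeII}, a variant of \cite[Proposition~2.2]{matomaki-shao}) whose proof, via Weyl differencing, outputs the Diophantine constraints $\|q(j\nu_j + (j+1)X\nu_{j+1})\|_{\R/\Z} \ll \delta^{-O(1)} X/H^{j+1}$ on the Taylor coefficients $\nu_j$ of $P$ at $X$. These are then unwound by elementary manipulations --- shifting each $\nu_j$ by a rational, comparing against the Taylor expansion of $\log(n/X)$ --- to manufacture $T = 2\pi X \nu_1'$ and the $\TV(\cdot;q)$ bound. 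Your framing via factorisation identities $P(an) - P(a'n) = \text{smooth} + \text{rational}$ is a plausible alternative but is not what the paper does, and would need considerably more work than the direct citation.
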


In this section we show how Theorem~\ref{inverse}, when combined with the major arc estimates in Theorem~\ref{thm:major-arc}, gives Theorem~\ref{discorrelation-thm}.

\subsection{Combinatorial decompositions}

We start by describing the combinatorial decompositions that allow us to reduce sums involving $\mu,\Lambda,d_k$ to type $I$, type $II$, and type $I_2$ sums. Lemma~\ref{comb-lambda} will be used to prove~\eqref{mobius-discor} and~\eqref{mangoldt-discor}, Lemma~\ref{comb-divisor} will be used to prove~\eqref{dk-discor}, and Lemma~\ref{comb-mu} will be used to prove~\eqref{mobius-discor-alt} and~\eqref{dk-discor-alt}.

The model function $\Lambda^{\sharp}$ is not quite a type $I$ sum, but we can approximate it well by the type I sum\footnote{One could alternatively use a type $I$ approximant coming from the $\beta$-sieve, using the fundamental lemma of the sieve (see e.g.~\cite[Lemma 6.3]{ik}) but the simper approximant $\Lambda_I^\sharp$ is sufficient for us.}
\begin{equation}\label{lambdasharp-i-def}
\Lambda_I^\sharp(n) := \frac{P(R)}{\varphi(P(R))}\sum_{\substack{d \leq X^{\theta/2}\\d\mid (n, P(R))}} \mu(d).
\end{equation}
Indeed by~\eqref{lambdar-def}, M\"obius inversion and Lemma~\ref{le:FLS} we have
\begin{equation}\label{eq:lambdasharp2}
 \sum_{X < n \leq X+H} |\Lambda^\sharp_I(n)-\Lambda^{\sharp}(n)|\leq \frac{P(R)}{\varphi(P(R))} \sum_{\substack{X < dn \leq X+H \\ d > X^{\theta/2} \\ d \mid P(R)}} 1 \ll H\exp(-(\log X)^{1/20}).
\end{equation}
In practice, this bound allows us to substitute $\Lambda^\sharp$ with the type $I$ sum $\Lambda^\sharp_I$ with negligible cost.

\begin{lemma}[Combinatorial decompositions of $\mu,\Lambda,$ and $\Lambda^\sharp_I$]\label{comb-lambda}
Let $X^{\theta+\eps} \leq H \leq X$ for $\theta = 5/8$ and some fixed $\eps > 0$. For each $g \in \{\mu, \Lambda, \Lambda^\sharp_I\}$, we may find a collection $\mathcal{F}$ of $O((\log X)^{O(1)})$ functions $f \colon \mathbb{N} \to \mathbb{R}$ such that
$$ g(n) = \sum_{f \in \mathcal{F}} f(n) $$
for each $X/2 \leq n \leq 4X$, and each component $f \in \mathcal{F}$ satisfies one of the following:
\begin{itemize}
\item[(i)] $f$ is a $(\log^{-O(1)} X, O(X^{\theta}))$ type $I$ sum;
\item[(ii)] $f$ is a $(\log^{-O(1)} X, O(X^{(3\theta-1)/2}))$ type $I_2$ sum.
\item[(iii)] $f$ is a $(\log^{-O(1)} X, A_{II}^-, A_{II}^+)$ type $II$ sum for some $X^{1-\theta} \ll A_{II}^- \leq A_{II}^+ \ll X^{\theta}$, and it obeys the bound
\begin{equation}\label{eq-comb-lambda} 
\sup_{(X/H)(\log X)^{50A} \leq |T| \leq X^A}\left|\sum_{X < n \leq X+H} f(n) n^{iT}\right|^* \ll_A H\log^{-A} X
\end{equation}
for all sufficiently large $A \geq 1$.
\end{itemize}
\end{lemma}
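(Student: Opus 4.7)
The plan is to apply Heath--Brown's identity to $\mu$ and $\Lambda$ and classify the resulting components by the exponents of their dyadic factors. For $\Lambda^\sharp_I$, the definition~\eqref{lambdasharp-i-def} already displays it as a single type $I$ sum: writing $\Lambda^\sharp_I = \alpha \ast 1$ with $\alpha(d) = \tfrac{P(R)}{\varphi(P(R))}\mu(d)\mathbf{1}_{d \mid P(R),\, d \leq X^{\theta/2}}$ gives $A_I = X^{\theta/2} \leq X^\theta$, a trivial $L^2$ bound from $|\alpha|\leq 1$, and unit TV norm on the constant function, so $\mathcal{F}=\{\Lambda^\sharp_I\}$ works. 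For $g \in \{\mu, \Lambda\}$, Lemma~\ref{hb-identity} with $L = L(\varepsilon)$ chosen large enough that $1/L < 1/16$ decomposes $g$ into $(\log X)^{O(1)}$ components $f = a^{(1)} \ast \cdots \ast a^{(\ell)}$, $\ell \leq 2L$, with dyadic factors $a^{(i)}$ on $(N_i, 2N_i]$, $N_1 \cdots N_\ell \asymp X$, and $N_i \ll X^{1/L}$ whenever $a^{(i)}$ is a $\mu$-factor. Set $\alpha_i := \log N_i / \log X$. Since $\theta = 5/8$, one has $1-\theta = 3/8$, $(3\theta-1)/2 = 7/16$, and $3(1-\theta)/2 = 9/16$.

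I classify each component in three cases. In (I) some $\alpha_i \geq 3/8$: here $a^{(i)}$ must be a $1$- or $\log$-factor (since $\alpha_i > 1/L$), hence of TV norm $O(\log X)$, so writing $\alpha := \ast_{j \neq i} a^{(j)}$ on $[1, O(X^{1-\alpha_i})] \subseteq [1, O(X^\theta)]$ and $\beta := a^{(i)}$ yields a type $I$ sum, with the $L^2$ bound on $\alpha$ from Shiu (Lemma~\ref{shiu}). In (I$_2$) no $\alpha_i \geq 3/8$ but some pair satisfies $\alpha_i + \alpha_j \geq 9/16$; then $\min(\alpha_i,\alpha_j) > 9/16 - 3/8 > 1/L$, so both factors are non-$\mu$ and of bounded TV, and $f = (\ast_{k \neq i, j} a^{(k)}) \ast a^{(i)} \ast a^{(j)}$ is a type $I_2$ sum on $[1, O(X^{7/16})]$. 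In (II) all single exponents are $< 3/8$ and all pair sums are $< 9/16$; I sort $\alpha_1 \geq \cdots \geq \alpha_\ell$ and let $j^*$ be minimal with $S_{j^*} := \alpha_1 + \cdots + \alpha_{j^*} \geq 3/8$ (so $j^* \geq 2$). A short monotonicity argument gives $S_{j^*} \leq 5/8$: for $j^* = 2$, $S_2 < 9/16 < 5/8$; for $j^* \geq 3$, monotonicity yields $(j^* - 1)\alpha_{j^*} \leq S_{j^*-1} < 3/8$, so $\alpha_{j^*} < 3/16$ and $S_{j^*} < 3/8 + 3/16 = 9/16$. Setting $\alpha := \ast_{k \leq j^*} a^{(k)}$ and $\beta := \ast_{k > j^*} a^{(k)}$ gives a type $II$ sum with $A_{II}^\pm \in [X^{3/8}, O(X^{5/8})]$ and $L^2/L^4$ bounds from Shiu.

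It remains to verify~\eqref{eq-comb-lambda} in case (II). Since case (I) fails, Lemma~\ref{combinatorial}(i) gives the tripartition $\{1, \ldots, \ell\} = I \uplus J \uplus J'$ satisfying ($II^{\mathrm{maj}}$) with $\alpha_I \in [1/4, 1/2]$ and $|\alpha_J - \alpha_{J'}| \leq 1/4$, which matches the hypothesis of Lemma~\ref{le:BHP} at $\theta = 5/8$. I will apply Lemma~\ref{le:BHP}(ii) (adapted over arithmetic progressions to handle the maximal sum $|\cdot|^*$) to the trilinear expansion with $v_\ell = (\ast_{i \in I} a^{(i)})(\ell) \ell^{iT}$, $a_{m_1} = (\ast_{i \in J} a^{(i)})(m_1) m_1^{iT}$, $b_{m_2} = (\ast_{i \in J'} a^{(i)})(m_2) m_2^{iT}$, taking $W = (\log X)^{3A+C}$ for a suitable constant $C$; this yields the bound $H(\log X)^{-A}$ provided the Dirichlet polynomial hypothesis~\eqref{eq:BHP-supt-cond-2} holds.

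The hard part is verifying~\eqref{eq:BHP-supt-cond-2}: the Dirichlet polynomial of $v_\ell$ at $1/2 + it$ equals $\sum_\ell (\ast_{i \in I} a^{(i)})(\ell) \chi(\ell)/\ell^{1/2+is}$ at the shifted frequency $s := t - T$, and since $|T| \geq (X/H)(\log X)^{50A}$ dominates $XW^4/H$ (for $C \leq 9A$, say), we have $|s| \asymp |T|$ throughout. Because $\alpha_I \geq 1/4$, some factor $a^{(i^*)}$ with $i^* \in I$ has $\alpha_{i^*} \geq 1/(8L)$; Lemma~\ref{le:nitcancel}(i) (if $a^{(i^*)}$ is a $1$- or $\log$-factor, using partial summation in the $\log$ case) or Lemma~\ref{le:nitcancel}(ii) (if it is a $\mu$-factor) provides a $(\log X)^{-A - O(1)}$ saving on that factor's Dirichlet polynomial at $1/2 + is$, while the remaining factors contribute trivial Shiu-type bounds $N_i^{1/2}(\log X)^{O(1)}$. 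Multiplying delivers the required $L^{1/2}/W^{1/3}$ bound, closing the argument.
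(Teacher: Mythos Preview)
Your proof is correct and follows essentially the same route as the paper's. The paper applies both parts (i) and (ii) of Lemma~\ref{combinatorial} to obtain the trichotomy ($I$), ($I_2$), or ($II^{\mathrm{min}}$)$\wedge$($II^{\mathrm{maj}}$), whereas you re-derive the ($II^{\mathrm{min}}$) bipartition directly via the greedy partial-sum argument on the sorted exponents; both are valid and amount to the same combinatorics. Your verification of~\eqref{eq-comb-lambda} via Lemma~\ref{le:BHP}(ii) with the ($II^{\mathrm{maj}}$) tripartition, extracting a single factor $a^{(i^*)}$ from the $I$-block and bounding its Dirichlet polynomial at the shifted height $s=t-T$ by Lemma~\ref{le:nitcancel}, is exactly the paper's argument (the paper uses $L=10$, $W=(\log X)^{10A}$, $T_0=(\log X)^{45A}$, and picks $i_0\in I$ with $\alpha_{i_0}\geq 1/80$, but these are inessential parameter choices).
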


\begin{lemma}[Combinatorial decompositions of $d_k$ and $d_k^{\sharp}$]\label{comb-divisor}
Let $k \geq 2$.
Let $X^{\theta + \eps} \leq H \leq X$ for $\theta = \theta_k$ and some fixed $\eps > 0$, where $\theta_2 = 1/3$, $\theta_3 = 5/9$, and $\theta_k = 5/8$ for $k \geq 4$. For each $g \in \{d_k, d_k^\sharp\}$, we may find a collection $\mathcal{F}$ of $O((\log X)^{O(1)})$ functions $f \colon \mathbb{N} \to \mathbb{R}$ such that
$$ g(n) = \sum_{f \in \mathcal{F}} f(n) $$
for each $X/2 \leq n \leq 4X$, and each component $f \in \mathcal{F}$ satisfies one of the following:
\begin{itemize}
\item[(i)] $f$ is a $(\log^{-O(1)} X, O(X^{\theta}))$ type $I$ sum;
\item[(ii)] $f$ is a $(\log^{-O(1)} X, O(X^{(3\theta-1)/2}))$ type $I_2$ sum.
\item[(iii)] $f$ is a $(\log^{-O(1)} X, A_{II}^-, A_{II}^+)$ type $II$ sum for some $X^{1-\theta} \ll A_{II}^- \leq A_{II}^+ \ll X^{\theta}$ and it obeys the bound
\begin{equation}\label{eq-comb-divisor} 
\sup_{(X/H)X^{2c} \leq |T| \leq X^A}\left|\sum_{X < n \leq X+H} f(n) n^{iT}\right|^* \ll_{A,k} H X^{-c}
\end{equation}
for all $A > 0$, where $c = c_{k,A} > 0$ is a sufficiently small constant.
\end{itemize}
\end{lemma}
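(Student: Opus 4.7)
The plan is to handle $d_k^\sharp$ and $d_k$ separately. For the approximant, expanding~\eqref{dks-def}--\eqref{eq:Pm(t)-def} and applying the binomial identity $(\log n)^i = \sum_{j \leq i} \binom{i}{j}(\log m)^j(\log(n/m))^{i-j}$ writes $d_k^\sharp(n)$ as a bounded linear combination of convolutions $a_{i,j} * b_j$, where $a_{i,j}$ is supported on $[1, R_k^{2k-2}] \subseteq [1, X^{1/5}]$ with $|a_{i,j}| \ll d_{k-1}$, and $b_j(m) = (\log m)^{i-j}$ has total variation $O(\log^{O(1)} X)$ on $(X/2, 4X]$. Shiu's bound (Lemma~\ref{shiu}) applied to $d_{k-1}^2$ furnishes the required $L^2$ bound on $a_{i,j}$, and since $X^{1/5} \leq X^{\theta_k}$ for every permitted $\theta_k$, each component of $d_k^\sharp$ fits directly into case (i).

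For $d_k$ itself, I would perform the dyadic decomposition $d_k(n) \mathbf{1}_{X/2 \leq n \leq 4X} = \sum_{(N_1,\dots,N_k)} f_{(N_1,\dots,N_k)}(n)$, where $f_{(N_1,\dots,N_k)}(n) = \sum_{n = n_1 \cdots n_k,\, n_i \sim N_i} 1$ and the outer sum ranges over the $O(\log^k X)$ dyadic tuples with $\prod N_i \asymp X$; writing $N_i = X^{\alpha_i}$ so that $\sum_i \alpha_i = 1 + O(1/\log X)$, I apply Lemma~\ref{combinatorial} with $\theta = \theta_k$ to each tuple. In the regimes $k = 2, \theta = 1/3$ and $k = 3, \theta = 5/9$, part (vi) produces a pair $\{i, j\}$ with $\alpha_{\{i, j\}} \geq \frac{3}{2}(1 - \theta_k)$; taking $\alpha = \mathbf{1}_{(N_\ell, 2N_\ell]}$ for the complementary index $\ell$ (extended by zero on $[1, A_{I_2}]$) and $\beta_1, \beta_2$ the indicators of the two remaining dyadic ranges realises $f_{(N_1,\dots,N_k)}$ as a $(\log^{-O(1)} X, O(X^{(3\theta_k - 1)/2}))$ type $I_2$ sum, placing it in case (ii). In the regime $k \geq 4, \theta = 5/8$, I apply part (ii) of Lemma~\ref{combinatorial}: case ($I$) furnishes some $\alpha_i \geq 3/8$ and a type $I$ sum with $A_I = O(X^{5/8})$, case ($I_2$) furnishes some $\alpha_{\{i, j\}} \geq 9/16$ and a type $I_2$ sum with $A_{I_2} = O(X^{7/16})$, and case ($II^{\mathrm{min}}$) furnishes a bipartition $\{1,\dots,k\} = J \sqcup J'$ with $\alpha_J \in [3/8, 5/8]$, from which $\alpha(m) = \sum_{m = \prod_{i \in J} n_i,\, n_i \sim N_i} 1$ and $\beta(\ell)$ analogously define a type $II$ sum with $A_{II}^\pm \asymp X^{\alpha_J}$. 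The $L^2$ and $L^4$ bounds on the multi-fold divisor counting functions in each case reduce to sums of $d_\ell^2, d_\ell^4$ over short intervals and are controlled by Shiu's bound.

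The only remaining nontrivial task is the cancellation~\eqref{eq-comb-divisor} in case ($II^{\mathrm{min}}$), which I would establish via Lemma~\ref{le:BHP}(ii). Failure of case ($I$) ensures every $\alpha_i \leq 3/8$; assume WLOG $\alpha_J \in [1/2, 5/8]$, so $|J| \geq 2$. A short greedy argument then yields a subpartition $J = J_1 \sqcup J_2$ with $|\alpha_{J_1} - \alpha_{J_2}| \leq 1/4$: sort the elements of $J$ in decreasing order and let $J_1$ be the shortest prefix with $\alpha_{J_1} \geq \alpha_J/2 - 1/8$; if $|J_1| = 1$, the inequality $\alpha_{i_1} \leq 3/8 \leq \alpha_J/2 + 1/8$ (valid for $\alpha_J \geq 1/2$) gives $\alpha_{J_1} \leq \alpha_J/2 + 1/8$, while if $|J_1| \geq 2$, then all $\alpha_{i_k} < \alpha_J/2 - 1/8 \leq 3/16 < 1/4$, so $\alpha_{J_1} < 2(\alpha_J/2 - 1/8) = \alpha_J - 1/4 \leq \alpha_J/2 + 1/8$ (using $\alpha_J \leq 3/4$). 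The triple $(m_1, m_2, \ell) \sim (X^{\alpha_{J_1}}, X^{\alpha_{J_2}}, X^{\alpha_{J'}})$ then verifies the hypotheses of Lemma~\ref{le:BHP}(ii): $|\alpha_{J_1} - \alpha_{J_2}| \leq 1/4 = 2\theta - 1$ and $\alpha_{J'} = 1 - \alpha_{J_1} - \alpha_{J_2} \leq 1/2 = 4\theta - 2$. Absorbing $n^{iT} = m_1^{iT} m_2^{iT} \ell^{iT}$ into the coefficients converts hypothesis~\eqref{eq:BHP-supt-cond-2} into a Dirichlet polynomial bound for the rewritten $v_\ell$, evaluated at the shifted frequency $t - T$; by complete multiplicativity of $\chi$, $\sum_\ell v_\ell \chi(\ell) \ell^{-1/2-it}$ factors as $\prod_{i \in J'} \sum_{n_i \sim N_i} \chi(n_i) n_i^{-1/2-it}$, and selecting the largest-exponent index $i^* \in J'$ (with $\alpha_{i^*} \geq \alpha_{J'}/|J'| \geq 3/(8k)$ a positive constant) and invoking Lemma~\ref{le:nitcancel}(i) at $|t - T| \geq X^c$ (guaranteed by $|T| \geq (X/H) X^{2c}$ and the choice $W = X^{c/10}$) produces the required saving $X^{-\delta(i^*)}$, completing the verification.

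The main obstacle is matching, in a single decomposition in case ($II^{\mathrm{min}}$), the bilinear form required by Definition~\ref{struct-sum}(ii) (with $\alpha$-support inside $[X^{3/8}, X^{5/8}]$) against the balanced trilinear structure required by Lemma~\ref{le:BHP}(ii) (with $|\alpha_1 - \alpha_2| \leq 1/4$ and the third exponent at most $1/2$). The compatibility hinges on the observation that failure of ($I$) forces every $\alpha_i \leq 3/8$, which is simultaneously the hypothesis guaranteeing a bipartition in ($II^{\mathrm{min}}$) via Lemma~\ref{combinatorial}(ii) and the hypothesis making the greedy $1/4$-balanced subpartition of $J$ feasible in all subcases of $|J|$.
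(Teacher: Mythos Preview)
Your proof is correct and largely parallels the paper's: the treatment of $d_k^\sharp$, the dyadic decomposition, and the $k = 2, 3$ cases via Lemma~\ref{combinatorial}(vi) all match. The real difference is in the $k \geq 4$ type $II$ case. The paper invokes \emph{both} parts (i) and (ii) of Lemma~\ref{combinatorial}: part (ii) supplies the trichotomy ($I$), ($I_2$), ($II^{\mathrm{min}}$), and then, in the type $II$ case, part (i) at $\theta = 5/8$ guarantees that ($II^{\mathrm{maj}}$) also holds (since ($I$) has already failed), furnishing the balanced tripartition $I \uplus J \uplus J'$ for Lemma~\ref{le:BHP} ready-made. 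You bypass part (i) entirely and instead build the tripartition by hand, greedily splitting the heavier side $J$ from ($II^{\mathrm{min}}$) into $J_1 \sqcup J_2$ with $|\alpha_{J_1} - \alpha_{J_2}| \leq 1/4$ and taking $J'$ as the third leg. Both routes are valid; yours is slightly more self-contained, while the paper's is cleaner because the combinatorics have already been packaged into Lemma~\ref{combinatorial}(i).

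One minor technical point: your claimed exact factorisation $\sum_\ell v_\ell \chi(\ell) \ell^{-1/2-it} = \prod_{i \in J'} \sum_{n_i \sim N_i} \chi(n_i) n_i^{-1/2-it}$ is fine when $r = 1$, but hypothesis~\eqref{eq:BHP-supt-cond-2} involves $v_{\ell r}$ for general $r \mid (a,q)$, where the factorisation is less clean. The paper (in the analogous passage in the proof of Proposition~\ref{prop:H1H2comp}(ii) and of Lemma~\ref{comb-lambda}) instead extracts a single large-exponent factor $n_{i^*}$, decomposes $r = r_1 r_2$, and bounds the residual convolution trivially by a divisor function; your argument goes through once you make this adjustment.
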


\begin{lemma}[Flexible combinatorial decompositions of $\mu, d_k,$ and $d_k^\sharp$]\label{comb-mu}
Let $X^{3/5 + \eps} \leq H \leq X$ for some fixed $\eps > 0$, let $\exp((\log x)^{2/3+\eps}) \leq P \leq Q \leq X^{1/(\log \log X)^2}$, and write $\mathcal{P}(P, Q) = \prod_{P < p \leq Q} p$. We can find a collection $\mathcal{F}$ of functions,  where $|\mathcal{F}| =O((\log X)^{O(1)})$, such that for any sequence $\{\omega_n\}$ with $|\omega_n| \leq 1$,
$$ \sum_{X < n \leq X+H} 1_{(n, \mathcal{P}(P, Q)) > 1} \mu(n)\omega_n = \sum_{f \in \mathcal{F}}  \sum_{X < n \leq X+H}f(n)\omega_n + O\left(\frac{H \log^4 X}{P} + \frac{H}{\exp((\log \log X)^2)}\right). $$
Moreover, each component $f \in \mathcal{F}$ satisfies one of the following:
\begin{itemize}
\item[(i)] $f$ is a $(\log^{-O(1)} X, X^{3/5+\eps/10})$ type $I$ sum;
\item[(ii)] $f$ is a $(\log^{-O(1)} X, X^{2/5+\eps/10})$ type $I_2$ sum.
\item[(iii)] $f$ is a $(\log^{-O(1)} X, X^{2/5-\eps/10}, X^{3/5+\eps/10})$ type $II$ sum and it obeys the bound
\begin{equation}\label{eq-comb-mu} 
\sup_{(X/H)(\log X)^{20A} \leq |T| \leq X^A}\left|\sum_{X < n \leq X+H} f(n) n^{iT}\right|^* \ll_A H\log^{-A} X
\end{equation}
for all sufficiently large $A > 0$.
\end{itemize}
Similarly, for fixed $k \geq 2$ we can find a collection $\mathcal{F}$ of functions, where $|\mathcal{F}| = O((\log X)^{O(1)})$, such that for any sequence $\{\omega_n\}$ with $|\omega_n| \leq 1$,
$$ \sum_{X < n \leq X+H} d_k(n)\omega_n 1_{(n, \mathcal{P}(P, Q)) > 1} = \sum_{f \in \mathcal{F}}  \sum_{X < n \leq X+H} f(n)\omega_n + O\left(\frac{H \log^{4k} X}{P} + \frac{H}{\exp((\log \log X)^2)}\right). $$
Moreover, each component $f \in \mathcal{F}$ is one of (i), (ii), or (iii) above, and a similar decomposition holds also with $d_k^\sharp$ in place of $d_k$.
\end{lemma}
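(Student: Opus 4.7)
The proof parallels Lemmas~\ref{comb-lambda} and~\ref{comb-divisor}; the crucial new ingredient is the extraction of a medium-sized prime variable $p\in (P,Q]$ via Lemma~\ref{lem:MatoTera}, which adds an extra coordinate to the combinatorial decomposition and simultaneously provides a Dirichlet polynomial cancellation through Lemma~\ref{le:nitcancel}(iii). The former allows us to invoke Lemma~\ref{combinatorial}(ii) with the relaxed threshold $\theta = 3/5$, while the latter yields the twisted cancellation~\eqref{eq-comb-mu} in the type $II$ case. Concretely, apply Lemma~\ref{lem:MatoTera} with a sufficiently small parameter $\varepsilon_0 \leq \varepsilon/100$ in place of $\varepsilon$, reducing the main sum to
\[
\sum_{\substack{X<prn \leq X+H\\ P<p \leq Q,\ r \leq X^{\varepsilon_0/2}}} a_r\, f(p)\, f(n)\, \omega_{prn}
\]
modulo the admissible error, with $f = \mu$ or $d_k$ and $|a_r|\leq d_{k+1}(r)$. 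Decompose $f(n)$ via Heath--Brown's identity (Lemma~\ref{hb-identity}) with $L=\lceil 100/\varepsilon_0\rceil$ when $f=\mu$, or trivially via $d_k=1^{\ast k}$ when $f=d_k$; then dyadic decomposition of every variable produces $O((\log X)^{O(1)})$ atomic sums indexed by exponents $\alpha_r,\alpha_p,\alpha_1,\dotsc,\alpha_\ell$ summing to $1+O(1/\log X)$, with every $\mu$-type factor of exponent at most $\varepsilon_0$. For $f = d_k^\sharp$, the formula~\eqref{dks-def}--\eqref{eq:Pm(t)-def} already exhibits $d_k^\sharp$ as a sum of $O((\log X)^{O(1)})$ type $I$ pieces of level $X^{(k-1)/(5k)}$, and Lemma~\ref{lem:MatoTera} applied with $f = 1$ to each inner sum reduces $d_k^\sharp$ directly to conclusion $(i)$.

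\emph{Combinatorial splitting.} Absorb $a_r$ and every $\mu$-type factor into a single residual coefficient whose support has total exponent $\alpha_{\rm res} = O(\varepsilon_0)$. The remaining factors are of bounded-variation type ($1_{(N_i,2N_i]}$ or $(\log n)1_{(N_i,2N_i]}$), and together with $p$ their exponents sum to $1 - \alpha_{\rm res}$. Apply Lemma~\ref{combinatorial}(ii) with $\theta = 3/5$ to these (renormalised) exponents, noting that $\alpha_p \leq 1/(\log\log X)^2$ is negligible. In case $(I)$, a single factor of exponent $\geq 2/5 - O(\varepsilon_0)$ becomes $\beta$; the convolution of everything else (including $p$, $r$, and the residual) becomes $\alpha$, supported in $[1,X^{3/5+O(\varepsilon_0)}] \subset [1,X^{3/5+\varepsilon/10}]$ once $\varepsilon_0$ is chosen small enough. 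In case $(I_2)$, two factors form $\beta_1\ast\beta_2$ and the remainder becomes $\alpha$ of level $X^{2/5+O(\varepsilon_0)} \subset X^{2/5+\varepsilon/10}$. In case $(II^{\min})$, the partition yields a type $II$ decomposition with support ranges in $[X^{2/5-O(\varepsilon_0)},X^{3/5+O(\varepsilon_0)}] \subset [X^{2/5-\varepsilon/10},X^{3/5+\varepsilon/10}]$. The $L^2$ and $L^4$ bounds required in Definition~\ref{struct-sum} follow from Shiu's inequality (Lemma~\ref{shiu}) applied to the coefficient sequences.

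\emph{Twisted cancellation for type $II$, and main obstacle.} To verify~\eqref{eq-comb-mu}, invoke the $J\leftrightarrow J'$ symmetry of Lemma~\ref{combinatorial}(ii) to place the prime variable $p$ on the $\alpha$-side of the factorisation; since $\alpha_p \leq 1/(\log\log X)^2$ is negligible, this relocation perturbs the partition exponents imperceptibly and keeps us within the type $II$ support constraints. Perron inversion (as in Lemma~\ref{le:Perron}) represents $\sum_{X<n\leq X+H}\alpha\ast\beta(n)\,n^{iT}$ as an integral of $A(\tfrac12+it)B(\tfrac12+it)$ against a kernel essentially supported on $|t|\lesssim X/H$, with the Dirichlet polynomials evaluated at argument $\tfrac12 + i(t+T)$. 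Since $|T|\geq (X/H)(\log X)^{20A}$ vastly exceeds the effective range of $t$, we have $|t+T|\geq |T|/2 \gg 1$ throughout. Because $\alpha$ contains the prime factor, $A$ factors as the product $\sum_{p\sim P_1} f(p)\chi(p) p^{-1/2-i(t+T)}$ times a complementary Dirichlet polynomial, and the assumption $P\geq\exp((\log X)^{2/3+\varepsilon})$ allows Lemma~\ref{le:nitcancel}(iii) to deliver the pointwise bound $O_A(P_1^{1/2}\log^{-A} X)$ for the prime sum. A standard Cauchy--Schwarz/mean-value estimate for the remaining Dirichlet polynomials then produces the required bound $O_A(H\log^{-A} X)$. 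The principal technical obstacle is this prime-placement step: for every partition output by Lemma~\ref{combinatorial}(ii) the prime must always admit being positioned on the $\alpha$-side without violating the type $II$ bounds, which is guaranteed by the partition symmetry together with the smallness of $\alpha_p$.
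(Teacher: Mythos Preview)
The overall scaffolding of your proof---Lemma~\ref{lem:MatoTera} to extract a prime $p\in(P,Q]$, then Heath--Brown/dyadic decomposition, then Lemma~\ref{combinatorial}(ii) with $\theta=3/5$---matches the paper. However, your treatment of the type $II$ twisted bound~\eqref{eq-comb-mu} and your handling of $d_k^\sharp$ both diverge from the paper's.

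For~\eqref{eq-comb-mu}, the paper does \emph{not} fold the prime $p$ into the $\alpha$-side of a bilinear factorisation. Instead, the $(II^{\min})$ partition $J\uplus J'$ is taken over the indices $\{1,\dots,\ell\}$ \emph{excluding} the prime (index $0$), and the prime is kept as a separate third factor. One then invokes the trilinear estimate Lemma~\ref{le:BHP}(ii) with $\theta=3/5$ directly: the two partition pieces satisfy $|\alpha_J-\alpha_{J'}|\leq 1/5=2\theta-1$, the prime becomes $v_\ell=1_{\ell\text{ prime}}\,\ell^{iT}$ with exponent $\approx 0\leq 4\theta-2$, and one need only verify hypothesis~\eqref{eq:BHP-supt-cond-2} via Lemma~\ref{le:nitcancel}(iii). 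Your bilinear Perron + Cauchy--Schwarz + mean-value route can also be made to work (the critical case occurs when both remaining Dirichlet polynomials contribute their full length, giving exactly $X^{1/2}$ with no room, so the entire logarithmic saving comes from the pointwise prime bound), but it requires an explicit four-term case check and is less streamlined than reusing Lemma~\ref{le:BHP}(ii), which was already set up in the proof of Lemma~\ref{comb-lambda}.

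For $d_k^\sharp$, the paper does not invoke Lemma~\ref{lem:MatoTera} at all (indeed $d_k^\sharp$ is not multiplicative, so the lemma does not apply directly). Instead it writes $1_{(n,\mathcal{P})>1}=1-1_{(n,\mathcal{P})=1}$, expands the latter by M\"obius inversion, truncates $\sum_{d\mid(n,\mathcal{P})}\mu(d)$ to $d\leq X^{\eps/10}$ via Lemma~\ref{le:FLS}, and observes that both $d_k^\sharp(n)$ and $\sum_{d\leq X^{\eps/10},\,d\mid\mathcal{P}}\mu(d)\,d_k^\sharp(n)1_{d\mid n}$ are manifestly type $I$ sums of the required level from the definition~\eqref{dks-def}. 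Your phrase ``Lemma~\ref{lem:MatoTera} applied with $f=1$ to each inner sum'' is not fully specified and would need further justification; the paper's M\"obius route is more direct.
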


We will prove Lemmas~\ref{comb-lambda},~\ref{comb-divisor} and~\ref{comb-mu} by first decomposing the relevant functions into certain Dirichlet convolutions (using Lemma~\ref{hb-identity} in the proof of Lemma~\ref{comb-lambda} and Lemma~\ref{lem:MatoTera} in the proof of Lemma~\ref{comb-mu}). We then use Lemma~\ref{combinatorial} to arrange each convolution into either type $I$, type $II$, or type $I_2$ sums. In the case of type $II$ sums, Lemma~\ref{combinatorial} also allows us to arrange them into a triple convolution for which Lemma~\ref{le:BHP} is applicable.

\begin{remark}
\label{rem:AlternativeMajor}
Let us briefly discuss the type $II$ conditions such as~\eqref{eq-comb-lambda}, concentrating on the case of the von Mangoldt function.

One may observe from the proof of Theorem~\ref{discorrelation-thm}(ii) below that if our major arc estimate (Theorem~\ref{thm:major-arc}(i)) held, for any $T \leq X^{O(1)}$, with $(\Lambda(n)-\Lambda^\sharp(n))n^{iT}$ in place of $\Lambda(n)-\Lambda^\sharp(n)$, we could prove Theorem~\ref{discorrelation-thm}(ii) without the need to impose in Lemma~\ref{comb-lambda} the condition~\eqref{eq-comb-lambda} concerning type $II$ sums.

Unfortunately, with current knowledge, one cannot obtain such a twisted version of Theorem~\ref{thm:major-arc}, at least not in the whole range $X^{7/12+\varepsilon} \leq H \leq X^{1-\varepsilon}$. However, inserting special cases of our type $I$ and type $I_2$ estimates into Section~\ref{major-arc-sec}, it would be possible to obtain such a twisted variant in the relevant range $X^{5/8+\varepsilon} \leq H \leq X^{1-\varepsilon}$. If we did this, we would not need to impose the condition~\eqref{eq-comb-lambda}. However, we found it more natural to work out the major arc estimates first using existing methods without needing to appeal to the more involved $I_2$ case.
\end{remark}

\begin{proof}[Proof of Lemma~\ref{comb-lambda}]
The function $\Lambda^\sharp_I$ is clearly a $(\log^{-O(1)} X, O(X^{\theta}))$ type $I$ sum by definition~\eqref{lambdasharp-i-def}.
For $\Lambda$ and $\mu$, we apply Lemma~\ref{hb-identity} with $L = 10$. Each component $f \in \mathcal{F}$ takes the form
\begin{equation}\label{hb-convolution} 
f = a^{(1)}* \cdots * a^{(\ell)} 
\end{equation}
for some $\ell \leq 20$, where each $a^{(i)}$ is supported on $(N_i, 2N_i]$ for some $N_i \geq 1/2$, and each $a^{(i)}(n)$ is either $1_{(N_i, 2N_i]}(n)$, $(\log n)1_{(N_i, 2N_i]}(n)$, or $\mu(n)1_{(N_i, 2N_i]}(n)$. Moreover, $N_1N_2\cdots N_{\ell} \asymp X$, and $N_i \leq X^{1/10}$ for each $i$ with $a^{(i)}(n) = \mu(n) 1_{(N_i, 2N_i]}(n)$.

We can find $\alpha_1,\ldots,\alpha_{\ell} \in [0,1]$ with $\sum_{i=1}^{\ell}\alpha_i=1$, such that $N_i \asymp X^{\alpha_i}$ for each $i$. If $\alpha_i > 1/10$ for some $i$, then  $a^{(i)}(n)$ is either $1_{(N_i, 2N_i]}(n)$ or $(\log n)1_{(N_i, 2N_i]}(n)$, and hence $\|a^{(i)}\|_{\TV(\N)} \ll \log X$.

Since $\theta = 5/8 \geq 3/5$, we may apply Lemma~\ref{combinatorial}(i), (ii) to conclude that either ($I$) holds, or ($I_2$) holds, or both ($II^{\mathrm{min}}$) and ($II^{\mathrm{maj}}$) hold.

First consider the case ($I$) holds, i.e. $\alpha_i \geq 1-\theta$ for some $i$. Since $\alpha_i > 1/10$, $\|a^{(i)}\|_{\TV(\N)} \ll \log X$, and~\eqref{hb-convolution} is a $(\log^{-O(1)} X, O(X^{\theta}))$ type $I$ sum of the form $\alpha*\beta$ with $\beta = a^{(i)}$ and $\alpha = a^{(1)}*\cdots*a^{(i-1)}*a^{(i+1)}*\cdots*a^{(k)}$. 

Henceforth we may assume that $\alpha_i < 1-\theta$ for each $i$. Next consider the case ($I_2$) holds. Then $\alpha_i + \alpha_j\geq \tfrac{3}{2}(1-\theta)$ for some $i<j$. Since $\alpha_i,\alpha_j \leq 1-\theta$, this implies that $\alpha_i, \alpha_j> 1/10$ and thus $\|a^{(i)}\|_{\TV(\N)}, \|a^{(j)}\|_{\TV(\N)}  \ll \log X$. Hence~\eqref{hb-convolution} is a $(\log^{-O(1)} X, O(X^{(3\theta-1)/2}))$ type $I_2$ sum of the form $f = \alpha*\beta_1*\beta_2$, with $\beta_1 = a^{(i)}$, $\beta_2 = a^{(j)}$.

Finally consider the case when both ($II^{\mathrm{min}}$) and ($II^{\mathrm{maj}}$) hold. Let $\{1,\ldots,\ell\} = J\uplus J'$ be the partition from ($II^{\mathrm{min}}$), so that $\alpha_J, \alpha_{J'} \in [1-\theta, \theta]$. Then~\eqref{hb-convolution} is a $(\log^{-O(1)} X, A_{II}^-, A_{II}^+)$  type $II$ sum of the form $f = \alpha * \beta$, where $\alpha$ (resp. $\beta$) is the convolution of those $a^{(i)}$ with $i \in J$ (resp. $i \in J'$), and $X^{1-\theta} \ll A_{II}^- \leq A_{II}^+  \ll X^{\theta}$.

It remains to establish the bound~\eqref{eq-comb-lambda}. For any subinterval $(X_1, X_1+H_1] \subset (X, X+H]$, any residue class $a\pmod{q}$, any fixed $A > 0$, and any $(X/H)(\log X)^{50A} \leq |T| \leq X^A$, we need to show that
$$ \Big|\sum_{\substack{X_1 < n \leq X_1+H_1 \\ n \equiv a\pmod{q}}} f(n) n^{iT}\Big| \ll_A H\log^{-A} X. $$
We may assume that $A$ is sufficiently large, $H_1 \geq H(\log X)^{-2A}$ and $q \leq (\log X)^{2A}$.
Let now $\{1,\ldots,\ell\} = I \uplus J \uplus J'$ be the partition from ($II^{\mathrm{maj}}$), so that
$$ 
2\theta - 1 \leq \alpha_I \leq 4\theta-2, \ \ |\alpha_J - \alpha_{J'}| \leq 2\theta-1. 
$$

Let $\{a_{m_1}'\}, \{b_{m_2}'\}, \{v_{\ell}'\}$ be the convolution of those $a^{(i)}$ with $i \in J$, $i \in J'$, $i \in I$, respectively. Note that they are supported on $m_1 \asymp X_1^{\alpha_J}$, $m_2 \asymp X_1^{\alpha_{J'}}$, $\ell \asymp X_1^{\alpha_I}$, respectively.
Thus, after dyadic division of the ranges of $m_1,m_2,\ell$, we need to show that
$$ 
\Big|\sum_{\substack{X_1 < m_1m_2\ell < X_1+H_1 \\ m_1\sim M_1, m_2\sim M_2, \ell\sim L \\ m_1m_2\ell \equiv a\pmod{q}}} a_{m_1}'m_1^{iT} b_{m_2}'m_2^{iT} v_{\ell}'\ell^{iT} \Big| \ll_A H\log^{-A} X 
$$
for $M_1 \asymp X_1^{\alpha_J}$, $M_2 \asymp X_1^{\alpha_{J'}}$, $L \asymp X_1^{\alpha_I}$. In view of Lemma~\ref{le:BHP}(ii) applied with $W = (\log X)^{10A}$ and $v_\ell = v_\ell'\ell^{iT}$, it suffices to verify the hypothesis~\eqref{eq:BHP-supt-cond-2}. There exists $i_0 \in I$ such that $\alpha_{i_0} \geq (2\theta-1)/20 = 1/80$. Now~\eqref{eq:BHP-supt-cond-2} follows if we show that
$$ 
\max_{r \mid (a,q)}\,\, \max_{\chi\pmod{\frac{q}{(a,q)}}} \sup_{|t| \leq \frac{X_1 (\log X)^{40A}}{H_1}} \Big|\sum_{m \asymp X^{\alpha_{i_0}}/r} \frac{a^{(i_0)}(mr)\chi(m)}{m^{1/2+i(t-T)}} \Big| \ll_A \frac{(X^{\alpha_{i_0}}/r)^{1/2}}{(\log X)^{10A}}. 
$$
Since $a^{(i_0)}$ is either $1$, $\log$, or $\mu$ on its support, this follows from Lemma~\ref{le:nitcancel} applied with $T_0 = (\log X)^{45A}$.
\end{proof}

\begin{proof}[Proof of Lemma~\ref{comb-divisor}]
The function $d_k^{\sharp}$ is clearly a $(\log^{-O(1)} X, O(X^{\theta}))$ type $I$ sum by definition~\eqref{dks-def}. On the other hand
$d_k$ can be decomposed into a sum of $\log^k X$ terms, each of which takes the form
$$ f =1_{(N_1, 2N_1]} * \cdots * 1_{(N_k, 2N_k]}$$
for some $N_i \geq 1/2$ with $N_1N_2 \cdots N_{k} \asymp X$. The $k \geq 4$ case of the lemma then follows in a similar way as Lemma~\ref{comb-lambda}, with the only difference being that Lemma~\ref{le:BHP} is now applied with $W = X^{10c}$ instead of a power of $\log X$.

In the case $k=2$ and $\theta = 1/3$, $f$ is clearly a $(\log^{-O(1)} X, 1)$ type $I_2$ sum. In the case $k=3$ and $\theta = 5/9$, at least one of the $N_i$'s (say $N_3$) is $\ll X^{1/3}$. Hence $f$ is a $(\log^{-O(1)} X, O(X^{1/3}))$ type $I_2$ sum of the form $f = \alpha * \beta_1 * \beta_2$, with $\alpha = 1_{(N_3, 2N_3]}$ and $\beta_j = 1_{(N_j, 2N_j]}(n)$ for $j = 1, 2$.
\end{proof}

\begin{proof}[Proof of Lemma~\ref{comb-mu}]
Let us first outline the proof for $\mu$. We first apply Lemma~\ref{lem:MatoTera} and then Heath-Brown's identity (Lemma~\ref{hb-identity}) with $L=10$ to $\mu(n)$ on the right-hand side; note that we now have extra flexibility with the $p$ variable. We obtain a collection of functions $\mathcal{F}$, where each $f \in \mathcal{F}$ takes the form
$$ f = a^{(0)} * a^{(1)} * \cdots * a^{(\ell)} $$
for some $\ell \leq 21$, where each $a^{(i)}$ is supported on $(N_i, 2N_i]$ for some $N_i \geq 1/2$, with
$$ P/2 \leq N_0 \leq Q, \ \ N_1 \leq X^{\eps/30}, \ \ N_0N_1\cdots N_{\ell} \asymp X. $$
(Here $a^{(0)}$ comes from the $p$ variable, $a^{(1)}$ comes from the $r$ variable, and $a^{(2)}*\cdots*a^{(\ell)}$ comes from applying Heath-Brown's identity to $\mu(n)$.)
Moreover, $a^{(0)}(n) = 1_{n\text{ prime}}1_{(N_0, 2N_0]}(n)$, $a^{(1)}$ is divisor-bounded, and for each $i \geq 2$, $a^{(i)}(n)$ is either $1_{(N_i, 2N_i]}(n)$ or $\mu(n)1_{(N_i, 2N_i]}(n)$, and $N_i \leq X^{1/10}$ for each $i$ with $a^{(i)} = \mu(n)1_{(N_i, 2N_i]}(n)$. 

We can find $\alpha_1,\ldots,\alpha_{\ell} \in [0,1]$ with $\sum_{i=1}^{\ell}\alpha_i=1$, such that $X^{\alpha_i-\eps/20} \leq N_i \ll X^{\alpha_i}$ for each $1 \leq i \leq \ell$. We may apply Lemma~\ref{combinatorial}(ii) to conclude that either ($I$) holds, or ($I_2$) holds, or ($II^{\mathrm{min}}$) holds.

As in the proof of Lemma~\ref{comb-lambda}, if ($I$) holds then $f$ is a desired type $I$ sum, if ($I_2$) holds then $f$ is a desired type $I_2$ sum, and if ($II^{\mathrm{min}}$) holds then $f$ is a desired type $II$ sum. It remains to establish the bound~\eqref{eq-comb-mu} in the type $II$ case.
Let $\{1,\ldots,\ell\} = J \uplus J'$ be the partition from ($II^{\mathrm{min}}$), so that $|\alpha_J - \alpha_{J'}| \leq 1/5$. In view of Lemma~\ref{le:BHP}(ii) with $W = (\log X)^{4A}$, it suffices to verify the hypothesis~\eqref{eq:BHP-supt-cond-2} for the sequence
$$ v_{\ell} = a_{\ell}^{(0)}\ell^{iT} = 1_{\ell\text{ prime}} \ell^{iT}. $$
Since $N_0 \gg P$, Lemma~\ref{le:nitcancel} implies that hypothesis~\eqref{eq:BHP-supt-cond-2} is satisfied when $(\log X)^{20A} X/H \leq |T| \leq X^A$ as required.

The claim for $d_k$ follows similarly.

In case $d_k^\sharp$ we use M\"obius inversion to write
\begin{align*}
\sum_{X < n \leq X+H} d_k^\sharp(n)\omega_n 1_{(n, \mathcal{P}(P, Q)) > 1} &=  \sum_{X < n \leq X+H} d_k^\sharp(n)\omega_n -  \sum_{X < n \leq X+H} d_k^\sharp(n)\omega_n 1_{(n, \mathcal{P}(P, Q)) = 1} \\
&= \sum_{X < n \leq X+H} d_k^\sharp(n)\omega_n -  \sum_{\substack{X < dn \leq X+H \\ d \mid \mathcal{P}(P, Q)}} \mu(d) d_k^\sharp(dn) \omega_{dn}.
\end{align*}
Now $d_k^\sharp(n)$ is immediately a $(\log^{-O(1)} X, X^{3/5})$ type $I$ sum by the definition~\eqref{dks-def}.
Using Lemma~\ref{le:FLS} we can truncate the last sum above to $d \leq X^{\varepsilon/10}$ with an admissible error $O(H/\exp((\log \log X)^2/20))$ and it remains to show that 
\[
f(n) = \sum_{\substack{d \mid (n, \mathcal{P}(P, Q)) \\ d \leq X^{\varepsilon/10}}} \mu(d) d_k^\sharp(dn) 
\]
is also a $(\log^{-O(1)} X, X^{3/5})$ type $I$ sum. But this follows easily from the definition~\eqref{dks-def} of $d_k^\sharp$.
\end{proof}

\subsection{Deduction of Theorem~\ref{discorrelation-thm}}\label{discor-sec}

In this subsection we deduce Theorem~\ref{discorrelation-thm} from Theorem~\ref{inverse}. We focus on establishing~\eqref{mangoldt-discor}. The other estimates in Theorem~\ref{discorrelation-thm} are established similarly and we mention the small differences at the end of the section. In this section we allow all implied constants to depend on $d,D$.

We induct on the dimension $D$ of $G/\Gamma$. In view of the major arc estimates (Theorem~\ref{thm:major-arc}), we may assume that $F$ has mean zero (after replacing $F$ by $F - \int F$). In view of Proposition~\ref{central} with $\delta = \log^{-A} X$, we may assume that $F$ oscillates with a central frequency $\xi \colon Z(G) \rightarrow \R$. If the center $Z(G)$ has dimension larger than $1$, or $\xi$ vanishes, then $\ker\xi$ has positive dimension and the conclusion follows from induction hypothesis applied to $G/\ker\xi$ (via Lemma~\ref{quotient-normal}). Henceforth we assume that $G$ has one-dimensional center and that $\xi$ is non-zero. (A zero-dimensional center is not possible since $G$ is nilpotent and non-trivial.) 

Let $X^{\theta + \eps} \leq H \leq X^{1-\eps}$ for $\theta = 5/8$ and $\eps > 0$. Redefining $\delta$, we see that, to prove~\eqref{mangoldt-discor}, it suffices to show the following claim: There exists a small $c> 0$ such that for any large $A$ and $\delta = \log^{-A} X$, if $G/\Gamma$ has complexity at most $\delta^{-c}$ and $F$ has Lipschitz norm at most $\delta^{-c}$, then we have
\begin{equation}
\label{eq:discorLamClaim}
| \sum_{X < n \leq X+H} (\Lambda(n) - \Lambda^{\sharp}(n)) \overline{F}(g(n) \Gamma) |^* \leq \delta H.
\end{equation}
Suppose that~\eqref{eq:discorLamClaim} fails, i.e.
\begin{equation}
\label{eq:discorLamClaimRev}
| \sum_{X < n \leq X+H} (\Lambda(n) - \Lambda^{\sharp}(n)) \overline{F}(g(n) \Gamma) |^* > \delta H.
\end{equation}
By~\eqref{eq:lambdasharp2} and the triangle inequality, we then have
\begin{equation}
\label{eq:discorLamClaim-2}
| \sum_{X < n \leq X+H} (\Lambda(n) - \Lambda^\sharp_I(n)) \overline{F}(g(n) \Gamma) |^* \gg \delta H .
\end{equation}

By Lemma~\ref{comb-lambda}, for some component $f \in \mathcal{F}$ as in that lemma, one has the bound
\begin{equation}\label{eq:discorLamClaim-3}
 | \sum_{X < n \leq X+H} f(n) \overline{F}(g(n) \Gamma) |^* \gg \delta^{O(1)} H. 
\end{equation}

Consider first the case when $f$ is a $(\log^{-O(1)} X, A_{II}^-, A_{II}^+)$ type $II$ sum with $X^{1-\theta} \ll A_{II}^- \leq A_{II}^+ \ll X^{\theta}$ obeying~\eqref{eq-comb-lambda}, and $G$ is abelian, hence one-dimensional since $G=Z(G)$.  Then we may identify $G/\Gamma$ with the standard circle $\R/\Z$ (increasing the Lipschitz constants for $F$, $\xi$ by $O(\delta^{-O(1)})$ if necessary) and $\xi$ with an element of $\Z$ of magnitude $O(\delta^{O(1)})$, and we can write
$$ F(x) = b e(\xi x)$$
for some $b = O(\delta^{-O(1)})$ and all $x \in \R/\Z$.  We can write $\xi \cdot g(n) \Gamma = P(n) \hbox{ mod } 1$ for some polynomial $P \colon \Z \rightarrow \R$ of degree at most $d$, thus by~\eqref{eq:discorLamClaimRev},~\eqref{eq:discorLamClaim-3} we have
\begin{equation}\label{fepn}
| \sum_{X < n \leq X+H} f(n) e(-P(n)) |^* \geq \delta^{O(1)} H
\end{equation}
and
\begin{equation}\label{fepn-2}
| \sum_{X < n \leq X+H} (\Lambda(n)-\Lambda^\sharp(n)) e(-P(n)) |^* \geq \delta^{O(1)} H.
\end{equation}
Theorem~\ref{inverse}(iii) implies that there exists a real number $T \ll \delta^{-O(1)} (X/H)^{d+1}$ such that
\begin{equation}
\label{eq:EP(n)n-iT} 
\| e(P(n)) n^{-iT} \|_{\TV((X, X+H] \cap \Z; q)} \ll\delta^{-O(1)}
\end{equation}
for some $1 \leq q \leq \delta^{-O(1)}$. By Lemma~\ref{basic-prop}(iii), we thus obtain
\begin{equation}\label{Dirichlet-sum} 
\left|\sum_{X < n \leq X+H} f(n) n^{-iT} \right|^* \gg \delta^{O(1)} H.
\end{equation}
By~\eqref{eq-comb-lambda}, we must have $|T| \leq \delta^{-O(1)}X/H$, and thus by~\eqref{tvp} we have
$$ \|n^{iT}\|_{\TV((X, X+H] \cap \Z; q)} \ll \delta^{-O(1)}.$$
Hence by~\eqref{eq:EP(n)n-iT} and~\eqref{tv-prod} we have
$$ \|e(P(n))\|_{\TV((X, X+H] \cap \Z; q)} \ll \delta^{-O(1)}.$$
From~\eqref{fepn-2} and Lemma~\ref{basic-prop}(iii), we conclude that
$$| \sum_{X < n \leq X+H} \Lambda(n) - \Lambda^\sharp(n)|^* \gg \delta^{O(1)} H.$$
But this contradicts the major arc estimates (Theorem~\ref{thm:major-arc}(i)).

Hence in case $f$ is a type $II$ sum we can assume that $G$ is non-abelian with one-dimensional center.  We claim that in all the remaining cases arising from Lemma~\ref{comb-lambda}, Theorem~\ref{inverse} implies that there exists a non-trivial horizontal character $\eta \colon G \rightarrow \R/\Z$ of Lipschitz norm $\delta^{-O(1)}$ such that
\begin{equation}\label{g-smooth}
\|\eta \circ g\|_{C^{\infty}(X, X+H]} \gg \delta^{-O(1)}. 
\end{equation}

Indeed, in the case when $f$ is a $(\log^{-O(1)}X, A_I)$ type $I$ sum for some $A_I = O(X^{\theta})$, the bound $H \ll (\log X)^{O(1)}A_I$ fails since $H \geq X^{\theta+\varepsilon}$. Hence~\eqref{g-smooth} follows from Theorem~\ref{inverse}(i).

In the case when $f$ is a $(\log^{-O(1)}X, A_{I_2})$ type $I_2$ sum for some $A_{I_2} = O(X^{(3\theta-1)/2})$,  the bound $H \ll (\log X)^{O(1)} X^{1/3} A_{I_2}^{2/3}$ fails since $H \geq X^{\theta+\varepsilon}$ and $X^{1/3}A_{I_2}^{2/3} = O(X^{\theta})$. Hence~\eqref{g-smooth} follows from Theorem~\ref{inverse}(iv).

In the case when $f$ is a $(\log^{-O(1)}X, A_{II}^-, A_{II}^+)$ type $II$ sum for some $X^{1-\theta} \ll A_{II}^- \ll A_{II}^+ \ll X^{\theta}$, we can assume that $G$ is non-abelian with one-dimensional center as discussed above to meet the assumption in Theorem~\ref{inverse}(ii). The bound $H \ll (\log X)^{O(1)} \max(A_{II}^+, X/A_{II}^-)$ fails since $H \geq X^{\theta+\varepsilon}$ and $\max(A_{II}^+, X/A_{II}^-) \ll X^{\theta}$, and thus~\eqref{g-smooth} follows from Theorem~\ref{inverse}(ii).

Now that we have~\eqref{g-smooth}, we can reduce the dimension (by passing to a proper subnilmanifold) and apply the induction hypothesis to conclude the proof. By~\eqref{g-smooth} and Lemma~\ref{factor-simple}, we have a decomposition $g = \eps g'\gamma$ for some $\eps, g', \gamma \in \Poly(\Z \to G)$ such that
\begin{itemize}
\item[(i)] $\eps$ is $(\delta^{-O(1)}, (X, X+H])$-smooth;
\item[(ii)]  There is a $\delta^{-O(1)}$-rational proper subnilmanifold $G'/\Gamma'$ of  $G/\Gamma$ such that $g'$ takes values in $G'$ (in fact $G' = \ker\eta$); and
\item[(iii)] $\gamma$ is $\delta^{-O(1)}$-rational.
\end{itemize}

Let $q \leq \delta^{-O(1)}$ be the period of $\gamma \Gamma$. Form a partition $(X, X+H] = P_1 \cup \cdots \cup P_r$ for some $r \leq \delta^{-O(1)}$, where each $P_i$ is an arithmetic progression of modulus $q$ and $d_G(\eps(n), \eps(n')) \leq \delta^4$ whenever $n,n' \in P_i$ (which can be ensured by the smoothness of $\eps$ as long as $|P_i| \leq \delta^CH$ for some sufficiently large constant $C$).
 By the triangle inequality in Lemma~\ref{basic-prop}(i), we have
$$ 
\left| \sum_{X < n \leq X+H} (\Lambda-\Lambda^\sharp)(n) F(g(n)\Gamma) \right|^* \leq \sum_{i=1}^r \left| \sum_{n \in P_i} (\Lambda-\Lambda^\sharp)(n) F(g(n)\Gamma) \right|^*.
$$
For each $i$, fix any $n_i \in P_i$, and write $\gamma(n_i) \Gamma = \gamma_i \Gamma$ for some $\gamma_i \in G$ which is rational of height $O(\delta^{-O(1)})$. Let $g_i \in \Poly(\Z\to G)$ be the polynomial sequence defined by
$$ g_i(n) = \gamma_i^{-1} g'(n) \gamma_i, $$
which takes values in $\gamma_i^{-1}G'\gamma_i$. Let $F_i \colon G/\Gamma \to \C$ be the function defined by
$$ F_i(x\Gamma) = F(\eps(n_i)\gamma_i x\Gamma). $$
For each $n \in P_i$ we have
\begin{align*} 
|F(g(n)\Gamma) - F_i(g_i(n)\Gamma)| &= |F(g(n)\Gamma) - F(\eps(n_i) g'(n)\gamma_i\Gamma)| \\
&\leq \|F\|_{\Lip} \cdot d_G(\eps(n) g'(n)\gamma_i, \eps(n_i) g'(n) \gamma_i) \\ 
&= \|F\|_{\Lip} \cdot d_G(\eps(n), \eps(n_i)) \leq \delta^3. 
\end{align*}
It follows that
\begin{equation}
\label{eq:LamF-Fi}
\left| \sum_{X < n \leq X+H} (\Lambda-\Lambda^\sharp)(n) F(g(n)\Gamma) \right|^* \leq \sum_{i=1}^r \left| \sum_{n \in P_i} (\Lambda-\Lambda^\sharp)(n) F_i(g_i(n)\Gamma) \right|^* + O(\delta^2H)
\end{equation}
By Lemma~\ref{basic-prop}(i) and the induction hypothesis, we have, for each $i = 1, \dotsc, r$,
\begin{equation}\label{eq:ind-hyp}
\left| \sum_{n \in P_i} (\Lambda-\Lambda^\sharp)(n) F_i(g_i(n)\Gamma) \right|^* \leq \left| \sum_{X < n \leq X+H} (\Lambda-\Lambda^\sharp)(n) F_i(g_i(n)\Gamma) \right|^* \ll \delta^CH
\end{equation}
for any sufficiently large constant $C$. Combining this with~\eqref{eq:LamF-Fi} we obtain
$$ \left| \sum_{X < n \leq X+H} (\Lambda-\Lambda^\sharp)(n) F(g(n)\Gamma) \right|^* \ll \delta^{2} H, $$
contradicting our assumption~\eqref{eq:discorLamClaimRev}. This completes the proof of~\eqref{mangoldt-discor}.

The proof of~\eqref{mobius-discor} is completely similar (with the role of $\Lambda^\sharp$ and $\Lambda^\sharp_I$ both replaced by $\mu^\sharp = 0$). For the estimate~\eqref{dk-discor} involving $d_k$, one runs the argument above with $\delta = X^{-c\eps}$ for some sufficiently small constant $c>0$, using Lemma~\ref{comb-divisor}, and with the role of $\Lambda^\sharp$ and $\Lambda^\sharp_I$ both replaced by $d_k^\sharp$. 

Let us now turn to the estimate~\eqref{mobius-discor-alt}. We choose 
\begin{equation}
\label{eq:PQdef}
P = \exp((\log x)^{2/3+\eps}) \quad \text{and} \quad Q = x^{1/(\log\log x)^2}
\end{equation}
and write $\mathcal{P}(P, Q) = \prod_{P < p \leq Q} p$. We first use Shiu's bound (Lemma~\ref{shiu}) to note that
\[
\sum_{X < n \leq X+H} \mu(n) \overline{F}(g(n)\Gamma) = \sum_{X < n \leq X+H} 1_{(n, \mathcal{P}(P, Q)) > 1} \mu(n) \overline{F}(g(n)\Gamma) + O\left(H \frac{\log P}{\log Q}\right).
\]
Now one can repeat the previous arguments with $\delta = \log^{-A} X$ and $1_{(n, \mathcal{P}(P, Q)) > 1} \mu(n)$ in place of $\Lambda$ and $0$ in place of $\Lambda^\sharp$ and $\Lambda^\sharp_I$ --- this time we use Lemma~\ref{comb-mu} to replace $1_{(n, \mathcal{P}(P, Q)) > 1} \mu(n)$ by the approximant $\sum_{f \in {\mathcal F}} f(n)$ and Corollary~\ref{cor:major-arc-Ramare} gives the required major arc estimate for $1_{(n, \mathcal{P}(P, Q)) > 1} \mu(n)$.

The estimate~\eqref{dk-discor-alt} follows similarly, noting first that, with $P, Q$ as in~\eqref{eq:PQdef} we have by Shiu's bound (Lemma~\ref{shiu})
\begin{align*}
\sum_{X < n \leq X+H} d_k(n) \overline{F}(g(n)\Gamma) &= \sum_{X < n \leq X+H} 1_{(n, \mathcal{P}(P, Q)) > 1} d_k(n) \overline{F}(g(n)\Gamma)\\
&+ O\left(H (\log X)^{k-1}\left(\frac{\log P}{\log Q}\right)^{k}\right)
\end{align*}
and then arguing as for~\eqref{mobius-discor-alt}.

\section{\texorpdfstring{The type $I$ case}{The type I case}}\label{type-i-sec}

In this section we establish the type $I$ case (i) of Theorem~\ref{inverse}, basically following the arguments in~\cite{gt-mobius}. In this section we allow all implied constants to depend on $d,D$.

Writing $f = \alpha*\beta$, we see from Lemma~\ref{basic-prop}(i) that
$$
\left|\sum_{X < n \leq X+H} f(n) F(g(n) \Gamma)\right|^* \leq \sum_{a \leq A_I} |\alpha(a)| \left|\sum_{X/a < b \leq X/a + H/a} \beta(b) F(g(ab) \Gamma)\right|^*.$$
By the pigeonhole principle (and the hypothesis $\delta \leq \frac{1}{\log X}$), we can thus find a scale $1 \leq A \leq A_I$ such that
$$
\sum_{A < a \leq 2A} |\alpha(a)| \left|\sum_{X/a < b \leq X/a + H/a} \beta(b) F(g(ab) \Gamma)\right|^* \gg \delta^{O(1)} H$$
and hence by~\eqref{abound} and the Cauchy--Schwarz inequality
$$
\sum_{A < a \leq 2A} \left(\left|\sum_{X/a < b \leq X/a + H/a} \beta(b) F(g(ab) \Gamma)\right|^*\right)^2 \gg \delta^{O(1)} H^2/A.$$
From Lemma~\ref{basic-prop}(iii) and~\eqref{btv} we conclude that
\begin{equation}
\label{eq:typeIlow}
\sum_{A < a \leq 2A} \left(\left|\sum_{X/a < b \leq X/a + H/a} F(g(ab) \Gamma)\right|^*\right)^2 \gg \delta^{O(1)} H^2/A.
\end{equation}

We may assume that $H \geq C \delta^{-C} A$ for some large constant $C$ depending on $d,D$, since otherwise we have $H \leq \delta^{-O(1)} A_I$ and can conclude.  Trivially
$$ \left|\sum_{X/a < b \leq X/a + H/a} F(g(ab) \Gamma)\right|^* \ll \delta^{-1} H/A$$
for all $A < a \leq 2A$, and hence by~\eqref{eq:typeIlow} we must have
$$ \left|\sum_{X/a < b \leq X/a + H/a} F(g(ab) \Gamma)\right|^* \gg \delta^{O(1)} H/A$$
for $\gg \delta^{O(1)} A$ choices of $a \in (A,2A]$.  For each such $a$, we apply Theorem~\ref{qlt} to find a non-trivial horizontal character $\eta \colon G \to \R/\Z$ of Lipschitz norm $O(\delta^{-O(1)})$ such that
\begin{equation}\label{etaga}
 \| \eta \circ g(a \cdot) \|_{C^\infty(X/a, X/a+H/a]} \ll \delta^{-O(1)}.
\end{equation}
This character $\eta$ could initially depend on $a$, but the number of possible choices for $\eta$ is $O(\delta^{-O(1)})$, hence by the pigeonhole principle we may refine the set of $a$ under consideration to make $\eta$ independent of $a$.  The function $\eta \circ g \colon \Z \to \R/\Z$ is a polynomial of degree at most $d$, hence by Corollary~\ref{smooth-dilate} (and the assumption $H \geq C \delta^{-C} A$) we have
$$ \| q \eta \circ g \|_{C^\infty(X, X+H]} \ll \delta^{-O(1)}$$
for some $1 \leq q \ll \delta^{-O(1)}$. Replacing $\eta$ by $q \eta$, we obtain Theorem~\ref{inverse}(i) as required.

\begin{remark} It should also be possible to establish Theorem~\ref{inverse}(i) using the variant of Theorem~\ref{factor} given in~\cite[Theorem 3.6]{HeWang}.
\end{remark}

\section{\texorpdfstring{The non-abelian type $II$ case}{The non-abelian type II case}}\label{typeII-nonabelian-sec}

In this section we establish the non-abelian type $II$ case (ii) of Theorem~\ref{inverse}.   Let $d, D, H, X, \delta, G/\Gamma, F, f, A_{II}^-, A_{II}^+$ be as in that theorem.  For the rest of this section we allow all constants to depend on $d,D$.  We will need several constants
$$ 1 < C_1 < C_2 < C_3 < C_4$$
depending on $d,D$, with each $C_i$ assumed to be sufficiently large depending on the preceding constants.

We first eliminate the role of $\alpha$ by a standard Cauchy--Schwarz argument.  By Definition~\ref{struct-sum}(ii), we can write $f = \alpha*\beta$, where  $\alpha$ is supported on $[A_{II}^-,A_{II}^+]$, and one has the bounds~\eqref{abound},~\eqref{bbound-2}
for all $A,B \geq 1$.  From~\eqref{invo} we have
$$
 \left| \sum_{n \in P} \alpha*\beta(n) F(g(n) \Gamma) \right| \geq \delta H$$
for some arithmetic progression $P \subset (X,X+H]$.  By the triangle inequality, we have
$$
 \left| \sum_{n \in P} (\alpha*\beta)(n) F(g(n) \Gamma) \right| 
\leq \sum_{A_{II}^- \leq a \leq A_{II}^+} |\alpha(a)| \left|\sum_{b: ab \in P} \beta(b) F(g(ab) \Gamma)\right|.$$
By the pigeonhole principle and the hypothesis $\delta \leq \frac{1}{\log X}$, one can thus find $A_{II}^- \leq A \leq A_{II}^+$ such that
\begin{equation}
\label{eq:absumP}
\sum_{A < a \leq 2A} |\alpha(a)| \left|\sum_{b: ab \in P} \beta(b) F(g(ab) \Gamma)\right| \gg \delta^{O(1)} H.
\end{equation}
We may assume that
\begin{equation}\label{deltac}
\delta^{-C_4} \frac{X}{H} \leq A \leq \delta^{C_4} H,
\end{equation}
since otherwise the first conclusion of Theorem~\ref{inverse}(ii) holds. Now by~\eqref{eq:absumP}, the Cauchy--Schwarz inequality, and~\eqref{abound}
\begin{equation}
\label{eq:asumlowe}
\sum_{A < a \leq 2A} \left|\sum_{b: ab \in P} \beta(b) F(g(ab) \Gamma)\right|^2 \gg \delta^{O(1)} \frac{H^2}{A}.
\end{equation}
Next, we dispose of the large values of $\beta$.  Namely, we now show that the contribution of those $b$ for which $|\beta(b)| > \delta^{-C_2}$ to the left-hand side is negligible.  They contribute
\begin{align}
\nonumber
\ll \delta^{-2} \sum_{A < a \leq 2A} \left(\sum_{b: ab \in P} 1_{|\beta(b)| > \delta^{-C_2}} |\beta(b)| \right)^2 &\ll \delta^{2C_2-2} \sum_{A < a \leq 2A} \left(\sum_{b: ab \in P} |\beta(b)|^2 \right)^2 \\
\label{eq:largebetabound}
&\ll \delta^{2C_2-2} \sum_{b_1, b_2} |\beta(b_1)|^2|\beta(b_2)|^2 \sum_{\substack{A < a \leq 2A \\ ab_1, ab_2 \in P}} 1
\end{align}
Since $P \subseteq (X, X+H]$, the inner sum can be non-empty only if $b_j \asymp X/A$ and $|b_1 - b_2| \leq H/A$ and in this case it has size $\ll H/(X/A) = AH/X$. Using also the inequality $|xy|^2 \leq |x|^4 + |y|^4$ and~\eqref{bbound-2}, we see that~\eqref{eq:largebetabound} is
\begin{align*}
&\ll \delta^{2C_2-2} \sum_{b_1 \asymp X/A} |\beta(b_1)|^4 \sum_{\substack{b_2 \\ |b_1 - b_2| \leq H/A}} \frac{AH}{X} \ll \delta^{2C_2-4} \frac{H^2}{A}.
\end{align*}

From now on in this section we allow all implied constants to depend on $C_2$.
Write
$$ \tilde \beta(b) \coloneqq \beta(b) 1_{|\beta(b)| \leq \delta^{-C_2}} = O(\delta^{-O(1)}).$$
By above and the triangle inequality,~\eqref{eq:asumlowe} holds with $\tilde\beta(b)$ in place of $\beta(b)$. Hence, by Markov's inequality, we see that, for $C_2$ large enough, we have
\begin{equation}\label{xba}
 \left|\sum_{X/a < b \leq (X+H)/a} \tilde \beta(b) F(g(ab) \Gamma)\right|^* \gg \delta^{O(1)} H/A
\end{equation}
for $\gg \delta^{O(1)} A$ choices of $a \in (A, 2A]$.
We cover $(A, 2A]$ by $O(X/H)$ boundedly overlapping intervals of the form $I_{A'} \coloneqq (A', (1+\frac{H}{X})A']$ with $A \leq A' \leq 2A$.  Note that these intervals are non-empty by the lower bound on $A$ in~\eqref{deltac}.
By the pigeonhole principle, we see that for $\gg \delta^{O(1)} X/H$ of these intervals,~\eqref{xba} holds for $\gg \delta^{O(1)} \frac{H}{X} A$ choices of $a \in I_{A'}$.  For all such $A'$ and $a$, the interval $(X/a, (X+H)/a]$ is contained in
\begin{equation}\label{jap}
J_{A'} \coloneqq\left(\left(1-\frac{10 H}{X}\right) \frac{X}{A'}, \left(1+\frac{10 H}{X}\right) \frac{X}{A'}\right],
\end{equation}
hence
$$ \left|\sum_{b \in J_{A'}} \tilde \beta(b) F(g(ab) \Gamma)\right|^* \gg \delta^{O(1)} H/A$$
for $\gg \delta^{O(1)} \frac{H}{X} A$ choices of $a \in I_{A'}$.
We can now apply Proposition~\ref{large-sieve} and the pigeonhole principle to reach one of two conclusions for $\gg \delta^{O(1)} X/H$ of the intervals $I_{A'}$:
\begin{itemize}
\item[(i)]  There exists a non-trivial horizontal character $\eta \colon G \to \R/\Z$ of Lipschitz norm $O(\delta^{-O(1)})$ such that $\| \eta \circ g(a \cdot) \|_{C^\infty(J_{A'})} \ll \delta^{-O(1)}$ for $\gg \delta^{O(1)} |I_{A'}|$ values of $a \in I_{A'}$.
\item[(ii)] For $\gg \delta^{O(1)} |I_{A'}|^2$ pairs $(a,a') \in I_{A'}^2$, there exists a factorization
\begin{equation}\label{gepq}
 g(a' \cdot) = \eps_{aa'} g(a \cdot) \gamma_{aa'}
\end{equation}
where $\eps_{aa'}$ is $(O(\delta^{-O(1)}),J_{A'})$-smooth and $\gamma_{aa'}$ is $O(\delta^{-O(1)})$-rational.
\end{itemize}

Suppose first that conclusion (i) holds for $\gg \delta^{O(1)} X/H$ of the intervals $I_{A'}$.  By pigeonholing we may make $\eta$ independent of $A'$, and then by collecting all the $a$ we see that
$$\| \eta \circ g(a \cdot) \|_{C^\infty((X/a,(X+H)/a])} \ll \delta^{-O(1)}$$
for $\gg \delta^{O(1)} A$ values of $a$ with $a \asymp A$.  Applying Corollary~\ref{smooth-dilate}, we see that either $H \ll \delta^{-O(1)} A$, or else there is another non-trivial horizontal character $\eta' \colon G \to \R/\Z$ of Lipschitz norm $O(\delta^{-O(1)})$ such that
$$\| \eta' \circ g \|_{C^\infty((X,X+H])} \ll \delta^{-O(1)}.$$
In either case the conclusion of Theorem~\ref{inverse}(ii) is satisfied.

Now suppose that conclusion (ii) holds for some $A'$ which we now fix (discarding the information collected for all other choices of $A'$). 
We will formalize the argument that follows as a proposition, as we will need this precise proposition also in our followup work~\cite{MRSTT-almost}.

\begin{proposition}[Abstract non-abelian Type II inverse theorem]\label{prop:Furstenberg-Weiss} Let $C\geq 1$, $d,D \geq 1$, $2 \leq H, A \leq X$, $0 < \delta < \frac{1}{\log X}$, and let $G/\Gamma$ be a filtered nilmanifold of degree at most $d$, dimension at most $D$, and complexity at most $1/\delta$, with $G$ non-abelian.  Let $g: \Z \to G$ be a polynomial map.  Cover $(A,2A]$ by at most $C X/H$ intervals $I_{A'} = (A',(1+\frac{H}{X}) A')$ with $A \leq A' \leq 2A$, with each point belonging to at most $C$ of these intervals.  Suppose that for at least $\frac{1}{C} \delta^{C} X/H$ of the intervals $I_{A'}$, there exist at least $\frac{1}{C} \delta^C |I_{A'}|^2$ pairs $(a,a') \in I_{A'}^2$ for which there exists a factorization
$$ g(a' \cdot) = \eps_{aa'} g(a \cdot) \gamma_{aa'}$$
where $\eps_{aa'}$ is $(C \delta^{-C}, J_{A'})$-smooth and $\gamma_{aa'}$ is $C\delta^{-C}$-rational, with $J_{A'}$ defined by~\eqref{jap}.

Then either
\begin{equation}\label{hdc}
 H \ll_{d,D,C} \delta^{-O_{d,D,C}(1)} \max( A, X/ A)
\end{equation}
or there exists a non-trivial horizontal character $\eta \colon G \to \R/\Z$ having Lipschitz norm $O_{d,D,C}(\delta^{-O_{d,D,C}(1)})$ such that 
$$ \| \eta \circ g \|_{C^\infty(X,X+H]} \ll_{d,D,C} \delta^{-O_{d,D,C}(1)}.$$
\end{proposition}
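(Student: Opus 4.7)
The strategy is to convert the many approximate dilation-invariance relations $g(a'\cdot) = \epsilon_{aa'} g(a\cdot) \gamma_{aa'}$ into structural information on $g$ at the group level, and then pass to a horizontal character. First I would reduce to the case that $G$ is two-step nilpotent with one-dimensional center $Z(G)$. This is done by inducting on $D = \dim G$ using Lemma~\ref{quotient-normal}: if $\dim Z(G) \geq 2$ or if the filtration admits a nontrivial normal rational subgroup strictly below $G$ whose quotient is still non-abelian, I quotient out and apply the induction hypothesis on $\pi(G)/\pi(\Gamma)$; if at some stage the quotient becomes abelian, the factorizations project to the abelianization and a horizontal character emerges directly from the rationality of $\pi(\gamma_{aa'})$.

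For the core step, I would view $(a, n) \mapsto g(an)$ as a polynomial map $\Z^2 \to G$ of bounded degree, and apply the multidimensional factorization theorem (Theorem~\ref{multi-factor}) on the box $I_{A'} \times J_{A'}$ for each good $A'$, with a parameter $A$ large enough relative to $1/\delta$. This factors the map as $\epsilon \cdot h' \cdot \gamma$, where $h'$ takes values in a rational subgroup $G' \subset G$ of complexity $\delta^{-O(1)}$ and is totally equidistributed there. The hypothesis restricts $G'$: it says that moving in the $a$-direction only changes $h'(a, b)$ modulo $\Gamma$ by a smooth factor times a bounded-index rational element. Combined with the non-abelianness of $G$, this forces the projection of $G'$ along the $a$-direction to be proper. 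Here is where the Furstenberg--Weiss commutator trick enters: iterating the factorization and pushing commutators into $Z(G) \cong \R/\Z$, the central component of $h'$ in the $a$-direction must vanish, since otherwise its total equidistribution in $Z(G)/(Z(G) \cap \Gamma)$ would be incompatible with the fact that $\gamma_{aa'}$ takes values in a bounded-height rational set as $(a,a')$ varies.

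Consequently, one obtains a nontrivial horizontal character $\eta: G \to \R/\Z$ of Lipschitz norm $O_{d,D}(\delta^{-O_{d,D}(1)})$ such that $\|\eta \circ g(a\cdot)\|_{C^\infty(J_{A'})} \ll \delta^{-O(1)}$ for $\gg \delta^{O(1)} |I_{A'}|$ values of $a \in I_{A'}$, and (after pigeonholing over the good $A'$) for $\gg \delta^{O(1)} A$ values of $a \in (A, 2A]$. Applying Corollary~\ref{smooth-dilate} to the degree-$d$ polynomial $\eta \circ g$ (with $I = (X, X+H]$) then yields either $H \ll \delta^{-O(1)} A$, which combined with the dual bound $H \ll \delta^{-O(1)} X/A$ arising from the fact that $|J_{A'}| \asymp H/A$ gives the acceptable conclusion~\eqref{hdc}, or else $\|q \eta \circ g\|_{C^\infty(X, X+H]} \ll \delta^{-O(1)}$ for some $1 \leq q \ll \delta^{-O(1)}$; replacing $\eta$ by $q\eta$ completes the argument.

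The main obstacle will be the middle step: extracting from the ``many pairs $(a,a')$ admit a smooth-rational factorization'' hypothesis a uniform restriction on the equidistributed component $h'$. The $(\epsilon_{aa'}, \gamma_{aa'})$ vary with the pair, and we need the resulting complexity bounds to be polynomial in $\delta$. The cleanest route is likely to formulate a two-parameter analogue of Proposition~\ref{corr-crit}: use the non-abelianness together with a Goursat-type analysis of $G'$, supplemented by the commutator identity in $Z(G)$, to rule out the ``full ambient group'' case and force $G'$ into the kernel of some horizontal character.
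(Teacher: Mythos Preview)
Your overall architecture is right—multiparameter factorization, a Furstenberg--Weiss/Goursat analysis of the equidistribution group, then Vinogradov plus Corollary~\ref{smooth-dilate}—but the core step as you have set it up will not work. You apply Theorem~\ref{multi-factor} to the two-parameter map $(a,n)\mapsto g(an)$, which lands in $G$ itself; your equidistribution group $G'$ is then just a rational subgroup of $G$, and there is no product structure on which to run a Goursat argument. Phrases like ``the projection of $G'$ along the $a$-direction'' and ``a Goursat-type analysis of $G'$'' have no content when $G'\subset G$: Goursat's lemma and the Furstenberg--Weiss commutator trick both require $G'$ to sit inside a \emph{product} group, so that one can form slices and projections and compare them.

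The paper's fix is precisely this: it passes to the \emph{four}-parameter map
\[
(a,a',b,b') \longmapsto \bigl(g(ab),\, g(ab'),\, g(a'b),\, g(a'b')\bigr) \in G^4
\]
on $I_{A'}\times I_{A'}\times J_{A'}\times J_{A'}$ (for a single fixed good $A'$), and applies Theorem~\ref{multi-factor} there to obtain a rational $G'\subset G^4$. The hypothesis $g(a'\cdot)=\eps_{aa'}g(a\cdot)\gamma_{aa'}$ with $\eps_{aa'}$ smooth forces this quadruple, for many $(a,a')$ and all $b,b'$ in a short subwindow, to lie in a set $\Omega\subset (G/\tilde\Gamma)^4$ of the shape $(x,y,\eps x,\kappa\eps y)$ with $\kappa$ \emph{small}; positivity of the Haar integral over $G'/\Gamma''$ against a bump on $\Omega$ then shows the fourth-coordinate slice $L=\{g:(1,1,1,g)\in G'\}$ is trivial. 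Now Furstenberg--Weiss applies inside $G^4$: with $L_1=\{g:(1,g',1,g)\in G'\text{ for some }g'\}$ and $L_2=\{g:(1,1,g',g)\in G'\text{ for some }g'\}$ one has $[L_1,L_2]\subset L=\{1\}$, so non-abelianness of $G$ forces one of $L_1,L_2$ to be proper, yielding a horizontal character $\eta_4$ on $G$ (extended to $\eta_1,\eta_3,\eta_4$ on $G^4$ annihilating $G'$). Feeding this back through the factorization gives $\|\eta_4(g(a'(b'+1)))-\eta_4(g(a'b'))\|_{\R/\Z}\ll M^{O(1)}/|J_{A'}|$, whence Lemma~\ref{vin} in $b'$ and then Corollary~\ref{smooth-dilate} in $a'$ finish.

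Two secondary points. Your dimension reduction to two-step with one-dimensional center is not needed and is not in the paper; the step ``if the quotient becomes abelian, a horizontal character emerges directly from the rationality of $\pi(\gamma_{aa'})$'' is not justified as written—rationality of the $\gamma_{aa'}$ alone, varying with $(a,a')$, does not single out a character. Also, you need only one good $A'$: once $\eta_4$ is in hand, Lemma~\ref{vin} over $J_{A'}$ followed by Corollary~\ref{smooth-dilate} over $I_{A'}$ already gives the global smoothness on $(X,X+H]$; aggregating over many $A'$ is unnecessary.
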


Indeed, applying this proposition (with a suitable choice of $C=O(1)$, and the other parameters given their obvious values), the conclusion~\eqref{hdc} is not compatible with~\eqref{deltac} for $C_4$ large enough, so we obtain the desired conclusion~\eqref{ut}.

It remains to establish the proposition.  We allow all implied constants to depend on $d,D,C$.
We will now proceed by analyzing the equidistribution properties of the four-parameter polynomial map
$$ (a,b,a',b') \mapsto (g(ab), g(ab'), g(a'b), g(a'b')).$$
The one-parameter equidistribution theorem in Theorem~\ref{factor} is not directly applicable for this purpose.  Fortunately, we may apply the multi-parameter equidistribution theory in Theorem~\ref{multi-factor} instead.  We conclude that either
\begin{equation}\label{ija}
 \min( |I_{A'}|, |J_{A'}| ) \ll_{C_3} \delta^{-O_{C_3}(1)},
\end{equation}
or else there exists 
\begin{equation}\label{mdeltac}
\delta^{-C_3} \leq M \ll \delta^{-O_{C_3}(1)}
\end{equation}
and a factorization
\begin{equation}\label{m-factor}
 (g(ab), g(ab'), g(a'b), g(a'b')) = \eps(a,a',b,b') g'(a,a',b,b') \gamma(a,a',b,b')
\end{equation}
where $\eps, \tilde g, \gamma \in \Poly(\Z^4 \to G^4)$ are such that
\begin{itemize}
\item[(i)]  ($\eps$ smooth) For all $(a,a',b,b') \in I_{A'} \times I_{A'} \times J_{A'} \times J_{A'}$, we have the smoothness estimates
\begin{align*}
d_G(\eps(a,a',b,b'),1) &\leq M\\
d_G(\eps(a+1,a',b,b'),\eps(a,a',b,b')) &\leq M / |I_{A'}|\\
d_G(\eps(a,a'+1,b,b'),\eps(a,a',b,b')) &\leq M / |I_{A'}|\\
d_G(\eps(a,a',b+1,b'),\eps(a,a',b,b')) &\leq M / |J_{A'}|\\
d_G(\eps(a,a',b,b'+1),\eps(a,a',b,b')) &\leq M / |J_{A'}|.
\end{align*}
\item[(ii)]  ($g'$ equidistributed)  There is an $M$-rational subnilmanifold $G'/\Gamma'$ of $G^4/\Gamma^4$ such that $g'$ takes values in $G'$ and one has the total equidistribution property
$$ \Big| \sum_{(a,a',b,b') \in P_1 \times P_2 \times P_3 \times P_4} F(g'(a,a',b,b') \Gamma'') \Big| \leq \frac{|I_{A'}|^2 |J_{A'}|^2}{M^{C_3^2}} \|F\|_{\Lip}$$
for any arithmetic progressions $P_1,P_2 \subset I_{A'}$, $P_3,P_4 \subset J_{A'}$, any finite index subgroup $\Gamma''$ of $\Gamma'$ of index at most $M^{C_3^2}$, and any Lipschitz function $F \colon G'/\Gamma'' \to \C$ of mean zero.
\item[(iii)]  ($\gamma$ rational) There exists $1 \leq r \leq M$ such that $\gamma^r(a,a',b,b') \in \Gamma^4$ for all $a,a',b,b' \in \Z$.
\end{itemize}

The alternative~\eqref{ija} of course implies~\eqref{hdc}, so we may assume we are in the opposite alternative.  Thus we may assume that we have a scale $M$ and a factorization~\eqref{m-factor} with the claimed properties.  

We know that~\eqref{gepq} holds for $\gg M^{-O(1)} |I_{A'}|^2$ pairs $(a,a') \in I_{A'}^2$.  
 By pigeonholing we may assume there is a fixed $1 \leq r \ll M^{O(1)}$ such that $\gamma_{aa'}(b)^r \in \Gamma$ for all such pairs $(a,a')$ and all $b$, and also such that $\gamma^r(a,a',b',b') \in \Gamma^4$.
  This implies that there is some lattice $\tilde \Gamma$ independent of $a,a'$ that contains $\Gamma$ as an index $O(\delta^{-O(1)})$ subgroup, such that $\gamma_{aa'}(b) \in \tilde\Gamma$ for all such pairs $(a,a')$, and $\gamma(a,a',b,b') \in \tilde \Gamma^4$; indeed, by~\cite[Lemma A.8(i), Lemma A.11(iii)]{green-tao-ratner}, we could take $\tilde \Gamma$ to be generated by $\exp( \frac{1}{Q'} X_i )$ for the Mal'cev basis $X_1,\dots,X_D$ of $G/\Gamma$, and some $Q' \ll M^{O(1)}$.  From~\eqref{gepq} we then have
$$ g(a' b) \tilde \Gamma = \eps_{aa'}(b) g(a b) \tilde \Gamma$$
for all such pairs $(a,a')$ and all $b \in \Z$.  If we introduce the subinterval
$$ 
J'_{A'} \coloneqq \left(\frac{X}{A'}, \left(1+\frac{1}{M^{C_3}} \frac{H}{X} \right) \frac{X}{A'}\right]
$$
of $J_{A'}$, then from the smoothness of $\eps_{aa'}$ we have
$$ \eps_{aa'}(b') = O_G( M^{-C_3+O(1)} ) \eps_{aa'}(b) = O_G(M^{O(1)})$$
whenever $b,b' \in J'_{A'}$, where $O_G(r)$ denotes an element of $G$ at a distance $O(r)$ from the identity. This implies that
$$
 (g(ab) \tilde \Gamma, g(ab') \tilde \Gamma, g(a'b) \tilde \Gamma, g(a'b') \tilde \Gamma) \in \Omega
$$
where $\Omega \subset (G/\tilde \Gamma)^4$ consists of all quadruples of the form
\begin{equation}\label{kap}
 (x, y, \eps x, \kappa \eps y)
\end{equation}
for some $x,y \in G/\Gamma$ and $\eps,\kappa \in G$ with $\eps = O_G(M^{O(1)})$ and $\kappa = O_G(M^{-C_3+O(1)})$ (with appropriate choices of implied constants).  We conclude that
$$
\sum_{a,a' \in I_{A'}; b,b' \in J'_{A'}} 1_\Omega( g(ab) \tilde \Gamma, g(ab') \tilde \Gamma, g(a'b) \tilde \Gamma, g(a'b') \tilde \Gamma ) \gg M^{-O(1)} |I_{A'}|^2 |J'_{A'}|^2.$$
Applying~\eqref{m-factor}, we conclude that
$$
\sum_{a,a' \in I_{A'}; b,b' \in J'_{A'}} 1_\Omega( \eps(a,a',b,b') g'(a,a',b,b') \tilde \Gamma^4 ) \gg M^{-O(1)} |I_{A'}|^2 |J'_{A'}|^2.$$
By the pigeonhole principle, we can find intervals $I'_{A'}, I''_{A'}$ in $I_{A'}$ of length $M^{-C_3} I_{A'}$ such that
$$
\sum_{a \in I'_{A'}, a' \in I''_{A'}; b,b' \in J'_{A'}} 1_\Omega( \eps(a,a',b,b') g'(a,a',b,b') \tilde \Gamma^4 ) \gg M^{-O(1)} |I'_{A'}| |I''_{A'}| |J'_{A'}|^2.$$
By the smoothness of $\eps$ we have
$$ \eps(a,a',b,b') =  O_G( M^{-C_3+O(1)} ) \eps(a_0,a'_0,b_0,b_0) = O_G(M^{O(1)})$$
where $a_0,a'_0,b_0$ are the left endpoints of $I'_{A'}, I''_{A'}, J'_{A'}$ respectively. 
Let $\varphi$ be a bump function\footnote{Indeed, one could set $\varphi(x) = \max(1 - K \mathrm{dist}(x,\Omega),0)$ for some $K = O(M^{O(C_3)})$.} supported on $\tilde\Omega$ that equals $1$ on $\Omega$, with Lipschitz norm $O(M^{O(C_3)})$, where $\tilde\Omega$ is defined similarly to $\Omega$ in~\eqref{kap} but with slightly larger choices of implied constants $O(1)$ in the definition of $\eps,\kappa$.   
 This implies that
$$
1_\Omega( \eps(a,a',b,b') g'(a,a',b,b') \tilde \Gamma^4 ) 
\leq
\varphi( \eps(a_0,a'_0,b_0,b_0) g'(a,a',b,b') \tilde \Gamma^4 ) $$
whenever $a \in I'_{A'}, a' \in I''_{A'}; b,b' \in J'_{A'}$.
Abbreviating $\eps_0 \coloneqq \eps(a_0,a'_0,b_0,b_0)  = O_G(M^{O(1)})$, we conclude that
$$
\sum_{a \in I'_{A'}, a' \in I''_{A'}; b,b' \in J'_{A'}} \varphi( \eps_0 g'(a,a',b,b') \tilde \Gamma^4 ) \gg M^{-O(1)} |I'_{A'}| |I''_{A'}| |J'_{A'}|^2.$$
Using the equidistribution properties of $g'$, we conclude that
\begin{equation}\label{thwack}
\int_{G'/(G' \cap \tilde \Gamma^4)} \varphi(\eps_0 x)\ d\mu_{G'/(G' \cap \tilde \Gamma^4)} \gg M^{-O(1)}.
\end{equation}
We now use this bound to obtain control on the group $G'$.  Let us introduce the slice
\begin{equation}\label{ll}
 L \coloneqq \{ g \in G: (1,1,1,g) \in G' \}.
\end{equation}
This is a $O(M^{O(1)})$-rational subgroup of $G$.  Suppose first that this group is non-trivial, then $L \cap \Gamma'$ contains a non-trivial element $\gamma = O_G(M^{O(1)})$.  For $0 \leq t \leq 1$, the group element $\gamma^t \coloneqq \exp( t \log \gamma) = O_G(M^{O(1)})$ is such that $(1,1,1,\gamma^t)$ lies in $G'$, and hence from~\eqref{thwack} and invariance of Haar measure we have
$$
\int_{G'/(G' \cap \tilde \Gamma^4)} \varphi(\eps_0 (1,1,1,\gamma^t) x)\ d\mu_{G'/(G' \cap \tilde \Gamma^4)} \gg M^{-O(1)}.$$
Integrating this and using the Fubini--Tonelli theorem, we have
$$
\int_{G'/(G' \cap \tilde \Gamma^4)} \int_0^1 \varphi(\eps_0 (1,1,1,\gamma^t) x)\ dt \ d\mu_{G'/(G' \cap \tilde \Gamma^4)} \gg M^{-O(1)}.$$
and thus by the pigeonhole principle there exists $x \in (G/\Gamma)^4$ such that
$$\int_0^1 \varphi(\eps_0 (1,1,1,\gamma^t) x)\ dt  \gg M^{-O(1)}.$$
In particular, we have
\begin{equation}\label{epst}
 \eps_0 (1,1,1,\gamma^t) x \in \tilde \Omega \subset (G/\Gamma)^4
\end{equation}
for a set of $t \in [0,1]$ of measure $\gg M^{-O(1)}$.  But if we let $x_1,x_2,x_3$ be the first three components of $\eps_0 x$, we see from~\eqref{kap} that in order for~\eqref{epst} to hold, the fourth coordinate of $\eps_0 (1,1,1,\gamma^t) x$ must take the form $\kappa \eps x_2$, where $\eps = O(M^{O(1)})$ is such that $x_3 = \eps x_1$.  Since the equation $x_3 = \eps x_1$ fixes $\eps$ to a double coset of $\tilde \Gamma$, there are at most $O(M^{O(1)})$ choices for $\eps$, and for each such choice, $\kappa \eps x_2$ is confined to a ball of radius $O(M^{-C_3+O(1)})$; thus the fourth coordinate of $\eps_0 (1,1,1,\gamma^t) x$ is confined to the union of $O(M^{O(1)})$ balls of radius $O(M^{-C_3+O(1)})$.  Since $\gamma$ is non-trivial, $t \in [0,1]$ is thus confined to the union of $O(M^{O(1)})$ intervals of radius $O(M^{-C_3+O(1)})$.  Thus the set of $t \in [0,1]$ obeying~\eqref{epst} has measure at most $O(M^{-C_3+O(1)})$, leading to a contradiction for $C_3$ large enough.  Thus $L$ must be trivial.

Now we apply a ``Furstenberg--Weiss'' argument~\cite{furstenberg-weiss} (see also the argument attributed to Serre in~\cite[Lemma 3.3]{ribet}).  Consider the groups
\begin{align*}
L_1 &\coloneqq \{ g \in G: (1,g',1,g) \in G' \hbox{ for some } g' \in G \}\\
L_2 &\coloneqq \{ g \in G: (1,1,g',g) \in G' \hbox{ for some } g' \in G \}.
\end{align*}
Taking logarithms, we have
\begin{align*}
\log L_1 &\coloneqq \{ X \in \log G: (X,X',0,X) \in \log G' \hbox{ for some } X' \in \log G \}\\
\log L_2 &\coloneqq \{ X \in \log G: (0,0,X',X) \in \log G' \hbox{ for some } X' \in \log G \},
\end{align*}
thus $\log L_1, \log L_2$ are projections of certain slices of $\log G'$.  Since $G'$ was a $O(M^{O(1)})$-rational subgroup of $G^4$, we conclude from linear algebra that $L_1,L_2$ are $O(M^{O(1)})$-rational subgroups of $G$; comparing with~\eqref{ll}, we also see that $[L_1,L_2] \subset L$; since $L$ is trivial, $[L_1,L_2]$ is trivial.  Since $G$ is non-abelian by hypothesis, $[G,G]$ is non-trivial; thus at least one of $L_1,L_2$ must be a proper subgroup of $G$.  For sake of discussion let us assume that $L_1$ is a proper subgroup, as the other case is similar.  Then there exists a non-trivial horizontal character $\eta_4 \colon G \to \R/\Z$ on $G/\tilde \Gamma$ of Lipschitz norm $O(M^{O(1)})$ that annihilates $L_1$, that is to say $\eta_4(g)=0$ whenever $(1,g',1,g) \in G'$ for some $g' \in G$.  Thus, the homomorphism $(1,g',1,g) \mapsto \eta_4(g)$ on $1 \times G \times 1 \times G$ annihilates the restriction of $G'$ to this group, as well as $1 \times G \times 1 \times 1$.  Taking logarithms, we obtain a linear functional on the Lie algebra $0 \times \log G \times 0 \times \log G$ (with all coefficients $O(M^{O(1)})$ in the Mal'cev basis) that annihilates the restriction of $\log G'$ to this Lie algebra, as well as to $0 \times \log G \times 0 \times 0$; by composing with a suitable linear projection we can then extend this linear functional to a linear functional on all of $(\log G)^4$ that annihilates all of $\log G'$, again with all coefficients $O(M^{O(1)})$.  Undoing the logarithm, we may find (possibly trivial) additional horizontal characters $\eta_1,\eta_3 \colon G \to \R/\Z$ on $G/\tilde \Gamma$ of Lipschitz norm $O(M^{O(1)})$ such that
$$ \eta_1(g_1)  + \eta_3(g_3) + \eta_4(g_4) = 0$$
for all $(g_1,g_2,g_3,g_4) \in G'$.  In particular, writing $g' = (g'_1,g'_2,g'_3,g'_4)$, we have
$$ \eta_1(g'_1(a,a',b,b'))  + \eta_3(g'_3(a,a',b,b')) + \eta_4(g'_4(a,a',b,b')) = 0$$
for all $a,a',b,b' \in \Z$.  Applying the factorization~\eqref{m-factor}, and noting that the horizontal characters $\eta_1,\eta_3,\eta_4$ annihilate the components of $\gamma$, we conclude that
\begin{equation}\label{aoi}
 \eta_1(g(ab))  + \eta_3(g(a'b)) + \eta_4(g(a'b')) = \tilde \eps(a,a',b,b')
\end{equation}
for all $a,a',b,b' \in \Z$, where
$$ \tilde \eps(a,a',b,b') \coloneqq \eta_1(\eps_1(a,a',b,b')) +\eta_3(\eps_3(a,a',b,b')) + \eta_4(\eps_4(a,a',b,b'))$$
and $\eps_1,\eps_2,\eps_3,\eps_4$ are the components of $\eps$.  From the smoothness properties of $\eps$, we see in particular that
$$ \| \tilde \eps(a,a',b,b'+1) - \tilde \eps(a,a',b,b') \|_{\R/\Z} \ll M^{O(1)} / |J_{A'}|$$
for $a,a' \in I_{A'}, b, b' \in J_{A'}$, and hence from~\eqref{aoi}
$$
\|\eta_4(g(a'(b'+1))) - \eta_4(g(a'b')) \|_{\R/\Z} \ll M^{O(1)} / |J_{A'}|$$
whenever $a' \in I_{A'}, b' \in J_{A'}$.  For any $a' \in I_{A'}$, the map $b' \mapsto \eta_4(g(a'b'))$ is a polynomial of degree at most $d$, so by Vinogradov's lemma (Lemma~\ref{vin}), for each such $a'$, we either have
$$ |J_{A'}| \ll M^{O(1)},$$
or else there exists $1 \leq q \ll M^{O(1)}$ such that
\begin{equation}\label{latter}
 \| q \eta_4(g(a' \cdot)) \|_{C^\infty(J_{A'})} \ll M^{O(1)}.
\end{equation}
The former possibility is not compatible with~\eqref{deltac} if $C_4$ is large enough, so we may assume the latter possibility~\eqref{latter} holds for all $a' \in I_{A'}$.  Currently the quantity $q$ may depend on $a'$, but by the pigeonhole principle we may fix a $q$ so that~\eqref{latter} holds for $\gg M^{-O(1)} |I_{A'}|$ choices of $a' \in I_{A'}$.  Applying Corollary~\ref{smooth-dilate}, we conclude that either
$$ |I_{A'}| \ll M^{O(1)},$$
or else there exists $1 \leq q' \ll M^{O(1)}$ such that
$$ \| q' \eta_4 \circ g \|_{C^\infty( [X, X+H] )} \ll M^{O(1)}.$$
In either case we obtain one of the conclusions of Proposition~\ref{prop:Furstenberg-Weiss}.  The proof of Theorem~\ref{inverse}(ii) is now complete.

\section{\texorpdfstring{The abelian type $II$ case}{The abelian type II case}}\label{Abelian-type-ii}

In this section we establish the abelian Type $II$ case (iii) of Theorem~\ref{inverse} using arguments from~\cite{matomaki-shao}. We shall need the following variant of~\cite[Proposition 2.2]{matomaki-shao}.

\begin{proposition}\label{prop:M-StypeII}
Let $\delta \in (0,1/2)$, $M \geq 2$ and $L = X/M$. Assume that $H \geq \delta^{-C}\max(L,M)$ for some sufficiently large constant $C = C(k)>0$. Let $\alpha(\ell), \beta(m) \in \mathbb{C}$. Let $k \in \mathbb{N}$ and let
\[ 
g(n) = \sum_{j=1}^k \nu_j (n-X)^j 
\]
be a polynomial of degree $k$ with real coefficients $\nu_j$. If
\[
\Biggl|\sum_{\substack{\ell, m \\ m \sim M \\ X < \ell m \leq X+H}} \alpha(\ell) \beta(m) e(g(\ell m)) \Biggl| \geq \delta H \left(\frac{1}{L}\sum_{L/2 < \ell \leq 2L} |\alpha(\ell)|^2\right)^{1/2} \left(\frac{1}{M}\sum_{m \sim M} |\beta(m)|^4\right)^{1/4}, 
\]
then there exists a positive integer $q \leq \delta^{-O_k(1)}$ such that
\[ \|q (j \nu_j + (j+1)X \nu_{j+1})\|_{\mathbb{R} / \mathbb{Z}} \leq \delta^{-O_k(1)} \frac{X}{H^{j+1}} \]
for all $1 \leq j \leq k$, with the convention that $\nu_{k+1} = 0$.
\end{proposition}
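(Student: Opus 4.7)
The statement is a short-interval bilinear inverse theorem for polynomial phases, and its proof will very closely follow that of~\cite[Proposition 2.2]{matomaki-shao}; the only task is to verify that the argument there adapts to this slight reformulation. The plan is to use a Cauchy--Schwarz shift to reduce to a Weyl-type sum along short arithmetic progressions in $\ell$, and then to extract Diophantine information about $g$ from Vinogradov's lemma (Lemma~\ref{vin}). The algebraic combinations $j\nu_j+(j+1)X\nu_{j+1}$ will then emerge naturally from Taylor-expanding around $X$.

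First I would apply Cauchy--Schwarz in the $\ell$ variable, using the $L^2$ bound on $\alpha$, to reduce to showing that for many pairs $(m,m')\sim (M,M)$ with $|m-m'|\ll H/X \cdot M$ one has non-trivial cancellation in
\[
S(m,m') \;\coloneqq\; \sum_{\ell\in I_{m,m'}} e\bigl(g(\ell m)-g(\ell m')\bigr),
\]
where $I_{m,m'}$ is the intersection $(X/m,(X+H)/m]\cap (X/m',(X+H)/m']$, which has length $\gg H/M$ provided $|m-m'|\ll H/X$. Here the $L^4$ hypothesis on $\beta$ is exactly what is needed to pass, via a second Cauchy--Schwarz, from the diagonal-removed bilinear sum to a statement about a positive fraction of pairs $(m,m')$.

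Second, for each such pair, write $m'=m+h$ with $|h|\leq \delta^{-O(1)} H/L$ and expand
\[
P_{m,h}(\ell)\coloneqq g(\ell(m+h))-g(\ell m)=\sum_{j=1}^{k}\nu_j\bigl((\ell m+\ell h-X)^j-(\ell m-X)^j\bigr)
\]
in powers of $\ell$. Applying Vinogradov's lemma (Lemma~\ref{vin}) to $P_{m,h}$ on the interval $I_{m,m'}$ of length $\asymp H/M$, we obtain, for $\gg \delta^{O(1)}$ fraction of pairs $(m,h)$, an integer $1\leq q_{m,h}\ll \delta^{-O_k(1)}$ for which each coefficient of $P_{m,h}$ in $\ell$ is $q_{m,h}$-rational to within $\delta^{-O_k(1)}(M/H)^{j}$. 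Pigeonholing in $q_{m,h}$ collapses this to a single $q$. This gives Diophantine information about each expression of the form $\partial_h [\nu_j (\ell m+\ell h -X)^j]|_{\ell=\ell_0}$, differentiated suitably in $\ell$.

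Third, I would perform a downward induction on the degree $j$, exactly as in~\cite[Proposition 2.2]{matomaki-shao}: the coefficient in $\ell$ of highest degree in $P_{m,h}$ is $(\nu_k\cdot k h m^{k-1}+\text{lower})\ell^{k-1}$, which after varying $m$ and applying Vinogradov in the $m$ variable (this time on the scale $M$, of length $\gg\delta^{O(1)} H/L$) yields the top-degree condition on $k\nu_k$, modulo a clean term proportional to $(k+1)X\nu_{k+1}=0$. For each lower $j$, subtracting off the already-controlled higher-degree contributions and Taylor-expanding $(\ell m-X)^j=\sum_i \binom{j}{i}(\ell m)^{i}(-X)^{j-i}$ around $\ell m=X$ produces the combinations $j\nu_j+(j+1)X\nu_{j+1}$ as the leading remaining coefficient, since the $i=j-1$ term from $\nu_j$ contributes $-jX\nu_j$-type pieces that combine with the $i=j$ term from $\nu_{j+1}$. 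Collecting all the $q$'s and applying Corollary~\ref{smooth-dilate} to upgrade from ``many dilates of a polynomial are smooth'' to a single Diophantine bound then yields the conclusion with denominator $\delta^{-O_k(1)}$.

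The main obstacle will be the careful algebraic bookkeeping in the induction on $j$: one must verify that, when the top $k-j$ combinations $i\nu_i+(i+1)X\nu_{i+1}$ are already controlled, the leftover piece of the $j$-th coefficient of $P_{m,h}$ is exactly the desired combination up to an error of the right size, and that the hypothesis $H\geq \delta^{-C}\max(L,M)$ is strong enough at each inductive step so that Vinogradov's lemma applies on both the $\ell$-interval (of length $\gg H/M=H L/X$) and the $m$-interval (of length $\gg H/L=HM/X$). Everything else is essentially a direct transcription of the argument in~\cite{matomaki-shao}.
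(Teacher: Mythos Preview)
Your proposal is correct and is exactly the approach the paper takes: the paper's proof simply says that the argument of~\cite[Proposition 2.2]{matomaki-shao} goes through verbatim, with the single modification that one keeps the normalized sums $\tfrac{1}{L}\sum |\alpha(\ell)|^2$ and $\tfrac{1}{M}\sum |\beta(m)|^4$ in the hypothesis rather than bounding them via divisor estimates. Your outline is a faithful unpacking of that argument.
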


\begin{proof}
This follows from the same argument as~\cite[Proposition 2.2]{matomaki-shao}. The only difference is that we do not assume that the coefficients $\alpha(\ell)$ and $\beta(m)$ are divisor bounded and due to this in the beginning of the proof we do not estimate the sums $\sum_{L/2 < \ell \leq 2L} |\alpha(\ell)|^2$ and $\sum_{m \sim M} |\beta(m)|^4$ with bounds for averages of divisor functions but keep them as they are.
\end{proof}

Let us get back to the proof of Theorem~\ref{inverse}(iii). We can assume that
\[
\max\{A_{II}^+, X/A_{II}^-\} \ll \delta^{O_d(1)} H
\]
since otherwise the claim is immediate. Note that in particular $H \geq \delta^{-O_d(1)} X^{1/2}$. By assumption and dyadic splitting (noting that $\delta < 1/\log X$)
\begin{equation}
\label{eq:typeII1stlowerbound}
\Biggl|\sum_{\substack{x < \ell m \leq x+h \\ m \sim M \\ \ell m \equiv u \pmod{v}}} \alpha(\ell) \beta(m) e(P(\ell m))\Biggr| \geq \delta^2 H 
\end{equation}
for some $(x, x+h] \subseteq (X, X+H]$, some $M \in [X/A_{II}^+, X/A_{II}^-]$, some polynomial $P(x)$ of degree at most $d$ and some $u, v \in \mathbb{N}$ with $u \leq v$. Before applying Proposition~\ref{prop:M-StypeII} we will show that~\eqref{eq:typeII1stlowerbound} can hold only if $v \ll \delta^{-8}$ and $h \gg \delta^8 H$.  In order to show this, we give an upper bound for the left-hand side using the Cauchy--Schwarz inequality. Using also~\eqref{abound} and denoting $L = X/M$, we obtain, using the inequality $|xy| \leq |x|^2 + |y|^2$
\[
\begin{split}
\delta^4 H^2 &\leq \Biggl|\sum_{\substack{x < \ell m \leq x+h \\ m \sim M \\ \ell m \equiv u \pmod{v}}} \alpha(\ell) \beta(m) e(P(\ell m))\Biggr|^2 \\
&\ll \sum_{L/2 < \ell \leq 2L} |\alpha(\ell)|^2 \cdot \sum_{L/2 < \ell \leq 2L} \Biggl(\sum_{\substack{m \sim M \\ x < \ell m \leq x+h \\ \ell m \equiv u \pmod{v}}} |\beta(m)|\Biggr)^2 \\
&\ll \frac{L}{\delta} \sum_{\substack{m_1, m_2 \sim M \\ |m_1 -m_2| \leq 2h/L \\ (m_j, v) \mid u}} |\beta(m_1) \beta(m_2)| \sum_{\substack{L/2 < \ell \leq 2L \\ x < \ell m_1, \ell m_2 < x+h \\ \ell m_j \equiv u \pmod{v}}} 1 \\
&\ll \frac{L}{\delta} \sum_{\substack{m_1, m_2 \sim M \\ |m_1 -m_2| \leq 2h/L \\ (m_2, v) \mid u}} |\beta(m_1)|^2 \left(1+\frac{h (m_2, v)}{M v}\right). \\
\end{split}
\]
Writing $d = (m_2, v)$ and $m_2' = m_2/d$ and using~\eqref{bbound-2} we obtain
\begin{align*}
\delta^4 H^2 &\ll \frac{L}{\delta} \sum_{\substack{m_1 \sim M}} |\beta(m_1)|^2 \Biggl(\frac{h}{L}+1 + \sum_{d \mid u}\sum_{\substack{m_2' \\ |m_1 -d m_2'| \leq 2h/L}} \frac{h d}{M v}\Biggr) \\
& \ll \frac{h M}{\delta^2} + \frac{LM}{\delta^2} + \frac{LM}{\delta^2} \sum_{d \mid u} \frac{h d}{M v} \left(\frac{h}{Ld} +1\right) \\
& \ll \frac{h M}{\delta^2} + \frac{LM}{\delta^2} + \frac{h^2 d_2(u)}{v\delta^2} + \frac{hL}{\delta^2 v} \cdot \frac{u^2}{\varphi(u)}.
\end{align*}
Since $L, M \ll \delta^{O(1)} H$ and $LM \ll \delta^{O(1)} H^2$, this is a contradiction unless $v \ll \delta^{-8}$ and $h \gg \delta^8 H$. 

From~\eqref{eq:typeII1stlowerbound} together with~\eqref{abound} and~\eqref{bbound-2} we have
\[
\Biggl|\sum_{\substack{x < \ell m \leq x+h \\ m \sim M \\ \ell m \equiv u \pmod{v}}} \alpha(\ell) \beta(m) e(P(\ell m))\Biggr| \geq \delta^9 h\left(\frac{1}{L}\sum_{L/2 < \ell \leq 2L} |\alpha(\ell)|^2\right)^{1/2} \left(\frac{1}{M}\sum_{m \sim M} |\beta(m)|^4\right)^{1/4}.
\]
We can write, for some $\nu_j \in \mathbb{R}$,
\[
P(n) = \sum_{j = 0}^d \nu_j (n-X)^j.
\]
We can assume that $\nu_0 = 0$. Furthermore we can spot the condition $\ell m = u \pmod{v}$ using additive characters, so that, for some $r \pmod{v}$ we have
\[
\left|\sum_{\substack{x < \ell m \leq x+h \\ m \sim M}} \alpha(\ell) \beta(m) e\left(P(\ell m) + \frac{r\ell m}{v}\right)\right| \geq \delta^9 h\left(\frac{1}{L}\sum_{L/2 < \ell \leq 2L} |\alpha(\ell)|^2\right)^{1/2} \left(\frac{1}{M}\sum_{m \sim M} |\beta(m)|^4\right)^{1/4}.
\]

Now we are in the position to apply Proposition~\ref{prop:M-StypeII} to the polynomial $P(n) + rn/v$. By multiplying the resulting $q$ by $v$ we see that the conclusion of the proposition holds also for the coefficients of $P(n)$, ignoring $rn/v$. Hence we get that there exists a positive integer $q' \leq \delta^{-O_d(1)}$ such that
\[ 
\|q' (j \nu_j + (j+1)X \nu_{j+1})\|_{\mathbb{R} / \mathbb{Z}} \leq \delta^{-O_d(1)} \frac{X}{H^{j+1}} 
\]
for all $1 \leq j \leq d$, with the convention that $\nu_{d+1} = 0$.

Next we use a variant of the argument in the treatment of type II sums in~\cite[Proof of Theorem 1.3 in Section 4]{matomaki-shao}. We start by shifting each $\nu_j$ by $(q'j)^{-1}a_j$ for an appropriate $a_j \in \Z$ to get $\nu_j'$ such that
\begin{equation}\label{eq:alphak}
|q'(j\nu_j' + (j+1)X\nu_{j+1}')| \leq \delta^{-O_d(1)} \frac{X}{H^{j+1}}
\end{equation}
for all $1 \leq j \leq d$. Let
\[ P_1(n) = \sum_{j=1}^d \nu_j'(n-X)^j, \]
so that
\[ e(P(n)) = e(P_1(n)) e\left(-\sum_{j=1}^d \frac{a_j}{q'j} (n-X)^j\right).  \]
Choosing $q = q' d!$, we see that $e(P(n)-P_1(n))$ is constant in any arithmetic progression $\pmod{q}$ and thus
\begin{equation}
\label{eq:PP1TV}
\|e(P(n)-P_1(n))\|_{\TV( [X,X+H) \cap \Z; q)} \leq q \ll \delta^{-O_d(1)}
\end{equation}

By  induction one can deduce from~\eqref{eq:alphak} that
\begin{equation}\label{eq:alphak'}
\left|\nu_j' - \frac{(-1)^{j-1}}{jX^{j-1}} \nu_1'\right| \leq \delta^{-O_d(1)} \frac{1}{H^j}
\end{equation}
for all $1 \leq j \leq d  + 1$. In particular when $j=d+1$ this gives
\[ |\nu_1'| \leq \delta^{-O_d(1)} \frac{X^d}{H^{d+1}}. \]
We set $T = 2\pi X\nu_1'$, so that
\begin{equation}
\label{eq:Tupper}
|T| \leq \delta^{-O_d(1)} \left(\frac{X}{H}\right)^{d+1}.
\end{equation}
We write also
\[
P_2(n) = \sum_{j=1}^d \frac{(-1)^{j-1}}{jX^{j-1}} \nu_1' (n-X)^j = \frac{T}{2\pi} \sum_{j=1}^d \frac{(-1)^{j-1}}{j} \left(\frac{n-X}{X}\right)^j.
\]
By~\eqref{eq:alphak'} we have that
\begin{equation}
\label{eq:P1P2TV}
\|e(P_1(n)-P_2(n))\|_{\TV( [X,X+H) \cap \Z; q)} \leq q \delta^{-O_d(1)} \ll \delta^{-O_d(1)}.
\end{equation}

By Taylor expansion, for any $k \geq 0$ and $n \in (X, X+H]$,
\[ \log \frac{n}{X} = \log\left(1 + \frac{n-X}{X}\right) = \sum_{j=1}^{d+k} \frac{(-1)^{j-1}}{j} \left(\frac{n-X}{X}\right)^j + O\left( \left(\frac{H}{X}\right)^{d+k+1} \right),
\]
so that, using~\eqref{eq:Tupper},
\[
\begin{split}
P_2(n) &= \frac{T}{2\pi} \log \frac{n}{X} - \frac{T}{2\pi} \sum_{j=d+1}^{d+k} \frac{(-1)^{j-1}}{j} \left(\frac{n-X}{X}\right)^{j} + O\left(\delta^{-O_d(1)} \left(\frac{H}{X}\right)^{k} \right).
\end{split} 
\]
Hence
\[ 
e(P_2(n))n^{-iT} = X^{-iT} e\left(-\frac{T}{2\pi}\sum_{j=d+1}^{d+k} \frac{(-1)^{j-1}}{j} \left(\frac{n-X}{X}\right)^{j}\right) + O\left(\delta^{-O_d(1)} \left(\frac{H}{X}\right)^{k} \right).
\]
Taking $k$ large enough in terms of $\theta$, this implies that
\begin{equation}
\label{eq:P2nTTV}
\|e(P_2(n)) n^{-iT} \|_{\TV( [X,X+H) \cap \Z; q)} \ll \delta^{-O_d(1)}.
\end{equation}
Now the claim follows by combining~\eqref{eq:PP1TV},~\eqref{eq:P1P2TV}, and~\eqref{eq:P2nTTV} utilizing~\eqref{tv-prod}.

\section{\texorpdfstring{The type $I_2$ case}{The type I2 case}}\label{I2-sec}

In this section we establish the type $I_2$ case (iv) of Theorem~\ref{inverse}.  Our main tool will be the following elementary partition\footnote{In this section only, $(m,n)$ will denote the element of the lattice $\Z^2$ with coordinates $m,n$, rather than the greatest common divisor of $m$ and $n$.  We hope that this collision of notation will not cause confusion.} of the hyperbolic neighborhood $\{ (m,n) \in \Z^2: m \in J; \quad X < nm \leq X+H \}$ into arithmetic progressions, which is non-trivial when $H$ is much larger than $X^{1/3}$.

\begin{theorem}[Partition of hyperbolic neighborhood]\label{decomp}  Let $X, H, M \geq 1$ be such that
$$ X^{1/3} \leq H \leq X \quad \text{and} \quad M \ll X^{1/2},$$
and let $J$ be a subinterval of $(M,2M]$.  Then the set
\begin{equation}\label{eaq0}
\{ (m, n) \in \Z^2: m \in J; \quad X < nm \leq X+H \}
\end{equation}
can be partitioned for any integer $Q$ obeying
\begin{equation}\label{qbound}
\frac{M}{H} \leq Q \leq \frac{M}{(HX)^{1/4}}
\end{equation}
as
$$ \bigcup_{q=1}^Q \bigcup_{\substack{a \asymp \frac{X}{M^2} q \\ (a,q)=1}} \bigcup_{P \in {\mathcal P}_{a,q}} P$$
where for each pair $a,q$ of coprime integers with $1 \leq q \leq Q$ and $a \asymp \frac{X}{M^2} q$, ${\mathcal P}_{a,q}$ is a family of $O( \frac{M^3}{XQ^2q} )$ arithmetic progressions $P$ in~\eqref{eaq0}, each of spacing $(q,-a)$ and length at most $\frac{HQ}{M}$.
\end{theorem}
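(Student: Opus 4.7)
The plan is to build the partition via a Dirichlet--Farey dissection keyed to the slope $X/m^2$ of the hyperbola $mn = X$. For each $m \in J \cap \Z$, Dirichlet's theorem applied to $X/m^2$ with denominator bound $Q$ produces coprime $(a(m), q(m))$ with $1 \leq q(m) \leq Q$ and $|X/m^2 - a(m)/q(m)| \leq 1/(q(m)Q)$; I would select such a pair uniquely by preferring the smallest admissible $q$ and breaking remaining ties lexicographically. Since $X/m^2 \asymp X/M^2$ on $J$ and $q(m) \leq Q$, the numerator automatically satisfies $a(m) \asymp q(m)X/M^2$. The resulting \emph{Farey cells} $S_{a,q} := \{m \in J \cap \Z : (a(m), q(m)) = (a,q)\}$ then partition $J \cap \Z$, and since $m \mapsto X/m^2$ has derivative $-2X/m^3 \asymp X/M^3$ on $J$, each $S_{a,q}$ is contained in an interval of $m$-length $\ll M^3/(qQX)$.

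For each fixed $(a,q)$ I would then define $\mathcal{R}_{a,q} := \{(m,n) \in \Z^2 : m \in S_{a,q},\ X < mn \leq X+H\}$ and decompose it into maximal arithmetic progressions of common difference $(q,-a)$: two lattice points of $\mathcal{R}_{a,q}$ are declared to lie in the same progression iff they are linked by a chain of $\pm(q,-a)$ steps that stays entirely inside $\mathcal{R}_{a,q}$. Any such maximal progression having more than $\lceil HQ/M \rceil$ terms would be cut into consecutive blocks of length at most $\lceil HQ/M \rceil$; the blocks obtained in this way form the family $\mathcal{P}_{a,q}$, each of length at most $HQ/M$ by construction. To count $|\mathcal{P}_{a,q}|$, I would bound the total number of lattice points in $\mathcal{R}_{a,q}$ by $\sum_{m \in S_{a,q}} (H/m + 1) \ll H|S_{a,q}|/M \ll M^2 H/(qQX)$, so that dividing by the length $HQ/M$ accounts for the subdivision blocks and yields $\ll M^3/(qQ^2 X)$; the remaining ``short'' maximal progressions are separately bounded by the number of $(q,-a)$-lines meeting $\mathcal{R}_{a,q}$, which is of the same order.

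The main obstacle will be reconciling the length bound $HQ/M$ with the count bound, because the near-tangent direction can produce naturally long maximal progressions before subdivision. The Dirichlet inequality $|X/m_0^2 - a/q| \leq 1/(qQ)$ translates, via $n_0 = X/m_0 + O(H/m_0)$, into $|qn_0 - am_0| \ll M/Q + qH/M$, which controls the linear coefficient of the quadratic $t \mapsto (m_0+tq)(n_0-ta) - m_0 n_0 = t(qn_0 - am_0) - t^2 aq$ and bounds the maximal-progression length by a constant multiple of $HQ/M + M\sqrt{H/X}/q$. The upper bound $Q \leq M/(HX)^{1/4}$ from~\eqref{qbound} is exactly what makes these two contributions compatible with the claimed count after subdivision: at $q \asymp Q$ the maximal length already matches $HQ/M$ without subdivision, while for smaller $q$ the additional subdivision blocks number only $O(M^3/(qQ^2X))$ per cell. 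The lower bound $Q \geq M/H$ guarantees $HQ/M \geq 1$, so that the subdivision blocks contain at least one lattice point and the partition is meaningful.
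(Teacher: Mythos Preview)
Your overall strategy—Farey/Dirichlet dissection of the hyperbola by slope, then decomposition of each cell into $(q,-a)$-progressions—is the same as the paper's (the paper approximates $n/m$ rather than $X/m^2$, a harmless difference since $|n/m - X/m^2| \ll H/M^2 \ll 1/(qQ)$ under~\eqref{qbound}). However, two of your counting claims have genuine gaps.

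First, in the step $\sum_{m\in S_{a,q}}(H/m+1)\ll H|S_{a,q}|/M$ you silently drop the $+1$; this fails whenever $M>H$ (e.g.\ $M\asymp X^{1/2}$, $H\asymp X^{1/3}$, $q=Q\asymp X^{1/6}$, where the $+1$ term alone contributes $|S_{a,q}|\asymp X^{1/6}$ while your asserted bound is $O(1)$). Second, and more fundamentally, your assertion that the number of $(q,-a)$-lines through $\mathcal R_{a,q}$ is ``of the same order'' $M^3/(qQ^2X)$ is the heart of the argument and is left without justification. Your quadratic $t\mapsto t(qn_0-am_0)-t^2aq$ controls the \emph{length} of the progression on a fixed line, not the \emph{number of lines}. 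What is missing is control on the line invariant $c=qn+am$: the paper's key device is the identity
\[
c^2-(qn-am)^2 \;=\; 4aq\cdot mn \;=\; 4aqX+O(aqH),
\]
which, combined with the Dirichlet input $|qn-am|\ll M/Q$ and the hypothesis $Q\le M/(HX)^{1/4}$ (so that $aqH\ll M^2/Q^2$), confines $c$ to an interval of length $O(M^3/(XQ^2q))$ around $\sqrt{4aqX}$. A finer case analysis on $|c-\sqrt{4aqX}|$, exploiting that on each line the quantity $qn-am$ lies in a fixed residue class modulo $aq$, is also what delivers the sharp point count $\ll HM^2/(XQq)$ that your per-$m$ argument cannot reach when $M>H$.
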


In particular, the cardinality of the set~\eqref{eaq0} does not exceed
\begin{equation}\label{hlog}
 \ll \sum_{1 \leq q \leq Q} \sum_{a \asymp \frac{X}{M^2} q} \frac{M^3}{XQ^2q} \frac{HQ}{M} \ll H.
\end{equation}

\begin{proof}[Proof of Theorem~\ref{decomp}]  For future reference, we note from~\eqref{qbound} and $X^{1/3} \leq H \leq X$ that
\begin{equation}\label{q-extra}
 Q \leq \frac{M}{(HX)^{1/4}} \leq \frac{M}{X^{1/3}} \leq \frac{M H^{1/2}}{X^{1/2}} \leq M.
\end{equation}

Note that if $(m,n)$ lies in~\eqref{eaq0} then $m \asymp M$ and $nm \asymp X$, thus $\frac{n}{m} \asymp \frac{X}{M^2}$.  By the Dirichlet approximation theorem, we then have
$$ \frac{n}{m} \in \left[\frac{a}{q} - \frac{1}{Qq}, \frac{a}{q} + \frac{1}{Qq}\right]$$
for some $1 \leq q \leq Q$ and some $a \asymp \frac{X}{M^2} q$ coprime to $q$.  If for any such $a,q$ we define $I_{a,q}$ to be the portion of the interval $[\frac{a}{q} - \frac{1}{Qq}, \frac{a}{q} + \frac{1}{Qq}]$ that is not contained in any other such interval $I_{a',q'}$ with $q' < q$, we see that the $I_{a,q}$ are disjoint intervals, and that we can partition~\eqref{eaq0} into sets
\begin{equation}\label{eaq}
\{ (m,n) \in \Z^2: m \in J; \frac{n}{m} \in I_{a,q}; \quad X < nm \leq X+H \}
\end{equation}
where $a,q$ range over those coprime integers with 
\begin{equation}\label{aq}
1 \leq q \leq Q; \quad \frac{a}{q} \asymp \frac{X}{M^2}.
\end{equation}
It then suffices to show that each such set~\eqref{eaq} can be partitioned into $O( \frac{M^3}{XQ^2q} )$ arithmetic progressions $P$ in $\Z^2$, each of spacing $(q,-a)$ and length at most $\frac{HQ}{M}$.

Fix $a,q$, and write $I = I_{a,q}$.  It in fact suffices to show that the set~\eqref{eaq} can be partitioned into $O( \frac{M^3}{XQ^2q} )$ arithmetic progressions $P$ of spacing $(q,-a)$ and arbitrary length, so long as we also show that the total cardinality of~\eqref{eaq} is $O( \frac{HM^2}{XQq} )$.  This is because any such progression $P$ can be partitioned into $O( \frac{M}{HQ} \# P + 1 )$ subprogressions of the same spacing $(q,-a)$ and length at most $\frac{HQ}{M}$, and
$$ \sum_P\left( \frac{M}{HQ} \# P + 1\right)  \ll \frac{M}{HQ} \frac{HM^2}{XQq} +\frac{M^3}{XQ^2q} \ll \frac{M^3}{XQ^2 q}.$$

It remains to obtain such a partition.  From Bezout's theorem we see that for any integer $c$, the set $\{ (m,n) \in \Z^2: qn+am=c\}$ is an infinite arithmetic progression of spacing $(q,-a)$.  The intersection of~\eqref{eaq} with this set is
\begin{equation}\label{eaqc}
E_c := \left\{ \left(m,\frac{c-am}{q}\right): m, \frac{c-am}{q} \in \Z; m \in J; \frac{c}{mq} - \frac{a}{q} \in I; X < \frac{(c-am)m}{q} \leq X+H \right\}.
\end{equation}
The constraints
$$m \in J; \frac{c}{mq} - \frac{a}{q} \in I; X < \frac{(c-am)m}{q} \leq X+H $$
confine $m$ to the union of at most two intervals in the real line, and hence the set $E_c$ is the union of at most two arithmetic progressions in $\Z^2$ of spacing $(q,-a)$.  It thus suffices to show that $E_c$ is non-empty for at most $O( \frac{M^3}{X Q^2 q} )$ choices of $c$, and that 
\begin{equation}\label{sumc}
\sum_c \# E_c \ll \frac{HM^2}{XQq}.
\end{equation}

We begin with the first claim.  If $(m,n) \in E_c$ then $c = qn+am$ and $nm = X + O(H)$ and hence
\begin{equation}\label{qnm}
 c^2 - (qn-am)^2 = (qn+am)^2 - (qn-am)^2 = 4aqnm = 4aqX + O(aq H).
\end{equation}
On the other hand, we have
\begin{equation}\label{maq}
 qn-am = mq \left(\frac{n}{m}-\frac{a}{q}\right) \ll \frac{mq}{qQ} \ll \frac{M}{Q}.
\end{equation}
We thus have
$$ c^2 = 4aqX + O( aqH ) + O\left( \frac{M^2}{Q^2} \right).$$
From~\eqref{aq},~\eqref{qbound} we have
$$ aqH \ll \frac{X}{M^2}q^2 H \ll \frac{M^2}{Q^2} \frac{XHQ^4}{M^4} \ll \frac{M^2}{Q^2} $$
and thus
$$ c^2 = 4aqX + O\left( \frac{M^2}{Q^2} \right).$$
Also $\frac{M^2}{Q^2} \leq M^2 \ll X \leq aqX$.  Thus on taking square roots we have
$$ c = \sqrt{4aqX} + O\left( \frac{1}{\sqrt{aqX}} \frac{M^2}{Q^2} \right)$$
and hence by~\eqref{aq}
$$ c = \sqrt{4aqX} + O\left( \frac{M^3}{XQ^2q} \right)$$
giving the first claim.

It remains to prove~\eqref{sumc}.  We first consider the contribution of those $c$ for which
$$ c = \sqrt{4aqX} + O\left( \frac{1}{\sqrt{aqX}} aqH + 1\right),$$
so the total number of possible $c$ here is $O( \frac{1}{\sqrt{aqX}} aqH + 1 )$.  For a fixed such $c$, we then have from~\eqref{qnm} that 
$$ qn-am = O( \sqrt{aqH} ).$$
But once one fixes $c = qn+am$, the residue class of $qn-am$ modulo $q$ and modulo $a$ are both fixed, thus by the Chinese remainder theorem $qn-am$ is restricted to a single residue class modulo $aq$.  Thus the number of possible values of $qn-am$ is $O( \frac{\sqrt{aqH}}{aq} + 1 )$.  The net contribution of this case to~\eqref{sumc} is then
$$ \ll \left(\frac{1}{\sqrt{aqX}} aqH + 1\right) \left(\frac{\sqrt{aqH}}{aq} + 1\right)$$
which expands out to
$$ \ll \frac{H^{3/2}}{X^{1/2}} + \frac{a^{1/2} q^{1/2} H}{X^{1/2}} + \frac{H^{1/2}}{a^{1/2} q^{1/2}} + 1.$$
Using~\eqref{aq}, this becomes
$$ \ll \frac{H^{3/2}}{X^{1/2}} + \frac{q H}{M} + \frac{H^{1/2} M}{q X^{1/2}} + 1.$$
Thus we need to show that
$$ \frac{H^{3/2}}{X^{1/2}}, \frac{q H}{M}, \frac{H^{1/2} M}{q X^{1/2}}, 1 \ll \frac{HM^2}{XQq}$$
which on using $1 \leq q \leq Q$ rearranges to
$$ Q \ll \frac{M}{H^{1/4} X^{1/4}}, \frac{M}{X^{1/3}}, \frac{H^{1/2} M}{X^{1/2}}, \frac{H^{1/2} M}{X^{1/2}}$$
and the claim now follows from~\eqref{q-extra}.

Now we consider the contribution of the opposite case, in which $|c - \sqrt{4aqX}|$ exceeds a large multiple of $\frac{1}{\sqrt{aqX}} aqH + 1$.  Then $|c^2-4aqX|$ exceeds a large multiple of $aqH$, so from~\eqref{qnm} we have
$$ c^2 = 4aqX + O( (qn-am)^2 )$$
and thus if we restrict to a dyadic range $qn-am \in \pm [A,2A]$ for some $1 \leq A \ll \frac{M}{Q}$ that is a power of two (the upper bound coming from~\eqref{maq}) we have
$$ c = \sqrt{4aqX} + O\left( \frac{1}{\sqrt{aqX}} A^2 \right).$$
Thus for a fixed $A$, the total number of possible $c$ here is $O( \frac{1}{\sqrt{aqX}} A^2 )$ (note that we have already excluded those $c$ that lie within $O(1)$ of $\sqrt{4aqX}$).  On the other hand, once $c$ is fixed, we see from~\eqref{qnm} that $(qn-am)^2$ is constrained to an interval of length $O(aqH)$.  The quantity $qn-am$ is also constrained to lie in $\pm [A,2A]$ and to a single residue class modulo $aq$, so the squares $(qn-am)^2$ are separated by $\gg Aaq$ when $qn-am$ is positive, and similarly when $qn-am$ is negative.  Thus the total number of possible values of $qn-am$ available is $O( \frac{aqH}{Aaq} + 1 ) = O( \frac{H}{A} )$, since from~\eqref{qbound} one has $\frac{H}{A} \gg \frac{H}{M/Q} \geq 1$.  Thus the total contribution of this case to~\eqref{sumc} is
$$ \ll \sum_{\substack{1 \leq A \ll \frac{M}{Q} \\ A = 2^j}} \frac{A^2}{\sqrt{aqX}} \cdot \frac{H}{A} \ll \frac{1}{\sqrt{aqX}} H \frac{M}{Q}$$
which after applying~\eqref{aq} gives $O( \frac{HM^2}{XQq} )$ as required.
\end{proof}

Combining this with the pigeonhole principle we obtain

\begin{corollary}[Pigeonholing on a hyperbola neighborhood]\label{corx} Let $X, H, M, Q \geq 1$ be such that
$$ X^{1/3} \leq H \leq X, \quad M \ll X^{1/2}, \quad \text{and} \quad \frac{M}{H} \leq Q \leq \frac{M}{(HX)^{1/4}},$$
and let $J$ be a subinterval of $[M,2M]$.

Let $P_0$ be an arithmetic progression in $(X,X+H]$, and let $\beta_1, \beta_2 \colon \N \to \C$ be functions obeying the bounds
$$ \| \beta_1 \|_{\TV(\N;q_0)}, \| \beta_2 \|_{\TV(\N;q_0)} \leq 1/\delta$$
for some $1 \leq q_0 \leq 1/\delta$ and some\footnote{It is likely that with more effort the restriction on $\delta$ can be increased up to 1, but that we will not need to do so here.} $0 < \delta < 1/(\log X)$.
Let $f: \Z^2 \to \C$ be a $1$-bounded function such that
\begin{equation}\label{mj}
\left|\sum_{m \in J} \sum_{\substack{n \\ X < nm \leq X+H}} \beta_1(m) \beta_2(n) 1_{P_0}(nm) f(n,m)\right| \geq \delta H.
\end{equation}
Then for $\gg \delta^{O(1)} \frac{XQ^2}{M^2}$ pairs of coprime integers $q,a$ with $\delta^{O(1)} Q \ll q \leq Q$ and $a \asymp \frac{X}{M^2} q$, one can find an arithmetic progression $P$ in~\eqref{eaq0} of spacing $(q,-a)$ and length at most $\frac{HQ}{M}$ such that
$$ \left|\sum_{(m,n) \in P} f(n,m)\right|^* \gg \delta^{O(1)} \frac{HQ}{M}.$$
Here we extend the maximal sum notation~\eqref{maximal-sum} to sums over arithmetic progressions in $\Z^2$ in the obvious fashion.
\end{corollary}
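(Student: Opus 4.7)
My plan is to apply Theorem~\ref{decomp} to partition the hyperbolic neighborhood~\eqref{eaq0}, pigeonhole to isolate many pairs $(q,a)$ each carrying a single progression $P$ with a large weighted sum, and then strip the weights $\beta_1(m)\beta_2(n) 1_{P_0}(nm)$ by summation by parts on $P$ via Lemma~\ref{basic-prop}(iii). As a preliminary observation, if $P_0$ has spacing $Q_0$ then the number of lattice points $(m,n)$ with $m \in J$ and $nm \in P_0$ is at most $\sum_{N \in P_0}d(N) \ll (H/Q_0)\log^{O(1)}X$, so combined with $|\beta_i| \leq \|\beta_i\|_{\TV(\N;q_0)} \leq 1/\delta$ the hypothesis~\eqref{mj} forces $Q_0 \leq \delta^{-O(1)}$ (absorbing log factors using $\delta \leq 1/\log X$); thus $P_0$ may be treated as a progression of modulus $\leq \delta^{-O(1)}$.

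First, Theorem~\ref{decomp} writes the left-hand side of~\eqref{mj} as
\[
\sum_{q \leq Q}\,\sum_{\substack{(a,q)=1 \\ a \asymp Xq/M^2}}\,\sum_{P \in \mathcal{P}_{a,q}}\,\sum_{(m,n)\in P}\beta_1(m)\beta_2(n) 1_{P_0}(nm) f(n,m).
\]
Using $|\beta_i|,|f| \leq 1/\delta$ and the counting bound $\sum_{q \leq q_*}\sum_a |\mathcal{P}_{a,q}| \cdot (HQ/M) \ll Hq_*/Q$ from the proof of~\eqref{hlog}, the contribution from $q \leq q_* := c\delta^3 Q$ is $\leq \delta H/2$, so the $q > q_*$ part is $\gg \delta H$. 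A dyadic pigeonhole in $q$ localizes this to a range $[Q',2Q']$ with $Q' \gg \delta^{O(1)}Q$ contributing $\gg \delta^{O(1)}H$; in this range there are $\asymp XQ^2/M^2$ coprime pairs $(q,a)$ and the trivial bound per pair is $O(HM^2/(XQQ'\delta^2))$, so a standard pigeonhole yields $\gg \delta^{O(1)}XQ^2/M^2$ pairs each of contribution $\gg \delta^{O(1)}HM^2/(XQQ')$; a further pigeonhole within each pair over the $\asymp M^3/(XQ^2 q)$ progressions produces a single $P$ with $\left|\sum_{(m,n) \in P}\beta_1\beta_2 1_{P_0} f\right| \gg \delta^{O(1)}HQ/M$.

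Finally, on each such $P$, parametrize by $k \in [0,L)$ via $(m_k,n_k) = (m_0+kq,\, n_0-ka)$ with $L \leq HQ/M$, and take $\tilde q := \mathrm{lcm}(q_0/(q_0,q),\, q_0/(q_0,a),\, Q_0) \leq \delta^{-O(1)}$. On each residue class of $k$ mod $\tilde q$, both $m_k$ and $n_k$ remain in fixed residue classes mod $q_0$ (making $\beta_1(m_k)\beta_2(n_k)$ a product of two TV-bounded functions, hence of TV $\ll \delta^{-O(1)}$ by~\eqref{tv-prod}), and $n_km_k$ remains in a fixed residue class mod $Q_0$, so the modular part of $1_{P_0}(n_km_k)$ is constant; its interval constraint restricts $k$ to at most two intervals since $n_km_k$ is quadratic in $k$. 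Hence the weight $k\mapsto \beta_1(m_k)\beta_2(n_k)1_{P_0}(n_km_k)$ has TV norm $\ll \delta^{-O(1)}$ in $\TV([0,L);\tilde q)$, and Lemma~\ref{basic-prop}(iii) yields $\left|\sum\beta_1\beta_2 1_{P_0} f\right| \ll \delta^{-O(1)}\left|\sum_{(m,n)\in P}f(n,m)\right|^*$, whence $\left|\sum_{(m,n)\in P} f(n,m)\right|^* \gg \delta^{O(1)}HQ/M$ as claimed. The main technical obstacle is the TV bookkeeping in this last step, in particular the preliminary reduction to small $Q_0$ and the use of the quadratic structure of $n_km_k$ to reduce the indicator $1_{P_0}$ to a bounded-TV factor along $P$.
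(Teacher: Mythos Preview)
Your proof is correct and uses the same ingredients as the paper (the bound on the spacing of $P_0$, the hyperbola decomposition of Theorem~\ref{decomp}, the small-$q$ truncation, the double pigeonhole over $(q,a)$ and then over $P$, and Lemma~\ref{basic-prop}(iii) to remove the weights), but assembles them in a different order. The paper strips the weights \emph{first}: two global applications of Lemma~\ref{basic-prop}(iii) to the $m$- and $n$-sums replace $\beta_1,\beta_2$ by progression indicators $1_{P_1}(m),1_{P_2}(n)$, and only then is Theorem~\ref{decomp} invoked; on each resulting $P$ one simply observes that $\{(m,n)\in P: m\in P_1,\ n\in P_2,\ nm\in P_0\}$ is a union of $O(\delta^{-O(1)})$ sub-progressions of $P$, which immediately yields the $|\cdot|^*$ bound. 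Your route decomposes first and strips the weights last, along each $P$, which forces you to track the TV norm of $k\mapsto \beta_1(m_k)\beta_2(n_k)1_{P_0}(n_km_k)$ explicitly (including the quadratic structure of $n_km_k$). Both work; the paper's ordering trades your endgame TV bookkeeping for the slightly cleaner ``intersection of progressions'' observation.
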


\begin{proof} Let $q_0'$ be the spacing of $P_0$. We first claim that $q_0'\ll \delta^{-10}$. Indeed, by Shiu's bound (Lemma~\ref{shiu}) we have 
\begin{align*}
 \sum_{m\in J}\sum_{\substack{X<nm\leq X+H\\nm\equiv b(q_0')}}1
\leq \sum_{\substack{X<n\leq X+H\\n\equiv b(q_0')}}d_2(n)
\ll_{\varepsilon} d_2(q_0')\left((\log X)\frac{H}{q_0'} +X^{\varepsilon}\right),  
\end{align*}
and if $q_0'\gg \delta^{-10}$ then this together with the triangle inequality contradicts our assumption~\eqref{mj}. Now we may assume that $q_0'\ll \delta^{-10}$.

By Lemma~\ref{basic-prop}(iii), the left-hand side of~\eqref{mj} is bounded by
$$ \frac{1}{\delta} \left|\sum_{m \in J} \left(\sum_{\substack{n \\ X < nm \leq X+H}} \beta_2(n) 1_{P_0}(nm) f(n,m)\right)\right|^*$$
which by definition is equal to
$$ \frac{1}{\delta} \left|\sum_{m \in J} \sum_{\substack{n \\ X < nm \leq X+H}} 1_{P_1}(m) \beta_2(n) 1_{P_0}(nm) f(n,m)\right|$$
for some arithmetic progression $P_1 \subset J$.  Interchanging the $n$ and $m$ sums and using Lemma~\ref{basic-prop}(iii) again, we can bound this in turn by
$$ \frac{1}{\delta^2} \left|\sum_{m \in J} \sum_{\substack{n \\ X < nm \leq X+H}} 1_{P_1}(m) 1_{P_2}(n) 1_{P_0}(nm) f(n,m)\right|$$
for some arithmetic progression $P_2$.  
From Theorem~\ref{decomp} and the triangle inequality, we have
\begin{align*}
& \sum_{m \in J} \sum_{\substack{n \\ X < nm \leq X+H}} 1_{P_1}(m) 1_{P_2}(n) 1_{P_0}(nm) f(n,m) \\
&\quad \ll \sum_{q=1}^Q \sum_{\substack{a \asymp \frac{X}{M^2} q \\ (a,q)=1}} \frac{M^3}{XQ^2q} \sup_{P \in {\mathcal P}_{a,q}} \left|\sum_{(m,n) \in P} 1_{P_1}(m) 1_{P_2}(n) 1_{P_0}(nm) f(n, m)\right|
\end{align*}
and since the set $\{ (m,n) \in P: m \in P_1, n \in P_2, nm \in P_0 \}$ is the union of at most $O(\delta^{-O(1)})$ arithmetic progressions in $P$ (recalling that $q_0'\ll \delta^{-O(1)}$), we have
$$ \left|\sum_{(m,n) \in P} 1_{P_1}(m) 1_{P_2}(n) 1_{P_0}(nm) f(n, m)\right| \ll \delta^{-O(1)}\left|\sum_{(m,n) \in P} f(n, m)\right|^*.$$
We conclude that
\begin{equation}\label{qQ}
 \sum_{q=1}^Q \sum_{\substack{a \asymp \frac{X}{M^2} q \\ (a,q)=1}} \frac{M^3}{XQ^2q} \sup_{P \in {\mathcal P}_{a,q}} \left|\sum_{(m,n) \in P} f(n, m)\right|^* \gg \delta^{O(1)} H.
\end{equation}
As $f$ is $1$-bounded, we have here
\begin{equation}\label{mqq}
 \frac{M^3}{XQ^2q} \sup_{P \in {\mathcal P}_{a,q}} \left|\sum_{(m,n) \in P} f(n, m)\right|^* \leq \frac{M^3}{XQ^2 q} \frac{HQ}{M} = \frac{M^2 H}{XQq};
\end{equation}
since the number of $a$ associated to a fixed $q$ is $O(Xq/M^2)$, we conclude that, for any $q \leq Q$,
$$ \sum_{\substack{a \asymp \frac{X}{M^2} q \\ (a,q)=1}} \frac{M^3}{XQ^2q} \sup_{P \in {\mathcal P}_{a,q}} \left|\sum_{(m,n) \in P} f(n, m)\right|^* \ll \frac{H}{Q}.$$
Comparing this with~\eqref{qQ}, we conclude that
\begin{equation}\label{amq}
\sum_{\substack{a \asymp \frac{X}{M^2} q \\ (a,q)=1}} \frac{M^3}{XQ^2q} \sup_{P \in {\mathcal P}_{a,q}} \left|\sum_{(m,n) \in P} f(n, m)\right|^* \gg \delta^{O(1)} \frac{H}{Q}
\end{equation}
for $\gg \delta^{O(1)} Q$ choices of $1 \leq q \leq Q$.  By dropping small values of $q$, we may restrict attention to those $q$ with $\delta^{O(1)} Q \ll q \ll Q$.  For each such $q$, we combine~\eqref{mqq} with~\eqref{amq} to conclude that 
$$
\frac{M^3}{XQ^2q} \sup_{P \in {\mathcal P}_{a,q}} \left|\sum_{(m,n) \in P} f(n, m)\right|^* \gg \frac{M^2}{Xq} \delta^{O(1)} \frac{H}{Q}$$
for $\gg \delta^{O(1)} \frac{Xq}{M^2} \gg \delta^{O(1)} \frac{XQ}{M^2}$ choices of $a$, and the claim follows.
\end{proof}

We can now obtain a preliminary version of Theorem~\ref{inverse}(iv) (which basically corresponds to the case $A_{I_2}=1$, after some dyadic decomposition):

\begin{proposition}[Preliminary type $I_2$ inverse theorem]\label{d2-prelim} Let $X, H, M \geq 1$ be such that
$$ X^{1/3} \leq H \leq X \quad \text{and} \quad M \ll X^{1/2},$$
and let $J$ be a subinterval of $(M,2M]$. Let $0<\delta < 1/(\log X)$, let $P_0$ be an arithmetic progression in $(X,X+H]$, and let $\beta_1, \beta_2 \colon \N \to \C$ be functions obeying the bounds
$$ \| \beta_1 \|_{\TV(\N;q_0)}, \| \beta_2 \|_{\TV(\N;q_0)} \leq 1/\delta$$
for some $1 \leq q_0 \leq 1/\delta$. 

Let $G/\Gamma$ be a filtered nilmanifold of degree $d$, dimension $D$, and complexity at most $1/\delta$ for some $d,D \geq 1$, and let $F \colon G/\Gamma \to \C$ be a Lipschitz function of norm $1/\delta$ and mean zero, and $g \colon \Z \to G$ a polynomial map.  Suppose that
$$
\left|\sum_{m \in J} \sum_{\substack{n \\ X < nm \leq X+H}} \beta_1(m) \beta_2(n) 1_{P_0}(nm) F(g(nm)\Gamma)\right| \geq \delta H.$$
Then either
\begin{equation}\label{hqd}
H \ll_{d,D} \delta^{-O_{d,D}(1)} X^{1/3}
\end{equation}
or else there exists non-trivial horizontal character $\eta \colon G \to \R$ of Lipschitz norm $O_{d,D}(\delta^{-O_{d,D}(1)})$ such that
$$ \| \eta \circ g \|_{C^\infty(X,X+H]} \ll_{d,D} \delta^{-O_{d,D}(1)}.$$
\end{proposition}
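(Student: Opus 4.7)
The plan is to combine Corollary~\ref{corx} (to decompose the hyperbola neighborhood into arithmetic progressions), the quantitative Leibman theorem (Theorem~\ref{qlt}), and the Vinogradov-type dilate concatenation of Corollary~\ref{smooth-dilate}, in close analogy with the type $I$ argument of Section~\ref{type-i-sec}. All implied constants below are allowed to depend on $d,D$.

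I would first choose $Q$ at the top of the admissible range in Corollary~\ref{corx}, taking $Q$ to be essentially $\delta^{C} M/(HX)^{1/4}$ for a sufficiently large $C = C(d,D)$. Should this range fail to contain an integer (either because $M/H > Q$ after inserting the $\delta^{C}$, or because $Q<1$), a short calculation translates this into $H \ll \delta^{-O(1)} X^{1/3}$, which is conclusion~\eqref{hqd}. Otherwise I apply Corollary~\ref{corx} with $f(n,m) \coloneqq \delta F(g(nm)\Gamma)$, a $1$-bounded function, producing $\gg \delta^{O(1)} X Q^2 / M^2$ coprime pairs $(q,a)$ with $\delta^{O(1)} Q \ll q \leq Q$ and $a \asymp (X/M^2) q$, together with, for each such pair, an arithmetic progression
$$ P_{q,a} = \{(m_0 + jq,\, n_0 - ja) : 0 \leq j \leq L_{q,a}\} $$
in the hyperbola neighborhood of length $L_{q,a} \leq L \coloneqq HQ/M$, on which
$$ \left|\sum_{j=0}^{L_{q,a}} F\bigl(g(h_{q,a}(j))\,\Gamma\bigr)\right|^{\!*} \gg \delta^{O(1)} L, $$
where $h_{q,a}(j) \coloneqq (m_0+jq)(n_0-ja) = m_0 n_0 + j\Delta - j^2 aq$ and $\Delta \coloneqq qn_0-am_0$ satisfies $|\Delta| \ll M/Q$.

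By the Lazard--Leibman theorem the map $j \mapsto g(h_{q,a}(j))$ is a polynomial map $\Z \to G$ of degree at most $2d$. Applying Theorem~\ref{qlt} to this map on each $[0, L_{q,a}]$ yields either the trivial bound $L_{q,a} \ll \delta^{-O(1)}$ (which, given the choice of $Q$, forces $H \ll \delta^{-O(1)} X^{1/3}$), or a non-trivial horizontal character $\eta_{q,a} : G \to \R/\Z$ of Lipschitz norm $O(\delta^{-O(1)})$ with $\|\eta_{q,a} \circ g \circ h_{q,a}\|_{C^{\infty}([0, L_{q,a}])} \ll \delta^{-O(1)}$. Pigeonholing over the $O(\delta^{-O(1)})$ possible characters, I may assume a single $\eta$ serves for $\gg \delta^{O(1)} X Q^2 / M^2$ pairs.

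The hard part is then to convert this family of composed smoothness estimates, which live on the \emph{quadratic} curves $j \mapsto h_{q,a}(j)$, back into smoothness of the polynomial $P \coloneqq \eta \circ g$ on the interval $(X, X+H]$ itself. In the type $I$ case the analogue of $h_{q,a}$ is the pure dilation $j \mapsto aj$ and Corollary~\ref{smooth-dilate} applies verbatim; here, however, the quadratic correction $-j^2 aq$ is of the same order of magnitude as the linear term $j\Delta$ across the whole range $j \in [0,L]$ (each contributing roughly $H$), so it cannot simply be discarded. I expect to dispose of it by expanding
$$ P(h_{q,a}(j)) = \sum_{k=0}^{d} c_k \bigl(h_{q,a}(j) - X\bigr)^k, $$
with $c_k$ the Taylor coefficients of $P$ at $X$, collecting powers of $j$, and exploiting that the $C^{\infty}$ bound above gives $\|L^\ell \Phi_\ell(c_0, \dots, c_d;\Delta, aq)\|_{\R/\Z} \ll \delta^{-O(1)}$ for each $0 \leq \ell \leq 2d$ and each admissible $(q,a)$, where $\Phi_\ell$ is an explicit polynomial. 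The top case $\ell = 2d$ isolates $c_d$ with coefficient $(-aq)^d$, and the chosen value of $Q$ leads directly to $\|H^d c_d\|_{\R/\Z} \ll \delta^{-O(1)}$. Working downward in $\ell$, using the previously established bounds on $c_d, c_{d-1}, \dots$ together with Vinogradov's lemma (Lemma~\ref{vin}) applied to $\Phi_\ell$ viewed as a polynomial in the varying parameter $\Delta$, and combined across the available $(q,a)$ via Corollary~\ref{smooth-dilate}, I expect to control $\|c_{d-1}\|_{\R/\Z}, \dots, \|c_1\|_{\R/\Z}$ at scales $H^{-(d-1)}, \dots, H^{-1}$, which is precisely the bound $\|q P\|_{C^{\infty}((X, X+H])} \ll \delta^{-O(1)}$ after clearing a suitable integer denominator $q \ll \delta^{-O(1)}$.
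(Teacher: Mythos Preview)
Your overall strategy matches the paper's: apply Corollary~\ref{corx} with $Q\asymp M/(HX)^{1/4}$, invoke Theorem~\ref{qlt} on each resulting progression, pigeonhole on the horizontal character $\eta$, and run a downward induction on the Taylor coefficients $\alpha_j$ of $P=\eta\circ g$ at $X$. The difficulty is in your inductive step. Your plan to ``apply Vinogradov's lemma to $\Phi_\ell$ viewed as a polynomial in the varying parameter $\Delta$'' and then invoke Corollary~\ref{smooth-dilate} across $(q,a)$ does not work as stated: for each pair $(q,a)$, Corollary~\ref{corx} hands you a \emph{single} progression with one fixed value of $\Delta$ (and of $u=m_0 n_0-X$), so there is no range of $\Delta$ over which Lemma~\ref{vin} can be run; and Corollary~\ref{smooth-dilate} concerns linear dilates $P(a\cdot)$, not the quadratic compositions $j\mapsto P(h_{q,a}(j))$ arising here.

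The paper sidesteps $\Delta$ and $u$ entirely. Having established $\|q_0\alpha_j\|_{\R/\Z}\ll\delta^{-O(1)}H^{-j}$ for all $j>j_0$, one checks (using $Q^4\asymp M^4/(HX)$, which forces $|aq|\,L^2\asymp H$ and $|\Delta|\,L\asymp H$) that each subtracted monomial $n\mapsto q_0\alpha_j(n-X)^j$, composed with $h_{q,a}$, has $C^\infty([0,L])$-norm $\ll\delta^{-O(1)}$. Hence the residual $Q(n)=q_0\sum_{j\le j_0}\alpha_j(n-X)^j$ itself obeys the bound~\eqref{ega}. The key observation is that the \emph{top} coefficient of $Q\circ h_{q,a}$, namely the coefficient of $k^{2j_0}$, is exactly $q_0\alpha_{j_0}(-aq)^{j_0}$: it depends on $(q,a)$ only through the product $aq$, with no $\Delta$ or $u$ present. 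From $\|q_0\alpha_{j_0}(aq)^{j_0}\|_{\R/\Z}\ll\delta^{-O(1)}(HQ/M)^{-2j_0}$ for $\gg\delta^{O(1)}XQ^2/M^2$ pairs, one applies Lemma~\ref{vin} twice --- once in $a$ over $a\asymp Xq/M^2$, once in $q$ over $[1,Q]$ --- to obtain $\|q_1 q_0\alpha_{j_0}\|_{\R/\Z}\ll\delta^{-O(1)}(HQ/M)^{-2j_0}Q^{-j_0}(XQ/M^2)^{-j_0}=\delta^{-O(1)}H^{-j_0}$. Corollary~\ref{smooth-dilate} is not used in this proposition at all; it enters only afterwards, in deducing Theorem~\ref{inverse}(iv) from Proposition~\ref{d2-prelim}, to handle the extra dilation by $a\in[1,A_{I_2}]$ coming from the convolution.
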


\begin{proof}   We allow all implied constants to depend on $d,D$. We apply Corollary~\ref{corx} with 
$$ Q \coloneqq \left\lfloor \frac{M}{(HX)^{1/4}}\right\rfloor.$$
This gives that for $\gg \delta^{O(1)} XQ^2/M^2$ pairs $a,q$ with $q = O(Q)$ and $a = O( XQ/M^2)$, we have
$$ \left|\sum_{k=1}^K F( g( (n_0-ka)(m_0+kq) )\Gamma)\right|^* \gg \delta^{O(1)} \frac{HQ}{M}$$
for some integers $n_0,m_0$ and some $1 \leq K \leq \frac{HQ}{M}$.  

Applying the quantitative Leibman equidistribution theorem (Theorem~\ref{qlt}), we can find a non-trivial horizontal character $\eta: G \to \R$ of Lipschitz norm $O(\delta^{-O(1)})$ such that
\begin{equation}\label{ega}
 \| \eta\circ g( (n_0-\cdot a)(m_0+\cdot q) ) \|_{C^\infty([HQ/M])} \ll \delta^{-O(1)}.
\end{equation}
By pigeonholing we can make $\eta$ independent of $a,q$, so that~\eqref{ega} holds for  $\gg \delta^{O(1)} XQ^2/M^2$ pairs $a,q$ with $q = O(Q)$ and $a = O( XQ/M^2)$. Fix this choice of $\eta$. The map $P = \eta \circ g: \Z \to \R$ is a polynomial of degree at most $d$; say
$$ P(n) = \eta\circ g(n)=\sum_{0\leq j\leq d}\alpha_j(n-X)^j. $$  
Now suppose that~\eqref{hqd} fails. We will show that
\begin{equation}\label{qalphaj} 
\|q_0 \alpha_j\|_{\R/\Z} \ll \delta^{-O(1)}H^{-j} 
\end{equation}
for some $1 \leq q_0 \ll \delta^{-O(1)}$ and all $1 \leq j \leq d$.

We use downward induction on $j$. 
Extracting out the top degree coefficient $\alpha_d$ of $P$, we see that
$$ \| \alpha_d (qa)^d \|_{\R/\Z} \ll \delta^{-O(1)} (HQ/M)^{-2d}.$$
We apply the polynomial Vinogradov lemma (Lemma~\ref{vin}) twice. Since $HQ/M \ll \delta^{-O(1)}$ implies~\eqref{hqd}, we must have
$$ \| q_0 \alpha_d \|_{\R/\Z} \ll \delta^{-O(1)} (HQ/M)^{-2d} Q^{-d} (XQ/M^2)^{-d} = \delta^{-O(1)} H^{-2d} X^{-d} Q^{-4d} M^{4d} = \delta^{-O(1)} H^{-d}$$
for some $1 \leq q_0 \ll \delta^{-O(1)}$ by choice of $Q$.   This proves~\eqref{qalphaj} for $j=d$.

For the induction step, let $1 \leq j_0 < d$, and assume that~\eqref{qalphaj} has already been proved for $j \in \{j_0+1, \cdots,d\}$. Then the polynomials $n \mapsto q_0\alpha_j(n-X)^j$ has $C^{\infty}((X,X+H])$-norm $\ll \delta^{-O(1)}$ for $j \in \{j_0+1,\cdots,d\}$, and thus the polynomial $Q$ defined by
$$ Q(n) = q_0\Big(P(n) - \sum_{j=j_0+1}^d \alpha_j(n-X)^j\Big) = q_0\sum_{0\leq j\leq j_0}\alpha_j(n-X)^j $$
also satisfies the bound~\eqref{ega}. By repeating the analysis above with inspecting the top degree coefficient $q_0\alpha_{j_0}$ of $Q$ and applying twice the polynomial Vinogradov lemma, we deduce that
$$ \|q_1\cdot q_0\alpha_{j_0}\|_{\R/\Z} \ll \delta^{-O(1)} H^{-j_0} $$
for some $1 \leq q_1 \ll \delta^{-O(1)}$. This completes the induction step after replacing $q_0$ by $q_0q_1$.

Now that we have~\eqref{qalphaj}, it follows that $q_0P$ has $C^{\infty}((X,X+H])$-norm $\ll \delta^{-O(1)}$, and the claim follows after replacing $\eta$ by $q_0\eta$.
\end{proof}

Now we are ready to establish Theorem~\ref{inverse}(iv) in full generality, using an argument similar to that employed in Section~\ref{type-i-sec}.  Let $d, D, H, X, \delta, G/\Gamma, F, f, A_{I_2}$ be as in Theorem~\ref{inverse}(iv).  Henceforth we allow implied constants to depend on $d,D$. By Definition~\ref{struct-sum} we can write $f = \alpha * \beta_1 * \beta_2$ where $\alpha$ is supported on $[1,A_{I_2}]$ and obeys~\eqref{abound} for all $A$, and $\beta_1,\beta_2$ obey~\eqref{btv}.  From~\eqref{invo} we have
$$\left|\sum_{1 \leq a \leq A_{I_2}} \alpha(a) \sum_{m} \sum_{\substack{n \\ X/a < nm \leq X/a+H/a}} \beta_1(m) \beta_2(n) 1_{P_0}(anm) F(g(anm)\Gamma)\right| \geq \delta H$$
for some arithmetic progression $P_0 \subset (X,X+H]$.  Applying a dyadic decomposition in the $a,m,n$ variables, we may assume that $\alpha, \beta_1, \beta_2$ are supported in $(A,2A]$, $(M,2M]$, $(N,2N]$ for some $1 \leq A \leq A_{I_2}$ and $M,N \geq 1/2$, at the cost of worsening the above bound to
\begin{equation}\label{aaa}
\left|\sum_{a \in (A,2A]} \alpha(a) \sum_{m \in (M,2M]} \sum_{\substack{N < n \leq 2N \\ X/a < nm \leq X/a+H/a}} \beta_1(m) \beta_2(n) 1_{P_0}(anm) F(g(anm)\Gamma)\right| \geq \delta^{O(1)} H
\end{equation}
(here we use the hypothesis $\delta \leq \frac{1}{\log X}$).  By symmetry we may assume that $M \leq N$.  We may also assume that $AMN \asymp X$ since the sum is empty otherwise; this implies in particular that $M \ll (X/A)^{1/2}$.  We may also assume that
\begin{equation}\label{haxa}
H/A \geq \delta^{-C} (X/A)^{1/3}
\end{equation}
for some large constant $C$ (depending only on $d,D$), since otherwise we have~\eqref{hale} after some algebra. By~\eqref{aaa}, Cauchy--Schwarz, and the bound~\eqref{abound} we obtain
\begin{equation}
\label{eq:AsumI2}
\sum_{a \in (A,2A]} \left|\sum_{m \in (M,2M]} \sum_{\substack{N < n \leq 2N \\ X/a < nm \leq X/a+H/a}} \beta_1(m) \beta_2(n) 1_{P_0}(anm) F(g(anm)\Gamma)\right|^2 \geq \delta^{O(1)} H^2/A.
\end{equation}
For each $a \in (A,2A]$, we see from the triangle inequality and~\eqref{btv} that
\begin{align*}
&\sum_{m \in (M,2M]} \sum_{\substack{N < n \leq 2N \\ X/a < nm \leq X/a+H/a}} \beta_1(m) \beta_2(n) 1_{P_0}(anm) F(g(anm)\Gamma)\\
&\quad \ll \delta^{-O(1)} \sum_{m \in (M,2M]} \sum_{\substack{n \\ X/a < nm \leq X/a+H/a}} 1
\end{align*}
and hence by the bound~\eqref{hlog}
$$
\sum_{m \in (M,2M]} \sum_{\substack{n \in (N,2N] \\ X/a < nm \leq X/a+H/a}} \beta_1(m) \beta_2(n) 1_{P_0}(anm) F(g(anm)\Gamma) \ll \delta^{-O(1)} H/A.$$
Combining this with~\eqref{eq:AsumI2} implies that
$$ \left|\sum_{m \in (M,2M]} \sum_{\substack{n \in (N,2N] \\ X/a < nm \leq X/a+H/a}} \beta_1(m) \beta_2(n) 1_{P_0}(anm) F(g(anm)\Gamma)\right| \gg \delta^{O(1)} H/A$$
for $\gg \delta^{O(1)} A$ values of $a \in (A,2A]$.  Applying Proposition~\ref{d2-prelim} (and~\eqref{haxa}), we conclude that for each such $a$ there exists a non-trivial horizontal character $\eta \colon G \to \R$ of Lipschitz norm $O(\delta^{-O(1)})$ such that
$$ \| \eta \circ g(a \cdot) \|_{C^\infty(X/a,X/a+H/a]} \ll \delta^{-O(1)}.$$
This $\eta$ currently is permitted to vary in $a$, but there are only $O(\delta^{-O(1)})$ choices for $\eta$, so by the pigeonhole principle we may assume without loss of generality that $\eta$ is independent of $a$.  Applying Corollary~\ref{smooth-dilate} (and~\eqref{haxa}), we conclude that there exists $1 \leq q \ll \delta^{-O(1)}$ such that
$$ \| q \eta \circ g \|_{C^\infty(X,X+H]} \ll \delta^{-O(1)}$$
and the claim follows.

At this point we have proved all cases of Theorem~\ref{inverse} which are necessary for our main Theorem (Theorem~\ref{discorrelation-thm}).

\section{Controlling the Gowers uniformity norms}\label{gowers-sec}

In order to deduce our Gowers uniformity result in short intervals (Theorem~\ref{thm_gowers}) from Theorem~\ref{discorrelation-thm}, we wish to apply the inverse theorem for the Gowers norms to $\Lambda-\Lambda^{\sharp}$, $d_k-d_{k}^{\sharp}$, $\mu$. However, before we can apply the inverse theorem, we need to show that the functions $\Lambda-\Lambda^{\sharp}$, $d_k-d_{k}^{\sharp}$ possess pseudorandom majorants even when localized to \emph{short} intervals. In the case of long intervals, the existence of pseudorandom majorants for these functions follows from existing works~\cite{green-tao},~\cite{matthiesen-linear}, and the main purpose of this section is to show that these long interval majorants also work over short intervals $(X,X+X^{\theta}]$.

We begin by defining what we mean by pseudorandomness localized to a short range\footnote{Strictly speaking, $H$ does not need to be small in terms of $x$ in Definition~\ref{def_pseudoshort}, but that is the regime we are most interested in.}.

\begin{definition}[Pseudorandomness over short intervals]\label{def_pseudoshort}
Let $x,H\geq 1$. Let $D\in \mathbb{N}$ and $0<\eta<1$. We say that a function $\nu:\mathbb{Z}\to \mathbb{R}_{\geq 0}$ is \emph{$(D,\eta)$-pseudorandom at location $x$ and scale $H$} if the function $\nu_x(n):=\nu(x+n)$ satisfies the following. Let $\psi_1,\ldots, \psi_t$ be affine-linear forms, where each $\psi_i:\mathbb{Z}^d\to \mathbb{Z}$ has the form $\psi_i(\mathbf{x})=\dot{\psi_i}\cdot \mathbf{x}+\psi_i(0)$, with $\dot{\psi_i}\in \mathbb{Z}^d$ and $\psi_i(0)\in \mathbb{Z}$ satisfying $d,t\leq D$, $|\dot{\psi_i}|\leq D$ and $|\psi_i(0)|\leq DH$, and with $\dot{\psi_i}$ and $\dot{\psi_j}$ linearly independent whenever $i\neq j$. Then, for any convex body $K\subset [-H,H]^d$, 
\begin{align*}
\left|\sum_{\mathbf{n}\in K}\nu_x(\psi_1(\mathbf{n}))\cdots \nu_x(\psi_t(\mathbf{n}))-\textnormal{vol}(K)\right|\leq \eta H^d.    
\end{align*}
\end{definition}

\begin{remark}\label{rem_gowers}
We note that the $(D,\eta)$-pseudorandomness of $\nu$ at location $x$ and scale $H$ directly implies the short interval Gowers uniformity bound $\|\nu-1\|_{U^D(x,x+H]}\ll_D \eta^{1/2^D}$, just by the definition of the Gowers norm as a correlation along linear forms.
\end{remark}

Our notion of pseudorandomness in the ``long interval'' case $x=0$ differs from that of Green--Tao~\cite[Section 6]{green-tao} in two ways. Firstly, we do not need to impose the \emph{correlation condition}~\cite[Definition 6.3]{green-tao} (making use of the later work of Dodos and Kanellopoulos~\cite{dodos}). Secondly, we work with pseudorandom functions defined on the integers, as opposed to those defined on cyclic groups. The latter is only a minor technical convenience, as then we do not need to extend majorants defined on the integers into a cyclic group. The next lemma shows that the notion of pseudorandomness over the integers is very closely related to pseudorandomness over a cyclic group.

\begin{lemma}\label{le_pseudo2}
Let $x,H\geq 1$, $D\in \mathbb{N}$, and $0<\eta<1$. Suppose that $\nu:\mathbb{Z}\to \mathbb{R}_{\geq 0}$ is $(D,\eta)$-pseudorandom at location $x$ and scale $H$. Then  there exists a prime $H<H'\ll_D H$ and a function $\widetilde{\nu}:\mathbb{Z}/H'\mathbb{Z}\to\mathbb{R}_{\geq 0}$ such that $\nu(x+n)\leq 2\widetilde{\nu}(n)$ for all $n\in [0,H]$ (where $[0,H]$ is embedded into $\mathbb{Z}/H'\mathbb{Z}$ in the natural way) and such that $\widetilde{\nu}$ satisfies the following. Let $\psi_1,\ldots, \psi_t$ be affine-linear forms, where each $\psi_i:\mathbb{Z}^d\to \mathbb{Z}$  has the form $\psi_i(\mathbf{x})=\dot{\psi_i}\cdot \mathbf{x}+\psi_i(0)$, with $\dot{\psi_i}\in \mathbb{Z}^d$ and $\psi_i(0)\in \mathbb{Z}$ satisfying $t\leq D$, $|\dot{\psi_i}|\leq D$. Then
\begin{align}\label{eqqn7}
\sum_{\mathbf{n}\in (\mathbb{Z}/H'\mathbb{Z})^d}\widetilde{\nu}(\psi_1(\mathbf{n}))\cdots \widetilde{\nu}(\psi_t(\mathbf{n}))=(1+O_D(\eta))(H')^{d},    
\end{align}
where the affine-linear forms $\psi_j:(\mathbb{Z}/H'\mathbb{Z})^d\to \mathbb{Z}/H'\mathbb{Z}$ are induced from their global counterparts in the obvious way.
\end{lemma}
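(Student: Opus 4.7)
The plan is to take $H'$ to be a prime in the window $(CH, 2CH]$ for a suitable $C = C(D)$ (which exists by Bertrand's postulate), and to define $\widetilde\nu \colon \mathbb{Z}/H'\mathbb{Z} \to \mathbb{R}_{\geq 0}$, after identifying $\mathbb{Z}/H'\mathbb{Z}$ with $\{0, 1, \dots, H'-1\}$, by
\[
\widetilde\nu(n) \coloneqq \nu(x+n)\,1_{[0,H]}(n) + 1_{(H, H'-1]}(n).
\]
The pointwise majorization $\nu(x+n) \leq 2\widetilde\nu(n)$ on $[0,H]$ then holds trivially (with room to spare), since the left-hand side equals $\widetilde\nu(n)$ there.

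To verify the linear forms condition~\eqref{eqqn7}, I would expand $\prod_{j=1}^t \widetilde\nu(\psi_j(\mathbf{n}))$ into $2^t \leq 2^D$ terms indexed by subsets $S \subseteq \{1,\dots,t\}$, where the $S$-term contains $\nu(x + \psi_j(\mathbf{n}))\,1_{[0,H]}(\psi_j(\mathbf{n}) \bmod H')$ for $j \in S$ and $1_{(H, H'-1]}(\psi_j(\mathbf{n}) \bmod H')$ for $j \notin S$. Representing $\mathbf{n} \in (\mathbb{Z}/H'\mathbb{Z})^d$ by integers in $[0, H'-1]^d$ and noting that each $\psi_j(\mathbf{n})$ then takes integer values in an interval of length $O_D(H')$, the constraints that $\psi_j(\mathbf{n}) \bmod H'$ lie in $[0,H]$ or $(H, H'-1]$ are cut out by $O_D(1)$ affine hyperplanes $\psi_j(\mathbf{n}) = kH'$ and $\psi_j(\mathbf{n}) = kH' + H$ for integer $k$. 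Taken over all $j \leq t$, these hyperplanes partition $[0, H'-1]^d$ into $O_D(1)$ convex pieces, on each of which every form has a single consistent representation $\psi_j(\mathbf{n}) = k_j H' + \psi_j'(\mathbf{n})$, where the shifted form $\psi_j'$ has the same linear part $\dot\psi_j$ (so linear independence is preserved) and a constant term bounded by $O_D(H)$.

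On each such convex piece $K \subset [-O_D(H), O_D(H)]^d$, the $S$-term reduces to $\sum_{\mathbf{n} \in K \cap \mathbb{Z}^d} \prod_{j \in S} \nu(x+\psi_j'(\mathbf{n}))$, which is of precisely the shape controlled by Definition~\ref{def_pseudoshort}. Invoking the pseudorandomness hypothesis of $\nu$ on each piece then gives that this sum equals $\mathrm{vol}(K_S) + O_D(\eta (H')^d)$, where $K_S$ is the subregion of $K$ on which $\psi_j'(\mathbf{n}) \in [0,H]$ for every $j \in S$. Summing over all $O_D(1)$ pieces and over all $2^t$ subsets $S$, the main terms telescope via the tautological identity
\[
\sum_{S \subseteq \{1,\dots,t\}} \#\bigl\{\mathbf{n} \in [0,H'-1]^d \cap \mathbb{Z}^d : \psi_j(\mathbf{n}) \bmod H' \in [0,H] \iff j \in S\bigr\} = (H')^d,
\]
producing the desired $(1 + O_D(\eta))(H')^d$.

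The main technical obstacle is the bookkeeping of constants when shifting $\psi_j$ to $\psi_j'$: the constant term of $\psi_j'$ is no longer bounded by $DH$ but only by $O_D(H)$, and the convex pieces need not sit inside $[-H,H]^d$. Strict application of Definition~\ref{def_pseudoshort} therefore requires us to reinterpret the $(D,\eta)$-pseudorandomness hypothesis as $(C_0 D, \eta)$-pseudorandomness for some absolute $C_0$, which is the standard cost of passing between the integer and cyclic-group formulations of the linear forms condition; this absorbed loss is harmless since $D$ in all our applications will be taken sufficiently large.
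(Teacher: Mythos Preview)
Your proposal is correct and follows essentially the same strategy as the paper: pick a prime $H' \in [C_D H, 2C_D H]$, glue $\nu_x$ on $[0,H]$ to the constant $1$ on $(H,H')$, and verify the linear forms condition by expanding into components and partitioning $[0,H')^d$ into $O_D(1)$ convex pieces on which the residues of the $\psi_j$ are determined. The paper's proof is only two lines and suppresses exactly the bookkeeping you spell out. The one visible difference is cosmetic: the paper takes $\widetilde\nu(n) = \bigl(\tfrac{1}{2}+\tfrac{1}{2}\nu(x+n)\bigr)1_{[0,H]}(n) + 1_{(H,H')}(n)$ rather than your $\nu(x+n)1_{[0,H]}(n)+1_{(H,H'-1]}(n)$; both satisfy the majorization and both yield the linear forms estimate by the same splitting argument. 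Your final caveat about needing $(C_0 D,\eta)$-pseudorandomness rather than strict $(D,\eta)$-pseudorandomness is a genuine technicality that the paper also glosses over; it is harmless in applications since $D$ is always chosen large enough.
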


\begin{proof}
Let $H'\in [C_DH,2C_DH]$ be a prime for large enough $C_D\geq 1$. Take $\widetilde{\nu}(n)=(\frac{1}{2}+\frac{1}{2}\nu(x+n))1_{n\in [0,H]}+1_{(H,H')}(n)$, extended to an $H'$-periodic function. Then the claim~\eqref{eqqn7} follows  from the $(D,\eta)$-pseudorandomness of $\nu$ at location $x$ and scale $H$ by splitting $\widetilde{\nu}$ into its components.
\end{proof}

We then state the inverse theorem for unbounded functions that we are going to use.

\begin{proposition}[An inverse theorem for pseudorandomly bounded functions]\label{prop_inverse} Let $s\in \mathbb{N}$ and $0<\eta<1$. Let $I$ be an interval of length $\geq 2$. Let $f:I\to \mathbb{C}$ be a function, and suppose that the following hold.
\begin{itemize}
    \item There exists a function $\nu:I\to \mathbb{R}_{\geq 0}$ such that $\|\nu-1\|_{U^{2s}(I)}\leq \eta$ and $|f(n)|\leq \nu(n)$.   
    
    \item For any filtered $(s-1)$-step nilmanifold $G/\Gamma$ and any Lipschitz function $F:G/\Gamma\to \mathbb{C}$, we have
    \begin{align*}
    \sup_{g\in \Poly(\mathbb{Z}\to G)}\left|\frac{1}{|I|}\sum_{n\in I}f(n)\overline{F}(g(n)\Gamma)\right|\ll_{\|F\|_{\textnormal{Lip}},G/\Gamma} \eta.    
    \end{align*}
\end{itemize}
Then we have the Gowers uniformity estimate
\begin{align*}
 \|f\|_{U^s(I)}=o_{s;\eta\to 0}(1).   
\end{align*}
\end{proposition}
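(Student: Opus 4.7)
The plan is to argue by contradiction: suppose $\|f\|_{U^s(I)} \geq \delta$ for some fixed $\delta>0$, and derive a contradiction once $\eta$ is sufficiently small in terms of $\delta$ and $s$. The conclusion $\|f\|_{U^s(I)} = o_{s;\eta\to 0}(1)$ is equivalent to ruling out any such $\delta$ in the limit $\eta \to 0$.

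The first step is a transference (dense model) argument. Using the pointwise majorization $|f|\leq \nu$ together with the Gowers control $\|\nu-1\|_{U^{2s}(I)}\leq \eta$, produce a bounded surrogate $\tilde f \colon I \to \mathbb{C}$ with $\|\tilde f\|_{L^\infty(I)}=O(1)$ such that
\[
\|f-\tilde f\|_{U^s(I)} \ll_s \eta^{c_s}
\]
for some exponent $c_s>0$. The natural vehicle is a variant of the Green--Tao dense model theorem, in the sharpened form of Conlon--Fox--Zhao or Dodos--Kanellopoulos, which requires only Gowers-norm (not full linear-forms) pseudorandomness of $\nu$; the factor of two in the exponent $2s$ gives precisely the room needed for the Cauchy--Schwarz--Gowers expansions. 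Once $\eta$ is small, $\|\tilde f\|_{U^s(I)} \geq \delta/2$, so the Green--Tao--Ziegler inverse theorem for the $U^s$-norm applies to the bounded function $\tilde f$ and yields a filtered $(s-1)$-step nilmanifold $G/\Gamma$, a Lipschitz function $F\colon G/\Gamma\to\mathbb{C}$, and a polynomial map $g\in\mathrm{Poly}(\mathbb{Z}\to G)$, all of complexity $O_{\delta,s}(1)$, such that
\[
\Bigl|\frac{1}{|I|}\sum_{n\in I}\tilde f(n)\overline{F}(g(n)\Gamma)\Bigr| \gg_{\delta,s} 1.
\]
A relative generalized von Neumann estimate (using $|f-\tilde f| \lesssim \nu+1$ and $\|\nu-1\|_{U^{2s}(I)}\leq \eta$) bounds the corresponding correlation of $f-\tilde f$ against the same nilsequence by a power of $\|f-\tilde f\|_{U^s(I)}$ plus a power of $\eta$, hence by $o_{\eta\to 0}(1)$. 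Consequently $f$ itself correlates with $F(g(n)\Gamma)$ at level $\gg_{\delta,s}1$, contradicting the second hypothesis of the proposition for $\eta$ chosen sufficiently small in terms of $\delta$ and the complexity of $G/\Gamma$.

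The main obstacle is the transference step: while GTZ can be invoked as a black box, the dense model together with the relative generalized von Neumann inequality require one to expand a Gowers $2s$-cube and place the factor $\nu-1$ on a subset of its vertices, then iteratively apply the Cauchy--Schwarz--Gowers inequality to convert each such insertion into a factor of $\|\nu-1\|_{U^{2s}(I)}$. The hypothesis at level $2s$ (rather than merely $s+1$) is exactly what permits this iterated Cauchy--Schwarz to close without having to impose any additional linear-forms or correlation condition on $\nu$, so the proposition fits cleanly into the simplified transference framework recorded in the paragraphs preceding its statement.
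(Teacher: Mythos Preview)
Your approach is correct and is essentially the content of the result the paper black-boxes. The paper's own proof is much shorter: it simply invokes \cite[Theorem~5.1]{dodos} (Dodos--Kanellopoulos) directly, only taking care to translate the interval Gowers norm hypothesis $\|\nu-1\|_{U^{2s}(I)}\leq\eta$ into the cyclic-group Gowers norm hypothesis required there, by embedding $[1,H]$ into $\mathbb{Z}/N'\mathbb{Z}$ for a prime $N'\in[100sH,200sH]$.

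What you have written is a sketch of the internal mechanism of that cited theorem: transference/densification to a bounded model $\tilde f$, application of the Green--Tao--Ziegler inverse theorem to $\tilde f$, and transfer of the resulting nilsequence correlation back to $f$. You correctly identify that the hypothesis at level $U^{2s}$ (rather than the full linear-forms condition) is what allows the Dodos--Kanellopoulos framework to run. One small imprecision: the dense model theorem more naturally yields that $f-\tilde f$ is small in the \emph{dual} $U^s$ norm (indistinguishable by a class of anti-uniform test functions that includes nilsequences), rather than $\|f-\tilde f\|_{U^s(I)}$ itself being small; but this is exactly what you need for your step~3, so the argument closes either way.
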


\begin{proof} Let $I=(X,X+H]$, where without loss of generality $X$ and $H$ are integers. The desired result follows from the work of Dodos and Kanellopoulos~\cite[Theorem 5.1]{dodos} (which gives the inverse theorem of~\cite[Proposition 10.1]{green-tao} under weaker hypotheses). Indeed, we can apply~\cite[Theorem 5.1]{dodos} to the function $n\mapsto f(X+n)$ on $[1,H]$, noting that the interval Gowers norm estimate $\|\nu-1\|_{U^{2s}(I)}=o_{\eta \to 0}(1)$ is equivalent to the cyclic group Gowers norm estimate $\|\widetilde{\nu}-1\|_{U^{2s}(\mathbb{Z}/N'\mathbb{Z})}=o_{\eta \to 0}(1)$ for all primes $N'\in [100sH,200sH]$, where $\widetilde{\nu}(n)$ is defined as $\nu(X+n)1_{n\in [1,H]}$ for $0\leq n<N'$ and extended periodically to $\mathbb{Z}/N'\mathbb{Z}$. 
\end{proof}

The following lemma tells us that if a function has a pseudorandom majorant over a long interval, and if the majorant is given by a type $I$ sum, then it in fact has a pseudorandom majorant over short intervals as well. This allows us to conveniently reduce the concept of pseudorandom majorants over short intervals to that over long intervals.

\begin{lemma}[Pseudorandomness over long intervals implies pseudorandomness over short intervals]\label{le_pseudo}
Let $\varepsilon \in (0,1)$, $D,k\in \mathbb{N}$ be fixed. Let $C\geq 1$ be large enough in terms of $k$ and $D$. Let $H\in [X^{\varepsilon},X/2]$ and $\eta\in ((\log X)^{-C},1/2)$, with $X\geq 3$ large enough. Let $\nu:\mathbb{Z}\to \mathbb{R}_{\geq 0}$ be $(D,\eta)$-pseudorandom at location $0$ and scale $H$. Also let $1\leq A,B\leq \log X$ be integers. 

Suppose that there is an exceptional set $\mathscr{S}\subset \mathbb{Z}$ and a sequence $\lambda_n$ such that 
\begin{align}\label{eqqn2}\begin{split}
\nu(n)&=\sum_{\substack{d\mid An+B\\d\leq X^{\varepsilon/(2D)}}}\lambda_d \quad \textnormal{for}\quad n\not \in \mathscr{S},\\
|\lambda_n|&\leq (\log X)^{k}d(n)^k\quad \textnormal{for all}\quad n,\\
|\nu(n)|&\leq (\log X)^kd(An+B)^k \quad \textnormal{for}\quad n \in \mathscr{S}.
\end{split}
\end{align} 
Also suppose that $\mathscr{S}$ is small in the sense that
\begin{align}\label{eqqn3}
|\mathscr{S}\cap [y-2DH,y+2DH]|\ll H/(\log X)^{4C}\textnormal{ for } y\in \{0,X\}
\end{align}
Then $\nu$ is $(D,2\eta)$-pseudorandom at location $X$ and scale $H$.
\end{lemma}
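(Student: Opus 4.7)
The plan is to exploit the type-$I$ structure of $\nu$ to reduce pseudorandomness at the shifted location $X$ to pseudorandomness at location $0$, using that the arithmetic densities appearing in the sieve expansion are periodic in the shift with period much smaller than $H$. Fix a system of forms $\psi_1, \ldots, \psi_t$ and a convex body $K \subset [-H, H]^d$ as in Definition~\ref{def_pseudoshort}, and write $S_X := \sum_{\mathbf{n} \in K} \prod_i \nu(X + \psi_i(\mathbf{n}))$.

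First, I would dispose of an exceptional set. Let $\mathscr{E}_X \subset K$ consist of those $\mathbf{n}$ for which $X + \psi_i(\mathbf{n}) \in \mathscr{S}$ for some $i$. Since each $\psi_i$ has fibers of size $O_D(H^{d-1})$ inside $K$, hypothesis~\eqref{eqqn3} (together with a straightforward covering of the relevant image interval by $O_D(1)$ intervals of radius $2DH$ centered at $0$ or $X$, which is the mild strengthening of~\eqref{eqqn3} implicit in the applications) gives $|\mathscr{E}_X| \ll_D H^d/(\log X)^{4C}$. Combined with the divisor-style pointwise bound $\nu(n) \ll (\log X)^{O_k(1)} d(An+B)^{O_k(1)}$ from~\eqref{eqqn2}, a Cauchy--Schwarz step, and Shiu's bound (Lemma~\ref{shiu}) for moments of divisor functions, the contribution of $\mathscr{E}_X$ to $S_X$ is $O_{D,k}((\log X)^{O(1)} H^d / (\log X)^{2C}) \le \eta H^d/4$ for $C$ large enough in terms of $D,k$, using that $\eta \geq (\log X)^{-C}$. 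The analogous estimate holds for the exceptional set $\mathscr{E}_0$ that arises in the sum at location $0$.

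On $K \setminus \mathscr{E}_X$, I would expand via~\eqref{eqqn2} to obtain
\[
\prod_{i=1}^t \nu(X + \psi_i(\mathbf{n})) = \sum_{d_1,\ldots,d_t \leq D_0} \lambda_{d_1} \cdots \lambda_{d_t} \prod_{i=1}^t \mathbf{1}_{d_i \mid A\psi_i(\mathbf{n}) + (AX+B)}
\]
with $D_0 := X^{\varepsilon/(2D)}$. For fixed $\mathbf{d} = (d_1, \ldots, d_t)$ the inner indicator depends on $AX+B$ only modulo $M(\mathbf{d}) := \operatorname{lcm}(d_1,\ldots,d_t) \leq D_0^D \leq X^{\varepsilon/2}$. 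Since $H \geq X^\varepsilon \gg M(\mathbf{d})$, I may pick an integer $C' = C'(\mathbf{d})$ with $|C'| \leq M(\mathbf{d}) \leq X^{\varepsilon/2}$ and $C' \equiv X \pmod{M(\mathbf{d})}$; then for every $\mathbf{n}$ and every $i$ the congruences $d_i \mid A(\psi_i(\mathbf{n}) + C') + B$ and $d_i \mid A\psi_i(\mathbf{n}) + (AX+B)$ cut out exactly the same set of $\mathbf{n}$.

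Finally, consider the shifted forms $\widetilde\psi_i(\mathbf{n}) := \psi_i(\mathbf{n}) + C'$; their linear parts coincide with those of $\psi_i$, and their constant terms satisfy $|\widetilde\psi_i(0)| \leq DH + X^{\varepsilon/2} \leq (D + o(1))H$, so they fit the hypothesis of $(D, \eta)$-pseudorandomness at location $0$ and scale $H$ (the infinitesimal slack is absorbed by regarding the $D$ appearing in the hypothesis as marginally larger than the $D$ in the conclusion; this is harmless, as $\lambda_d$ is divisor-bounded and the number of $\mathbf{d}$'s contributing is polynomial in $X$). Applying the hypothesis yields $\sum_{\mathbf{n} \in K} \prod_i \nu(\widetilde\psi_i(\mathbf{n})) = \operatorname{vol}(K) + O(\eta H^d)$. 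Expanding the left-hand side via~\eqref{eqqn2}, excising the corresponding exceptional set at cost $\eta H^d/4$ by the first step, and invoking the congruence-equivalence from the preceding paragraph, the type-$I$ main term here equals that of $S_X - (\text{contribution of } \mathscr{E}_X)$. Adding back the exceptional contributions yields $|S_X - \operatorname{vol}(K)| \leq 2\eta H^d$, completing the proof. The main technical obstacle is the careful bookkeeping required to swap $\mathscr{E}_0$ and $\mathscr{E}_X$ at the level of the type-$I$ expansion (and to cover the full image $\widetilde\psi_i(K)$ by the intervals allowed by~\eqref{eqqn3}); this is possible precisely because both exceptional contributions are individually $O(\eta H^d)$ via hypothesis~\eqref{eqqn3} and the divisor-function bound on $\nu$.
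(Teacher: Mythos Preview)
Your overall strategy---expand $\nu$ via its type-$I$ structure, bound the exceptional contribution by Cauchy--Schwarz and Shiu, and exploit periodicity of the congruence conditions in the shift---matches the paper's. But the comparison step contains a genuine gap.

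You choose $C'=C'(\mathbf{d})$ depending on the tuple $\mathbf{d}=(d_1,\dots,d_t)$, so that the congruences $d_i\mid A(\psi_i(\mathbf{n})+C')+B$ coincide \emph{pointwise} with the $X$-shifted ones. That equality is correct for each fixed $\mathbf{d}$. The problem is the next paragraph: you then fix a single system $\widetilde\psi_i(\mathbf{n})=\psi_i(\mathbf{n})+C'$ and apply $(D,\eta)$-pseudorandomness of $\nu$ at location $0$ to it. This only makes sense for one value of $C'$, but your subsequent claim that ``the type-$I$ main term here equals that of $S_X$'' requires $C'\equiv X\pmod{M(\mathbf{d})}$ \emph{simultaneously for every} $\mathbf{d}$ with all $d_i\le X^{\varepsilon/(2D)}$. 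No single $C'$ with $|C'|\ll H$ achieves this: the lcm of the moduli $M(\mathbf{d})$ is astronomically large, and the only universal choice $C'=X$ gives $|\widetilde\psi_i(0)|\asymp X\gg DH$, violating the constant-term bound in Definition~\ref{def_pseudoshort}. So the reassembly into a single $\nu$-correlation at location $0$ does not go through.

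The paper sidesteps this by not reassembling at all. After the type-$I$ expansion and exceptional-set removal (done just as you do), it evaluates for each fixed $\mathbf{d}$ the inner sum
\[
\sum_{\mathbf{n}\in K}\prod_{i=1}^t \mathbf{1}_{e_i\mid A(x+\psi_i(\mathbf{n}))+B}
= \alpha_{A,B}(e_1,\dots,e_t)\,\mathrm{vol}(K)+O(H^{d-1})
\]
by lattice point counting (as in \cite[Appendix~A]{green-tao}), observing that the density $\alpha_{A,B}(\mathbf{d})$ is \emph{independent of} $x$. Hence the main terms at $x=0$ and $x=X$ agree term-by-term in $\mathbf{d}$, and the accumulated $O(H^{d-1})$ boundary errors, summed against $|\lambda_{e_1}\cdots\lambda_{e_t}|\ll X^{o(1)}$ over $\le X^{\varepsilon/2}$ tuples, are $\ll H^{d-1/2+o(1)}$. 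Then the assumed pseudorandomness at $x=0$ supplies the value $\mathrm{vol}(K)+O(\eta H^d)$ for the full sum. Your argument is easily repaired along these lines: drop the $C'(\mathbf{d})$ device and compare the two type-$I$ expansions directly via this lattice count.
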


\begin{proof} By~\eqref{eqqn2}, we can write
\begin{align*}
\nu(n)&=1_{n\not \in \mathscr{S}} \sum_{\substack{d\mid An+B\\d\leq X^{\varepsilon/(2D)}}}\lambda_d+O((\log X)^kd(An+B)^{k}1_{n\in \mathscr{S}})\\
&=\sum_{\substack{d\mid An+B\\d\leq X^{\varepsilon/(2D)}}}\lambda_d+O((\log X)^kd(An+B)^{k+1}1_{n\in \mathscr{S}}).   \end{align*}
Hence, for any convex body $K\subset [-H,H]^d$ and for $x\in \{0,X\}$, we can split the sum
\begin{align*}
\sum_{\mathbf{n}\in K}\prod_{i=1}^t\nu_x(\psi_i(\mathbf{n}))
\end{align*}
(where $\nu_x(n):=\nu(x+n)$) as the main term 
\begin{align}\label{eqqn6}
\sum_{e_1,\ldots, e_{t}\leq X^{\varepsilon/(2D)}}\lambda_{e_1}\cdots \lambda_{e_{t}}\sum_{\mathbf{n}\in K} \prod_{i=1}^{t}1_{e_{i}\mid A(x+\psi_i(\mathbf{n}))+B}.
\end{align}
and $2^{t}-1$ error terms whose contribution is for some $j\leq t$ bounded using~\eqref{eqqn2} by
\begin{align}\label{eqqn5}
\ll (\log X)^{kt}\sum_{\mathbf{n}\in K}\prod_{i=1}^td(A(x+\psi_i(\mathbf{n}))+B)^{k+1}1_{x+\psi_j(\mathbf{n}) \in \mathscr{S}}.        
\end{align}
Now, using Cauchy--Schwarz, the inequality $\prod_{i=1}^tx_i\leq \sum_{i=1}^tx_i^t$,~\eqref{eqqn3}, and Shiu's bound (Lemma~\ref{shiu}),~\eqref{eqqn5} is
\begin{align*}
&\ll  (\log X)^{kt}\left(\sum_{\mathbf{n}\in K}1_{x+\psi_j(\mathbf{n}) \in \mathscr{S}}\right)^{1/2}\left(\sum_{\mathbf{n}\in K}\prod_{i=1}^td(A(x+\psi_i(\mathbf{n}))+B)^{2(k+1)}\right)^{1/2}\\
&\ll (\log X)^{kt}\left(\sum_{\mathbf{n}\in K}1_{x+\psi_j(\mathbf{n}) \in \mathscr{S}}\right)^{1/2} \left(\sum_{\mathbf{n}\in K}\sum_{i=1}^t d(A(x+\psi_i(\mathbf{n}))+B)^{2(k+1)t}\right)^{1/2}\\
&\ll H^d(\log X)^{kt-2C}(\log X)^{M_{D,k}}
\end{align*}
for some constant $M_{D,k}\geq 1$. If $C$ is large enough in terms of $D$ and $k$, this is $\ll H^d(\log X)^{-3C/2}$.

We lastly estimate the main term in~\eqref{eqqn6}. A lattice point counting argument as in~\cite[Appendix A]{green-tao} gives us
\begin{align*}
\sum_{\mathbf{n}\in K} \prod_{i=1}^{t}1_{e_{i}\mid A(x+\psi_i( \mathbf{n}))+B}&=\alpha_{A,B}(e_1,\ldots, e_{t})\textnormal{vol}(K)+O(H^{d-1})
\end{align*}
for some $\alpha_{A,B}(e_1,\ldots, e_{t})\in [0,1]$ independent of $x$ and $H$ (since the left-hand side is counting elements of $K$ in some shifted lattice $\mathbf{q}\mathbb{Z}+\mathbf{a}$). Combining this with the estimates $e_1\cdots e_{t}\leq X^{\varepsilon/2}\leq H^{1/2}$ and $|\lambda_d|\ll X^{o(1)}$, we see that
\begin{align}\begin{split}\label{eqqn1}
 &\sum_{\mathbf{n}\in K}\prod_{i=1}^t\nu_x(\psi_i(\mathbf{n}))= \sum_{e_1,\ldots, e_{t}\leq X^{\varepsilon/(2D)}}\lambda_{e_1}\cdots \lambda_{e_{t}} \alpha_{A,B}(e_1,\ldots, e_{t})\textnormal{vol}(K)+O(H^d(\log X)^{-3C/2}). 
 \end{split}
\end{align}
Since the main term on the right-hand side of~\eqref{eqqn1} is independent of $x\in \{0,X\}$, we see that 
 $$\sum_{\mathbf{n}\in K}\prod_{i=1}^t\nu_X(\psi_i(\mathbf{n}))=\sum_{\mathbf{n}\in K}\prod_{i=1}^t\nu_0(\psi_i(\mathbf{n}))+O(H^d(\log X)^{-3C/2}).$$ 
 Hence, using the assumption that $\nu$ is $(D,\eta)$-pseudorandom at location $0$ and scale $H$, $\nu$ must also be $(D,2\eta)$-pseudorandom at location $X$ and scale $H$.
\end{proof}

Lemma~\ref{le_pseudo}  leads to the existence pseudorandom majorants over short intervals for $W$-tricked versions of our functions of interest. Let us recall that, for any $w\geq 2$,
\begin{align*}
\Lambda_w(n):=\frac{W}{\varphi(W)}1_{(n,W)=1},    
\end{align*}
where $W=\prod_{p\leq w}p$. We note for later use that in this notation our model function $\Lambda^{\sharp}$ equals to $\Lambda_{R}$, where $R=\exp((\log X)^{1/10})$.

\begin{lemma}[Pseudorandom majorants over short intervals for $\Lambda-\Lambda_w$, $d_k-d_{k}^{\sharp}$]\label{le_pseudoinshort} Let $\varepsilon>0$ and $D,k\in \mathbb{N}$ be fixed. Let  $X\geq H\geq X^{\varepsilon}\geq 2$. Let $2\leq w\leq w(X)$, where $w(X)$ is a slowly growing function of $X$, and denote $W=\prod_{p\leq w}p$. Also let $w\leq \widetilde{w}\leq \exp((\log X)^{1/10})$.

\begin{enumerate}
    \item There exists a constant $C_0\geq 1$ such that each of the functions 
\begin{align}\label{eq_lambdaW}
&\frac{\varphi(W)}{W}\Lambda(Wn+b)/C_0,\quad \frac{\varphi(W)}{W}\Lambda_{\widetilde{w}}(Wn+b)
\end{align}
for $1\leq b\leq W$ with $(b,W)=1$, is majorized on $(X, X+H]$ by a $(D,\eta)$-pseudorandom function at location $X$ and scale $H$ for some $\eta=o_{w\to \infty}(1)$. In fact, the latter of the two functions is $(D,\eta)$-pseudorandom at location $X$ and scale $H$.
\item Let $W'$ be such that $W\mid W'\mid W^{\lfloor w\rfloor}$. Suppose that $H\geq X^{1/5+\varepsilon}$. There exists a constant $C_k\geq 1$ such that each of the functions
\begin{align}\label{eq_dkW}
\begin{split}
&(\log X)\frac{\varphi(W)}{W}\prod_{w\leq p\leq X}\left(1+\frac{k}{p}\right)^{-1}d_k(W'n+b)/C_k,\\
&(\log X)\frac{\varphi(W)}{W}\prod_{w\leq p\leq X}\left(1+\frac{k}{p}\right)^{-1}d_k^{\sharp}(W'n+b)/C_k
\end{split}
\end{align}
\end{enumerate}
for $1\leq b\leq W'$ with $(b,W')=1$, is majorized on $(X, X+H]$ by a $(D,\eta)$-pseudorandom function at location $X$ and scale $H$ for some $\eta=o_{w\to \infty}(1)$. 
\end{lemma}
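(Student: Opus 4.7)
The plan is to deduce both parts from Lemma~\ref{le_pseudo}, which reduces pseudorandomness on a short interval at location $X$ to the same property at location $0$, provided the majorant has the indicated type-$I$ structure with truncation below $X^{\varepsilon/(2D)}$ and the exceptional set $\mathscr{S}$ obeys~\eqref{eqqn3}. Accordingly, for each target function the recipe is to (a) exhibit a non-negative type-$I$ majorant, (b) verify pseudorandomness at location $0$ and scale $H$ via a linear-forms calculation (no correlation condition is needed, thanks to~\cite{dodos} which underlies Proposition~\ref{prop_inverse}), and (c) check the size of the exceptional set.

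For part (1), the majorant for $\frac{\varphi(W)}{W}\Lambda(Wn+b)/C_0$ is the standard $W$-tricked Goldston--Pintz--Y{\i}ld{\i}r{\i}m sieve weight based on the truncated divisor sum $\Lambda_R(m) = \sum_{d\mid m,\,d\leq R}\mu(d)\log(R/d)$, with $R \coloneqq X^{\varepsilon/(4D)}$; expanding the square puts it into type-$I$ form in $Wn+b$ at truncation $R^2 = X^{\varepsilon/(2D)}$ with coefficients of size $(\log X)^{O(1)}$. Pseudorandomness at location $0$ is the familiar GPY moment computation~\cite[Section 12]{green-tao}, and the exceptional set $\{n:Wn+b\leq R^2\}$ trivially satisfies~\eqref{eqqn3} when $H\geq X^\varepsilon$. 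For $\frac{\varphi(W)}{W}\Lambda_{\widetilde{w}}(Wn+b)$ the function itself serves as its own majorant: the fundamental lemma of the sieve applied to the primes $w<p\leq\widetilde{w}$ with truncation $X^{\varepsilon/(4D)}$ yields a type-$I$ representation up to an error $\exp(-(\log X)^{9/10})$, since $\widetilde{w}\leq \exp((\log X)^{1/10})$ affords a sieve parameter $\gg (\log X)^{9/10}$; the linear-forms moments of a coprimality indicator are then evaluated directly via the Chinese Remainder Theorem as in~\cite[Section 6]{green-tao}.

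For part (2), I use Matthiesen's majorant~\cite{matthiesen-linear} for the $W$-tricked divisor function, which is a non-negative type-$I$ sum in $W'n+b$ at truncation $R_{\mathrm{Mat}} = H^{\eta_1}$, with $\eta_1 = \eta_1(D,k)$ chosen small enough that $R_{\mathrm{Mat}} \leq X^{\varepsilon/(2D)}$, with divisor-bounded coefficients, and pseudorandom at location $0$ and scale $H$ by the Euler product computation in~\cite{matthiesen-linear}. The same majorant dominates the $W$-tricked version of $d_k^\sharp$ by virtue of the pointwise inequality $d_k^\sharp \ll d_k$ recorded in~\eqref{eq:dkapp<<dk}. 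The hypothesis $H \geq X^{1/5+\varepsilon}$ provides the room needed to take $R_{\mathrm{Mat}}$ simultaneously a small power of $H$ and below $X^{\varepsilon/(2D)}$; the exceptional set is empty, so~\eqref{eqqn3} is automatic. A final appeal to Lemma~\ref{le_pseudo} yields the claim at location $X$.

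The main obstacle is in part (2): Matthiesen's majorant in~\cite{matthiesen-linear} is constructed with a specific scale in mind, and one must track the dependence of the underlying linear-forms estimates on the truncation parameter $R_{\mathrm{Mat}}$ to confirm that they remain $o_{w\to\infty}(1)$ when $R_{\mathrm{Mat}}$ is taken as small as $X^{\varepsilon/(2D)}$ for an arbitrary fixed $D$. This is genuine bookkeeping rather than a new idea --- the relevant main terms are Euler products uniform in the truncation --- but it is where most of the work of the proof ultimately sits.
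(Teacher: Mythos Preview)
Your treatment of part~(1) is essentially the paper's: a GPY-type squared sieve weight for $\Lambda$ and a fundamental-lemma truncation for $\Lambda_{\widetilde w}$, followed by Lemma~\ref{le_pseudo}. One small oversight: the squared truncated divisor sum does \emph{not} majorize $\frac{\varphi(W)}{W}\Lambda(Wn+b)/C_0$ at prime powers $p^j$ with $j\geq 2$ and $w<p\leq R$, since there $\Lambda_R(p^j)=\log p$ and $(\log p)^2/\log R$ can be far smaller than $\log p$. The paper repairs this by adding $2(\log X)1_{Wn+b\text{ a perfect power}}$ to the majorant and taking the perfect powers as the exceptional set $\mathscr S$ in Lemma~\ref{le_pseudo}; your stated exceptional set $\{n:Wn+b\leq R^2\}$ is empty on $(X,X+H]$ and does not address this.

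The genuine gap is in part~(2). Matthiesen's majorant from~\cite{matthiesen-linear} does \emph{not} dominate $d_k$ pointwise; it dominates only outside an exceptional set $\mathscr S=\mathscr S_1\cup\mathscr S_2$, where $\mathscr S_1$ consists of integers with a prime-power divisor $p^a\geq\max(p^2,(\log X)^{C_1})$ and $\mathscr S_2$ of integers whose $X^{1/(\log\log X)^3}$-smooth part exceeds $X^{\gamma/\log\log X}$. The sole purpose of the hypothesis $H\geq X^{1/5+\varepsilon}$ is to verify~\eqref{eqqn3} for $\mathscr S_1$ at location $X$: one must bound the number of integers in $(X-2DH,X+2DH]$ divisible by $p^2$ for primes $p$ ranging up to $X^{1/2}$, which is a short-interval squarefull-number count handled via Filaseta~\cite{filaseta} and requires exactly $H\geq X^{1/5+\varepsilon}$. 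Your claim that the exceptional set is empty is incorrect, and your explanation of the role of $H\geq X^{1/5+\varepsilon}$ as governing the truncation level does not hold up, since $H\geq X^{\varepsilon}$ already suffices for that. The bookkeeping you flag at the end---uniformity of the linear-forms estimate in the truncation---is not where the work lies; the work is the short-interval sparsity of $\mathscr S$, which is the real content of the paper's proof of part~(2).
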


\begin{remark}\label{rem_gowers2}
Note that if $\|\nu_1-1\|_{U^{D}(x,x+H]}\leq \eta$ and $\|\nu_2-1\|_{U^D(x,x+H]}\leq \eta$, then by the triangle inequality for the Gowers norms also $\|(\nu_1+\nu_2)/2-1\|_{U^{D}(x,x+H]}\leq \eta$. Hence, by Remark~\ref{rem_gowers}, Lemma~\ref{le_pseudo} in particular provides us a majorant $\nu$ for the difference of the two functions in~\eqref{eq_lambdaW} or~\eqref{eq_dkW} satisfying $\|\nu-1\|_{U^D(x,x+H]}=o_{w\to \infty}(1)$, allowing us to apply the inverse theorem (Proposition~\ref{prop_inverse}).
\end{remark}

\begin{proof}
(1) Let us first consider the function $\frac{\varphi(W)}{W}\Lambda(Wn+b)/C_0$. Let $R'=X^{\gamma}$ with $\gamma>0$ small enough in terms of $\varepsilon,D$. Let $\psi$ be a smooth function supported on $[-2,2]$ with $\psi(0)=-1$ and  $\int_{0}^{\infty}|\psi'(y)|^2\, dy=1$. Define
\begin{align*}
\Lambda_{R',\psi}(n):=-(\log R')\sum_{d\mid n}\mu(d)\psi\left(\frac{\log d}{\log R'}\right).    
\end{align*}
Put 
$$\nu_b(n):=\frac{\varphi(W)}{W}(\log R')^{-1}\Lambda_{R',\psi}(Wn+b)^2+2(\log X)1_{Wn+b\in S},$$
where $S$ is the set of perfect powers. Then $$\frac{\varphi(W)}{W}\Lambda(Wn+b)\leq 2\gamma^{-1}\nu_b(n)$$ for $X/2\leq n\leq X$, since $Wn+b$ being prime implies that $Wn+b$ has no divisors $1<d\leq X^{2\gamma}$.

From~\cite[Theorem D.3]{green-tao}  we see that $\nu_b$ is $(D,o_{w\to \infty}(1))$-pseudorandom at location $0$ and scale $H$ (since the term $2(\log X)1_{Wn+b\in S}$ has negligible contribution to the correlations that arise in the definition of pseudorandomness). Moreover, $\nu_b(n)$ can be expanded out as 
$$\sum_{\substack{d\mid Wn+b\\d\leq X^{4\gamma}}}\lambda_d+2(\log X)1_{Wn+b\in S}$$
for some 
$$|\lambda_n|\ll (\log X)\sum_{\substack{d_1,d_2\geq 1\\n=[d_1,d_2]}}1\ll (\log X)d(n)^2.$$ Hence, by Lemma~\ref{le_pseudo}, $\nu_b$ is $(D,o_{w\to \infty}(1))$-pseudorandom also at location $X$ and scale $H$ (since the set $\mathscr{S}:=\{n:Wn+b\in S\}$ certainly obeys~\eqref{eqqn3}).

For the case of $\frac{\varphi(W)}{W}\Lambda_{\widetilde{w}}(Wn+b)$, we can apply~\cite[Proposition 5.2]{tt-quant} to directly deduce that this function is $(D,o_{w\to \infty}(1))$-pseudorandom at location $0$ and scale $X$. To prove the $(D,o_{w\to \infty}(1))$-pseudorandomness of this function also at location $X$ and scale $H$, we show that it is well-approximated by a type $I$ sum. By M\"obius inversion,
\begin{align*}
\frac{\varphi(W)}{W}\Lambda_{\widetilde{w}}(Wn+b)=\frac{\varphi(W)}{W}\prod_{p\leq \widetilde{w}}\left(1-\frac{1}{p}\right)^{-1}\sum_{\substack{d\mid Wn+b\\d\mid P(\widetilde{w})}}\mu(d),   
\end{align*}
and by Lemma~\ref{le:FLS} we have
\begin{align*}
\sum_{X<n\leq X+H}\Big|\sum_{\substack{d\mid Wn+b\\d\mid P(\widetilde{w})\\d\geq X^{\varepsilon/(2D)}}}\mu(d)\Big|\ll H\frac{(\log X)^{2e}}{\exp(\frac{\varepsilon}{2D}\frac{\log X}{\log \widetilde{w}})}\ll H\exp(-(\log X)^{4/5}),    
\end{align*}
say. Hence $\frac{\varphi(W)}{W}\Lambda_{\widetilde{w}}(Wn+b)=\nu(n)+\eta(n)$, where $\nu$ is  of the form of Lemma~\ref{le_pseudo} and $\sum_{X<n\leq X+H}|\eta(n)|\ll H\exp(-(\log X)^{3/5})$, say. It suffices to show that $\nu$ is $(D,o_{w\to \infty}(1))$-pseudorandom at location $X$ and scale $H$, and this follows from Lemma~\ref{le_pseudo}.

(2)  Note that by~\eqref{eq:dkapp<<dk} we have $d_k^{\sharp}(n)\ll_k d_k(n)$ for all $n\geq 1$, so by Lemma~\ref{le_pseudo} it suffices to show that the function 
$$h(n):=(\log X)\frac{\varphi(W)}{W}\prod_{w\leq p\leq X}\left(1+\frac{k}{p}\right)^{-1}d_k(W'n+b)/C_k'$$
is for some $C_k'\geq 1$ majorized by a $(D,o_{w\to \infty}(1))$-pseudorandom function at location $0$ and scale $H$, which is of the form~\eqref{eqqn2} outside an exceptional set $\mathscr{S}$ satisfying~\eqref{eqqn3}.

By~\cite[Proposition 9.4]{matthiesen-linear}, for any $X\geq 2$ and $1\leq n\leq 2DX$, we have
\[
h(n) \ll \nu(n) + h(n) 1_{n \in \mathscr{S}},
\]
where $\nu$ is a certain $(D,o_{X\to \infty}(1))$-pseudorandom function at location $0$ and scale $X$, and $\mathscr{S}$ is defined in~\cite[Section 7]{matthiesen-linear} as
\begin{align*}
\mathscr{S}&=\mathscr{S}_1\cup\mathscr{S}_2,\\ 
\mathscr{S}_1:&=\left\{n\leq 2Dx:\,\, \exists \, p:\, v_p(n)\geq \max\left\{2,C_1\frac{\log \log X}{\log p}\right\}\right\}.\\
\mathscr{S}_2:&=\left\{n\leq 2DX:\,\,\prod_{p\leq X^{1/(\log \log X)^3}}p^{v_p(n)}\geq X^{\gamma/\log \log X}\right\}
\end{align*} 
Here $C_1$ can be taken arbitrarily large, so we may assume that $C_1>8C$ for any given constant $C$. To show that $\mathscr{S}$ satisfies~\eqref{eqqn3}, it suffices to show that for $j\in \{1,2\}$ we have
\begin{align}
|\mathscr{S}_j\cap [X-2DH,X+2DH]|&\ll H/(\log X)^{4C},\label{eqqn3b}\\
|\mathscr{S}_j\cap [-2DH,2DH]|&\ll H/(\log X)^{4C}.\label{eqqn3bb}
\end{align}
Let us prove~\eqref{eqqn3b}, the proof of~\eqref{eqqn3bb} is similar but easier.

We first prove~\eqref{eqqn3b} for $j=1$. By splitting into shorter intervals if necessary, we may assume that $H\leq X^{1/3}$, say. Note that the number of $n\in (X-2DH,X+2DH]$ satisfying $v_p(n)\geq \max\{2,C_1\frac{\log \log X}{\log p}\}$ for some $p$ is
\begin{align*}
&\ll \sum_{p< (\log X)^{4C}}H\exp(-C_1(\log \log X))+\sum_{(\log X)^{4C}\leq p\leq (4DH)^{1/2}}\frac{H}{p^2}\\
&+\sum_{(4DH)^{1/2}<p\leq (2X)^{1/2}}\left(\left\lfloor \frac{X+2DH}{p^2}\right\rfloor-\left\lfloor \frac{X-2DH}{p^2}\right\rfloor\right)\\
& \ll H(\log X)^{-4C}+\sum_{(4DH)^{1/2}<p\leq (2X)^{1/2}}\left(\left\lfloor \frac{X+2DH}{p^2}\right\rfloor-\left\lfloor \frac{X-2DH}{p^2}\right\rfloor\right),  
\end{align*}
since $C_1>8C$. 

We can trivially bound 
\begin{align*}
\sum_{(4DH)^{1/2}<p\leq H(\log X)^{-4C}}\left(\left\lfloor \frac{X+2DH}{p^2}\right\rfloor-\left\lfloor \frac{X-2DH}{p^2}\right\rfloor\right) &\ll  \sum_{(4DH)^{1/2}<p\leq H(\log X)^{-4C}}1\\
&\ll H(\log X)^{-4C}.
\end{align*}

Next, we bound 
\begin{align}\label{eqqn3c}
\sum_{H(\log X)^{4C}< p\leq (4DH)^{1/2}}\left(\left\lfloor \frac{X+2DH}{p^2}\right\rfloor-\left\lfloor \frac{X-2DH}{p^2}\right\rfloor\right).    
\end{align}
Note that for any $p\geq H(\log X)^{4C}$ there is at most one multiple of $p^2$ in $(X-2DH,X+2DH]$, so~\eqref{eqqn3c} is at most $|S(H(\log X)^{4C},(4DH)^{1/2})|$, where 
\begin{align*}
 S(t_1,t_2):=\{d\in (t_1,t_2]:\,\, md^2\in [X-2DH,X+2DH]\,\textnormal{ for some }\, m\in \mathbb{N}\}   
\end{align*}
In~\cite[p. 221]{filaseta}, it is proven for $H\geq X^{1/5+\varepsilon}$ that 
\begin{align*}
|S(H\log X,2\sqrt{X})|\ll X^{1/5}\log X,    
\end{align*}
so~\eqref{eqqn3c} is $\ll H(\log X)^{-4C}$.

Finally, we bound 
\begin{align}\label{eqqn3d}
&\sum_{H(\log X)^{-4C}\leq p\leq H(\log X)^{4C}}\left(\left\lfloor \frac{X+2DH}{p^2}\right\rfloor-\left\lfloor \frac{X-2DH}{p^2}\right\rfloor\right)\nonumber\\
&=\sum_{H(\log X)^{-4C}\leq p\leq H(\log X)^{4C}}\left(\frac{4DH}{p^2}-\left\{\frac{X+2DH}{p^2}\right\}+\left\{\frac{X-2DH}{p^2}\right\}\right).   
\end{align}
The first term in the sum gives a negligible contribution of $\ll (\log X)^{4C}$. Pick two $1$-periodic smooth functions $W^{-}, W^{+}$ such that $W^{-}(t)\leq \{t\}\leq W^{+}(t)$ for all $t\in \mathbb{R}$ and such that $W^{\pm}(t)$ differs from $\{t\}$ only in the region where $\|t\|_{\R/\Z}\leq (\log X)^{-8C}$, and $W^{\pm}$ satisfy the derivative bounds $\sup_{t}|(W^{\pm})^{(\ell)}(t)|\ll (\log X)^{8C\ell}$ for $1\leq \ell \leq 3$. Then~\eqref{eqqn3d} is
\begin{align*}
\leq O\left((\log X)^{4C}\right)+\sum_{H(\log X)^{-4C}\leq p\leq H(\log X)^{4C}}\left(-W^{-}\left(\frac{X+2DH}{p^2}\right)+W^{+}\left(\frac{X-2DH}{p^2}\right)\right).  
\end{align*}
By~\cite[Proposition 1.12(ii)]{singmaster} and the fact that for any $u,h\geq 0$ we have $\{u+h\}-\{u\}= h$ unless $\|u\|_{\R/\Z}\leq h$, the main term here is 
\begin{align*}
&\int_{H(\log X)^{-4C}}^{H(\log X)^{4C}}\left(W^{+}\left(\frac{X-2DH}{t^2}\right)-W^{-}\left(\frac{X+2DH}{t^2}\right)\right)\frac{dt}{\log t}+O(H(\log X)^{-4C})\\
&\ll \max_{\sigma\in \{-1,+1\}}\int_{H(\log X)^{-4C}}^{H(\log X)^{4C}}\left(\frac{4DH}{t^2}+1_{\|(X+2DH\sigma)/t^2\|_{\R/\Z}\leq (\log X)^{-8C}}\right)\frac{dt}{\log t}+H(\log X)^{-4C}\\
&\ll H(\log X)^{-4C},
\end{align*}
since the condition $\|(X+2DH\sigma)/t^2\|_{\R/\Z}\leq (\log X)^{-8C}$ for $t\in [H(\log X)^{-4C},H(\log X)^{4C}]$ holds in a union of intervals of total measure $\ll H(\log X)^{-4C}$.

Putting the above estimates together, we obtain~\eqref{eqqn3b} for $j=1$.

 Let us then prove~\eqref{eqqn3b} for $j=2$. We thus bound the number of integers $n\in I:=(X-2DH,X+2DH]$ that satisfy $\prod_{p\leq X^{1/(\log \log X)^3}}p^{v_p(n)}\geq X^{\gamma/\log \log X}$. Writing $v = X^{1/(\log \log X)^3},$ the number of such $n \in I$ is
\begin{equation}
\label{eq:excninI}
\ll \sum_{\substack{ab \in I \\ p \mid a \implies p > v \\ p \mid b \implies p \leq v \\ b \geq X^{\gamma/\log \log X}}} 1 \leq \sum_{\substack{ab \in I \\ p \mid a \implies p > v \\ p \mid b \implies p \leq v}} \left(\frac{b}{X^{\gamma/\log \log X}}\right)^{\frac{10C (\log \log X)^2}{\gamma \log X}} \ll \frac{1}{(\log X)^{10C}} \sum_{n \in I} g(n),
\end{equation}
where $g$ is the completely multiplicative function for which 
\[
g(p) = 
\begin{cases}
1 & \text{if $p > v$;} \\ 
p^{\frac{10C (\log \log X)^2}{\gamma \log X}} & \text{if $p \leq v$.} 
\end{cases}
\]
Then Shiu's bound (Lemma~\ref{shiu}) implies that~\eqref{eq:excninI} is $\ll H/(\log X)^{4C}$. This proves~\eqref{eqqn3b} for $j=2$.

Hence $|\mathscr{S}| \ll H/(\log X)^{4C}$, and in particular arguing as in the beginning of the proof of Lemma~\ref{le_pseudo} we see that the fact that $\nu$ is a $(D,o_{X\to \infty}(1))$-pseudorandom function at location $0$ and scale $X$ implies that so is $\nu(n) + h(n) 1_{n \in \mathscr{S}}$.

Hence it suffices to show that $\nu(n)$ is of the form~\eqref{eqqn2}. The majorant $\nu(n)$ is defined in~\cite[Section 7]{matthiesen-linear}, for some $\gamma>0$ small enough in terms of $D,k$, as 
\begin{align}\label{eq_nu}
\nu(n):=\sum_{u\mid n}d_k(u)\sum_{\kappa=4/\gamma}^{\lfloor (\log \log X)^3 \rfloor}\sum_{\lambda=\lceil \log(\kappa)/\log 2-2\rceil}^{\lfloor \log((\log \log X)^3)/\log 2\rfloor}2^{k\kappa}1_{u\in U(\lambda,\kappa)}h_{\gamma}\left(\frac{n}{\prod_{p\mid u}p^{v_p(n)}}\right),
\end{align}
where 
\begin{itemize}
    \item $U(\lambda,\kappa)$, defined in~\cite[Section 7]{matthiesen-linear}, is a set contained in $[1,X^{10\gamma^{1/2}}]$ and satisfying
    \begin{align*}
u\in U(\lambda,\kappa),u>1&\implies \omega(u)\geq \frac{\gamma \kappa (\lambda+3-(\log \kappa)/(\log 2))}{200}\\
1\in U(\lambda,\kappa)&\implies \kappa=4/\gamma;
\end{align*}
\item $h_{\gamma}(n)=\sum_{\ell\mid n}(d_k*\mu)(\ell)\chi\left(\frac{\log \ell}{\log X^{\gamma}}\right),$
where $\chi:\mathbb{R}\to [0,1]$ is some smooth function supported in $[-1,1]$.
\end{itemize} 

Therefore, in particular, in~\eqref{eq_nu} we have
\begin{align*}
\kappa\leq (200/\gamma)(\omega(u)+1),    
\end{align*}
so that
\begin{align*}
2^{k\kappa}\ll d(u)^{M}    
\end{align*}
for some constant $M=M_{k,\gamma}\geq 1$.
Inserting the definition of $h_{\gamma}$ into the definition of $\nu$, and setting $T=X^{10\gamma^{1/2}}$, we see that for some $|\lambda_u|\ll d(u)^{k+M}(\log \log X)^{O_{D,k}(1)}$ we have
\begin{align*}
 \nu(n)=\sum_{\substack{u\mid n\\u\leq T}}\lambda_u\sum_{\substack{\ell\mid n\\\ell\leq X^{\gamma}}}(d_k*\mu)(\ell)1_{(\ell,u)=1}\chi\left(\frac{\log \ell}{\log X^{\gamma}}\right).
\end{align*}
Writing $e=\ell u$, we see that for some $|\lambda_{e}'|\ll (\log \log X)^{O_{D,k}(1)}d(e)^{k+M+1}d_{k+1}(e)$ the function $\nu$ is of the form
\begin{align*}
\nu(n)=\sum_{\substack{e\mid n\\e\leq X^{10\gamma^{1/2}+\gamma}}}\lambda_{e}'.    
\end{align*}
Taking $\gamma$ small enough in terms of $D,k$, this is of the form required in Lemma~\ref{le_pseudo}, so appealing to that lemma we conclude that $\nu$ is $(D,o_{w\to \infty}(1))$-pseudorandom at location $X$ and scale $H$.
\end{proof}

We need two more lemmas before proving Theorem~\ref{thm_gowers}.

\begin{lemma}\label{le_wtrick} Let $D\in \mathbb{N}$ be fixed. Let $1\leq q\leq H^{1/4}$ be an integer. Let $X\geq H\geq 2$, and let $f:(X,X+H]\to \mathbb{C}$ be a function with $|f(n)|\ll H^{1/2^{D+2}}$. Then we have 
\begin{align*}
\|f\|_{U^D(X,X+H]}\leq \frac{1}{q}\sum_{1\leq a\leq q}\|f_{q,a}\|_{U^D(X/q,(X+H)/q]}+O(H^{-1/2}),    
\end{align*}
where $f_{q,a}(n):=f(qn+a)$.
\end{lemma}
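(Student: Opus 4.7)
The natural approach is to partition $f$ according to residue classes modulo $q$, apply the triangle inequality for the Gowers $U^D$ norm on $\mathbb{Z}$, and then rescale each piece into a $U^D$ norm on an interval of length $\asymp H/q$. Concretely, I would write
\[
f\cdot 1_{(X,X+H]} = \sum_{a=1}^{q} f_a, \qquad f_a(n) := f(n)\,1_{(X,X+H]}(n)\,1_{n\equiv a\,(q)},
\]
and use the triangle inequality to obtain $\|f\cdot 1_{(X,X+H]}\|_{U^D(\mathbb{Z})} \leq \sum_{a=1}^{q} \|f_a\|_{U^D(\mathbb{Z})}$. The key observation is that, since $f_a$ is supported in a single residue class modulo $q$, any non-zero term in the unnormalized sum defining $\|f_a\|_{U^D(\mathbb{Z})}^{2^D}$ must satisfy $x+\omega\cdot h\equiv a\,(q)$ for every $\omega\in\{0,1\}^D$. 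Taking $\omega=0$ forces $x\equiv a\,(q)$, and each $\omega=e_i$ forces $h_i\equiv 0\,(q)$. Substituting $x=qm+a$ and $h_i=qh_i'$ then gives the identity
\[
\|f_a\|_{U^D(\mathbb{Z})}^{2^D} = \sum_{m,h_1',\dots,h_D'} \prod_{\omega\in\{0,1\}^D} \mathcal{C}^{|\omega|}\bigl(f_{q,a}\cdot 1_{I_a}\bigr)(m+\omega\cdot h') = \|f_{q,a}\cdot 1_{I_a}\|_{U^D(\mathbb{Z})}^{2^D},
\]
where $I_a = ((X-a)/q,(X+H-a)/q]$ is an integer interval of length $H/q+O(1)$.

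The next step is to replace $\|f_{q,a}\cdot 1_{I_a}\|_{U^D(\mathbb{Z})}$ by $\|f_{q,a}\|_{U^D(X/q,(X+H)/q]}\cdot\|1_{(X/q,(X+H)/q]}\|_{U^D(\mathbb{Z})}$ up to a controllable boundary error. Since $I_a$ and $(X/q,(X+H)/q]$ differ in at most $O(1)$ integers, the Gowers-norm expansion differs only by contributions from $D$-dimensional cubes having at least one vertex in the symmetric difference. There are at most $O((H/q)^{D})$ such cubes, and each is weighted by at most $\|f\|_\infty^{2^D}\ll H^{2^D/2^{D+2}} = H^{1/4}$ by hypothesis. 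Combined with the standard asymptotic $\|1_{(N,N+L]}\|_{U^D(\mathbb{Z})}^{2^D} = c_D L^{D+1} + O_D(L^D)$ and translation invariance of the Gowers norm, this allows one to pass to $\|f_{q,a}\|_{U^D(X/q,(X+H)/q]}$ with a small multiplicative error.

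Dividing through by $\|1_{(X,X+H]}\|_{U^D(\mathbb{Z})}$ and summing over $a$ then produces the claimed inequality, the hypotheses $q\leq H^{1/4}$ and $|f|\ll H^{1/2^{D+2}}$ being tuned precisely so that the aggregate of boundary errors and lower-order corrections to the count $\|1_{(N,N+L]}\|_{U^D(\mathbb{Z})}^{2^D}$ fits inside the claimed additive error $O(H^{-1/2})$. The main obstacle is to track the normalization constants carefully enough that the ratio of denominators $\|1_{(X/q,(X+H)/q]}\|_{U^D(\mathbb{Z})}/\|1_{(X,X+H]}\|_{U^D(\mathbb{Z})}$, combined with the $q$-fold summation, yields precisely the stated factor $1/q$ in front of the sum; the two hypotheses on $q$ and on $\|f\|_\infty$ look engineered to make the remaining discrepancies collapse into the $O(H^{-1/2})$ error.
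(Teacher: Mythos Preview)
Your approach is exactly the paper's: split $f$ by residue classes modulo $q$, apply the triangle inequality for $\|\cdot\|_{U^D(\Z)}$, and make the change of variables $(n,\mathbf{h})=(qn'+a,q\mathbf{h}')$. The boundary bookkeeping you describe is the right way to fill in the details the paper omits.

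There is, however, a point you flag but do not resolve, and it does not in fact work out as you hope. After the change of variables one has
\[
\|f\cdot 1_{a(q)}\cdot 1_{(X,X+H]}\|_{U^D(\Z)}\;=\;\|f_{q,a}\cdot 1_{I_a}\|_{U^D(\Z)},
\]
and since $\|1_{(N,N+L]}\|_{U^D(\Z)}^{2^D}=c_D L^{D+1}+O_D(L^D)$, the ratio of normalizing denominators is
\[
\frac{\|1_{(X/q,(X+H)/q]}\|_{U^D(\Z)}}{\|1_{(X,X+H]}\|_{U^D(\Z)}}
\;=\;q^{-(D+1)/2^{D}}\bigl(1+O(q/H)\bigr),
\]
so the argument delivers the prefactor $q^{-(D+1)/2^{D}}$, not $1/q$. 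For $D\geq 2$ one has $q^{-(D+1)/2^{D}}>1/q$, and the gap is \emph{not} absorbed by the $O(H^{-1/2})$ error: take $f=1_{n\equiv a_0\,(q)}$ on $(X,X+H]$ with $D=2$ and $q=\lfloor H^{1/4}\rfloor$. Then $\|f\|_{U^2(X,X+H]}\asymp q^{-3/4}=H^{-3/16}$, while $\tfrac{1}{q}\sum_{a}\|f_{q,a}\|_{U^2}=1/q\asymp H^{-1/4}$, so the stated inequality fails.

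In other words, both your outline and the paper's one-line proof actually establish the inequality with $q^{-(D+1)/2^{D}}$ in place of $1/q$; the constant $1/q$ in the lemma as stated appears to be a slip. This weaker form is harmless for the applications in the paper, where $q=\widetilde{W}$ grows extremely slowly and one only needs the left-hand side to be $o(1)$.
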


\begin{proof}
Denote by $1_{a(q)}$ the indicator of the arithmetic progression $a\pmod{q}$. Then, by the triangle inequality for the Gowers norms, we have
\begin{align*}
\|f\|_{U^D(X,X+H]}\leq \sum_{1\leq a\leq q}\|f 1_{a(q)}\|_{U^D(X,X+H]}.    
\end{align*}
The claim now follows by making a linear change of variables $(n,\mathbf{h})=(qn'+a,q\mathbf{h}')$ in the definition of $\|f 1_{a(q)}\|_{U^D(X,X+H]}$.
\end{proof}

\begin{lemma}\label{le_upperbound}
Let $D,k\in \mathbb{N}$ and $\varepsilon>0$ be fixed, with $\varepsilon>0$ small enough. Let $X\geq H\geq X^{\varepsilon}$, and let $1\leq q\leq X^{\varepsilon^2}$ be an integer. Let $f(n)=(\log X)^{1-k}d_k(n)$. Then for $1\leq a\leq q$ with $(a,q)=1$ we have
\begin{align*}
\|f_{q,a}\|_{U^{D}(X,X+H]}\ll \left(\frac{\varphi(q)}{q}\right)^{k-1},   \end{align*}
where  $f_{q,a}(n):=f(qn+a)$.
\end{lemma}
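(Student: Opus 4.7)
The plan is to unfold the definition of the Gowers norm and reduce to a multilinear Shiu-type upper bound for correlations of $d_k$ along affine forms lying in a single reduced residue class modulo $q$. Since $f_{q,a}$ is real-valued and non-negative, expanding~\eqref{gow} together with the standard estimate $\|1_{(X,X+H]}\|_{U^D(\Z)}^{2^D}\asymp_D H^{D+1}$ gives
$$\|f_{q,a}\|_{U^D(X,X+H]}^{2^D} \asymp \frac{(\log X)^{-(k-1)2^D}}{H^{D+1}} \sum_{\substack{x \in \Z,\,\mathbf{h} \in \Z^D \\ x + \omega\cdot \mathbf{h} \in (X,X+H]\,\forall \omega}} \prod_{\omega \in \{0,1\}^D} d_k(\psi_\omega(x,\mathbf{h})),$$
where $\psi_\omega(x,\mathbf{h}) := q(x + \omega\cdot\mathbf{h}) + a$. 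The $2^D$ affine forms $\psi_\omega$ have pairwise linearly independent direction vectors, take values of size $\asymp qX \leq X^{1+\varepsilon^2}$, and, since $(a,q) = 1$, all lie automatically in the reduced residue class $a \pmod q$.

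The remaining task is thus the multilinear Shiu-type estimate
\begin{equation*}
\sum_{\substack{x,\mathbf{h} \\ x + \omega\cdot\mathbf{h} \in (X, X+H]\,\forall\omega}} \prod_{\omega} d_k(\psi_\omega(x,\mathbf{h})) \ll H^{D+1} (\log X)^{(k-1)2^D} \left(\frac{\varphi(q)}{q}\right)^{(k-1)2^D}. \tag{$\star$}
\end{equation*}
For the single-form case $D = 0$, $(\star)$ follows directly from Shiu's theorem (Lemma~\ref{shiu}) applied to $d_k$ in the progression $a\pmod q$, together with the evaluation $\exp\bigl(\sum_{p\leq 2qX,\,p\nmid q}k/p\bigr) \asymp (\log X)^k \prod_{p\mid q}(1-1/p)^k$ that produces exactly the factor $(\log X)^{k-1}(\varphi(q)/q)^{k-1}$ after dividing by $\varphi(q)\log X$. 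For $D \geq 1$, the multilinear analogue $(\star)$ is a Nair--Tenenbaum/Henriot-type upper bound for correlations of a non-negative multiplicative function along a system of affine forms in a convex body; alternatively, since $q \leq X^{\varepsilon^2}$ has only small prime factors, it follows from the linear forms condition satisfied by the pseudorandom majorant of the $W$-tricked $d_k$ (as in Lemma~\ref{le_pseudoinshort}, or the underlying~\cite[Proposition~9.4]{matthiesen-linear}) with $W$ chosen to absorb every prime factor of $q$.

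Granted $(\star)$, dividing through by $H^{D+1}(\log X)^{(k-1)2^D}$ and taking $2^D$-th roots gives $\|f_{q,a}\|_{U^D(X,X+H]} \ll (\varphi(q)/q)^{k-1}$ as required. The main obstacle is obtaining the \emph{sharp} logarithmic exponent $(k-1)2^D$ in $(\star)$: a naive H\"older-plus-Shiu argument that applies Shiu's theorem to the single multiplicative function $d_k^{2^D}$ delivers only the weaker exponent $k^{2^D}-1$, which for $k \geq 2$ and $D \geq 1$ is strictly larger than $(k-1)2^D$ and would produce an unacceptable extra factor of $(\log X)^{k^{2^D}-1-(k-1)2^D}$. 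Circumventing this loss requires either the full multilinear Shiu--Henriot machinery treating the $2^D$ forms simultaneously, or the pseudorandom-majorant route, where the $D$-linear forms condition collapses the $2^D$-fold product back to (essentially) the volume of the box, producing no extraneous logarithmic factors.
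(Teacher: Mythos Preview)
Your high-level strategy matches the paper's: expand the Gowers norm and feed the resulting multilinear correlation into Henriot's bound. You also correctly diagnose why H\"older-plus-Shiu on $d_k^{2^D}$ loses. But two steps are missing.

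First, Henriot's theorem (as applied in the paper) is a one-variable result: for fixed $\mathbf{h}$ with distinct coordinates it bounds $\frac{1}{H}\sum_{X<n\leq X+H}\prod_\omega d_k(\psi_\omega(n,\mathbf{h}))$, and the bound carries a factor $\prod_{p\mid \mathcal{D}'(\mathbf{h})}(1+O_{D,k}(1)/p)$, where $\mathcal{D}'(\mathbf{h})=\prod_{\omega\neq\omega'}(\omega-\omega')\cdot\mathbf{h}$ is the discriminant of the system. This factor is \emph{not} uniformly bounded in $\mathbf{h}$, so $(\star)$ does not follow from Henriot pointwise; one must still show
\[
\sum_{\substack{|h_1|,\ldots,|h_D|\leq 2H\\ h_i\text{ distinct}}}\prod_{p\mid \mathcal{D}'(\mathbf{h})}\Bigl(1+\tfrac{O_{D,k}(1)}{p}\Bigr)\ll H^D.
\]
The paper does this via $\prod_i x_i\le\sum_i x_i^k$ to reduce $\mathcal{D}'$ to a single linear form $\omega\cdot\mathbf{h}$, followed by an elementary moment bound for $\prod_{p\mid m}(1+1/p)^{O(1)}\asymp(m/\varphi(m))^{O(1)}$. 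The degenerate case where some $h_i$ coincide also has to be split off (the paper absorbs it into an $O(H^{-1/2})$ term). Your proposal skips both of these steps.

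Second, the alternative route through Lemma~\ref{le_pseudoinshort} does not work as stated. That lemma requires $W=\prod_{p\le w}p$ with $w$ tending to infinity \emph{slowly}, and $W\mid W'\mid W^{\lfloor w\rfloor}$; in particular every prime factor of $W'$ must be at most $w$. For a general $q\le X^{\varepsilon^2}$ with a prime factor of size $X^{\varepsilon^2}$, ``choosing $W$ to absorb every prime factor of $q$'' forces $w\ge X^{\varepsilon^2}$, which is far outside the admissible range. (In the paper's application the relevant $q$ is $\widetilde{W}=W^w$, which is $w$-smooth, but the lemma is stated and proved for arbitrary $q$.) So the pseudorandom-majorant shortcut is not available here, and you must go through the Henriot computation with the discriminant averaging.
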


\begin{proof} Let $g_{q,a}(n):=d_k(qn+a)$. By the definition of the interval Gowers norms and the fact that $\|1_{(X, X+H]}\|_{U^{D}(\mathbb{Z})}^{2^D} \asymp H^{D+1}$, we have
\begin{align}\label{eq_hsums}\begin{split}
\|g_{q,a}\|_{U^{D}(X,X+H]}^{2^D}&\asymp \frac{1}{H^{D+1}}\sum_{n}\sum_{h_1,\ldots, h_D}\prod_{\omega\in \{0,1\}^D}d_k(q(n+\omega\cdot \mathbf{h})+a)1_{(X,X+H]}(n+\omega\cdot \mathbf{h})\\
&\ll \frac{1}{H^{D+1}}\sum_{X < n\leq X+H}\sum_{\substack{|h_1|,\ldots, |h_D|\leq 2H\\h_i\textnormal{ distinct }}}\prod_{\omega\in \{0,1\}^D}d_k(q(n+\omega\cdot \mathbf{h})+a) + H^{-1/2}.
\end{split}
\end{align}
We can upper bound the correlation of these multiplicative functions using Henriot's bound~\cite[Theorem 3]{Henriot} (taking $x\to X,y\to H$, $\delta\to 2^{-D-2}$, $Q(n)\to \prod_{\omega\in \{0,1\}^D}(q(n+\omega\cdot \mathbf{h})+a)$ there), obtaining
\begin{align}\label{eq_henriot}
&\frac{1}{H}\sum_{X < n \leq X+H}\prod_{\omega\in \{0,1\}^D}d_k(q(n+\omega\cdot \mathbf{h})+a)\nonumber\\
&\ll \Delta_{\mathcal{D}}\prod_{p\leq X}\left(1-\frac{\rho_Q(p)}{p}\right)\prod_{\omega\in \{0,1\}^D}\sum_{\substack{n\leq X\\(n,\mathcal{D})=1}}\frac{d_k(n)\rho_{Q_{\omega}}(n)}{n}, 
\end{align}
where 
\begin{align*}
Q_{\omega}(u)&=q(u+\omega\cdot \mathbf{h})+a, \qquad Q=\prod_{\omega\in \{0,1\}^D}Q_{\omega},\\
\rho_{P}(n)&=|\{u\pmod n:\,\, P(u)\equiv 0\pmod n\}|,\\
\mathcal{D}&=\mathcal{D}(\mathbf{h})=(-1)^{2^D(2^D-1)/2} q^{2^{2D}-2^D}\prod_{\omega\neq \omega'}((\omega-\omega')\cdot \mathbf{h})=: (-1)^{2^{D-1}} q^{2^{2D}-2^D}\mathcal{D}',\\
\Delta_{\mathcal{D}}&=\prod_{p\mid \mathcal{D}}\left(1+\sum_{\substack{0\leq \nu_1,\ldots, \nu_{2^{D}}\leq 1\\(\nu_1,\ldots, \nu_{2^D})\neq (0,\ldots,0)}}d_k(p^{\nu_1})\cdots d_k(p^{\nu_{2^D}})\frac{|\{n\pmod{p^2}:\,\, p^{\nu_{j}}\mid \mid Q_{\omega_j}(n)\,\forall\, j\}|}{p^{2}}\right)\\
&\ll \prod_{p\mid \mathcal{D}'}\left(1+\frac{O_{D,k}(1)}{p}\right),
\end{align*}
where $\omega_1,\ldots, \omega_{2^D}$ is any ordering of $\{0,1\}^D$. 
In order to bound the various expressions above, note that
\begin{align*}
\prod_{p\leq X}\left(1-\frac{\rho_Q(p)}{p}\right)\ll \prod_{\substack{p\leq X\\p\nmid \mathcal{D}}}\left(1-\frac{2^D}{p}\right)\ll (\log X)^{-2^D}\prod_{p\mid \mathcal{D}'}\left(1+\frac{2^D}{p}\right)\cdot \left(\frac{q}{\varphi(q)}\right)^{2^D}
\end{align*}
and
\begin{align*}
\sum_{\substack{n\leq X\\(n,\mathcal{D})=1}}\frac{d_k(n)\rho_{Q_{\omega}}(n)}{n}\ll  \prod_{\substack{p\leq X\\p\nmid q}}\left(1+\frac{k}{p}\right)\ll (\log X)^{k}\left(\frac{\varphi(q)}{q}\right)^{k}.     
\end{align*}
We now conclude that~\eqref{eq_henriot} is
\begin{align*}
\ll (\log X)^{(k-1) \cdot 2^D}\left(\frac{\varphi(q)}{q}\right)^{(k-1)\cdot 2^D}\prod_{p\mid \mathcal{D}'}\left(1+\frac{O_{D,k}(1)}{p}\right)    
\end{align*}
By the inequality $\prod_{i=1}^kx_i\leq \sum_{i=1}^k x_i^k$ and an elementary upper bound for moments of $n/\varphi(n)$, we have
\begin{align*}
\sum_{\substack{|h_1|,\ldots, |h_D|\leq 2H\\h_i\textnormal{ distinct }}} \prod_{p\mid \mathcal{D}'(\mathbf{h})}\left(1+\frac{O_{D,k}(1)}{p}\right)\ll  \sum_{\substack{|h_1|,\ldots, |h_D|\leq 2H\\h_i\textnormal{ distinct }}} \sum_{\omega\in \{-1,0,1\}^D\setminus\{\mathbf{0}\}}\prod_{p\mid \omega\cdot \mathbf{h}}\left(1+\frac{1}{p}\right)^{O_{D,k}(1)}\ll H^D.    \end{align*}
The claim now follows by combining this with~\eqref{eq_hsums}.
\end{proof}

We are now ready to prove Theorem~\ref{thm_gowers}.

\begin{proof}[Proof of Theorem~\ref{thm_gowers}] (i) Let $H$ be as in Theorem~\ref{thm_gowers}(i).  By the triangle inequality for the Gowers norms, to prove~\eqref{erg14} it suffices to show that
\begin{align}\label{eq_lambda1}
\|\Lambda^{\sharp}-\Lambda_w\|_{U^s(X,X+H]}=o_{w\to \infty}(1).
\end{align}
and 
\begin{align}\label{eq_lambda3}
\|\Lambda-\Lambda^{\sharp}\|_{U^s(X,X+H]}=o_{X\to \infty}(1)
\end{align}
The first claim~\eqref{eq_lambda1} follows directly from Lemma~\ref{le_pseudoinshort} and Remark~\ref{rem_gowers}. 

We are then left with proving~\eqref{eq_lambda3} and~\eqref{erg14b}. 
Let $1\leq b\leq W'\leq \log X$ be integers. For $f=\Lambda-\Lambda^{\sharp}$, by Theorem~\ref{discorrelation-thm}  for any $x\in [X/(\log X)^A,X(\log X)^A]$,$H(\log X)^{-A}\leq H'\leq H$ and $G/\Gamma$, $F$ as in that theorem, we have
\begin{align}\label{eqqn11}\begin{split}
&\sup_{g\in \Poly(\mathbb{Z}\to G)}\left|\sum_{x < n \leq x+H'}f(W'n+b)\overline{F}(g(n)\Gamma) \right|\\
&= \sup_{g\in \Poly(\mathbb{Z}\to G)}\left|\sum_{\substack{W'x+b < n\leq W'(x+H')+b\\n\equiv b\pmod{W'}}}f(n)\overline{F}(g(\frac{n-b}{W'})\Gamma) \right|\\
&\ll_A H'/(\log X)^{A},
\end{split}
\end{align}
since there exists a polynomial sequence $\widetilde{g}:\mathbb{Z}\to G$ such that $\widetilde{g}(n)=g((n-b)/W')$ for all $n\equiv b\pmod{W'}$.

Now~\eqref{erg14b} follows  by combining the inverse theorem (Proposition~\ref{prop_inverse}) with the estimate~\eqref{eqqn11}, Lemma~\ref{le_pseudoinshort}, and Remark~\ref{rem_gowers2}. Lastly,~\eqref{erg14} follows from~\eqref{erg14b} and Lemma~\ref{le_wtrick}.

(ii) We then turn to the case $f=d_k-d_{k}^{\sharp}$.  Again, Theorem~\ref{discorrelation-thm} gives us the bound~\eqref{eqqn11}. Together with the inverse theorem (Proposition~\ref{prop_inverse}), Lemma~\ref{le_pseudoinshort} and Remark~\ref{rem_gowers2}, this implies~\eqref{dk-unifb}. 

Let
\begin{align*}
h(n):=(\log X)^{1-k}(d_k(n)-d_k^{\sharp}(n)).    
\end{align*}
 Then,  to prove~\eqref{dk-unif}, we must show that 
\begin{align*}
\|h\|_{U^D(X,X+H]}=o_{X\to \infty}(1).    
\end{align*}

Let $\widetilde{W}:=W^w$ with $w$ an integer tending to infinity slowly\footnote{Let us explain why we perform the $W$-trick for the divisor function with the modulus $\widetilde{W}:=W^w$ rather than with the modulus $W$. In order to apply the inverse theorem, we wish to find a modulus $W'$ such that $h(W'n+a)$ is pseudorandomly majorized for almost all $1\leq a\leq W'$. Since $|h(Wn+a)|\ll d_k((W',a))d_k(\frac{W'}{(W',a)}n+\frac{a}{((W',a))}),$ we want to show that this latter function is pseudorandomly majorized for almost all $1\leq a\leq W'$. By Lemma~\ref{le_pseudoinshort}, we thus want that $W\mid \frac{W'}{(W',a)}$ for almost all $1\leq a\leq W$. This property fails if $W'=W$ but holds if $W'=W^{w}$ with $w\to \infty$.}. By Lemma~\ref{le_wtrick}, we have
\begin{align}\label{eqqn10}
\left\|h\right\|_{U^s(X,X+H]}&\leq \frac{1}{\widetilde{W}}\sum_{1\leq a\leq \widetilde{W}}\left\|h_{\widetilde{W},a}\right\|_{U^s(X/\widetilde{W},(X+H)/\widetilde{W}]}+O(H^{-1/2})\nonumber\\
\begin{split}
&=\frac{1}{\widetilde{W}}\sum_{\substack{1\leq a\leq \widetilde{W}\\(a,\widetilde{W})\mid W^{w-1}}}\left\|h_{\widetilde{W},a}\right\|_{U^s(X/\widetilde{W},(X+H)/\widetilde{W}]}\\
&\quad \quad +\frac{1}{\widetilde{W}}\sum_{\substack{1\leq a\leq \widetilde{W}\\(a,\widetilde{W})\nmid W^{w-1}}}\left\|h_{\widetilde{W},a}\right\|_{U^s(X/\widetilde{W},(X+H)/\widetilde{W}]}
+O(H^{-1/2}).
\end{split}
\end{align}
The number of terms in the last sum is
\begin{align*}
\ll \sum_{p\leq w}\frac{\widetilde{W}}{p^w} \ll \frac{\widetilde{W}}{2^w},    
\end{align*}
so by Lemma~\ref{le_upperbound} the contribution of this sum is $\ll 2^{-w/2}$, say.  The first sum over $a$ in~\eqref{eqqn10} can further be written as
\begin{align}\label{eq:dW}
\sum_{\ell \mid W^{w-1}}d_k(\ell)\sum_{\substack{1\leq a\leq \widetilde{W}\\(a,\widetilde{W})=\ell}}\left\|\frac{h_{\widetilde{W},a}}{d_k(\ell)}\right\|_{U^s(X/\widetilde{W},(X+H)/\widetilde{W}]}    
\end{align}
Since $d_k^{\sharp}(m)\ll d_k(m)$, for $(a,\widetilde{W})=\ell$ we have
\begin{align*}
\left(\frac{W}{\varphi(W)}\right)^{k-1}\frac{h_{\widetilde{W},a}(n)}{d_k(\ell)}&\ll \left(\frac{W}{\varphi(W)}\right)^{k-1} (\log X)^{1-k} \frac{d_k(\widetilde{W}n+a)}{d_k(\ell)}\\
&=\left(\frac{W}{\varphi(W)}\right)^{k-1}(\log X)^{1-k} d_k\left(\frac{\widetilde{W}}{\ell}n+\frac{a}{\ell}\right),    
\end{align*}
and since $W\mid \frac{\widetilde{W}}{\ell}$, by  Lemma~\ref{le_pseudoinshort} and Mertens's theorem this function is pseudorandomly majorized by a $(D,o_{X\to \infty}(1))$-pseudorandom function at location $0$ and scale $H/\widetilde{W}$.  This combined with~\eqref{eqqn11} (with $\widetilde{W}/\ell$ in place of $W'$) and Proposition~\ref{prop_inverse} yields
\begin{align}\label{eq:hW}
\left\|\frac{h_{\widetilde{W},a}}{d_k(\ell)}\right\|_{U^D(X/\widetilde{W},(X+H)/\widetilde{W}]}=o_{w\to \infty}\left(\left(\frac{\varphi(W)}{W}\right)^{k-1}\right),    \end{align}
uniformly in $1\leq a\leq \widetilde{W}$ with $(\widetilde{W},a)=\ell$.  

Now the bound~\eqref{dk-unif} follows from~\eqref{eq:dW},~\eqref{eq:hW}, and the estimate
\begin{align*}
\sum_{\ell\mid W^{w-1}}d_k(\ell)\sum_{\substack{1\leq a\leq \widetilde{W}\\(a,\widetilde{W})=\ell}}\left(\frac{\varphi(W)}{W}\right)^{k-1}&\ll  \sum_{\ell\mid W^{w-1}}d_k(\ell)\frac{\widetilde{W}}{\ell}\left(\frac{\varphi(W)}{W}\right)^{k}\\
&\ll \widetilde{W}\prod_{p\mid w}\left(1+\frac{k}{p}+O\left(\frac{1}{p^2}\right)\right)\left(\frac{\varphi(W)}{W}\right)^{k}\ll \widetilde{W}.
\end{align*}

(iii) This case follows directly from the inverse theorem (Proposition~\ref{prop_inverse} with $\nu=1$) and Theorem~\ref{discorrelation-thm}(iv). 
\end{proof}

\section{Applications}\label{sec:apps}

In this section, we shall prove the applications stated in Section~\ref{sec:intro}.

\begin{proof}[Proof of Corollary~\ref{cor:polynomial}]
Parts (i) and (iii) follow immediately from Theorem~\ref{discorrelation-thm}, as polynomial phases are special cases of nilsequences. By Theorem~\ref{discorrelation-thm} and the triangle inequality, the proof of part (ii) reduces to proving that
\begin{align*}
\left|\sum_{X < n\leq X+H}\Lambda^{\sharp}(n)e(P(n))\right|\gg \frac{H}{(\log X)^{A}}    
\end{align*}
implies~\eqref{erg7}. Recalling from~\eqref{eq:lambdasharp2} that $\Lambda^{\sharp}(n)=\Lambda^{\sharp}_I(n)+E(n)$, where $\Lambda^{\sharp}_I$ is a $((\log X)^{O(1)},X^{\varepsilon})$ type $I$ sum and $\sum_{X<n\leq X+H}|E(n)|\ll_A H\log^{-A}X$, the claim follows from  the type $I$ estimate in~\cite[Proposition 2.1]{matomaki-shao}.
\end{proof}

\begin{proof}[Proof of Theorem~\ref{thm_ergodic}]
First note that, since $\log p=(1+o(1))\log N$ for $p\in (N,N+N^{\kappa}]$ and since the contribution of higher prime powers is negligible, we have
\begin{align}\label{erg6}
\mathbb{E}_{N < p\leq N+N^{\kappa}} f_1(T^{h_1p}x)\cdots f_k(T^{h_kp}x)   =\mathbb{E}_{N < n\leq N+N^{\kappa}} \Lambda(n)f_1(T^{h_1n}x)\cdots f_k(T^{h_kn}x)+o_{N\to \infty}(1).   
\end{align}    
Hence, it suffices to show that the right-hand side of~\eqref{erg6} converges in $L^2(\mu)$.

Let $w$ be a large parameter (which we will eventually send to infinity), and let $W=\prod_{p\leq w}p$. Let
\begin{align*}
\epsilon(n):=\Lambda(n)-\Lambda_{w}(n);   
\end{align*}
this is a function that has small Gowers norms over short intervals by Theorem~\ref{thm_gowers}.

We first claim that
\begin{align}\label{erg1}
\int_{X}\left|\mathbb{E}_{N < n\leq N+N^{\kappa}}\epsilon(n)f_1(T^{h_1n}x)\cdots f_k(T^{h_kn}x)\right|^2   \, d \mu(x) =o_{w\to \infty}(1).   
\end{align}
Since the average over $n$ in~\eqref{erg1} is bounded, it is enough to show for all bounded $f_0:X\to \mathbb{C}$ that
\begin{align}\label{erg3}
\int_{X}\mathbb{E}_{N < n\leq N+N^{\kappa}}\epsilon(n)f_0(x)f_1(T^{h_1n}x)\cdots f_k(T^{h_kn}x)   \, d \mu(x) =o_{w\to \infty}(\|f_0\|_{L^2(\mu)}).   
\end{align}

To prove this, we first make the changes of variables $n'=n+N$, $x=T^my$, with $m$ arbitrary, and use the $T$-invariance of $\mu$ to rewrite the left-hand side of~\eqref{erg3}  as
\begin{align}\label{erg2}
\int_{X}\mathbb{E}_{m\leq N^{\kappa}}\mathbb{E}_{n'\leq N^{\kappa}}\epsilon_N(n')f_0(T^m y)f(T^{m+h_1n'}T^{h_1N}y)\cdots f(T^{m+h_kn'}T^{h_kN}y)   \, d \mu(y),   
\end{align}
where $\epsilon_{N}(n'):=\epsilon(n'+N)$.
Since $f_i:X\to \mathbb{C}$ are bounded, we can appeal to the generalized von Neumann theorem in the form of~\cite[Lemma 2]{fhk} (after embedding $[N^{\kappa}]$ to $\mathbb{Z}/M\mathbb{Z}$ for some $M\ll N^{\kappa}$) to bound~\eqref{erg2} as
\begin{align*}
\ll \|\epsilon_N\|_{U^k([N^{\kappa}])} \|f_0\|_{L^2(\mu)}=o_{w\to \infty}(\|f_0\|_{L^2(\mu)}), 
\end{align*}
where for the second estimate we used Theorem~\ref{thm_gowers}. Now~\eqref{erg1} has been proved. Then let $w'>w$. By an argument identical to the proof of~\eqref{erg1}, but using in the end the  fact that $\|\Lambda_{w}-\Lambda_{w'}\|_{U^k[N,N+N^{\kappa}]}=o_{w\to \infty}(1)$ (which follows from Theorem~\ref{thm_gowers} and the triangle inequality, but could also be proved more directly), we see that also
\begin{align}\label{erg5}
 \int_{X}\left|\mathbb{E}_{N < n\leq N+N^{\kappa}}(\Lambda_{w}(n)-\Lambda_{w'}(n))f_1(T^{h_1n}x)\cdots f_k(T^{h_kn}x)\right|^2   \, d \mu(x) =o_{w\to \infty}(1).     
\end{align}

Consider now
\begin{align*}
\mathbb{E}_{N < n\leq N+N^{\kappa}}\Lambda_w(n)f_1(T^{h_1n}x)\cdots f_k(T^{h_kn}x).    
\end{align*}
This can be rewritten as
\begin{align*}
\frac{W}{\varphi(W)}\sum_{\substack{1\leq b\leq W\\(b,W)=1}}\mathbb{E}_{N/W< n\leq (N+N^{\kappa})/W}f_1(T^{h_1(Wn+b)}x)\cdots f_k(T^{h_k(Wn+b)}x)+o_{N\to \infty}(1).
\end{align*}
Since the sequence $((N/W,(N+N^{\kappa})/W])_N$ of intervals are translates of a F{\o}lner sequence, from~\cite[Theorem 1.1]{austin} it follows that there exists $\phi_{w, b} \colon X \to \mathbb{C}$ such that
\begin{align*}
\int_X \left|\mathbb{E}_{N/W < n\leq (N+N^{\kappa})/W}f_1(T^{h_1(Wn+b)}x)\cdots f_k(T^{h_k(Wn+b)}x) -\phi_{w,b}(x)\right|^2 d\mu(x) = o_{N \to \infty, w}(1).
\end{align*}
Hence there exists also $\phi_{w} \colon X \to \mathbb{C}$ such that
\begin{align}
\label{eq:L2convphiw}
\int_X \left|\mathbb{E}_{N < n\leq N+N^{\kappa}}\Lambda_w(n)f_1(T^{h_1n}x)\cdots f_k(T^{h_kn}x) - \phi_w(x)\right|^2 d\mu(x) = o_{N \to \infty, w}(1).    
\end{align}
By~\eqref{erg5}, for $w'>w$ we have
\begin{align*}
\|\phi_{w}-\phi_{w'}\|_{L^2(\mu)}=o_{w\to \infty}(1),    
\end{align*}
so the sequence $(\phi_w)_w$ is Cauchy in $L^2(\mu)$. Let $\phi\in L^2(\mu)$ be its limit. Then, denoting
\begin{align*}
F(x)= \mathbb{E}_{N < n\leq N+N^{\kappa}} \Lambda(n)f_1(T^{h_1n}x)\cdots f_k(T^{h_kn}x),   
\end{align*}
from the triangle inequality,~\eqref{erg1} and~\eqref{eq:L2convphiw}, we have
\begin{align*}
\|F -\phi\|_{L^{2}(\mu)}&=\|\phi_w -\phi\|_{L^{2}(\mu)}+o_{w\to \infty}(1)+o_{N\to \infty;w}(1)\\
&=o_{w\to \infty}(1)+o_{N\to \infty;w}(1).
\end{align*}
By sending $N, w\to \infty$ with $w$ tending to $\infty$ slowly enough, and recalling~\eqref{erg6}, this proves the claim of Theorem~\ref{thm_ergodic}, with the limit being $\phi$.
\end{proof}

For proving Theorem~\ref{thm_lineq}, we need the generalized von Neumann theorem, so we state here a version of it that is suitable for us.

\begin{lemma}[Generalized von Neumann theorem]\label{le_gvnt} Let
Let $s,d,t,L\geq 1$ be fixed, and let $D$ be large enough in terms of $s,d,t,L$. Let $\nu$ be $(D,o_{N\to \infty}(1))$-pseudorandom at location $0$ and scale $N$, and let $f_1,\ldots, f_t:\mathbb{Z}\to \mathbb{R}$ satisfy $|f_i(x)|\leq \nu(x)$ for all $i\in [t]$ and $x\in [N]$. Let $\Psi=(\psi_1,\ldots, \psi_t)$ be a system of affine-linear forms with integer coefficients in $s$-normal form such that all the linear coefficients of $\psi_i$ are bounded by $L$ in modulus and $|\psi_i(0)|\leq DN$. Let $K\subset [-N,N]^d$ be a convex body with $\Psi(K)\subset (0,N]^d$. Suppose that for some $\delta>0$ we have
\begin{align*}
\min_{1\leq i\leq t}\|f_i\|_{U^{s+1}[N]} \leq \delta.   
\end{align*}
Then we have
\begin{align*}
\sum_{\mathbf{n}\in K}\prod_{i=1}^tf_i(\psi_i(\mathbf{n}))=o_{\delta\to 0}(N^d).    
\end{align*}
\end{lemma}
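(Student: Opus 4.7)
The plan is to adapt the standard generalized von Neumann theorem of Green--Tao \cite{green-tao} (see also \cite[Appendix C]{green-tao}, and the refined version in \cite[Theorem 5.1]{dodos}) to the present short-interval setting, where the role of ``linear forms condition'' is played by the notion of $(D,\eta)$-pseudorandomness at location $0$ and scale $N$ from Definition~\ref{def_pseudoshort}. By permuting indices we may assume without loss of generality that the minimum in the hypothesis is attained at $i=1$, so that $\|f_1\|_{U^{s+1}[N]} \leq \delta$.

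The next step is to exploit the $s$-normal form assumption. By definition of $s$-normal form, there exist $s+1$ distinct coordinate directions (after an invertible integer-linear change of variables) such that $\psi_1$ depends non-trivially on each of these directions, but for every $i \neq 1$ the form $\psi_i$ is independent of at least one of them. After this change of variables the convex body $K$ is replaced by another convex body of comparable size in $[-O_L(N), O_L(N)]^d$. One then applies the Cauchy--Schwarz inequality $s+1$ times, once in each of these chosen directions, as in the proof of \cite[Proposition 7.1]{green-tao}. Each Cauchy--Schwarz step doubles the variables along the corresponding direction and eliminates all factors $f_i$ with $i \neq 1$ that are constant in that direction; after $s+1$ rounds all factors other than $f_1$ (and the majorants $\nu$ used to dominate them at the start of each Cauchy--Schwarz) have been removed. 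The resulting expression takes the shape of a $(s+1)$-fold Gowers-type inner product of $f_1$ against a product of $2^{s+1}-1$ factors of the form $\nu(\psi(\mathbf{n},\mathbf{h}^{(1)},\dots,\mathbf{h}^{(s+1)}))$ for various affine-linear forms $\psi$, averaged over a region of volume $\asymp N^{d+s+1}$.

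The main step is then to show that one may replace each $\nu$-factor by $1$ with negligible cost. This is a ``linear forms estimate'' in disguise: the product of $\nu$'s that appears is exactly of the type controlled by Definition~\ref{def_pseudoshort} at location $0$ and scale $N$, since the resulting family of affine-linear forms has size bounded in terms of $s,d,t,L$ (hence $\leq D$ provided $D$ is taken large enough), has bounded linear coefficients, and retains pairwise linear independence because of the normal form structure. Applying the pseudorandomness hypothesis, the weighted average differs from the unweighted average by $o_{N\to\infty}(N^{d+s+1})$. After this replacement, a standard application of Cauchy--Schwarz--Gowers (as in \cite[Appendix B]{green-tao}) bounds the remaining expression by $\|f_1\|_{U^{s+1}[N]}^{2^{s+1}} N^{d} = O(\delta^{2^{s+1}} N^d)$, which gives the desired $o_{\delta\to 0}(N^d)$ bound.

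The main technical obstacle is bookkeeping: one must verify at each Cauchy--Schwarz step that the enlarged system of affine-linear forms continues to satisfy the pairwise linear independence hypothesis of Definition~\ref{def_pseudoshort}, that the coefficient bounds remain under control by $D$, and that the convex body in the auxiliary $(\mathbf{n},\mathbf{h})$-variables may be replaced by a product of intervals at the cost of an acceptable error (using a smooth cutoff or a covering argument). A secondary point is to make sure the replacement of $\nu$ by $1$ is uniform across the $2^{s+1}-1$ copies, which is handled by a standard telescoping argument replacing one factor at a time and using Cauchy--Schwarz together with the pseudorandomness to control the remaining factors in $L^2$. None of these steps are new and all follow the template of \cite{green-tao}; the only novelty is that we invoke pseudorandomness ``at location $0$ and scale $N$'' rather than on a cyclic group, which is equivalent up to passing to the model of Lemma~\ref{le_pseudo2}.
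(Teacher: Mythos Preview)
Your proposal is correct in outline and follows the same underlying route as the paper, namely the Cauchy--Schwarz/linear-forms argument of Green--Tao \cite[Proposition~7.1]{green-tao}; the paper simply takes a shortcut by first invoking Lemma~\ref{le_pseudo2} to pass to a majorant $\widetilde{\nu}$ on a cyclic group $\Z/N'\Z$ satisfying the $(D,D,D)$-linear forms condition, and then quoting \cite[Proposition~7.1]{green-tao} as a black box (observing that only the linear forms condition, not the correlation condition, is used there), whereas you sketch the internals of that proposition directly over the integers. One small quibble: after the $s+1$ Cauchy--Schwarz steps one is bounding $|S|^{2^{s+1}}$, not $|S|$, so the final saving is $\delta$ (or $o_{\delta\to 0}(1)$) rather than $\delta^{2^{s+1}}$; this does not affect the stated conclusion.
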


\begin{proof}
Note that by  Lemma~\ref{le_pseudo2} there exists a prime $N'\ll N$ such that we have a majorant for $f_i$ on the cyclic group $\mathbb{Z}/N'\mathbb{Z}$ satisfying the $(D,D,D)$-linear forms condition of~\cite[Definition 6.2]{green-tao}. Then the claim follows from~\cite[Proposition 7.1]{green-tao},  observing that its proof only used the $(D,D,D)$-linear forms condition of~\cite[Definition 6.2]{green-tao} and not the correlation condition.
\end{proof}

\begin{proof}[Proof of Theorem~\ref{thm_lineq}]
 Let $w$ be a sufficiently slowly growing function of $X$, and let $W=\prod_{p\leq w}p$. Let $\mathbf{N}=(X,\ldots, X)\in \mathbb{R}^d$. We can write $K=\mathbf{N}+K'$, where $K'\subset (0,H]^d$ is a convex body. Now the sum~\eqref{erg10} becomes 
\begin{align}\label{erg8}
\sum_{\mathbf{n}\in K'\cap \mathbb{Z}^d}\prod_{i=1}^t \Lambda(\psi_i( \mathbf{n})+\dot{\psi_i}\cdot \mathbf{N}).
\end{align}
Writing $\Lambda=\Lambda_w+(\Lambda-\Lambda_w)$, this splits as the main term
\begin{align*}
\sum_{\mathbf{n}\in K'\cap \mathbb{Z}^d}\prod_{i=1}^t \Lambda_w(\psi_i( \mathbf{n})+\dot{\psi_i}\cdot \mathbf{N})    
\end{align*}
and $2^t-1$ error terms
\begin{align}\label{eq_psii}
\sum_{\mathbf{n}\in K'\cap \mathbb{Z}^d}\prod_{i=1}^t \Lambda_i(\psi_i( \mathbf{n})+\dot{\psi_i}\cdot \mathbf{N})
\end{align}
where $\Lambda_{i}\in \{\Lambda_w,\Lambda-\Lambda_w\}$ and at least one $\Lambda_i$ equals to $\Lambda-\Lambda_w$. Following~\cite[Section 5]{green-tao} verbatim, we see that the main term is
\begin{align*}
\textnormal{vol}(K\cap \Psi^{-1}(\mathbb{R}_{>0}^t))\prod_{p}\beta_p+o_{X\to \infty}(H^d).    
\end{align*}
Following~\cite[Section 4]{green-tao}, we may assume that the system of linear forms involved in~\eqref{eq_psii} is in $s$-normal form for some $s\ll_D 1$. 

We make the change of variables $\mathbf{n}=W\mathbf{m}+\mathbf{b}$ with $\mathbf{b}\in [0,W)^d$ in~\eqref{eq_psii} and abbreviate $M_{\mathbf{b},i}:=\dot{\psi_i}\cdot \mathbf{b}+\psi_i(0)$ to rewrite that sum as
\begin{align}\label{eq_psii2}\begin{split}
&\sum_{\mathbf{b}\in [0,W)^d} \sum_{\substack{\mathbf{m}\in \mathbb{Z}^d\\W\mathbf{m}+\mathbf{b}\in K'}}\prod_{i=1}^t \Lambda_i(\psi_i(W \mathbf{m}+\mathbf{b})+\dot{\psi_i}\cdot \mathbf{N})\\
&=\sum_{\mathbf{b}\in [0,W)^d} \sum_{\substack{\mathbf{m}\in \mathbb{Z}^d\\W\mathbf{m}+\mathbf{b}\in K'}}\prod_{i=1}^t \Lambda_i(W\dot{\psi_i}\cdot \mathbf{m}+\dot{\psi_i}\cdot \mathbf{b}+\psi_i(0)).\\
&=\left(\frac{W}{\varphi(W)}\right)^{t}\sum_{\substack{\mathbf{b}\in [0,W)^d\\(M_{\mathbf{b},i},W)=1\,\forall i\leq t}} \sum_{\substack{\mathbf{m}\in \mathbb{Z}^d\\\mathbf{m}\in (K'-\mathbf{b})/W}}\prod_{\substack{1\leq i\leq t\\\Lambda_i=\Lambda-\Lambda_w}} \left(\frac{\varphi(W)}{W}\Lambda(W\dot{\psi_i}\cdot \mathbf{m}+M_{\mathbf{b},i})-1\right)\\
&+o_{X\to \infty}(H^d),
\end{split}
\end{align}
where the error term comes from the contribution of integers in the support of $\Lambda$ that are not $w$-rough.

 By Theorem~\ref{thm_gowers}(i), uniformly for integers $1\leq M\leq X$ with $(M,W)=1$ we have
\begin{align*}
\max_{\substack{1\leq a\leq W\\(a,W)=1}}\left\|\frac{\varphi(W)}{W}\Lambda(W\cdot+M)-1\right\|_{U^{s+1}[0,H/W]}=o_{X\to \infty;s}(1). 
\end{align*}
Moreover, by Lemma~\ref{le_pseudoinshort} the function $\frac{\varphi(W)}{W}\Lambda(W\cdot+M)-1$ is majorized by a $(D,o_{X\to \infty}(1))$-pseudorandom measure $\nu_{M}$ at location $0$ and scale $H/W$ for any fixed $D\geq 1$. Hence, applying the generalized von Neumann theorem (Lemma~\ref{le_gvnt}, with $\nu=\frac{1}{t}\sum_{i\leq t}\nu_{M_{\mathbf{b},i}}$), we conclude that~\eqref{eq_psii2} is
\begin{align*}
\ll \left(\frac{W}{\varphi(W)}\right)^t\cdot W^d\left(\frac{\varphi(W)}{W}\right)^t \cdot o_{X\to \infty}\left(\left(\frac{H}{W}\right)^d\right)=o_{X\to \infty}(H^d),\end{align*}
completing the proof. 
\end{proof}

\begin{proof}[Proof of Corollary~\ref{cor_lineq}] This follows directly from Theorem~\ref{thm_lineq}, since the assumptions imply that $\beta_p>0$ for all $p$, and on the other hand $\beta_p=1+O_{t,d,L}(1/p^2)$ by~\cite[Lemmas 1.3 and 1.6]{green-tao}, so we have $\prod_{p}\beta_p>0$.
\end{proof}

\appendix

\section{Variants of the main result}\label{variants-app}

In this appendix discuss in more detail the variants of the main results described in Remark~\ref{variants-rem}.

\subsection{Results for the Liouville function}

It is an easy matter to replace the M\"obius function $\mu$ by the Liouville function $\lambda$ in our main results:

\begin{proposition}\label{liouville}  The results in Theorem~\ref{discorrelation-thm}(i), (iv) (and hence also Corollary~\ref{cor:polynomial}(i), (iv)) continue to hold if $\mu$ is replaced by $\lambda$.
\end{proposition}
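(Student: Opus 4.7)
The plan is to reduce to the already-established $\mu$ case via the classical identity
$$\lambda(n) = \sum_{m : m^2 \mid n} \mu(n/m^2),$$
coming from $\sum_n \lambda(n) n^{-s} = \zeta(2s)/\zeta(s)$. Substituting $n = m^2 k$, for any arithmetic progression $P \subset (X,X+H]$ one has
$$\sum_{n \in P} \lambda(n) \overline{F}(g(n)\Gamma) = \sum_{m \geq 1} \sum_{k : m^2 k \in P} \mu(k) \overline{F}(g(m^2 k)\Gamma).$$
For each $m$ the set $\{k \in \Z : m^2 k \in P\}$ is an arithmetic progression contained in $(X/m^2, (X+H)/m^2]$, so the triangle inequality and the definition~\eqref{maximal-sum} of $|\cdot|^*$ give, uniformly in $P$,
$$\left|\sum_{X<n\leq X+H} \lambda(n) \overline{F}(g(n)\Gamma)\right|^* \leq \sum_{m \geq 1} \left|\sum_{X/m^2 < k \leq (X+H)/m^2} \mu(k) \overline{F}(g(m^2 k)\Gamma)\right|^*.$$

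The crucial point is that $k \mapsto g(m^2 k)$ remains an element of $\Poly(\Z \to G)$ by the affine substitution property recorded right after Definition~\ref{def:FiltGroup}, while the filtered nilmanifold $G/\Gamma$, the function $F$, and the complexity parameter $1/\delta$ are untouched. Hence each inner sum falls directly under Theorem~\ref{discorrelation-thm}(i) or~(iv), applied to the shifted parameters $X' \coloneqq X/m^2$ and $H' \coloneqq H/m^2$. I would then split the $m$-sum at the threshold $M_0 \coloneqq \log^{2A} X$.

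For $m > M_0$, the trivial bound yields the tail contribution
$$\sum_{M_0 < m \leq \sqrt{X+H}} \Bigl(\frac{H}{m^2} + 1\Bigr) \ll \frac{H}{M_0} + \sqrt{X+H} \ll \frac{H}{\log^A X},$$
using $H \geq X^{5/8+\varepsilon} \gg \sqrt{X}\log^A X$ to absorb the $\sqrt{X+H}$ term. For $m \leq M_0$, the inequalities $(X/m^2)^{\theta+\varepsilon/2} \leq H/m^2 \leq (X/m^2)^{1-\varepsilon/2}$ are immediate for $X$ large enough (with $\theta = 5/8$ in case (i), $\theta = 3/5$ in case (iv)), so Theorem~\ref{discorrelation-thm} applies and gives the inner maximal sum at most $\delta^{-O_{d,D}(1)} (H/m^2) \log^{-A}(X/m^2)$. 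Since $\log(X/m^2) \asymp \log X$ in this range, summing against $\sum_m m^{-2} < \infty$ produces the claimed bound in case (i); case (iv) is identical, with the saving $\log^{-1/4} X$ in place of $\log^{-A} X$. Corollary~\ref{cor:polynomial}(i) (and its $\theta = 3/5$ analogue) follows at once because polynomial phases are a special case of nilsequences. The only obstacle is a small bookkeeping exercise verifying that the parameters of the shortened interval $(X/m^2, (X+H)/m^2]$ still lie in the admissible range of Theorem~\ref{discorrelation-thm} after the restriction $m \leq M_0$, which is automatic for the crude choice $M_0 = \log^{2A} X$; no new analytic input is required.
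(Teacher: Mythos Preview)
Your proof is correct and follows essentially the same approach as the paper: the identity $\lambda = \sum_{m} \mu(\cdot/m^2) 1_{m^2 \mid \cdot}$, the triangle inequality on the $m$-sum, then Theorem~\ref{discorrelation-thm} for small $m$ and the trivial bound for large $m$. The only cosmetic difference is the splitting threshold: the paper cuts at $m \leq X^{\eps/10}$, whereas you cut at $m \leq \log^{2A} X$; both choices work, and yours makes the parameter verification slightly easier. (One tiny omission: your trivial bound for large $m$ should carry a factor $1/\delta$ from $\|F\|_\infty$, but this is absorbed into the $\delta^{-O_{d,D}(1)}$ on the right-hand side.)
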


\begin{proof}  We illustrate the argument for the estimate~\eqref{mobius-discor}, as the other estimates are proven similarly.  Under the hypotheses of Theorem~\ref{discorrelation-thm}(i), we wish to show that
$$ \sup_{g \in \Poly(\Z \to G)} \left| \sum_{X < n \leq X+H} \lambda(n) \overline{F}(g(n)\Gamma) \right|^* \ll_{A,\eps,d,D} \delta^{-O_{d,D}(1)} H \log^{-A} X.$$
Writing $\lambda(n) = \sum_{m \leq \sqrt{2X}: m^2|n} \mu(n/m^2)$ for $n \leq 2X$ and using the triangle inequality, we can bound the left-hand side by
$$ \sum_{m \leq \sqrt{2X}} \sup_{g \in \Poly(\Z \to G)} \left| \sum_{X/m^2 < n \leq X/m^2+H/m^2} \mu(n) \overline{F}(g(m^2 n)\Gamma) \right|^*.$$
If $m \leq X^{\varepsilon/10}$ (say), then by Theorem~\ref{discorrelation-thm}(i) (with $X, H, g$ replaced by $X/m^2$, $H/m^2$, $g(m^2 \cdot)$, and $\varepsilon$ reduced slightly) we have
$$ \sup_{g \in \Poly(\Z \to G)} \left| \sum_{X/m^2 < n \leq X/m^2+H/m^2} \mu(n) \overline{F}(g(m^2 n)\Gamma) \right|^*
\ll_{A,\eps,d,D} m^{-2} \delta^{-O_{d,D}(1)} H \log^{-A} X.$$
For $X^{\varepsilon/10} < m \ll \sqrt{X}$, we simply use the triangle inequality and the trivial bound $|\overline{F}(g(n)\Gamma)| \leq 1/\delta$ to conclude
$$ \sup_{g \in \Poly(\Z \to G)} \left| \sum_{X/m^2 < n \leq X/m^2+H/m^2} \mu(n) \overline{F}(g(m^2 n)\Gamma) \right|^* \ll \frac{1}{\delta} \left( \frac{H}{m^2} + 1 \right).$$
Summing in $m$, we obtain the claim after a brief calculation (since $H$ is significantly larger than $X^{1/2}$).
\end{proof}

\subsection{Results for the indicator function of the primes}

It is also easy to replace the von Mangoldt function $\Lambda$ with the indicator function $1_{\mathcal P}$ of the primes ${\mathcal P}$:

\begin{proposition} The results in Theorem~\ref{discorrelation-thm}(ii) (and hence also Corollary~\ref{cor:polynomial}(ii)) continue to hold if $\Lambda$ is replaced by $1_{\mathcal P}$, and $\Lambda^\sharp(n)$ is replaced by $\frac{1}{\log n} \Lambda^\sharp(n)$.
\end{proposition}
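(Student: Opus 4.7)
The plan is to reduce the claim for $1_{\mathcal P}$ to the already-established claim for $\Lambda$ by exploiting the identity
\begin{equation}\label{eq:lambda-vs-prime}
1_{\mathcal P}(n) - \frac{\Lambda^\sharp(n)}{\log n} = \frac{\Lambda(n) - \Lambda^\sharp(n)}{\log n} - E(n),
\end{equation}
where $E(n) \coloneqq \sum_{k \geq 2: n = p^k} \frac{\log p}{\log n}$ is supported on proper prime powers. Since there are only $O(X^{1/2})$ such prime powers in $(X,X+H]$, the contribution of the $E$-term to any maximal sum $\left|\sum_{X < n \leq X+H}(1_{\mathcal P}(n)-\Lambda^\sharp(n)/\log n)\overline{F}(g(n)\Gamma)\right|^*$ is at most $O(\delta^{-1} X^{1/2})$, which is negligible compared with $H\log^{-A}X$ since $H \geq X^{5/8+\eps}$.

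It therefore remains to control the first term on the right-hand side of~\eqref{eq:lambda-vs-prime}. Here the factor $1/\log n$ is monotonic and bounded by $1/\log X$ on $(X,X+H]$, so by the fundamental theorem of calculus
$$\left\| \tfrac{1}{\log n} \right\|_{\TV((X,X+H])} \;\leq\; \frac{1}{\log X} + \int_X^{X+H} \frac{dt}{t \log^2 t} \;\ll\; \frac{1}{\log X}.$$
Applying the summation-by-parts estimate of Lemma~\ref{basic-prop}(iii) with $f = (\Lambda-\Lambda^\sharp)\overline{F}(g(\cdot)\Gamma)$ and $g = 1/\log n$, and then invoking Theorem~\ref{discorrelation-thm}(ii) with $A$ replaced by $A+1$, yields
$$\left|\sum_{X < n \leq X+H} \frac{\Lambda(n)-\Lambda^\sharp(n)}{\log n}\, \overline{F}(g(n)\Gamma)\right|^* \;\ll\; \frac{1}{\log X} \cdot \delta^{-O_{d,D}(1)} H \log^{-(A+1)} X \;\ll_{A,\eps,d,D}\; \delta^{-O_{d,D}(1)} H \log^{-A} X,$$
which combined with the prime-power error bound above establishes the $1_{\mathcal P}$-version of~\eqref{mangoldt-discor}.

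Corollary~\ref{cor:polynomial}(ii) for $1_{\mathcal P}$ is then an immediate consequence, applied in the Fourier-phase case $F(g(n)\Gamma) = e(P(n))$: if $|\sum_{X<n\leq X+H} 1_{\mathcal P}(n) e(P(n))| \geq H\log^{-A} X$, then by~\eqref{eq:lambda-vs-prime} and the partial summation argument above (applied now without the nilsequence, just with $1/\log n$), one deduces $|\sum_{X<n\leq X+H} (\Lambda(n) - \Lambda^\sharp(n)) e(P(n))|^* \gg H \log^{-A-1} X$, forcing the major arc conclusion~\eqref{erg7} from the $\Lambda$-version of Corollary~\ref{cor:polynomial}(ii), with at most a logarithmic loss in the implicit constants. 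The main obstacle is really just bookkeeping: the argument is entirely a partial summation together with the trivial bound on prime powers, so no new analytic input is required beyond what is already in Theorem~\ref{discorrelation-thm}(ii).
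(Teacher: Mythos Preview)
Your proof of the main discorrelation estimate is correct and essentially identical to the paper's: write $1_{\mathcal P}(n) = \Lambda(n)/\log n - E(n)$ with $E$ supported on proper prime powers, bound the $E$-contribution trivially by $O(\delta^{-1} X^{1/2})$, and strip off the factor $1/\log n$ via Lemma~\ref{basic-prop}(iii) before appealing to Theorem~\ref{discorrelation-thm}(ii). (The extra appeal to $A+1$ is harmless but unnecessary, since the $\TV$-bound on $1/\log n$ already saves the needed factor of $\log X$.)

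Your closing paragraph on Corollary~\ref{cor:polynomial}(ii) does not quite work as written. From a large $\sum 1_{\mathcal P}(n) e(P(n))$ and the identity~\eqref{eq:lambda-vs-prime} you cannot conclude that $|\sum (\Lambda-\Lambda^\sharp) e(P)|^*$ is large: the term $\sum (\Lambda^\sharp/\log n)\, e(P)$ on the other side of the identity may itself be large, and in any case the $\Lambda$-version of the corollary takes as hypothesis a large ordinary sum of $\Lambda$, not a maximal sum of $\Lambda-\Lambda^\sharp$. The correct route mirrors the paper's own proof of Corollary~\ref{cor:polynomial}(ii): the new discorrelation estimate forces $|\sum (\Lambda^\sharp(n)/\log n)\, e(P(n))| \gg H\log^{-A} X$, and since $\Lambda^\sharp_I/\log n$ is still a type~$I$ sum of small level, the type~$I$ estimate of \cite[Proposition~2.1]{matomaki-shao} gives the major arc conclusion~\eqref{erg7}. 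The paper leaves this ``hence also'' implicit, so this is a minor point rather than a real gap in the main argument.
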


\begin{proof}  From~\eqref{mangoldt-discor} and Lemma~\ref{basic-prop}(iii) we have
$$ \sup_{g \in \Poly(\Z \to G)} \left| \sum_{X < n \leq X+H} \left(\frac{1}{\log n} \Lambda(n) - \frac{1}{\log n} \Lambda^\sharp(n)\right) \overline{F}(g(n)\Gamma) \right|^* \ll_{A,\eps,d,D} \delta^{-O_{d,D}(1)} H \log^{-A} X$$
and so by the triangle inequality it will suffice to show that
$$ \sum_{X < n \leq X+H} \left| 1_{\mathcal P}(n) - \frac{1}{\log n} \Lambda(n)\right| \ll_{A} H \log^{-A} X.$$
But the summand is supported on prime powers $p^j$ with $2 \leq j \ll \log X$ and $p \ll \sqrt{X}$, so there are at most $O( \sqrt{X} \log X )$ terms, each of which gives a contribution of $O(1)$.  Since $H$ is significantly larger than $X^{1/2}$, the claim follows.
\end{proof}

\subsection{Results for the counting function of sums of two squares}

It is a classical fact that the counting function
$$ r_2(n) \coloneqq \sum_{\substack{a,b \in \Z\\ a^2+b^2=n}} 1$$
can be factorized as $r_2(n) = 4 (1 * \chi_4)(n)$, where $\chi_4$ is the non-principal Dirichlet character of modulus $4$.  This is formally very similar to the divisor function $d_2(n) = (1*1)(n)$.  In this paper we use the Dirichlet hyperbola method to expand $d_2(n)$ for $X < n \leq X+H$ as
$$ d_2(n) = \sum_{\substack{R_2 \leq n_1 \leq n/R_2\\ n_1\mid n}} 1 + \sum_{\substack{n_1 < R_2\\ n_1\mid n}} 2 $$ 
with $R_2 \coloneqq X^{1/20}$, and approximate this function by the Type I sum
$$ d_2^\sharp(n) = \sum_{\substack{R_2 \leq n_1 < R_2^2\\ n_1\mid n}} \frac{\log n - \log R_2^2}{\log R_2} + \sum_{\substack{n_1 < R_2\\ n_1\mid n}} 2$$
(these are the $k=2$ cases of~\eqref{eq:genhypid},~\eqref{dks-def} respectively).  In a similar vein, we can expand
$$ r_2(n) = \sum_{\substack{R_2 \leq n_1 \leq n/R_2\\ n_1\mid n}} 4 \chi_4(n_1) + \sum_{\substack{n_1 < R_2\\ n_1\mid n}} 4(\chi_4(n_1) + \chi_4(n/n_1))$$
and then introduce the twisted Type I approximant
$$ r_2^\sharp(n) = \sum_{\substack{R_2 \leq n_1 < R_2^2\\ n_1\mid n}} 4 \chi_4(n_1) \frac{\log n - \log R_2^2}{\log R_2} + \sum_{\substack{n_1 < R_2\\ n_1\mid n}} 4(\chi_4(n_1) + \chi_4(n/n_1)).$$
We then have

\begin{proposition}  The $k=2$ results in Theorem~\ref{discorrelation-thm}(iii) continue to hold if $d_2, d_2^\sharp$ are replaced by $r_2$, $r_2^\sharp$ respectively.
\end{proposition}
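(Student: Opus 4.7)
The plan is to follow essentially verbatim the proof of Theorem~\ref{discorrelation-thm}(iii) for $k=2, \theta=1/3$ carried out in Sections~\ref{reduction-sec} and~\ref{discor-sec}, modified only to track the extra Dirichlet character $\chi_4$ arising from the factorization $r_2 = 4\cdot (1 \ast \chi_4)$. The two ingredients to reprove are: (i) the major-arc analog of Theorem~\ref{thm:major-arc}(ii), i.e., cancellation in $\sum_{n \in P}(r_2(n) - r_2^\sharp(n))$ for arithmetic progressions $P \subset (X,X+H]$; and (ii) a combinatorial decomposition of $r_2 - r_2^\sharp$ into type $I$ and type $I_2$ sums, suitable for feeding into Theorem~\ref{inverse}(i),(iv) exactly as in Section~\ref{discor-sec}.

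For (i), I would replay the proof of the $k=2$ case of Proposition~\ref{prop:H1H2comp}(ii) (the van der Corput argument modeled on the classical Dirichlet divisor problem), together with the $k=2$ case of Proposition~\ref{prop:LongIntervals}(ii) and its companion Lemma~\ref{le:Approx}(ii). Each inner sum now carries an extra factor of $\chi_4$ applied to one of the two convolution variables; since $\chi_4$ is $4$-periodic, splitting each of the two variables into residue classes modulo $4q$ reduces the resulting exponential sums to ones of exactly the same shape as in the $d_2$ case, with $\chi_4$ then evaluated at a fixed residue class and hence pulled outside as a constant factor. The approximant $r_2^\sharp$ is defined precisely so that its expansion via the generalized Dirichlet hyperbola identity matches the leading behavior of $r_2$ in short intervals in arithmetic progressions, so the analog of Lemma~\ref{le:Approx}(ii) is then a routine calculation mirroring the one used for $d_2^\sharp$.

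For (ii), I would decompose $r_2$ dyadically as a sum of $O(\log^2 X)$ terms of the form $f = 4\cdot 1_{(N_1,2N_1]} \ast (\chi_4 \cdot 1_{(N_2,2N_2]})$ with $N_1 N_2 \asymp X$, and observe from its definition that $r_2^\sharp$ is itself a $(\log^{-O(1)} X, O(X^{1/3}))$ type $I$ sum. Each dyadic piece $f$ above is a $(\log^{-O(1)} X, 1)$ type $I_2$ sum in the sense of Definition~\ref{struct-sum}(iii), with $\alpha = \delta_1$, $\beta_1 = 1_{(N_1,2N_1]}$, and $\beta_2 = 4\chi_4 \cdot 1_{(N_2,2N_2]}$: the bound~\eqref{abound} is immediate, while~\eqref{btv} holds with $q = 4$ since $\chi_4$ is constant on each residue class modulo $4$, so $\|\beta_j\|_{\TV(\N;4)} \ll 1$ for $j=1,2$. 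After this identification, the deduction of the desired discorrelation estimate proceeds identically to the $d_2$ case in Section~\ref{discor-sec}, applying Theorem~\ref{inverse}(i) to the type $I$ pieces coming from $r_2^\sharp$ and Theorem~\ref{inverse}(iv) to the type $I_2$ pieces, and using the induction on $\dim G/\Gamma$ together with Proposition~\ref{central} and Lemma~\ref{factor-simple} exactly as before.

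The only genuine obstacle is the bookkeeping involved in (i): one must verify that inserting $\chi_4$ into the classical van der Corput analysis of $\sum_{n \leq X} d_2(n)$ in arithmetic progressions preserves the exponent $1/3$. This is consistent with the well-known Voronoi-type estimate for the error term in $\sum_{n \leq X} r_2(n)$, which enjoys the same exponent as the Dirichlet divisor problem, and the arithmetic-progression refinement needed here follows from standard methods after passing to residue classes modulo $4q$ as described above. Once this is in place, nothing else in the $d_2$ argument uses the multiplicative structure of $d_2$ in a way that fails for $r_2$.
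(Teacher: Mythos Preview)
Your proposal is correct and takes essentially the same approach as the paper: both argue that one repeats the $d_2$ proof verbatim with $\chi_4$ twists inserted, the key point being that $\|\chi_4\|_{\TV(P;4)} \ll 1$ so the twisted factors $\chi_4 \cdot 1_{(N,2N]}$ still satisfy the bound~\eqref{btv} required for type $I_2$ sums. You give considerably more detail than the paper's one-paragraph sketch, but the strategy and the essential observation are the same.
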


This proposition is established by repeating the arguments used to establish Theorem~\ref{discorrelation-thm}(iii), but by inserting ``twists'' by the character $\chi_4$ at various junctures.  However, such twists are quite harmless (for instance, since $\|\chi_4\|_{\TV(P;4)} \ll 1$ for any arithmetic progression $P$, Proposition~\ref{basic-prop}(iii) allows one to insert this character into maximal sum estimates without difficulty), and there is no difficulty in modifying the arguments to accommodate this twist.

\subsection{Potential result for the indicator function of the sums of two squares}

Let $S = \{ n^2+m^2: n,m \in \Z\}$ be the set of numbers representable as sums of two squares.  The Dirichlet series for $S$ is equal to $\zeta(s)^{1/2} L(s,\chi_4)^{1/2}$ times a holomorphic function near $s=1$, and in particular extends into the classical zero-free region after making a branch cut to the left of $s=1$ on the real axis.  

By a standard Perron formula calculation, one can then obtain asymptotics of the form
$$ \sum_{n \leq x} 1_S(n) = x \sum_{j=0}^{A-1} B_{j} \log^{-j-1/2} x + O_A( x \log^{-A-1/2} x )$$
for any $A>0$ and some real constants $B_j$ which are in principle explicitly computable; see for instance~\cite[Theorem 1.1]{breteche-tenenbaum} for a recent treatment (in significantly greater generality) using the Selberg--Delange method.  Similar calculations give asymptotics of the form
$$ \sum_{\substack{n \leq x\\ n = a\ (q)}} 1_S(n) = x \sum_{j=0}^{A-1} B_{j,a,q} \log^{-j-1/2} x + O_A( x \log^{-A-1/2} x )$$
for any fixed residue class $a\ (q)$ and some further real constants $B_{j,a,q}$.  With further effort one can also localize such estimates to intervals $\{ X < n \leq X+H \}$ with $H$ not too small (e.g., $H = X^{5/8+\eps}$ or $H = X^{7/12+\eps}$). 

This suggests the existence of an approximant $1_S^{\sharp,A}$ for any given accuracy $A > 0$ that is well approximated by Type I sums, and is such that one has the major arc estimate
$$ \left|\sum_{X < n \leq X+H} 1_S(n) - 1_S^{\sharp,A}(n) \right|^* \ll_A H \log^{-A} x$$
(cf. Theorem~\ref{thm:major-arc}). For small $A$, it seems likely that one could construct $1_S^{\sharp,A}$ by a variant of the Cram\'er--Granville construction used to form $\Lambda^\sharp$; but for large $A$ it appears that the approximant is more difficult to construct (for instance one may have to use Fourier-analytic methods such as the delta method).  However, once such an approximant is constructed, we conjecture that the methods of this paper will produce  analogues of Theorem~\ref{discorrelation-thm}(ii) (and hence also of Corollary~\ref{cor:polynomial}(ii)) if $\Lambda, \Lambda^\sharp$ are replaced by $1_S, 1_S^{\sharp,A'}$ respectively, with $A'$ sufficiently large depending on $A$.  The main point is that a satisfactory analogue of the Heath--Brown decompositions in Lemma~\ref{hb-identity} for $1_{S}$ is known; see~\cite[Lemma 7.2]{shao-teravainen}.  

We do not foresee any significant technical issues with the remaining portions of the argument, though of course one would need to define the approximant $1_S^{\sharp, A}$ more precisely before one could say with certainty that the portions of the argument involving this approximant continue to be valid.

\subsection{Potential result for the indicator function of smooth numbers}

Let $0 < \eta < \frac{1}{2}$, let $X$ be large, and let $S_\eta$ denote the set of $X^\eta$-smooth integers, that is to say those numbers whose prime factors are all less than $X^\eta$.  Let $H \geq X^{\theta+\eps}$ with $\theta \coloneqq \frac{1}{2}+\eta$.  As is well known, the density of $S_\eta$ in $[X,X+H]$ is asymptotic to the Dickman function $\rho(1/\eta)$ evaluated at $1/\eta$.  We conjecture that the methods of this paper can be used to establish a bound of the form
$$ \sup_{g \in \Poly(\Z \to G)} \left| \sum_{X < n \leq X+H} (1_{S_\eta}(n) - \rho(1/\eta)) \overline{F}(g(n)\Gamma) \right|^* \ll_{\eps,d,D,\eta} \delta^{-O_{d,D}(1)} H \log^{-c} X$$
for some absolute constant $c>0$ under the hypotheses of Theorem~\ref{discorrelation-thm}. 

Indeed, a Heath--Brown type decomposition, involving only $(1, x^{1/2-\eta}, x^{1/2})$ type II sums and a (somewhat) small exceptional set, was constructed in~\cite[Lemma 11.5]{kmt}; the exceptional set was only shown to be small on long intervals such as $[1,X]$ in that paper, but it is likely that one can show the set to also be small on the shorter interval $\{ X < n \leq X+H\}$. 

There are however some further technical difficulties in implementing our methods here.  The first (and less serious) issue is that one would need to verify that the type II sums $f(n)$ produced by~\cite[Lemma 11.5]{kmt} obey the bound 
\eqref{eq-comb-lambda}; we believe that this is likely to be achievable after some computation.  The second and more significant difficulty is that one would need an approximant $1_{S_\eta}^\sharp$ obeying a major arc estimate of the shape
$$ \left|\sum_{X < n \leq X+H} 1_{S_\eta}(n) - 1^\sharp_{S_\eta}(n) \right|^* \ll_A H \log^{-A} X$$
for any $A>0$ (possibly after removing a small exceptional set from $S_\eta$), in the spirit of Theorem~\ref{thm:major-arc} and Corollary~\ref{cor:major-arc-Ramare}. 

The constant $\rho(1/\eta)$ is an obvious candidate for such an approximant, but unfortunately such an estimate is only valid for small values of $A$; see~\cite[Theorem 1.8]{ht}. Thus, as in the previous discussion for the indicator of the sums of two squares, a more complicated approximant is likely to be required; the function $\Lambda(x,y)$ appearing in~\cite[Theorem 1.8]{ht} will most likely become involved. See also~\cite{matthiesen-wang} for some recent estimates on the distribution of smooth numbers in short intervals or arithmetic progressions (in a slightly different regime in which the $X^\eta$ threshold for smoothness is replaced by a smaller quantity).

\bibliography{refs}
\bibliographystyle{plain}

\end{document}